     \definecolor{dark-red}{rgb}{0.54,0,0}
     \definecolor{dark-green}{rgb}{0,0.54,0}
     \definecolor{dark-magenta}{rgb}{0.54,0,0.54}
     \definecolor{dark-cyan}{rgb}{0,0.54,0.54}
\newcommand\CC{\protect\mathbb{C}}
\newcommand\FF{\protect\mathbb{F}}
\newcommand\QQ{\protect\mathbb{Q}}
\newcommand\RR{\protect\mathbb{R}}
\newcommand\bA{\mathbb{A}}
\newcommand\bG{\mathbb{G}}
\newcommand\bH{\mathbb{H}}
\newcommand\bQ{\mathbb{Q}}
\newcommand\bR{\mathbb{R}}
\newcommand\bV{\mathbb{V}}
\newcommand\bX{\mathbb{X}}
\newcommand\bY{\mathbb{Y}}
\newcommand\bZ{\mathbb{Z}}
\newcommand\sP{\mathscr{P}}
\newcommand\cC{\mathcal{C}}
\newcommand\cD{\mathcal{D}}
\newcommand\cF{\mathcal{F}}
\newcommand\cG{\mathcal{G}}
\newcommand\cO{\mathcal{O}}
\newcommand\cS{\mathcal{S}}
\newcommand\cW{\mathcal{W}}
\DeclareMathOperator{\SL}{SL}
\DeclareMathOperator\GL{GL}
\DeclareMathOperator\Hom{Hom}
\DeclareMathOperator\Tr{Tr}
\DeclareMathOperator\Gal{Gal}
\DeclareMathOperator\Res{Res}
\DeclareMathOperator\tr{tr}
\DeclareMathOperator\Ind{Ind}
\DeclareMathOperator\im{Im}
\DeclareMathOperator\SO{SO}
\DeclareMathOperator\Span{span}
\DeclareMathOperator{\PGL}{PGL}
\DeclareMathOperator{\diag}{diag}
\DeclareMathOperator{\Mp}{Mp}
\DeclareMathOperator\ord{ord}
\newcommand\trans{\text{t}}
\newcommand\op{\text{op}}
\DeclareMathOperator\BC{BC}
\DeclareMathOperator\JL{JL}
\newcommand\bi{\mathbf{i}}
\newcommand\bj{\mathbf{j}}
\DeclareMathOperator\GU{GU}
\let\G\relax
\DeclareMathOperator\G{G}
\let\U\relax
\DeclareMathOperator\U{U}
\let\O\relax
\DeclareMathOperator\O{O}
\DeclareMathOperator\GSp{GSp}
\newcommand\llangle{\langle\!\langle}
\newcommand\rrangle{\rangle\!\rangle}
\DeclareMathOperator\M{M}
\newcommand\bn{\mathbf{n}}
\newcommand\W{\mathbf{W}}
\newcommand\V{\mathbf{V}}
\newcommand\bg{\mathbf{g}}
\newcommand\bchi{\mathbf{\chi}}
\newcommand\blambda{\mathbf{\lambda}}
\newcommand\bs{\mathbf{s}}
\newcommand\bw{\mathbf{w}}
\DeclareMathOperator\GO{GO}
\DeclareMathOperator\vol{vol}
\DeclareMathOperator\Val{Val}
\newcommand\e{\mathbf{e}}
\newcommand\triv{\mathrm{triv}}
\newcommand\Tam{\mathrm{Tam}}
\DeclareMathOperator\PSL{PSL}
\DeclareMathOperator\real{Re}
\newcommand{\from}{\colon}
\theoremstyle{theorem} \newtheorem{proposition}{Proposition}[section]
\theoremstyle{definition} \newtheorem{definition}[proposition]{Definition}
\theoremstyle{theorem} \newtheorem{lemma}[proposition]{Lemma}
\theoremstyle{remark} 
\theoremstyle{remark} 
\theoremstyle{remark} 
\theoremstyle{definition} \newtheorem{example}[proposition]{Example}
\theoremstyle{definition} 
\theoremstyle{theorem} 
\theoremstyle{theorem} 
\theoremstyle{theorem} \newtheorem{theorem}[proposition]{Theorem}
\theoremstyle{theorem} \newtheorem{corollary}[proposition]{Corollary}
\theoremstyle{definition} 
\theoremstyle{theorem} 
\theoremstyle{remark} 
\theoremstyle{definition} 
\theoremstyle{definition} 
\theoremstyle{definition} 
\theoremstyle{definition} 
\theoremstyle{remark} \newtheorem*{claim*}{Claim}
\theoremstyle{remark} 
\theoremstyle{theorem} 
\theoremstyle{theorem} 
\theoremstyle{definition} 
\theoremstyle{definition} 
\theoremstyle{theorem} 
\theoremstyle{remark} 
\theoremstyle{definition} 
\theoremstyle{remark}
\theoremstyle{theorem} \newtheorem*{main}{Main Theorem}
\newcommand\ur{\rm{ur}}
\DeclareMathOperator{\Sp}{Sp}
\DeclareMathOperator\pr{pr}
\DeclareMathOperator\Nm{Nm}
\DeclareMathOperator\val{val}
\newcommand\ad{{\mathrm{ad}}}
\newcommand\quotient{\backslash}
\newcommand\ONE{\mathbbm{1}}
 \numberwithin{equation}{section}
\title{Periods identities of CM forms on quaternion algebras}
\author{Charlotte Chan}
\email{charchan@umich.edu}
\address{Department of Mathematics \\
University of Michigan \\
530 Church St. \\
Ann Arbor 48109 USA}
\begin{document}

\maketitle

\begin{abstract}
Waldspurger's formula gives an identity between the norm of a torus period and an $L$-function of the twist of an automorphic representation on $\GL(2)$. For any two Hecke characters of a fixed quadratic extension $E$, one can consider the two torus periods coming from integrating one character against the automorphic induction of the other. Because the corresponding $L$-functions agree, (the norms of) these periods---which occur on different quaternion algebras---are closely related. In this paper, we give a direct proof of an explicit identity between the torus periods themselves.
\end{abstract}

\tableofcontents

\section{Introduction}\label{ch:introduction}

Waldspurger's work in 1985 sparked the beginnings of a rich theory studying the relationship between special values of $L$-functions and automorphic periods. In \cite{W85a}, he studies torus periods for representations of $B_\bA^\times$, where $B$ is a quaternion algebra over a number field $F$. Consider
\begin{equation*}
\sP(\pi^B, \Omega) \from \pi^B \to \CC, \qquad f^B \mapsto \int_{T_\bQ \backslash T_\bA} f^B(g) \cdot \Omega(g) \, dg,
\end{equation*}
where $\pi^B$ is the Jacquet--Langlands transfer of an irreducible automorphic representation $\pi$ of $\GL_2(\bA_\bQ)$ and $\Omega$ is a character of a maximal torus $T$. Waldspurger establishes a formula 
\begin{equation}\label{e:vague waldspurger}
|\sP(f^B, \Omega)|^2 = * \cdot L(\BC(\pi) \otimes \Omega, \tfrac{1}{2}),
\end{equation}
where $*$ consists of factors that depend only on local data. Combining Waldspurger's formula with Tunnell--Saito's work on $\epsilon$-dichotomy, which characterizes the branching behavior of representations of local quaternion algebras in terms of local $\epsilon$-factors, one sees that there is \textit{at most one} quaternion algebra $B$ such that $*$ is nonzero. If $L(\BC(\pi) \otimes \Omega, \tfrac{1}{2}) \neq 0$ and the central character condition 
\begin{equation*}
\omega_\pi \cdot \Omega|_{\bA_\bQ^\times} = 1, \qquad \text{where $\omega_\pi$ is the central character of $\pi$,}
\end{equation*}
holds, then there is a \textit{unique} quaternion algebra $B$---characterized by local $\epsilon$-factors---such that the linear functional $\sP(\pi^B, \Omega)$ is nonzero. 

In this paper, we will consider the torus periods arising from two symmetric special cases of this: fixing two Hecke characters $\chi_1, \chi_2$ of $E^\times$, consider
\begin{enumerate}[label=(\arabic*)]
\item
$\pi = \pi_{\chi_1}$ and $\Omega = \chi_2$

\item
$\pi = \pi_{\chi_2}$ and $\Omega = \chi_1$
\end{enumerate}
As such, the only automorphic representations of $\GL_2$ we will consider are those that arise as the automorphic induction $\pi_\chi$ of a Hecke character $\chi$. As the central character of $\pi_\chi$ is $\chi|_{\bA_F^\times} \cdot \epsilon_{E/F}$, the analogue of the central character condition for both (1) and (2) is:
\begin{equation}\label{e:ccc intro}
\chi_1|_{\bA_F^\times} \cdot \chi_2|_{\bA_F^\times} \cdot \epsilon_{E/F} = 1.
\end{equation}

Formally, the Rankin--Selberg $L$-function for the $(\GL_2 \times \GL_2)$-representation $\pi_{\chi_1} \otimes \pi_{\chi_2}$ satisfies
\begin{equation}\label{e:L symmetry}
L(\BC(\pi_{\chi_1}) \otimes \chi_2, s) = L(\pi_{\chi_1} \otimes \pi_{\chi_2}, s) = L(\BC(\pi_{\chi_2}) \otimes \chi_1, s).
\end{equation}
On the other hand, as we see in Equation \eqref{e:vague waldspurger}, Waldspurger's formula relates (1) to the left-hand side of \eqref{e:L symmetry} and (2) to the right-hand side of \eqref{e:L symmetry}. Furthermore, the quaternion algebras $B_1$ and $B_2$ arising from (1) and (2) are related by the following simple formula:
\begin{equation}\label{e:B1 B2}
\text{The ramification of $B_1$ and $B_2$ at a place $v$ agree if and only if $-1 \in \Nm(E_v^\times/F_v^\times)$.}
\end{equation}
Therefore one obtains a relationship between (the norms of) the torus periods arising from our two symmetric cases. 

As these torus periods occur on \textit{different} quaternion algebras, it is of interest to study these periods directly, without invoking Waldspurger. In this paper, we do exactly this: we prove an explicit identity between the periods on $B_1$ and $B_2$. We will employ the theta correspondence to construct automorphic forms and compare the resulting torus periods. To this end, the key to our approach is the construction of a \textit{seesaw of dual reductive pairs} that precisely realizes the quaternion algebras $B_1$ and $B_2$. 

\begin{main}[\ref{t:identity of periods}]
There exist explicitly constructed pairs of automorphic forms $f_1^{B_1} \in \JL^{B_1^\times}(\pi_{\chi_1})$ and $f_2^{B_2} \in \JL^{B_2^\times}(\pi_{\chi_2})$ such that
\begin{equation*}
\sP(f_1^{B_1}, \chi_2) = \sP(f_2^{B_2}, \chi_1).
\end{equation*}
\end{main}

We point out the simplest interesting case of the Main Theorem. Let $F = \bQ$ and $E = \bQ(\sqrt{-7})$, and consider the canonical Hecke character $\chi_{\rm can}$ of $E$ in the sense of Rohrlich \cite{Ro80}. Since $\chi_{\rm can}$ restricts to the quadratic character, $\chi_1 = \chi_{\rm can}^n$ and $\chi_2 = \chi_{\rm can}^m$ satisfy \eqref{e:ccc intro} so long as $n$ and $m$ have opposite parity. When $n=2$ and $m=3+2l \geq 3$, $B_1$ is the split quaternion algebra $M_2(\bQ)$ and $B_2$ is the \textit{definite} quaternion algebra $B$ ramified at exactly $7$ and $\infty$. The newform $f$ in the automorphic induction $\pi_{\chi_{\rm can}^2}$ has weight $3$ and level $\Gamma_1(7)$ with nebentypus $\varepsilon_{\bQ(\sqrt{-7})/\bQ}$, and $\delta_3^{l} f$ is a test vector for the torus period against $\chi_{\rm can}^{3 + 2l}$, where $\delta_3^{l}$ is the $l$th iterate of the Shimura--Maass differential operator. The Main Theorem gives an explicit automorphic form $f_l^B$ in the Jacquet--Langlands transfer of $\pi_{\chi_{\rm can}^{3+2l}}$ to a \textit{definite} quaternion algebra such that
\begin{equation}\label{e:ex}
0 \neq \int_{[E^\times]} (\delta_3^{l} f)(g) \cdot \chi_{\rm can}^{3+2l}(g) \, dg = \int_{[E^\times]} f_l^B(g) \cdot \chi_{\rm can}^2(g) \, dg.
\end{equation}
As $l$ changes, the $\delta_3^{l} f$ live in the same representation, but on the definite side, the representation space containing $f_l^B$ also varies. This set-up is now primed for arithmetic application: after dividing by a canonical period and taking $p$-adic limits in $l$, the left-hand side of \eqref{e:ex} is related to logarithms of generalized Heegner cycles via Bertolini--Darmon--Prasanna \cite{BDP13}. Although we do not consider arithmetic consequences of the Main Theorem here, we plan to explore this in future work.

\subsection{Outline}

We begin by establishing notation and background in Sections \ref{s:definition} and \ref{ch:weil}. In Section \ref{ch:two quaternion algebras}, we give a simple description of the relationship between $B_1$ and $B_2$. We then construct dual reductive pairs $(\U_B(V), \U_B(W^*))$ and $(\U_E(\Res V), \U_E(W))$ that both capture the behavior of $E^\times \subset B_1^\times, B_2^\times$ and also compatibly map into the same symplectic group. The goal of this paper is then to study the following seesaw of similitude unitary groups with respect to the theta correspondence:
\begin{equation*}
\begin{tikzcd}
\GU_E(\Res V) \ar[dash]{rd} \ar[dash]{d} & \GU_B(W^*)  \ar[dash]{d} \ar[dash]{ld} \\
\GU_B(V) & \GU_E(W)
\end{tikzcd} \text{`` $=$ ''}
\begin{tikzcd}
B_2^\times \ar[dash]{rd} \ar[dash]{d} & B_1^\times \ar[dash]{d} \ar[dash]{ld} \\
E^\times & E^\times
\end{tikzcd}
\end{equation*}

In Section \ref{ch:splittings}, we use Kudla's splittings for unitary groups and explicitly study their compatibility on $E^\times \times E^\times$. Many of the calculations are similar to the calculations in \cite{IP16b}.  From the compatibility statements about the splittings, we can deduce precise information about how the Weil representations on $\GU_B(V) \times \GU_B(W^*)$ and $\GU_E(\Res V) \times \GU_E(W)$ are related.

In Section \ref{ch:global theta}, we give a representation theoretic description of the global theta lifts. This requires a careful study of Kudla's splittings at the places $v$ where everything is unramified (Section \ref{s:split splittings}). We prove (Theorem \ref{t:theta aut ind}) that the global theta lifts can be described in terms of automorphic induction and Jacquet--Langlands and that the global theta lift vanishes if and only if the Jacquet--Langlands transfer does not exist. Combining these results with the compatibility results of Section \ref{ch:splittings}, we obtain our Main Theorem (Theorem \ref{t:identity of periods}).

In Sections \ref{ch:schwartz} and \ref{ch:explicit rallis}, in the case $E/F$ is CM, we construct a Schwartz function $\varphi$ whose theta lift $\theta_{\varphi}(\chi)$ to $\GL_2(F)$ is the newform. We prove an explicit Rallis inner product formula relating $\theta_\varphi(\chi)$ to $L(1, \widetilde \chi)$, which in particular shows that the theta lift is nonvanishing. These  Schwartz functions have been considered in various places before. At the finite places, they have appeared for example in \cite[Proposition 2.5.1]{P06}, \cite[N1]{X07}. At the infinite places, our choice is constructed from a confluent hypergeometric function ${}_1 F_1(a,b,t)$ of the first type. 

We conclude the paper (Section \ref{ch:example}) with details on the canonical Hecke character $\chi_{\rm can}$ of $\bQ(\sqrt{-7})$, the example mentioned earlier in the introduction.

\subsection*{Acknowledgements}

I'd like to thank my advisor Kartik Prasanna for introducing me to this area of research and to thank Atsushi Ichino for many helpful conversations. A further thank you to both Kartik and Atsushi for sharing their impeccably written preprints with me at an early stage. This work was partially supported by NSF grants DMS-0943832, DMS-1160720, and DMS-1802905.

\section{Definitions} \label{s:definition}

For a number field $F$, let $\cO$ be the ring of integers of $F$ and $\cD$ the different of $F$ over $\QQ$. Let $r_1$ be the number of real embeddings of $F$ and $2r_2$ be the number of complex embeddings of $F$. For each finite place $v$ of $F$, let $\cO_v$ be the ring of integers of $F_v$, $\pi_v$ a uniformizer of $\cO_v$, and $q_v$ the cardinality of the residue field $\cO_v/\pi_v$. Let $D = D_F$ be the discriminant of $F$ and for each finite place $v$ of $F$, let $d_v$ be the non-negative integer such that $\cD \otimes_{\cO} \cO_v = \pi_v^{d_v} \cO_v$. Set $\delta_v = \pi_v^{-d_v}$. Then $|D| = \prod_{v \nmid \infty} q_v^{d_v}$. 

Throughout this paper, let $E$ be a (possibly split) quadratic extension of $F$ and let $B$ be a quaternion algebra over $F$ containing $E$. The main groups in this paper are $\bA_E^\times$, $\bA_E^1$, and $B_\bA^\times$. For shorthand, we write
\begin{equation*}
[E^\times] \colonequals \bA_F^\times E^\times \backslash \bA_E^\times, \qquad [E^1] \colonequals E^1 \backslash \bA_E^1, \qquad [B^\times] \colonequals \bA_F^\times B^\times \backslash B_\bA^\times,
\end{equation*}
where in the last definition, we view $\bA_F^\times$ as the center of $B_\bA^\times$.

\subsection{Measures}\label{s:tamagawa}

Throughout this paper, all integrations over adelic groups are performed with respect to the Tamagawa measure. We define $dx = \prod_v dx_v$ to be the measure on $\bA_F$ that is self-dual with respect to a chosen additive character $\psi$ of $F$. We now describe the Tamagawa measure explicitly in a few special cases.

\begin{example}
The \textit{standard additive character} of $F \backslash \bA_F$ is $\psi \colonequals \psi_0 \circ \Tr_{F/\bQ}$, where $\psi_0 = \otimes_v \psi_{0,v}$ is the non-trivial additive character of $\bQ \backslash \bA_\bQ$ given by
\begin{equation*}
\psi_{0,v}(x) = \begin{cases}
e^{2\pi \sqrt{-1} x} & \text{if $v = \infty$,} \\
e^{-2\pi\sqrt{-1}x} & \text{if $v \nmid \infty$.}
\end{cases}
\end{equation*}
Observe that if $v$ is a finite place of $F$, then $\psi_v$ is trivial on $\pi_v^{-d_v} \cO_{F_v}$ but nontrivial on $\pi_v^{-d_v-1} \cO_{F_v}$. The measure $dx$ on $\bA_F$ that is self-dual with respect to $\psi$ has the property that:
\begin{enumerate}[label=$\cdot$]
\item
If $v$ is finite, then $\vol(\cO_{F_v}, dx_v) = q_v^{-d_v/2}$.

\item
If $v$ is infinite, then $dx_v$ is the Lebesgue measure.
\end{enumerate}
More generally, if $\psi'$ is any additive character of $\bA_F$, then for any finite place $v$, we have $\vol(\cO_v, dx_v) = q_v^{c(\psi_v)/2}$, where $c(\psi_v)$ is the smallest integer such that $\psi_v$ is trivial on $\pi_v^{c(\psi_v)} \cO_{F_v}$.
\end{example}

\begin{example}
For any number field $k$, put
\begin{equation*}
\rho_k \colonequals \Res_{s = 1} \zeta_F(x) = \frac{2^{r_1} (2 \pi)^{r_2} h R}{|D|^{1/2} w},
\end{equation*}
where $r_1$ is the number of real places of $k$, $r_2$ is the number of complex places of $k$, $h = h_k$ is the class number of $k$, $R = R_k$ is the regulator of $k$, $D = D_k$ is the discriminant of $k$, and $w = w_k$ is the number of roots of unity in $k$. Then the Tamagawa measure of $\bA_k^\times$ is
\begin{equation*}
d^\times x^{\Tam} = \rho_k^{-1} \cdot \prod_v d^\times x_v^{\Tam},
\end{equation*}
where
\begin{equation*}
d^\times x_v^{\Tam} \colonequals \begin{cases}
(1-q_v^{-1})^{-1} dx_v/|x|_v & \text{if $v$ is finite,} \\
dx_v/|x|_v & \text{if $v$ is infinite.}
\end{cases}
\end{equation*}
Observe that if $v$ is finite, then $\vol(\cO_v^\times, d^\times x_v^{\Tam}) = q_v^{-d_v/2}$. The Tamagawa number of $\bG_m$ is $1$, i.e.\ $\vol(k^\times \backslash \bA_k^\times, d^\times x^{\Tam}) = 1$.
\end{example}

\begin{example}
The previous example explicitly describes the Tamagawa measure of $\bA_F^\times$ and $\bA_E^\times$. For each place $v$ of $F$, one has a short exact sequence
\begin{equation*}
1 \to F_v^\times \to E_v^\times \to E_v^1 \to 1,
\end{equation*}
and hence we may define a local measure $d^1 g_v^{\Tam}$ on $E_v^1$ as the quotient measure. Then the Tamagawa measure of $E_\bA^1$ is
\begin{equation*}
d^1 g^{\Tam} \colonequals \frac{\rho_F}{\rho_E} \cdot \prod_v d^1 x_v^{\Tam}.
\end{equation*}
Observe that if $v$ is a finite place of $F$, then 
\begin{equation*}
\vol(E_v^1 \cap \cO_{E_v}^\times, d^1 x_v^{\Tam}) = 
\begin{cases}
q_{F_v}^{-1/2} & \text{if $v$ ramifies in $E$,} \\
q_{F_v}^{-d_{F_v}/2} & \text{if $v$ is inert or split in $E$.}
\end{cases}
\end{equation*}
Observe that $\vol(E_v^1 \cap \cO_{E_v}^\times, d^1 x_v^{\Tam}) = 1$ for all but finitely many places $v$. If $F$ is totally real and $E/F$ is totally imaginary, then one can show (for example by calculating the measure of an annulus in $\CC$ containing the unit circle) that
\begin{equation*}
\vol(\CC^1, d^1 x_\infty^{\Tam}) = 2\pi.
\end{equation*}
\end{example}

\subsection{Conductors}\label{s:conductor}

In this section we briefly review the notion of the conductor of an admissible representation. First let $k$ be a non-Archimedean local field with ring of integers $\cO_k$ and a fixed uniformizer $\pi$. For any integer $N \in \bZ_{\geq 0}$, let
\begin{equation*}
K_0'(N) \colonequals \left\{\left(\begin{matrix} a & b \\ c & d \end{matrix}\right) \in \GL_2(\cO_k) : c \in \pi^N \cO_k\right\}.
\end{equation*}

\begin{theorem}[Casselman]
Let $\rho$ be an irreducible admissible infinite-dimensional representation of $\GL_2(k)$ with central character $\omega$. Let $c(\rho) \in \bZ_{\geq 0}$ be the smallest integer such that 
\begin{equation*}
\left\{v \in \rho : \text{$\rho(g) v = \omega(a) v$ for all $g = \left(\begin{matrix} a & b \\ c & d \end{matrix}\right) \in K_0'(c(\rho))$}\right\} \neq \{0\}. 
\end{equation*}
Then this space has dimension one.
\end{theorem}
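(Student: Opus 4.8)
The plan is to pass to the Kirillov model of $\rho$. Since $\rho$ is irreducible, admissible, and infinite-dimensional, it is generic; I would fix a nontrivial additive character $\psi$ of $k$, which after rescaling may be taken to be trivial on $\cO_k$ and nontrivial on $\pi^{-1}\cO_k$, and realize $\rho$ on its Kirillov model $\cK = \cK(\rho,\psi)$. Recall this is a space of functions $\xi \colon k^\times \to \CC$ containing $C_c^\infty(k^\times)$, on which $\left(\begin{smallmatrix} a & x \\ 0 & 1\end{smallmatrix}\right)$ acts by $\xi(t) \mapsto \psi(xt)\,\xi(at)$ and the center acts by $\omega$. Set $w = \left(\begin{smallmatrix} 0 & 1 \\ -1 & 0\end{smallmatrix}\right)$, and for $N \geq 1$ write $V_N$ for the space in the statement. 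Every matrix in $K_0'(N)$ with $N \geq 1$ has a unit in its upper-left corner, and $K_0'(N)$ is the product of $\left(\begin{smallmatrix} 1 & 0 \\ \pi^N\cO_k & 1\end{smallmatrix}\right)$ with the group of upper-triangular matrices it contains; unraveling the defining condition of $V_N$ through this factorization yields two conditions on $\xi \in \cK$. First, from the action of the upper-triangular matrices, $\xi$ must be supported on $\cO_k$ and satisfy $\xi(ut) = \omega(u)\xi(t)$ for all $u \in \cO_k^\times$ — an $N$-independent condition; call this subspace $W$. Second, writing $\left(\begin{smallmatrix} 1 & 0 \\ c & 1\end{smallmatrix}\right) = w^{-1}\left(\begin{smallmatrix} 1 & -c \\ 0 & 1\end{smallmatrix}\right)w$ and applying the formulas to $\rho(w)\xi$, invariance under $\left(\begin{smallmatrix} 1 & 0 \\ \pi^N\cO_k & 1\end{smallmatrix}\right)$ is equivalent to $\rho(w)\xi$ being supported on $\pi^{-N}\cO_k$. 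Hence $V_N = \{\xi \in W : \rho(w)\xi \text{ is supported on } \pi^{-N}\cO_k\}$.

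Existence of $c(\rho)$ is then immediate. The function $\xi_0(t) = \omega(t)\,\ONE_{\cO_k^\times}(t)$ lies in $C_c^\infty(k^\times) \subseteq \cK$ and, by a direct check, in $W$; being a vector in a smooth representation it is fixed by $\left(\begin{smallmatrix} 1 & 0 \\ \pi^N\cO_k & 1\end{smallmatrix}\right)$ for $N$ large, so $\xi_0 \in V_N$ for $N \gg 0$, and $c(\rho) = \min\{N : V_N \neq 0\}$ is well-defined.

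For the one-dimensionality at level $c(\rho)$, the key structural input is that $\rho(w)$ \emph{averages over dilations on $W$}: for $\xi \in W$, the function $\rho(w)\xi$ is invariant under $t \mapsto ut$ for every $u \in \cO_k^\times$. This I would prove from the identity $\rho\left(\begin{smallmatrix} u & 0 \\ 0 & 1\end{smallmatrix}\right)\rho(w) = \omega(u)\,\rho(w)\,\rho\left(\begin{smallmatrix} u^{-1} & 0 \\ 0 & 1\end{smallmatrix}\right)$ (because $w$ normalizes the diagonal torus) together with $\rho\left(\begin{smallmatrix} u^{-1} & 0 \\ 0 & 1\end{smallmatrix}\right)\xi = \omega(u^{-1})\xi$ for $\xi \in W$. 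Granting this, put $c = c(\rho)$ and assume $c \geq 1$; let $\xi, \xi' \in V_c \setminus \{0\}$. As $\rho(w)$ is invertible, $\rho(w)\xi \neq 0$ is supported on $\pi^{-c}\cO_k$ but not on $\pi^{-(c-1)}\cO_k$ — otherwise $\xi \in V_{c-1} = \{0\}$ by minimality of $c$, a contradiction — so its restriction to the annulus $\pi^{-c}\cO_k^\times$ is nonzero, and by dilation-invariance equals $\lambda(\xi)\cdot\ONE_{\pi^{-c}\cO_k^\times}$ for a unique $\lambda(\xi) \in \CC^\times$; define $\lambda(\xi') \in \CC^\times$ similarly. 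Then $\eta := \lambda(\xi')\xi - \lambda(\xi)\xi'$ lies in $V_c$, and $\rho(w)\eta$ vanishes on $\pi^{-c}\cO_k^\times$, hence is supported on $\pi^{-(c-1)}\cO_k$; therefore $\eta \in V_{c-1} = \{0\}$, so $\xi$ and $\xi'$ are proportional. The remaining case $c(\rho) = 0$ is the spherical one: then $\omega$ is unramified, $V_0$ is the space of $\GL_2(\cO_k)$-fixed vectors, and its one-dimensionality is classical (Iwasawa decomposition in an unramified principal series).

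I expect the main obstacle to be the careful setup of the Kirillov model and of the two inputs used above: that $C_c^\infty(k^\times)$ sits inside $\cK(\rho,\psi)$ with the stated action of the upper-triangular matrices (standard, via uniqueness of Whittaker functionals for $\GL_2$), and the identification $V_N = \{\xi \in W : \rho(w)\xi \text{ supported on } \pi^{-N}\cO_k\}$, which requires attention to the conductor of $\psi$ (whence the normalization above) and to the behavior of Kirillov functions near $0$ and near $\infty$. It is worth emphasizing that this argument uses neither the local functional equation nor the supercuspidal/special/principal-series trichotomy; those would enter only for the finer statement $\dim V_N = \max(0, N - c(\rho) + 1)$ and for identifying $c(\rho)$ with the exponent of the conductor appearing in $\epsilon(s, \rho, \psi)$.
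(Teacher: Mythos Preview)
The paper does not prove this statement: it is quoted as Casselman's theorem with an implicit reference to \cite{Ca73} and then used as a black box (for instance, in Section~\ref{ch:schwartz} and in the proof of Theorem~\ref{t:newform}). So there is no ``paper's own proof'' to compare against.

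Your Kirillov-model argument is the standard route to this result and is essentially Casselman's original one. The reduction $V_N = \{\xi \in W : \rho(w)\xi \text{ supported on } \pi^{-N}\cO_k\}$ for $N \geq 1$ is correct, as is the dilation-invariance of $\rho(w)\xi$ on $W$ and the resulting linear-algebra step. One small point you should make explicit: when $c(\rho)=1$ your minimality step produces $\eta \in W$ with $\rho(w)\eta$ supported on $\cO_k$, but your factorization-based description of $V_N$ was only derived for $N\geq 1$, so you cannot immediately invoke ``$\eta \in V_0 = 0$''. The fix is easy: such an $\eta$ is fixed by both $\left(\begin{smallmatrix}1 & \cO_k\\0&1\end{smallmatrix}\right)$ and $\left(\begin{smallmatrix}1 & 0\\\cO_k&1\end{smallmatrix}\right)$, and since $w = \left(\begin{smallmatrix}1&1\\0&1\end{smallmatrix}\right)\left(\begin{smallmatrix}1&0\\-1&1\end{smallmatrix}\right)\left(\begin{smallmatrix}1&1\\0&1\end{smallmatrix}\right)$, these together with the Borel of $\GL_2(\cO_k)$ generate all of $\GL_2(\cO_k)$; hence $\eta$ lies in the genuine $V_0$, which vanishes by minimality of $c(\rho)=1$. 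With that patch the argument is complete.
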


We call $c(\rho)$ the \textit{conductor} of $\rho$. For a smooth character $\chi \from k^\times \to \CC^\times$, define its \textit{conductor} $c(\chi) \in \bZ_{\geq 0}$ to be the smallest number such that $\chi|_{U_k^{c(\chi)}} = 1$, where $U_k^0 \colonequals \cO_k^\times$ and $U_k^n = 1 + \pi^n \cO_k$ for $n > 0$. Now let $L/k$ be a (possibly split) quadratic extension of $k$. Let $\chi$ be a smooth character of $L^\times$ and let $\pi_\chi$ denote its automorphic induction to $\GL_2(k)$. It will be useful for us to have an explicit description of $c(\pi_\chi)$ in terms of $c(\chi)$ for each place $v$ of $F$. This calculation follows from facts about Artin conductors of Galois representations and the fact that conductors of admissible representations of $\GL_2(k)$ are compatible with Artin conductors of Galois representations under the local Langlands correspondence. We have
\begin{equation}\label{e:c pi chi}
c(\pi_\chi) =
\begin{cases}
c(\chi_1) + c(\chi_2) & \text{if $L = k \oplus k$ and $\chi = \chi_1 \otimes \chi_2$,} \\
c(\pi_\chi) = \val_k(4) + 2c(\chi) & \text{if $L/k$ is unramified,} \\
c(\pi_\chi) = 1 + \val_k(4) + c(\chi) & \text{if $L/k$ is ramified.}
\end{cases}
\end{equation}

\section{Weil representations}\label{ch:weil}

Let $k$ be any field. Let $\bV$ be a symplectic vector space over $k$. The Weil representation of $\Sp(\bV)$ is a representation of a cover of $\Sp(\bV)$. It arises in a very natural way, which we briefly recall. The symplectic space $\bV$ gives rise to a Heisenberg group $H(\bV)$, which is a central extension of $\bV$ by $k$. The natural action of $\Sp(\bV)$ on $\bV$ extends to an action on $H(\bV)$ fixing the center $Z(H(\bV)) = k$. Let $\bV = \bX + \bY$ be a complete polarization. By the Stone--von Neumann theorem, the irreducible representations of $H(\bV)$ with nontrivial central character are uniquely determined by their central character and can be realized on the vector space $\cS(\bX)$ of Schwartz functions. Thus by Schur's lemma, the $\Sp(\bV)$ action on $H(\bV)$ induces an automorphism $\phi_g$ of $\cS(\bX)$ that is unique up to scalars. We therefore have a group homomorphism
\begin{equation*}
[\omega_\psi] \from \Sp(\bV) \to \PGL(\cS(\bX)), \qquad g \mapsto [\phi_g],
\end{equation*}
where $[\phi_g]$ denotes the image of $\phi_g$ under the quotient map $\GL(\cS(\bX)) \to \PGL(\cS(\bX))$. This is the \textit{projective Weil representation} of $\Sp(\bV)$.

It is natural to try to understand when $[\omega_\psi]$ lifts to a genuine representation of $\Sp(\bV)$. When $k = \FF_q$, there exists a lift, but this isn't the case in general. The assignment $g \mapsto \phi_g$ satisfies
\begin{equation*}
\phi_g \phi_h = z_\bY(g,h) \phi_{gh}, \qquad \text{for $g,h \in \Sp(\bV)$}.
\end{equation*}
It is a straightforward check that $(g,h) \mapsto z_\bY(g,h)$ defines a $2$-cocycle in $H^2(\Sp(\bV), \CC^\times)$. The 2-cocycle $z_\bY$ corresponds to a central extension $\Mp(\bV)$ of $\Sp(\bV)$ and certainly the projective Weil representation of $\Sp(\bV)$ lifts to a genuine representation of $\Mp(\bV)$. But we can realize the Weil representation on $\Sp(\bV)$ itself if and only if $z_\bY$ is in fact a $2$-coboundary.

In this paper, we will be interested in the adelic Weil representation, which is comprised of Weil representations of local fields. For the rest of this section, let $k$ be a local field of characteristic zero, fix an additive character $\psi \from k \to \CC^\times$, and fix a complete polarization $\bV = \bX + \bY$. 

\subsection{Metaplectic groups over local fields}

Following \cite[Lemma 3.2]{R93}, there is an explicit unitary lift $r \from \Sp(\bV) \to \GL(\cS(\bX))$ (a map of sets) of the projective Weil representation given by
\begin{equation*}\label{e:r}
\left(r(\sigma) \varphi\right)(x) = \int_{\bY/\ker \gamma} f_\sigma(x + y) \varphi(x \alpha + y \gamma) \mu_\sigma(d\bar y)
\end{equation*}
for any $\varphi \in \cS(\bX)$ and any $\sigma = \left(\begin{smallmatrix} \alpha & \beta \\ \gamma & \delta \end{smallmatrix}\right)$, where $\mu_\sigma$ is a Haar measure on $\bY/\ker \gamma$, $\bar y$ is the coset $y + \ker \gamma \in \bY/\ker \gamma$, and $f_\sigma(x + y) = \psi(q_\sigma(x + y))$ for $q_\sigma(x + y) = \frac{1}{2} \llangle x\alpha, x \beta \rrangle + \frac{1}{2} \llangle y \gamma, y \delta \rrangle + \llangle y \gamma, x \beta \rrangle.$
%
%
Moreover, this lift is the unique lift satisfying the properties in \cite[Theorem 3.5]{R93}. We then define the $2$-cocycle $z_\bY \from \Sp(\bV) \times \Sp(\bV) \to \CC^1$ by 
\begin{equation*}
r(gh) = z_\bY(g,h)^{-1} \cdot r(g) \cdot r(h).
\end{equation*}
This represents a class in $H^2(\Sp(\bV), \CC^1)$ and therefore gives rise to a $\CC^1$-extension $\Mp(\bV)$ of $\Sp(\bV)$ which we call the \textit{metaplectic group}. Explicitly, this group is the set $\Sp(\bV) \times \CC^1$ together with the multiplication rule
\begin{equation*}
(g,x) \cdot (h,y) = (gh, xy \cdot z_\bY(g,h)).
\end{equation*}
We define the Weil representation $\omega_\psi$ on the metaplectic group $\Mp(\bV)$ to be
\begin{equation*}
\omega_\psi \from \Mp(\bV) \to \GL(\cS(\bX)), \qquad (g,z) \mapsto z \cdot r(g).
\end{equation*}
Oftentimes, it is easier to work with the following description of $\omega_\psi$:
\begin{align}
\label{e:weil explicit m}
\omega_\psi\left(\left(\begin{matrix} a & \\ & (a^\trans)^{-1} \end{matrix}\right), z\right) \varphi(x) &= z \cdot |\det a|^{1/2} \cdot \varphi(xa) \\
\label{e:weil explicit n}
\omega_\psi\left(\left(\begin{matrix} \mathbf 1_n & b \\ & \mathbf 1_n \end{matrix}\right), z \right) \varphi(x) &= z \cdot \psi\left(\frac{1}{2} x b^\trans x \right) \cdot \varphi(x) \\
\label{e:weil explicit w}
\omega_\psi\left(\left(\begin{matrix} & \mathbf 1_n \\ -\mathbf 1_n & \end{matrix}\right), z \right) \varphi(x) &= z \cdot \int_{k^n} \varphi(y) \psi(x^\trans y) \, dy
\end{align}
for $\varphi \in \cS(\bX)$, $x \in \bX \cong k^n$, $a \in \GL(\bX) \cong \GL_n(k)$, $b \in \Hom(\bX, \bY) \cong \M_n(k)$ with $b^\trans = b$, and $z \in \CC^1$. In \eqref{e:weil explicit w}, we take $dy$ to be the product of the self-dual Haar measure on $k$ with respect to $\psi$.

It will later (for example, in Section \ref{ch:schwartz}) be convenient to understand how changing the additive character $\psi$ affects the Weil representation $\omega_\psi$. One can check that the Weil representation with respect to the additive character $\psi_\nu(x) \colonequals \psi(\nu x)$ satisfies
\begin{equation}\label{e:change psi}
\omega_\psi(d(\nu)^{-1} g d(\nu), z) = \omega_{\psi_\nu}(g,z), \qquad \text{where $d(\nu) \colonequals \left(\begin{smallmatrix} 1 & 0 \\ 0 & \nu \end{smallmatrix}\right)$ for $\nu \in k$.}
\end{equation}

If for a subgroup $\iota \from G \hookrightarrow \Sp(\bV)$, the restriction of $z_\bY$ represents the trivial class in $H^2(G, \CC^1)$, then via an explicit trivialization $s$ of $z_\bY|_{G \times G}$, we can define the Weil representation $\omega_\psi$ on $G$ as
\begin{equation*}
\omega_\psi \from G \to \GL(\cS(\bX)), \qquad g \mapsto \omega_\psi(g,s(g)).
\end{equation*}

One feature that makes the Weil representation computable is the fact that the $2$-cocycle $z_\bY$ can be expressed in terms of the \textit{Weil index} of the \textit{Leray invariant}. The properties of these that we will use in Section \ref{ch:splittings} can be found in \cite{R93}, \cite[Sections 3.1.1, 3.1.2]{IP16a}.

\subsection{The doubled Weil representation}

Now consider the doubled symplectic space $\bV^\square \colonequals \bV + \bV^-$, where $\bV^-$ has the negated form. Let $\bX^\square = \bX + \bX^-$, $\bY^\square = \bY + \bY^-$, and let $\omega_\psi^\square$ denote the Weil representation on the metaplectic group $\Mp(\bV^\square)$ with respect to $\bV^\square = \bX^\square + \bY^\square.$ We will also make use of the polarization $\bV^\square = \bV^\triangle + \bV^\bigtriangledown$, where $\bV^\triangle = \{(v,v) : v \in \bV\}$ and $\bV^\bigtriangledown = \{(v,-v) : v \in \bV\}$. Identifying $\Sp(\bV^-)$ with $\Sp(\bV)^\op$, we have a natural homomorphism
\begin{equation*}
\tilde \iota \from \Mp(\bV) \times \Mp(\bV)^\op \to \Mp(\bV^\square), \qquad ((g,z),(h,w)) \mapsto (\diag(g,h^{-1}), zw^{-1}).
\end{equation*}

\subsection{Dual reductive pairs and the Howe correspondence}\label{s:howe duality}

A \textit{dual reductive pair} $(G, G')$ in $\Sp(\bV)$ is a pair of reductive subgroups of $\Sp(\bV)$ which are mutual centralizers of each other. There is a natural map
\begin{equation*}
i \from G \times G' \to \Sp(\bV), \qquad (g,g') \mapsto (v \mapsto g^{-1} v g').
\end{equation*}
If the cocycle $z_\bY$ can be trivialized on $i(G \times G') \subset \Sp(\bV)$, we can define the Weil representation on $i(G \times G')$ and pull back to a Weil representation of $G \times G'$. In \cite{K94}, Kudla wrote down explicit splittings of $z_\bY$. We will make use of this work heavily in the present paper.

The Weil representation $\omega_\psi$ on $G \times G'$ has the following multiplicity-one property. For an irreducible $G$-representation $\pi$, let $\cS(\pi)$ denote the largest quotient of $\cS(\bX)$ such that $G$ acts by $\pi$. By \cite[Chapter 2, Lemma III.4]{MVW}, there exists a unique irreducible $G'$-representation $\Theta(\pi)$ such that
\begin{equation*}
\cS(\pi) \cong \pi \otimes \Theta(\pi).
\end{equation*}
We call $\Theta(\pi)$ the \textit{local theta lift} of $\pi$. 

\section{Waldspurger, Tunnell--Saito, and a pair of quaternion algebras} \label{ch:two quaternion algebras}

For any quaternion algebra $B$ over $F$, we write $\Sigma_B \colonequals \{\text{places $v$ of $F$ such that $B_v$ is ramified}\}.$

\subsection{Waldspurger's formula}\label{ss:waldspurger}

Let $\pi$ be an irreducible automorphic representation of $\GL_2(\bA_F)$ with central character $\omega_\pi$ that has a nonzero Jacquet--Langlands transfer $\pi^B$ to $B_\bA^\times$. Recall that this means that $\pi_v$ is discrete series at all $v \in \Sigma_B$. Let $\Omega$ be any Hecke character of $E^\times$ such that $\Omega|_{\bA_F^\times} = \omega_\pi^{-1}$. Define
\begin{equation*}
\sP(\pi^B, \Omega) \from \pi^B \to \CC, \qquad f \mapsto \int_{[E^\times]} f(t) \, \Omega(t) \, dt.
\end{equation*}
We have the following classical theorem, which follows from combining Waldspurger's formula with the local $\epsilon$-dichotomy theorem of Tunnell and Saito.

\begin{theorem}[{Waldspurger \cite{W85a}, Tunnell \cite{T83}, Saito \cite{S93}}]\label{t:global branching}
Let $\pi$ be an irreducible automorphic representation of $\GL_2(\bA_F)$ with central character $\omega_\pi$. If 
\begin{equation*}
L(\BC(\pi) \otimes \Omega, \tfrac{1}{2}) \neq 0, \qquad \text{and} \qquad \Omega|_{\bA_F^\times} = \omega_\pi^{-1},
\end{equation*}
then there exists a unique quaternion algebra $B = B_{\pi, \Omega}$ over $F$ such that 
\begin{equation*}
\sP(\pi^B, \Omega) \neq 0.
\end{equation*}
Moreover, $B$ is the unique quaternion algebra with ramification set
\begin{equation*}
\Sigma_{\pi, \Omega} \colonequals \{v : \epsilon_v(\BC(\pi) \otimes \Omega) \cdot \omega_v(-1) = -1\}.
\end{equation*}
\end{theorem}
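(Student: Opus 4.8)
The approach is the standard one: combine the refined, factorizable form of Waldspurger's period formula with the local $\epsilon$-dichotomy of Tunnell and Saito, and then check that the resulting set of ``bad'' places is the ramification set of an honest quaternion algebra over $F$.

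First I would recall Waldspurger's factorization of the global torus period. Fixing local invariant pairings on each $\pi_v^{B_v}$, one writes $|\sP(f^B,\Omega)|^2$ as a nonzero constant times $L(\BC(\pi)\otimes\Omega,\tfrac12)$ times an Euler product $\prod_v \alpha_v(f_v^{B_v})$ of local functionals $\alpha_v \colon \pi_v^{B_v}\otimes\Omega_v \to \CC$, with $\alpha_v \equiv 1$ for almost all $v$. By local multiplicity one, $\alpha_v \neq 0$ exactly when $\Hom_{E_v^\times}(\pi_v^{B_v}\otimes\Omega_v,\CC)\neq 0$. Hence $\sP(\pi^B,\Omega)$ is a nonzero functional on $\pi^B$ if and only if $L(\BC(\pi)\otimes\Omega,\tfrac12)\neq 0$ and the local Hom space is nonzero for every place $v$.

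Next I would invoke Tunnell--Saito. For each $v$ there is exactly one of the two quaternion algebras over $F_v$ on which the local Hom space is nonzero, distinguished by the sign $\epsilon_v(\BC(\pi)\otimes\Omega)\cdot\omega_v(-1)$: the Hom space is nonzero for $\M_2(F_v)$ precisely when this sign is $+1$, and for the division algebra precisely when it is $-1$; moreover in the $-1$ case $\pi_v$ must be discrete series. Combining this with the previous paragraph, if $\sP(\pi^B,\Omega)\neq 0$ then $B_v$ is forced at every place, so $\Sigma_B = \Sigma_{\pi,\Omega}\colonequals\{v : \epsilon_v(\BC(\pi)\otimes\Omega)\cdot\omega_v(-1)=-1\}$; this gives uniqueness of $B$.

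It remains to produce $B$. I would check that $\Sigma_{\pi,\Omega}$ is finite (immediate, since $\epsilon_v=1$ and $\omega_v(-1)=1$ at unramified places) and of even cardinality: since $\Omega|_{\bA_F^\times}=\omega_\pi^{-1}$, the product formula gives $\prod_v\omega_v(-1)=1$, so $\prod_v \epsilon_v(\BC(\pi)\otimes\Omega)\omega_v(-1)=\epsilon(\BC(\pi)\otimes\Omega,\tfrac12)$, which must be $+1$ because $L(\BC(\pi)\otimes\Omega,\tfrac12)\neq 0$ (a $-1$ sign would force the central value to vanish via the functional equation). Thus $\Sigma_{\pi,\Omega}$ is a legitimate ramification set; let $B=B_{\pi,\Omega}$ be the corresponding quaternion algebra. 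By the discrete-series conclusion of Tunnell--Saito, $\pi_v$ is discrete series at each $v\in\Sigma_B$, so the Jacquet--Langlands transfer $\pi^B$ exists. For this $B$, every $\alpha_v$ is nonzero by construction and $L(\BC(\pi)\otimes\Omega,\tfrac12)\neq 0$ by hypothesis, so $\sP(\pi^B,\Omega)\neq 0$. The only deep ingredient is Waldspurger's formula itself; the remaining steps are bookkeeping with local root numbers, and the one place demanding genuine care is matching normalization conventions---the precise twist by $\omega_v(-1)$, the choice of $L$-function ($\BC(\pi)\otimes\Omega$ versus the Rankin--Selberg $L$-function), and evaluation at $s=\tfrac12$.
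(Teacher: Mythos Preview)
Your proposal is correct and follows essentially the same approach as the paper: use $L(\BC(\pi)\otimes\Omega,\tfrac12)\neq 0$ to force the global root number to be $+1$, combine with $\prod_v\omega_v(-1)=1$ to get even parity of $\Sigma_{\pi,\Omega}$, and then appeal to Waldspurger's formula together with Tunnell--Saito $\epsilon$-dichotomy. Your write-up is in fact more detailed than the paper's (you spell out the factorization of the period, the existence of the Jacquet--Langlands transfer via the discrete-series consequence of Tunnell--Saito, and the finiteness of $\Sigma_{\pi,\Omega}$), but the logical structure is identical.
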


\begin{proof}
If $L(\BC(\pi) \otimes \Omega, \tfrac{1}{2}) \neq 0$, then $\epsilon(\BC(\pi) \otimes \Omega) = +1$. Since $\omega$ is a Hecke character of $\bA^\times$, we must have $\omega(-1) = +1$. Therefore, there must be an \textit{even} number of places $v$ of $F$ such that $\epsilon_v(\BC(\pi) \otimes \Omega) \cdot \omega_v(-1) = -1$, and hence there exists a unique quaternion algebra $B_{\pi, \Omega}$ over $F$ with ramification set $\Sigma_{\pi, \Omega}$, and the conclusion now follows from Waldspurger's formula and the local branching criterion of Tunnell and Saito.
\end{proof}

\subsection{A pair of quaternion algebras}\label{ss:two quat}

We now specialize to the setting where $\pi$ comes from automorphic induction. Let $\chi, \chi'$ be Hecke characters of $\bA_E^\times$. One has
\begin{equation*}
L(\BC(\pi_\chi) \otimes \chi', s) = L(\pi_\chi \otimes \pi_{\chi'}, s) = L(\BC(\pi_{\chi'}) \otimes \chi, s),
\end{equation*}
and let us assume that 
\begin{equation}\label{e:nonzero center}
L(\BC(\pi_\chi) \otimes \chi', \tfrac{1}{2}) = L(\BC(\pi_{\chi'}) \otimes \chi, \tfrac{1}{2}) \neq 0.
\end{equation} 
It is a standard calculation to see that the central character of $\pi_\chi$ (and of any Jacquet--Langlands transfer $\pi_\chi^B$) is $\chi|_{\bA_F^\times} \cdot \epsilon_{E/F}$, where $\epsilon_{E/F}$ is the quadratic character of $\bA_F^\times$ associated to the quadratic extension $E/F$. Therefore the central character condition in Theorem \ref{t:global branching} is:
\begin{equation}\label{e:ccc}
\chi|_{\bA_F^\times} \cdot \chi'|_{\bA_F^\times} \cdot \epsilon_{E/F} = 1.
\end{equation}
If $\chi, \chi'$ satisfy \eqref{e:ccc}, then by Theorem \ref{t:global branching}, $B = B_{\pi_\chi, \chi'}$ and $B' = B_{\pi_{\chi'}, \chi}$ are the unique quaternion algebras such that $\sP(\pi_\chi^B, \chi') \neq 0$ and $\sP(\pi_{\chi'}^{B'}, \chi) \neq 0.$

\begin{proposition}\label{p:two quat}
Let $\chi, \chi'$ be Hecke characters of $\bA_E^\times$ satisfying \eqref{e:nonzero center} and \eqref{e:ccc}, and let $E = F(\bi)$ with $\bi^2 = u$. If $B = B_{\pi_\chi, \chi'}$ is the quaternion algebra that corresponds to the Hilbert symbol $(u, J)$, then $B' = B_{\pi_{\chi'}, \chi}$ corresponds to the Hilbert symbol $(u, -J)$.
\end{proposition}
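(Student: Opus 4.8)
The plan is to compute the ramification sets $\Sigma_B = \Sigma_{\pi_\chi,\chi'}$ and $\Sigma_{B'} = \Sigma_{\pi_{\chi'},\chi}$ of Theorem~\ref{t:global branching} one place of $F$ at a time, to show that they differ precisely at the places $v$ with $(u,-1)_v = -1$, and then to read off the asserted identity of Hilbert symbols from the bilinearity of the symbol. Write $\epsilon_v^{(1)} \colonequals \epsilon_v(\BC(\pi_\chi)\otimes\chi')$ and $\epsilon_v^{(2)} \colonequals \epsilon_v(\BC(\pi_{\chi'})\otimes\chi)$. Since the central character of $\pi_\chi$ is $\chi|_{\bA_F^\times}\cdot\epsilon_{E/F}$ and that of $\pi_{\chi'}$ is $\chi'|_{\bA_F^\times}\cdot\epsilon_{E/F}$, the criterion of Theorem~\ref{t:global branching} reads
\begin{align*}
v \in \Sigma_B &\iff \epsilon_v^{(1)}\cdot\chi_v(-1)\cdot\epsilon_{E_v/F_v}(-1) = -1,\\
v \in \Sigma_{B'} &\iff \epsilon_v^{(2)}\cdot\chi'_v(-1)\cdot\epsilon_{E_v/F_v}(-1) = -1,
\end{align*}
where $\chi_v(-1)$ abbreviates the value of $\chi|_{\bA_F^\times}$ at $-1 \in F_v^\times$, and likewise $\chi'_v(-1)$.

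The first point I would establish is that the local root numbers agree: $\epsilon_v^{(1)} = \epsilon_v^{(2)}$ for every place $v$. This is the local refinement of the chain $L(\BC(\pi_\chi)\otimes\chi', s) = L(\pi_\chi\otimes\pi_{\chi'}, s) = L(\BC(\pi_{\chi'})\otimes\chi, s)$ from \eqref{e:L symmetry}: via local Langlands (compatibility with automorphic induction, base change, and Rankin--Selberg convolution) both $\epsilon_v^{(1)}$ and $\epsilon_v^{(2)}$ are the local root number at $s = \tfrac12$ of the Weil--Deligne representation $\Ind_{W_{E_v}}^{W_{F_v}}(\chi_v) \otimes \Ind_{W_{E_v}}^{W_{F_v}}(\chi'_v)$, which is manifestly symmetric in $\chi_v \leftrightarrow \chi'_v$. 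Concretely, I would verify this at split, inert, and ramified $v$ in turn, using the $\sigma$-invariant additive character $\psi_{F_v}\circ\Tr_{E_v/F_v}$ of $E_v$ so that replacing a local character by its $\sigma$-conjugate leaves the $\epsilon$-factor unchanged.

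Granting $\epsilon_v^{(1)} = \epsilon_v^{(2)} =: \epsilon_v$, the two displayed equivalences show that $v$ lies in exactly one of $\Sigma_B$, $\Sigma_{B'}$ if and only if $\chi_v(-1) \neq \chi'_v(-1)$, i.e.\ $\chi_v(-1)\,\chi'_v(-1) = -1$. Restricting the central-character condition \eqref{e:ccc} to $F_v^\times$ gives $\chi_v(-1)\,\chi'_v(-1) = \epsilon_{E_v/F_v}(-1) = (u,-1)_v$, so
\begin{equation*}
\Sigma_{B'} = \Sigma_B \mathbin{\triangle} \{\, v : (u,-1)_v = -1 \,\};
\end{equation*}
this is exactly \eqref{e:B1 B2}, since $(u,-1)_v = 1$ precisely when $-1 \in \Nm(E_v^\times/F_v^\times)$. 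The set on the right is the ramification set of the quaternion algebra $(u,-1)$, and passing to Brauer classes turns a symmetric difference of ramification sets into the product in $\mathrm{Br}(F)$; hence $[B'] = [B]\cdot[(u,-1)]$. As $[B] = [(u,J)]$ by hypothesis, bilinearity of the Hilbert symbol in its second argument gives $[B'] = [(u,J)]\cdot[(u,-1)] = [(u,-J)]$. Finally $B'$ does contain $E$: at each $v \in \Sigma_{B'}$ either $v \in \Sigma_B$, where $E_v/F_v$ is a field because $E \hookrightarrow B$, or $(u,-1)_v = -1$, which likewise forces $E_v/F_v$ to be a field; hence $B'$ is the quaternion algebra attached to the Hilbert symbol $(u,-J)$, as claimed. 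Condition \eqref{e:nonzero center} is used only to guarantee, through Theorem~\ref{t:global branching}, that $B$ and $B'$ exist.

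The step I expect to be the main obstacle is the equality $\epsilon_v^{(1)} = \epsilon_v^{(2)}$: it is immediate globally from \eqref{e:L symmetry} but must be checked place by place, and the only delicate part is keeping track of normalisations---the choice of local additive character on $E_v$, the inductivity of $\epsilon$-factors in degree zero, and the split/inert/ramified trichotomy for $E_v/F_v$. Once this symmetry is in hand, the rest is a formal manipulation of Hilbert symbols together with the central-character condition.
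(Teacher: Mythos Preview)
Your proposal is correct and follows essentially the same approach as the paper's proof: both establish the local equality $\epsilon_v(\BC(\pi_\chi)\otimes\chi') = \epsilon_v(\BC(\pi_{\chi'})\otimes\chi)$, use the central-character condition \eqref{e:ccc} to see that the two ramification sets differ exactly where $\epsilon_{E_v/F_v}(-1) = -1$, and then apply bilinearity of the Hilbert symbol to pass from $(u,J)$ to $(u,-J)$. Your write-up is more explicit than the paper's (which simply calls the epsilon-factor equality a ``standard computation'') and adds the harmless extra check that $E$ embeds in $B'$.
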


\begin{proof}
It is a standard computation to show that:
\begin{equation*}
\epsilon_v(\BC(\pi_\chi) \otimes \chi') = \epsilon_v(\BC(\pi_{\chi'}) \otimes \chi).
\end{equation*}
Equation \eqref{e:ccc} implies that $\omega_{\pi_\chi} \cdot \omega_{\pi_{\chi'}} \cdot \epsilon_{E/F} = 1.$ Using Theorem \ref{t:global branching}, we see that $\Sigma_{\pi_{\chi'}, \chi}$ can be described in terms of $\Sigma_{\pi_\chi, \chi'}$:
\begin{equation*}
\Sigma_{\pi_{\chi'}, \chi} =
\left\{v :
\begin{gathered}
\text{$v \in \Sigma_{\pi_\chi, \chi'}$ and $\epsilon_{E_v/F_v}(-1) = 1$, or} \\
\text{$v \notin \Sigma_{\pi_\chi, \chi'}$ and $\epsilon_{E_v/F_v}(-1) = -1$.}
\end{gathered}\right\}
\end{equation*}
An equivalent way to state this relationship is the following. The quaternion algebra $B$ can be given an $F$ basis $1, \bi, \bj, \bi\bj$ such that $E = F[\bi]$. Write $\bi^2 = u$ and $\bj^2 = J$ so that $B$ is the quaternion algebra associated to the Hilbert symbol $(u,J)$. That is,
\begin{equation*}
(u,J)_v = -1 \qquad \Longleftrightarrow \qquad v \in \Sigma_{\pi_\chi, \chi'}.
\end{equation*}
By the bimultiplicativity of the Hilbert symbol, $B'$ is the quaternion algebra associated to 
\begin{equation*}
(u,J) \cdot \epsilon_{E/F}(-1) = (u, J) \cdot (u, -1) = (u,-J). \qedhere
\end{equation*}
\end{proof}

\subsection{A seesaw of unitary groups}\label{s:unitary groups}

In this section, we introduce the main dual reductive pairs of interest in this paper. Fix $\mathbf i \in E$ with $\tr_{E/F} \mathbf i = \mathbf i + \overline {\mathbf i} = 0$. Note that $E = F[\mathbf i]$. Let $B$ be a (possibly split) quaternion algebra over $F$ and let $1, \mathbf i, \mathbf j, \mathbf k$ be a standard basis for $B$ over $F$. Viewing $B = E \oplus E \bj$, we set $\pr \from B \to E$ to be the projection onto the $E$-component. We consider the following spaces:
\begin{enumerate}[label=\textbullet]
\item
$V = B = 1$-dimensional right $B$-space with skew-Hermitian form $\langle x,y \rangle = x^* \bi y$

\item
$W^* = B \otimes_E E = 1$-dimensional left $B$-space with Hermitian form $(x,y) = x y^*$

\item
$\Res V = 2$-dimensional right $E$-space with skew-Hermitian form $\langle x,y \rangle = \pr(x^* \bi y)$

\item
$W = E = 1$-dimensional left $E$-space with Hermitian form $(a,b) = a \overline b$

\item
$V_0 = 1$-dimensional right $E$-space with Hermitian form $\langle a,b \rangle_0 = \overline a b$

\item
$W_0 = B = 2$-dimensional left $E$-space with skew-Hermitian form $(x,y)_0 = -\bi \pr(xy^*)$

\item
$\bV = V \otimes W^* = \Res V \otimes W = V_0 \otimes W_0 = 4$-dimensional $F$-space with symplectic form $\frac{1}{2} \Tr_{E/F}(\langle \cdot, \cdot \rangle \otimes \overline{( \cdot, \cdot )})$
\end{enumerate}
Then both pairs $(U_B(V), U_B(W^*))$ and $(U_E(\Res V), U_E(W))$ are irreducible dual reductive pairs (of type 1) in $\Sp(\bV)$. (See, for example, \cite{P93}.) For any pair $(V,W) = (V, W^*)$, $(\Res V, W)$, or $(V_0, W_0)$, we take as our convention 
\begin{equation*}
\GL(V) \times \GL(W) \to \GL(V \otimes W), \qquad (g,h) \mapsto (v \otimes w \mapsto g^{-1}v \otimes wh).
\end{equation*}
It is clear that $U_B(V) \subset U_E(\Res V)$ and that $U_E(W) \subset U_B(W^*)$. Furthermore, we have a commutative diagram
\begin{equation}\label{e:compat subgroups}
\begin{tikzcd}[column sep=tiny]
\U_B(V) \ar[hook]{d} & \times & \U_B(W^*) \ar{rrrr} &&&& \Sp(\Res_{B/F}(V \otimes_B W^*)) \ar{d}{=} \\
\U_E(\Res V) & \times & \U_E(W) \ar[hook]{u} \ar{rrrr} &&&& \Sp(\Res_{E/F}(\Res V \otimes_E W))
\end{tikzcd}
\end{equation} 
Therefore we have the following seesaw of dual reductive pairs
\begin{equation*}
\begin{tikzcd}
U_E(\Res V) \ar[dash]{rd} \ar[dash]{d} & U_B(W^*)  \ar[dash]{d} \ar[dash]{ld} \\
U_B(V) & U_E(W)
\end{tikzcd}
\cong
\begin{tikzcd}
(E^1 \times (B')^1)/F^1  \ar[dash]{d} \ar[dash]{dr} & B^1 \ar[dash]{d} \ar[dash]{dl} \\
E^1 \cup E^{\frac{1}{J}} \mathbf j & E^1
\end{tikzcd}
\end{equation*}
Here, $B' = \left(\frac{\mathbf i^2, -\mathbf j^2}{F}\right)$ and the superscript $r \in \bQ$ picks out the norm-$r$ elements. The analogous seesaw with similitudes is
\begin{equation}\label{e:main seesaw}
\begin{tikzcd}
GU_E(\Res V) \ar[dotted,dash]{rd} \ar[dotted,dash]{d} & GU_B(W^*)  \ar[dotted,dash]{d} \ar[dotted,dash]{ld} \\
GU_B(V) & GU_E(W)
\end{tikzcd}
\cong
\begin{tikzcd}
(E^\times \times (B')^\times)/F^\times \ar[dotted,dash]{d} \ar[dotted,dash]{dr} & B^\times \ar[dotted,dash]{d} \ar[dotted,dash]{dl} \\
E^\times \cup E^\times \mathbf j & E^\times
\end{tikzcd}
\end{equation} 
The only isomorphism that is not straightforward to see is $\GU_E(\Res V) \cong (E^\times \times (B')^\times)/F^\times.$ This comes from a natural right action of $(B')^\times$ on $\Res V = B$ defined by
\begin{equation*}
1 \cdot \bj' = \bj, \qquad \bj \cdot \bj' = -J.
\end{equation*}

We note that the point of introducing the $E$-spaces $V_0$ and $W_0$ is  that we have natural maps
\begin{equation*}
U_B(V)^0 \cong U_E(V_0), \qquad U_B(W^*) \hookrightarrow U_E(W_0).
\end{equation*}
This will allow us to compute splittings on the quaternionic unitary groups $U_B(V)$ and $U_B(W^*)$ by pulling back splittings on $U_E(V_0)$ and $U_E(W_0)$.

\section{Splittings for unitary similitude groups}\label{ch:splittings}

In this section, we define the Weil representation on the dual reductive pairs introduced in Section \ref{s:unitary groups} using the explicit splittings of $z_\bY$ defined by Kudla \cite{K94}. The properties of the Weil index and the Leray invariant we will use in this section can be found in \cite{R93}, \cite[Sections 3.1.1, 3.1.2]{IP16a}. We prove that the splittings are compatible with the seesaws constructed in Section \ref{s:unitary groups}. In Sections \ref{s:kudla}, \ref{s:change polarization}, \ref{s:three seesaws}, and \ref{s:compat}, we fix a place $v$ of $F$ and suppress $v$ from the notation. In Section \ref{s:global splittings}, we combine the local considerations from the preceding subsections into the global picture. Many of these calculations (especially in Sections \ref{s:three seesaws} and \ref{s:compat}) are similar to those in \cite[Appendix C]{IP16a}, \cite{IP16b}. 

In order to describe the global automorphic theta lift from a Hecke character to a quaternion algebra, which we will do later in Section \ref{ch:global theta}, we will need to give an explicit description of the local splittings in Section \ref{s:three seesaws} in the special case that the quaternion algebra is unramified (i.e.\ split) at the place in question. We do this in Section \ref{s:split splittings}.

\subsection{Kudla's splitting for split unitary groups}\label{s:kudla}

We first recall Kudla's splitting \cite{K94} of Rao's cocycle \cite{R93} for split unitary groups over $E$. Let $\W \cong E^{2n}$ (row vectors) be an $E$-vector space of dimension $2n$ with $\epsilon$-skew Hermitian form
\begin{equation*}
\langle (x_1, y_1), (x_2, y_2) \rangle = x_1 \overline y_2^\trans - \epsilon y_1 \overline x_2^\trans,
\end{equation*}
and let $e_1, \ldots, e_n, e_1', \ldots, e_n'$ be the $E$-basis of $\W$ giving the isomorphism $\W \cong E^{2n}$. Let $\V$ be an $E$-vector space of dimension $m$ with a non-degenerate $\epsilon$-Hermitian form $(\cdot, \cdot)$. (Here, $\overline x$ denotes the image of $x$ under the nontrivial involution of $E$ over $F$ and the superscript ${}^\trans$ denotes transposition.) Then $(\U_E(\V), \U_E(\W))$ is a dual reductive pair and there is a natural map
\begin{equation*}
\iota \from \U_E(\V) \times \U_E(\W) \to \Sp(\V \otimes_E \W), \qquad (h,g) \mapsto (w \otimes v \mapsto h^{-1}w \otimes vg).
\end{equation*}
We denote by $\iota_\W \from \U_E(\V) \to \Sp(\V \otimes_E \W)$ and $\iota_\V \from \U_E(\W) \to \Sp(\V \otimes_E \W)$ the restrictions of $\iota$ to $\U_E(\V) \times \{1\}$ and $\{1\} \times \U_E(\W)$, respectively.

For $0 \leq j \leq n$, let $\tau_j \in \U_E(\W)$ be the element defined by
\begin{align*}
e_i \tau_j = \begin{cases}
-\epsilon e_i' & \text{if $1 \leq i \leq j$,} \\
e_i & \text{if $i > j$,}
\end{cases}
\qquad and \qquad
e_i' \tau_j = \begin{cases}
e_i & \text{if $1 \leq i \leq j$,} \\
e_i' & \text{if $i > j$.}
\end{cases}
\end{align*}
Then
\begin{equation*}
\U_E(\W) = \bigsqcup_{i=0}^j P \tau_j P,
\end{equation*}
where $P = P_Y \subset \U_E(\W)$ is the parabolic subgroup stabilizing the maximal isotropic subspace $Y \colonequals \Span_E\{e_1', \ldots, e_n'\}$. If $g = p_1 \tau_j p_2 \in P \tau_j P$, then we define
\begin{equation*}
j(g) \colonequals j, \qquad \text{and} \qquad x(g) \colonequals \det(p_1p_2|_Y) \in E^\times.
\end{equation*}

For any $E$-vector space $\V_0$ endowed with a non-degenerate Hermitian form, define
\begin{equation*}
\gamma_F(\tfrac{1}{2} \psi \circ R\V_0) \colonequals (u, \det(\V_0))_F \gamma_F(-u, \tfrac{1}{2} \psi)^m \gamma_F(-1, \tfrac{1}{2} \psi)^{-m}.
\end{equation*}

\begin{definition}\label{d:kudla splitting}
Define
\begin{equation*}
\beta_{\V,\xi} \from \U_E(\W) \to \CC^1, \qquad g \mapsto\begin{cases}
\xi(x(g)) \gamma_F(\tfrac{1}{2} \psi \circ R\V)^{-j(g)} & \text{if $\epsilon = +1$,} \\
\xi(x(g)) \xi(\bi)^j \gamma_F(\tfrac{1}{2}\psi \circ R\V')^{-j(g)} & \text{if $\epsilon = -1$,}
\end{cases}
\end{equation*}
where $\V'$ is the Hermitian form obtained by scaling the skew-Hermitian form on $\V$ by $\bi$.
\end{definition}

\begin{theorem}[{Kudla, \cite[Thm 3.1]{K94}}]\label{t:kudla splittings}
Let $\xi$ be a unitary character of $E^\times$ whose restriction to $F^\times$ is $\epsilon_{E/F}^m$, where $\epsilon_{E/F}(x) = (x, u)_F$ is the quadratic character corresponding to the extension $E/F$. Then for the maximal isotropic subspace $\bY \colonequals \V \otimes_E Y$ of $\V \otimes_E \W$,
\begin{equation*}
z_{\bY}(\iota_\V(g_1), \iota_\V(g_2)) = \beta_{\V,\xi}(g_1g_2) \beta_{\V,\xi}(g_1)^{-1} \beta_{\V,\xi}(g_2)^{-1}.
\end{equation*}
In other words, with respect to the isomorphism $\Mp(\V \otimes_E \W) \cong \Sp(\V \otimes_E \W) \times \CC^1$ determined by $z_{\bY}$, the following diagram commutes:
\begin{equation*}
\begin{tikzcd}
& \Mp(\V \otimes_E \W)_{\bY} \ar{d} \\
\U_E(\W) \ar{ur}{(\iota_\V, \beta_{\V,\xi})} \ar{r}[below]{\iota_\V} & \Sp(\V \otimes_E \W)
\end{tikzcd}
\end{equation*}
\end{theorem}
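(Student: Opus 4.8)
The plan is to pull Rao's explicit formula for the cocycle $z_\bY$ on $\Sp(\V \otimes_E \W)$ back along $\iota_\V$ and to recognize the resulting trivializing cochain. Recall from \cite{R93} that, relative to the polarization $\V \otimes_E \W = \bY \oplus \bX$ with $\bY = \V \otimes_E Y$, the value $z_\bY(\sigma_1, \sigma_2)$ is assembled from two pieces of data: Rao's function $x(\sigma)$ together with the integer $j(\sigma)$ recording the position of $\sigma$ relative to the Siegel parabolic $P_\bY$ (these contribute Hilbert symbols and a power of $\gamma_F(\psi)$), and the Weil index $\gamma_F(\tfrac12 \psi \circ q(\sigma_1, \sigma_2))$ of the Leray (Maslov) invariant $q(\sigma_1, \sigma_2)$, which is an $F$-quadratic form. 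So the whole problem reduces to computing how these invariants behave on the subgroup $\iota_\V(\U_E(\W)) \subset \Sp(\V \otimes_E \W)$.

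First I would transport the Bruhat decomposition $\U_E(\W) = \bigsqcup_j P\tau_j P$. Because $\iota_\V$ preserves $\bY = \V \otimes_E Y$, it carries $P = P_Y$ into the Siegel parabolic $P_\bY$ and sends $\tau_j$ to an element whose ``Weyl part'' swaps the $j$ hyperbolic $E$-planes of $\W$ tensored with $\V$; in particular $j(\iota_\V(g))$ is a fixed multiple of $j(g)$, so that each hyperbolic $E$-plane contributes exactly one power of $\gamma_F(\tfrac12\psi\circ R\V)$, the Weil index of $\V$ regarded as a quadratic space over $F$. Tracking the Levi: for $g = p_1\tau_j p_2$ the element $\iota_\V(g)$ acts on $\bY$ with $F$-determinant congruent, modulo squares, to $\N_{E/F}(x(g))^m$ times a power of $\det(\V)$ governed by $j(g)$ --- precisely the combination packaged into $\gamma_F(\tfrac12\psi\circ R\V) = (u,\det(\V))_F\,\gamma_F(-u,\tfrac12\psi)^m\,\gamma_F(-1,\tfrac12\psi)^{-m}$. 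Feeding this into Rao's formula, the ``$j$-part'' and ``Levi part'', evaluated at $(\iota_\V(g_1),\iota_\V(g_2))$, collapse to the power $\gamma_F(\tfrac12\psi\circ R\V)^{-j(g_1g_2)+j(g_1)+j(g_2)}$ together with residual Hilbert symbols in the $\N_{E/F}(x(g_i))$ and in $u$.

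The heart of the argument is the Leray-invariant computation: I would show that the Maslov form $q(\iota_\V(g_1),\iota_\V(g_2))$ is $F$-isometric to the tensor product of $R\V$ with a low-rank $F$-quadratic form determined by $x(g_1)$, $x(g_2)$, $x(g_1g_2)$ and the cancellation pattern of hyperbolic planes. Taking Weil indices and applying the multiplicativity of $\gamma_F$ under such tensor products --- which introduces the usual Hilbert-symbol corrections --- recovers the power of $\gamma_F(\tfrac12\psi\circ R\V)$ found above and leaves a further collection of Hilbert symbols. The last check is that, together with the residual symbols from the previous step, everything assembles into the single factor $\xi(x(g_1g_2))\,\xi(x(g_1))^{-1}\,\xi(x(g_2))^{-1}$. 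This is exactly where the hypothesis $\xi|_{F^\times} = \epsilon_{E/F}^m = (\cdot,u)_F^m$ enters: the failure of Rao's $x$-function to be a homomorphism is measured, up to norms, by an element of $F^\times$, on which $\xi$ is the Hilbert-symbol character $(\cdot,u)_F^m$ matching the residual terms. Assembling the three pieces proves the identity in the Hermitian case $\epsilon = +1$.

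For the skew-Hermitian case $\epsilon = -1$ I would reduce to the Hermitian one by scaling: replacing $\langle\cdot,\cdot\rangle$ on $\V$ by $\bi\langle\cdot,\cdot\rangle$ turns $\V$ into the Hermitian space $\V'$ and conjugates $\iota_\V$ by the element of $\Sp(\V\otimes_E\W)$ acting as multiplication by $\bi$ on $\V\otimes_E Y$; chasing this conjugation through Rao's formula produces exactly the extra factor $\xi(\bi)^{j(g)}$ in Definition \ref{d:kudla splitting}. The one genuinely hard step is the Leray-invariant identification above --- pinning down $q(\iota_\V(g_1),\iota_\V(g_2))$ as a twisted multiple of $R\V$ and bookkeeping the Hilbert symbols down to the coboundary of $\xi\circ x$. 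Everything else is formal once Rao's formula and the structure of the Bruhat cells are in hand; this step is where the geometry of the dual pair $(\U_E(\V),\U_E(\W))$ and the precise restriction condition on $\xi$ genuinely enter, and it is in essence the computation of \cite[\S3]{K94} (compare the parallel calculations in \cite{IP16b}).
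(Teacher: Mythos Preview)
The paper does not prove this theorem; it is quoted verbatim from \cite[Theorem~3.1]{K94} and used as a black box. So there is no ``paper's own proof'' to compare against --- the paper simply cites Kudla.

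Your sketch is a reasonable outline of Kudla's original argument: pulling Rao's cocycle formula back through $\iota_\V$, identifying $j(\iota_\V(g))$ and the Rao $x$-function on the image, and reducing the Leray invariant to a tensor product with $R\V$. A few places where your outline is vaguer than it looks: the assertion that $j(\iota_\V(g))$ is ``a fixed multiple of $j(g)$'' is correct (the multiple is $\dim_F \V = 2m$ when you view things over $F$, or one can work with Rao's $E$-version and get $m\cdot j(g)$), but making this precise requires tracking how the Bruhat cells of $\U_E(\W)$ sit inside those of $\Sp(\V\otimes_E\W)$, which is not automatic since the map of Weyl groups is not injective in general. More seriously, the step you flag as ``the heart of the argument'' --- identifying the Leray form $q(\iota_\V(g_1),\iota_\V(g_2))$ up to $F$-isometry --- is exactly the content of Kudla's computation and you have not really done it; saying it is ``$F$-isometric to the tensor product of $R\V$ with a low-rank $F$-quadratic form'' is the conclusion, not the argument. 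The Hilbert-symbol bookkeeping that collapses to $\xi(x(g_1g_2))\xi(x(g_1))^{-1}\xi(x(g_2))^{-1}$ also hides a genuine calculation (this is where the precise constant $\gamma_F(\tfrac12\psi\circ R\V)$ enters, not just some power of $\gamma_F(\psi)$).

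In short: your strategy is the right one and matches Kudla's, but what you have written is a plan rather than a proof. Since the paper treats this as a citation, that is arguably all that is required here; if you want to actually carry it out, the reference is \cite[\S3]{K94}, and the parallel computations in \cite[Appendix~C]{IP16a} and \cite{IP16b} are helpful models.
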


\subsection{Changing polarizations}\label{s:change polarization}

\begin{lemma}[{Kudla, \cite[Lemma 4.2]{K94}}]\label{l:change polarization}
Let $\bX + \bY$ and $\bX' + \bY'$ be two polarizations of a symplectic space $\bV$.  Then
\begin{equation*}
z_{\bY'}(g_1, g_2) = \lambda(g_1g_2) \lambda(g_1)^{-1} \lambda(g_2)^{-1} \cdot z_{\bY}(g_1, g_2),
\end{equation*}
where $\lambda \from \Sp(\bV) \to \CC^1$ is given by
\begin{equation*}
\lambda(g) \colonequals \lambda_{\bY \rightsquigarrow \bY'}(g) \colonequals \gamma_F(\tfrac{1}{2} \psi \circ q(\bY, \bY' g^{-1}, \bY')) \cdot \gamma_F(\tfrac{1}{2} \psi \circ q(\bY, \bY', \bY g)).
\end{equation*}
In particular, the bijection
\begin{equation*}
\Mp(\bV)_{\bY} \to \Mp(\bV)_{\bY'}, \qquad (g,z) \mapsto (g, z \cdot \lambda(g))
\end{equation*}
is an isomorphism.
\end{lemma}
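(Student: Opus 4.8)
The plan is to identify $\lambda$ as the scalar relating two Schr\"odinger models of the Weil representation, and only then to compute that scalar. First I would dispose of the \emph{in particular} clause, which is purely formal once the coboundary identity is in hand: the identity $z_{\bY'}(g_1,g_2) = \lambda(g_1g_2)\lambda(g_1)^{-1}\lambda(g_2)^{-1}z_{\bY}(g_1,g_2)$ says exactly that $(g,z)\mapsto(g,z\,\lambda(g))$ carries the $z_{\bY}$-twisted product on $\Sp(\bV)\times\CC^1$ to the $z_{\bY'}$-twisted product, and it is visibly bijective, so it is the asserted isomorphism $\Mp(\bV)_{\bY}\to\Mp(\bV)_{\bY'}$. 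Note also that $z_{\bY}$ and $z_{\bY'}$ both represent the class of the intrinsic metaplectic extension in $H^2(\Sp(\bV),\CC^1)$, so a priori they differ by \emph{some} coboundary; all the work is in pinning down which one.

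Next I would realize $\omega_\psi$ simultaneously on $\cS(\bX)$, via the Rao lift $r=r_{\bY}$ recalled above, and on $\cS(\bX')$, via the analogous lift $r'=r_{\bY'}$ obtained by integrating over $\bY'$ in place of $\bY$. Both descend to the single projective representation $[\omega_\psi]$ of $\Sp(\bV)$. By Stone--von Neumann there is an intertwiner $F=F_{\bY\rightsquigarrow\bY'}\colon\cS(\bX)\to\cS(\bX')$, unique up to a scalar, given explicitly by a Gaussian integral operator whose kernel is built from $\psi$ and the relative position of $\bY$ and $\bY'$; I would fix any normalization of $F$. For each $g$, the operators $F\,r(g)\,F^{-1}$ and $r'(g)$ are two genuine lifts of the same element of $\PGL(\cS(\bX'))$, hence $F\,r(g)\,F^{-1}=\lambda(g)\,r'(g)$ for a unique $\lambda(g)\in\CC^1$. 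Substituting into $r(g_1)r(g_2)=z_{\bY}(g_1,g_2)\,r(g_1g_2)$ and the corresponding relation for $r'$, and cancelling $F$, yields the coboundary identity at once with this $\lambda$; the uniqueness of the Rao lift (\cite[Theorem 3.5]{R93}) makes $\lambda$ canonical, and the normalization of $F$ is a constant that cancels out.

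Then comes the genuine computation of $\lambda(g)$. Here I would restrict to the big Bruhat cell, i.e.\ to $g$ for which the relevant Lagrangian intersections vanish, so that $r(g)$, $r'(g)$ and $F$ are all honest Gaussian kernel operators with no degeneration in the integral formula for the Rao lift. Composing the three operators and reading off the overall normalizing constant presents $\lambda(g)$ as a Weil index $\gamma_F(\tfrac{1}{2}\psi\circ Q_g)$ of a quadratic form $Q_g$ assembled from the three Lagrangians $\bY$, $\bY'$, $\bY g$ and their $g^{-1}$-translates. I would then match $Q_g$, up to Witt equivalence (which is all the Weil index detects), with the orthogonal sum of the Leray invariants $q(\bY,\bY' g^{-1},\bY')$ and $q(\bY,\bY',\bY g)$, using the $\Sp(\bV)$-equivariance $q(\ell_1 g,\ell_2 g,\ell_3 g)=q(\ell_1,\ell_2,\ell_3)$ and the cochain identity among the Leray invariants of the four Lagrangians $\bY,\bY',\bY g,\bY'g$. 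Finally I would propagate from the big cell to all of $\Sp(\bV)$: the coboundary relation for $g_1,g_2$ in the big cell determines $\lambda$ on all products, and joint continuity (archimedean case) or local constancy (non-archimedean case) of both sides upgrades agreement on a generating set to agreement everywhere. Alternatively, one can bypass the intertwiner and derive the identity by direct manipulation of Rao's closed-form expression for $z_{\bY}$ in terms of Leray invariants, at the cost of heavier bookkeeping.

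The hard part will be exactly that Gaussian-integral bookkeeping: tracking normalizing constants through the triple composition $F\circ r(g)\circ F^{-1}$ and recognizing the resulting Weil index as precisely the stated product of Weil indices of Leray invariants, while handling the non-generic $g$ where the kernels degenerate and the integral operators must be interpreted through limits or Cayley transforms. This is the technical core of Rao's and Kudla's analysis of the metaplectic cocycle, and in the body of the paper we simply invoke \cite[Lemma 4.2]{K94}.
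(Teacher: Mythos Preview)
Your proposal is a reasonable sketch of the standard argument, but there is nothing to compare it against: the paper does not prove this lemma at all. It is stated with attribution to \cite[Lemma 4.2]{K94} and used as a black box throughout Section~\ref{ch:splittings}. You yourself observe this in your final sentence.

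As a standalone outline of Kudla's argument, your approach is sound: defining $\lambda$ as the scalar discrepancy between $F\,r_{\bY}(g)\,F^{-1}$ and $r_{\bY'}(g)$ for an intertwiner $F$ immediately yields the coboundary relation, and the explicit identification of $\lambda(g)$ with the stated product of Weil indices of Leray invariants is exactly the content of the cited lemma (together with Rao's formulas in \cite{R93}). The only caveat is that your ``big cell plus continuity'' extension step is slightly imprecise in the non-archimedean case, where one typically uses instead that both sides are well-defined functions on all of $\Sp(\bV)$ and agree on a Zariski-dense set, or one works directly with Rao's closed-form cocycle as you mention in your alternative. But for the purposes of this paper none of this matters: the lemma is quoted, not proved.
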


\subsection{Three seesaws of unitary groups}\label{s:three seesaws}

For any two unitary similitude groups $\GU_E(\V)$ and $\GU_E(\W)$, we write
\begin{equation*}
\G(\U_E(\V) \times \U_E(\W)) \colonequals \{(g,h) \in \GU_E(\V) \times \GU_E(\W) : \nu(g) = \nu(h)\}.
\end{equation*}
Fix a complete polarization $\bV = \bX + \bY$. In this section, we define splittings (of $z_{\bY}$ or $z_{\bY^\square}$, depending on context) for the unitary groups $\G(\U_E(V_0^\square) \times \U_E(W_0))$, $\G(\U_E(V_0) \times \U_E(W_0))$, $\G(\U_E(\Res V) \times \U_E(W^\square))$, and $\G(\U_E(\Res V) \times \U_E(W))$, which fit into the seesaw
\begin{equation}\label{e:single seesaw}
\begin{tikzcd}
\U_E(\Res V) \ar[dash]{dr} \ar[dash]{d} & \U_E(W_0) \ar[dash]{dl} \ar[dash]{d} \\
\U_E(V_0) & \U_E(W)
\end{tikzcd}
\end{equation}
and the two corresponding doubling seesaws:
\begin{equation}\label{e:double seesaw}
\begin{tikzcd}
\U_E(V_0^\square) \ar[dash]{dr} \ar[dash]{d} & \U_E(W_0) \times \U_E(W_0) \ar[dash]{dl} \ar[dash]{d} \\
\U_E(V_0) \times \U_E(V_0) & \U_E(W_0)^\triangle
\end{tikzcd} \qquad
\begin{tikzcd}
\U_E(\Res V) \ar[dash]{dr} \ar[dash]{d} & \U_E(W^\square) \ar[dash]{dl} \ar[dash]{d} \\
\U_E(\Res V)^\triangle& \U_E(W) \times \U_E(W)
\end{tikzcd}
\end{equation}

\subsubsection{Splittings for $\G(\U_E(V_0^\square) \times \U_E(W_0))$ and $\G(\U_E(V_0) \times \U_E(W_0))$} \label{s:splittings0}

Consider the $2$-dimensional $E$-space $V_0 \otimes_E W_0$ with skew-Hermitian form given by $\overline{(\cdot, \cdot)} \otimes \langle \cdot, \cdot \rangle$. By a straightforward computation, we see that this allows us to identify $V_0 \otimes_E W_0 = W_0$ as $E$-spaces endowed with skew-Hermitian forms. Define
\begin{align*}
i \from \G(\U_E(V_0) \times \U_E(W_0)) &\to \U_E((V_0 \otimes W_0)^\square), \\ 
(g,h) &\mapsto ((v \otimes w,v^- \otimes w^-) \mapsto (g^{-1}v \otimes w h, v^- \otimes w^-)), \\
i^- \from \G(\U_E(V_0) \times \U_E(W_0)) &\to \U_E((V_0 \otimes W_0)^\square), \\
(g,h) &\mapsto ((v \otimes w,v^- \otimes w^-) \mapsto (v \otimes w, g^{-1}v^- \otimes w^-h)), \\
i^\square \from \G(\U_E(V_0^\square) \times \U_E(W_0)) &\to \U_E(V_0^\square \otimes W_0), \\
(g,h) &\mapsto (v \otimes w \mapsto g^{-1}v \otimes wh).
\end{align*}
We may identify $V_0^\square \otimes W_0 = (V_0 \otimes W_0)^\square = W_0^\square$. We have natural embeddings
\begin{align*}
&\G(\U_E(V_0) \times \U_E(V_0) \times \U_E(W_0)) \hookrightarrow \G(\U_E(V_0) \times \U_E(W_0)) \times \G(\U_E(V_0) \times \U_E(W_0)) \\
&\G(\U_E(V_0) \times \U_E(V_0) \times \U_E(W_0)) \hookrightarrow \G(\U_E(V_0^\square) \times \U_E(W_0)).
\end{align*}
Observe that for $(g_1, g_2, h) \in \G(\U_E(V_0) \times \U_E(V_0) \times \U_E(W_0))$,
\begin{equation*}
i(g_1,h) i^-(g_2,h) = i^\square(g_1,g_2,h) \in \U_E(W_0^\square).
\end{equation*}
We identify $\Res_{E/F}(W_0^\square) = \bV^\square$ and let
\begin{equation*}
\iota \from \U_E(W_0^\square) \to \Sp(\Res_{E/F}(W_0^\square)) = \Sp(\bV^\square)
\end{equation*}
be the natural embedding. We will often identify $\U_E(W_0^\square)$ with $\iota(\U_E(W_0^\square))$.

\begin{definition}\label{d:splittings0}
Pick a character $\xi \from E^\times \to \CC^1$ such that $\xi|_{F^\times} = \epsilon_{E/F}$. Define
\begin{equation*}
\beta \from \U_E(W_0^\square) \to \CC^1, \qquad g \mapsto \xi(x(g)) \cdot ((u, -1)_F \gamma_F(u, \tfrac{1}{2} \psi))^{-j(g)}.
\end{equation*}
Define $\lambda \colonequals \lambda_{V_0 \otimes W_0^\triangle \rightsquigarrow \bY^\square} \from \Sp(\bV^\square) \to \CC^1$ and
\begin{align*}
\hat s &\colonequals i^* \beta, & \hat s^- &\colonequals (i^-)^* \beta, & \hat s^\square &\colonequals (i^\square)^* \beta, \\
s &\colonequals i^*(\beta \lambda), & s^- &\colonequals (i^-)^* (\beta \lambda), & s^\square &\colonequals (i^\square)^*(\beta \lambda).
\end{align*}
\end{definition}

\begin{lemma}\mbox{}
\begin{enumerate}[label=(\alph*)]
\item
$\hat s$, $\hat s^-$, and $\hat s^\square$ are splittings of $z_{V_0 \otimes W_0^\triangle}$ on the images of $i$, $i^-$, and $i^\square$, respectively.

\item
$s$ is a splitting of $z_\bY$ on the image of $i$, $s^-$ is a splitting of $z_\bY^{-1}$ on the image of $i^-$, and $s^\square$ is a splitting of $z_{\bY^\square}$ on the image of $i^\square$.
\end{enumerate}
\end{lemma}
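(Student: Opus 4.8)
The plan is to reduce everything to two inputs: Kudla's splitting theorem (Theorem \ref{t:kudla splittings}) applied to the dual reductive pair $(\U_E(V_0), \U_E(W_0))$ with its doubled variants, and Kudla's change-of-polarization lemma (Lemma \ref{l:change polarization}) relating the polarization $V_0 \otimes W_0^\triangle$ to the standard polarization $\bY^\square$. Part (a) is the assertion that the function $\beta$ of Definition \ref{d:splittings0}, which is exactly Kudla's $\beta_{\V, \xi}$ of Definition \ref{d:kudla splitting} for $\V = V_0$ (so $\epsilon = -1$, $m = 1$, and the scaled Hermitian space $\V' = V_0'$ has $\det \V' = u$, giving $\gamma_F(\tfrac12\psi \circ R V_0') = (u,u)_F \gamma_F(-u,\tfrac12\psi)\gamma_F(-1,\tfrac12\psi)^{-1} = (u,-1)_F\gamma_F(u,\tfrac12\psi)$ after the standard Weil-index identities), is a coboundary trivializing $z_{\bY}$ on the image of $\iota_{V_0}$ inside $\Mp(V_0 \otimes W_0)_{\bY}$. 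Then $\hat s$, $\hat s^-$, $\hat s^\square$ are obtained by pulling this back along the three maps $i$, $i^-$, $i^\square$. The point is that, under the identifications $V_0 \otimes W_0 \cong W_0$ and $V_0^\square \otimes W_0 \cong W_0^\square$ made above, each of these three maps factors through $\iota_{V_0^{(\square)}}$ composed with the doubling embedding, so $\beta$ restricted along them is still a coboundary for the relevant Rao cocycle.

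In more detail, the key steps are as follows. First I would verify the identification $V_0 \otimes_E W_0 \cong W_0$ as skew-Hermitian $E$-spaces (a direct computation with the forms $\overline{\langle\cdot,\cdot\rangle_0}\otimes(\cdot,\cdot)_0$), and likewise $V_0^\square \otimes W_0 \cong W_0^\square$; this is what makes the target groups in Definition \ref{d:splittings0} literally $\U_E(W_0^\square)$ rather than some abstract symplectic group. Second, I would observe that $i^\square$ is, up to these identifications, nothing but $\iota_{V_0}$ for the pair $(\U_E(V_0^\square), \U_E(W_0))$ — wait, more precisely it is the map $(g,h) \mapsto$ the image in $\Sp$ of the element acting by $g^{-1}$ on the $V_0^\square$-factor and by $h$ on the $W_0$-factor, so that fixing $h=1$ recovers $\iota_{V_0^\square}$ in Kudla's sense; applying Theorem \ref{t:kudla splittings} with $\V = V_0^\square$ (note $\det V_0^\square = \det V_0 \cdot \overline{\det V_0}$, a norm, so the relevant $\gamma_F$ factor is unchanged and $\beta$ for $V_0^\square$ agrees with $\beta$ for $V_0$) gives that $(i^\square)^*\beta$ splits $z_{V_0 \otimes W_0^\triangle}$ on the image of $i^\square$. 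For $i$ and $i^-$, I would use the factorization $i(g,h)\,i^-(g,h') $ landing in $\U_E(W_0^\square)$ together with the fact that $i$ and $i^-$ individually have image in the two $\Sp(\bV)$-factors of $\Sp(\bV^\square)$; on each factor $\beta$ pulls back to a splitting of $z_{V_0\otimes W_0^\triangle}$ by the same Kudla computation, and compatibility with the doubled cocycle follows since on the triangle polarization the doubled cocycle is the "difference" of the two factor cocycles. This establishes (a).

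For part (b), I would apply Lemma \ref{l:change polarization} to pass from the polarization $V_0 \otimes W_0^\triangle$ of $\bV^\square$ to the standard polarization $\bY^\square$, with the correction factor $\lambda = \lambda_{V_0\otimes W_0^\triangle \rightsquigarrow \bY^\square}$. Since $\lambda$ itself is a coboundary relating $z_{V_0\otimes W_0^\triangle}$ and $z_{\bY^\square}$ on all of $\Sp(\bV^\square)$, multiplying the splittings of (a) by $\lambda$ (pulled back along the relevant map) converts them into splittings of $z_{\bY^\square}$. This is immediate for $s^\square = (i^\square)^*(\beta\lambda)$. For $s = i^*(\beta\lambda)$ and $s^- = (i^-)^*(\beta\lambda)$ I would note that $i$ has image in the first $\Sp(\bV)$-factor, on which $z_{\bY^\square}$ restricts to $z_{\bY}$, so $s$ splits $z_{\bY}$; and $i^-$ has image in the second factor $\Sp(\bV)^\op \cong \Sp(\bV^-)$, on which $z_{\bY^\square}$ restricts to $z_{\bY^-} = z_{\bY}^{-1}$ (this sign is exactly the content of the $\tilde\iota$ formula with $w^{-1}$ in the second slot), so $s^-$ splits $z_{\bY}^{-1}$.

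The main obstacle I anticipate is purely bookkeeping: keeping straight, across the three maps and the doubling, which Weil-index and Leray-invariant identities are being invoked, and in particular checking carefully that $\beta_{V_0^\square,\xi} = \beta_{V_0,\xi}$ under the identifications (i.e., that doubling does not change the discriminant class modulo norms and hence leaves the $\gamma_F(\tfrac12\psi\circ RV_0)$ factor untouched) and that the restriction of $z_{\bY^\square}$ to the anti-diagonal factor really is $z_{\bY}^{-1}$ and not $z_{\bY}$. Neither is deep — both are standard manipulations with Rao's cocycle as recalled in \cite{R93}, \cite[Sections 3.1.1, 3.1.2]{IP16a} — but they must be done with care since the entire later comparison of Weil representations on the two sides of the seesaw rests on these sign conventions.
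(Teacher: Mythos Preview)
Your approach is essentially the paper's: identify $\beta$ of Definition~\ref{d:splittings0} with Kudla's $\beta_{V_0,\xi}$, invoke Theorem~\ref{t:kudla splittings} to get (a), then apply Lemma~\ref{l:change polarization} for (b). Two small corrections. First, $V_0$ carries the \emph{Hermitian} form $\langle a,b\rangle_0 = \overline a b$, so in Kudla's conventions $\epsilon = +1$ (not $-1$), $\det V_0 = 1$, and the computation reads $\gamma_F(\tfrac12\psi\circ RV_0) = (u,1)_F\,\gamma_F(-u,\tfrac12\psi)\gamma_F(-1,\tfrac12\psi)^{-1} = (u,-1)_F\gamma_F(u,\tfrac12\psi)$; your parenthetical with $\det\V' = u$ and the extra $\xi(\bi)^j$ factor is off and in fact doesn't simplify to the claimed expression. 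Second, you are overcomplicating (a) by worrying about $\beta_{V_0^\square,\xi}$ versus $\beta_{V_0,\xi}$: there is only one $\beta$, defined once and for all on $\U_E(W_0^\square)$, and $\hat s$, $\hat s^-$, $\hat s^\square$ are pullbacks of that single function along $i$, $i^-$, $i^\square$, so once $\beta$ splits $z_{V_0\otimes W_0^\triangle}$ on $\U_E(W_0^\square)$ all three follow immediately. Your treatment of (b), including the observation that $z_{\bY^\square}$ restricts to $z_{\bY}^{-1}$ on the second factor, is correct and in fact more explicit than the paper, which leaves (b) unwritten.
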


\begin{proof}
Observe that $\det(V_0) = 1$ and $\dim(V_0) = 1$ so that
\begin{equation*}
\gamma_F(\tfrac{1}{2} \psi \circ RV_0) = (u, 1)_F \gamma_F(-u, \tfrac{1}{2} \psi) \gamma_F(-1, \tfrac{1}{2} \psi)^{-1} = (u, -1)_F \gamma_F(u, \tfrac{1}{2} \psi).
\end{equation*}
This implies that $\beta = \beta_{\U_E(V_0), \xi}$ (see Definition \ref{d:kudla splitting}) and hence is a splitting of $z_{V_0 \otimes_E W_0^\triangle}$. Since $\hat s$, $\hat s^-$, and $\hat s^\square$ are pullbacks of $\beta$, they must also be splittings of the same cocycle.
\end{proof}

\begin{lemma}
For any $(g,h) \in \G(\U_E(V_0) \times \U_E(W_0))$,
\begin{equation*}
\hat s^-(g,h) = \overline{\hat s(g,h)} \cdot \xi(\det(g,h)).
\end{equation*}
\end{lemma}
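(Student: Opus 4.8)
The plan is to reduce the identity to a comparison of the single function $\beta$ of Definition~\ref{d:splittings0}, evaluated at $i(g,h)$ and at $i^{-}(g,h)$. Since $\hat s=i^{*}\beta$ and $\hat s^{-}=(i^{-})^{*}\beta$, this amounts to understanding how the two Bruhat invariants $x$ and $j$ of the elements $i(g,h),\,i^{-}(g,h)\in\U_E(W_0^{\square})$ are related. The key structural input is the identity
\[
i^{-}(g,h)\;=\;\theta\,i(g,h)\,\theta^{-1},
\]
where $\theta$ is the involution of $W_0^{\square}=(V_0\otimes W_0)\oplus(V_0\otimes W_0)^{-}$ interchanging the two summands: this is immediate from the formulas defining $i$ and $i^{-}$, since $i(g,h)$ acts by the tensor action $v\otimes w\mapsto g^{-1}v\otimes wh$ on the first summand and trivially on the second, while $i^{-}(g,h)$ does the reverse. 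The essential feature of $\theta$ is that it is an \emph{anti}-isometry: it fixes the Lagrangian $W_0^{\triangle}$ pointwise and acts by $-1$ on $W_0^{\bigtriangledown}$, and under the embedding $\GU_E(W_0^{\square})\hookrightarrow\GSp(\bV^{\square})$ it has multiplier $-1$ (so it does not lie in $\Sp(\bV^{\square})$).

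The heart of the argument is to compute how conjugation by $\theta$ affects Kudla's splitting. On the group-theoretic side, $\theta$ stabilizes $W_0^{\triangle}$ hence normalizes the Siegel parabolic, so it preserves the Bruhat cells, giving $j(i^{-}(g,h))=j(i(g,h))$, and it alters the Levi invariant $x$ by an explicit sign read off from its action on the Weyl elements $\tau_j$. On the metaplectic side — and this is what produces the complex conjugate — conjugating Rao's cocycle, the Weil indices, and the Leray invariants by a symplectic similitude of multiplier $-1$ has the same effect as replacing the additive character $\psi$ by $\psi_{-1}=\overline{\psi}$; this is the mechanism behind \eqref{e:change psi}, and passing to $\overline{\psi}$ complex-conjugates the whole construction. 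Combining the two sides, and using $\overline{\gamma_F(u,\tfrac12\psi)}=\gamma_F(u,\tfrac12\psi)^{-1}$ together with $(u,-1)_F=\epsilon_{E/F}(-1)=\xi(-1)$, the various Hilbert-symbol corrections should telescope, leaving the $\theta$-equivariance
\[
\beta(\theta\,\sigma\,\theta^{-1})\;=\;\overline{\beta(\sigma)}\cdot\xi(\det\sigma), \qquad \sigma\in\U_E(W_0^{\square}).
\]
Applying this with $\sigma=i(g,h)$ and noting that $i(g,h)$ is the tensor action of $(g,h)$ on the first summand of $W_0^{\square}$ extended by the identity on the second, so that $\det\bigl(i(g,h)\bigr)=\det(g,h)$, yields exactly $\hat s^{-}(g,h)=\overline{\hat s(g,h)}\cdot\xi(\det(g,h))$.

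I expect the main obstacle to be this middle step: tracking carefully how $x$, $j$, and the Weil-index and Leray-invariant factors transform under conjugation by the multiplier-$(-1)$ anti-isometry $\theta$, and checking that the Hilbert-symbol discrepancies collapse exactly to $\xi(\det(g,h))$ with no residual sign. This is a direct but delicate calculation with Kudla's explicit splitting, of the same flavour as the computations of \cite[Appendix C]{IP16a} and \cite{IP16b} cited at the start of this section; alternatively one could try to organize it around the relation $i(g_1,h)\,i^{-}(g_2,h)=i^{\square}(g_1,g_2,h)$ together with the splitting $\hat s^{\square}$ of the doubled group, specialized along the diagonal $g_1=g_2$.
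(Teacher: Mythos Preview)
Your approach is correct in spirit and shares its starting point with the paper: both use that $i^{-}(g,h)=\theta\,i(g,h)\,\theta^{-1}$ where $\theta=d_{W_0^{\triangle}}(-1)$ is the swap of the two summands, which fixes $W_0^{\triangle}$ and negates $W_0^{\bigtriangledown}$. From this the paper extracts exactly the two relations you predict on the group-theoretic side,
\[
j\bigl(i^{-}(g,h)\bigr)=j\bigl(i(g,h)\bigr),\qquad x\bigl(i^{-}(g,h)\bigr)=(-1)^{j}\,x\bigl(i(g,h)\bigr),
\]
and then stops: it simply plugs these into the explicit formula $\beta(\sigma)=\xi(x(\sigma))\cdot\bigl((u,-1)_F\,\gamma_F(u,\tfrac12\psi)\bigr)^{-j(\sigma)}$ and simplifies using only $\xi(-1)=(u,-1)_F$ and the identity $\gamma_F(u,\tfrac12\psi)^2=(u,-1)_F$. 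The complex conjugate appears for the trivial reason that $(u,-1)_F\,\gamma_F(u,\tfrac12\psi)$ lies on the unit circle, so its inverse is its conjugate; and $\xi(x)^2=\xi(\det)$ supplies the remaining factor. The whole thing is four lines.

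Your ``metaplectic side'' --- reinterpreting conjugation by a multiplier-$(-1)$ similitude as passage $\psi\rightsquigarrow\overline{\psi}$ and then tracking the Weil indices and Leray invariants --- is a valid conceptual explanation of \emph{why} those four lines work, but it is unnecessary machinery here and is precisely the ``delicate calculation'' you flag as the obstacle. The paper avoids it entirely: once the two relations on $x$ and $j$ are in hand, no further Rao/Leray bookkeeping is needed. Your alternative via the doubling relation $i(g_1,h)\,i^{-}(g_2,h)=i^{\square}(g_1,g_2,h)$ specialized to $g_1=g_2$ is also not what the paper does; that relation is instead used for the \emph{next} lemma (Lemma~\ref{l:double compat0}).
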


\begin{proof}
Let $d_{W_0^\triangle}(-1) = \left(\begin{smallmatrix} 1 & 0 \\ 0 & -1 \end{smallmatrix}\right)$ and set
\begin{equation*}
j_{W_0^\triangle} \from \U_E(W_0^\square) \to \U_E(W_0^\square), \qquad g \mapsto d_{W_0^\triangle}(-1) g d_{W_0^\triangle}(-1).
\end{equation*}
Let $g \in \G(\U_E(V_0) \times \U_E(W_0))$. By a straightforward computation, we have
\begin{equation*}
x(i^-(g)) = (-1)^{j(g)} x(i(g)), \qquad \text{and} \qquad j(i^-(g)) = j(i(g)).
\end{equation*}
Therefore, since $\gamma_F(u, \tfrac{1}{2} \psi)^2 = (u,-1)_F$,
\begin{align*}
\hat s^-(g)
&= \xi(x(i^-(g))) ((u,-1)_F \gamma_F(u, \tfrac{1}{2} \psi))^{-j(i^-(g))} \\
&= \xi(x(i(g))) ((u,-1)_F \gamma_F(u, \tfrac{1}{2} \psi))^{j(i(g))} 
= \xi(x(i(g)))^2 \overline{\hat s(g)} 
= \xi(\det(g)) \overline{\hat s(g)}. \qedhere
\end{align*}
\end{proof}

\begin{lemma}\label{l:double compat0}
For $(g_1, g_2, h) \in \G(\U_E(V_0) \times \U_E(V_0) \times \U_E(W_0))$,
\begin{equation*}
s^\square(g_1,g_2,h) = s(g_1,h) \cdot \overline{s(g_2,h)} \cdot \xi(\det(i(g_2,h))).
\end{equation*}
\end{lemma}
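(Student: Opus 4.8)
The plan is to reduce everything to the multiplicativity relation $i^\square(g_1,g_2,h) = i(g_1,h)\cdot i^-(g_2,h)$ in $\U_E(W_0^\square)$ recorded just before Definition~\ref{d:splittings0}, together with the fact that $\beta\lambda$ is a splitting of $z_{\bY^\square}$ on all of $\U_E(W_0^\square)$. The latter follows by combining Kudla's Theorem~\ref{t:kudla splittings} applied with $\V = V_0$ --- which, exactly as in the proof above, identifies $\beta$ with $\beta_{\U_E(V_0),\xi}$ and hence realizes it as a splitting of $z_{V_0\otimes W_0^\triangle}$ --- with the polarization-change Lemma~\ref{l:change polarization} for $V_0\otimes W_0^\triangle \rightsquigarrow \bY^\square$.

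First I would expand $s^\square$ via the cocycle relation. Since $(\beta\lambda)(xy) = z_{\bY^\square}(x,y)\,(\beta\lambda)(x)\,(\beta\lambda)(y)$ for all $x,y\in\U_E(W_0^\square)$, pulling back along the multiplicativity relation gives
\[
s^\square(g_1,g_2,h) \;=\; z_{\bY^\square}\!\bigl(\iota\, i(g_1,h),\,\iota\, i^-(g_2,h)\bigr)\cdot s(g_1,h)\cdot s^-(g_2,h).
\]
Next I would observe that the image of $\iota\circ i$ lies in $\Sp(\bV)\times\{1\}$ and that of $\iota\circ i^-$ in $\{1\}\times\Sp(\bV^-)$ inside $\Sp(\bV^\square)=\Sp(\bV\oplus\bV^-)$; since $\bY^\square=\bY\oplus\bY^-$ is the product polarization, the cocycle factors as $z_{\bY^\square}\bigl((a,b),(c,d)\bigr)=z_{\bY}(a,c)\,z_{\bY^-}(b,d)$, and normalization of each factor ($z(x,1)=z(1,x)=1$) forces $z_{\bY^\square}(\iota\, i(g_1,h),\iota\, i^-(g_2,h))=1$. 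This reduces the claim to
\[
s^\square(g_1,g_2,h)=s(g_1,h)\cdot s^-(g_2,h),
\]
so it remains to identify $s^-(g,h)$ with $\overline{s(g,h)}\cdot\xi(\det(i(g,h)))$.

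For this last identification I would write $s=\hat s\cdot(\lambda\circ\iota\circ i)$ and $s^-=\hat s^-\cdot(\lambda\circ\iota\circ i^-)$ and invoke the previous lemma's relation $\hat s^-(g,h)=\overline{\hat s(g,h)}\,\xi(\det(i(g,h)))$ (with $\det(i(g,h))$ the determinant there denoted $\det(g,h)$); since $\lambda$ is $\CC^1$-valued, it suffices to prove $\lambda(\iota\, i^-(g,h))=\overline{\lambda(\iota\, i(g,h))}$. As in the computation behind the previous lemma, $i^-(g,h)=d_{W_0^\triangle}(-1)\,i(g,h)\,d_{W_0^\triangle}(-1)$, so $\iota\, i^-(g,h)=\sigma\,(\iota\, i(g,h))\,\sigma^{-1}$ with $\sigma=\iota(d_{W_0^\triangle}(-1))$; this involution fixes $\bY^\square$ and sends the polarization $V_0\otimes W_0^\triangle$ to the opposite one $V_0\otimes W_0^\bigtriangledown$. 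By $\Sp$-invariance of the Leray invariant this gives $\lambda_{V_0\otimes W_0^\triangle\rightsquigarrow\bY^\square}(\sigma g\sigma^{-1})=\lambda_{V_0\otimes W_0^\bigtriangledown\rightsquigarrow\bY^\square}(g)$, and replacing $W_0^\triangle$ by $W_0^\bigtriangledown$ scales the quadratic forms entering the Leray invariants by $-1$, so the associated Weil indices are complex-conjugated. This yields $\lambda(\iota\, i^-(g,h))=\overline{\lambda(\iota\, i(g,h))}$, hence $s^-(g,h)=\overline{s(g,h)}\,\xi(\det(i(g,h)))$, and substituting into the displayed formula finishes the proof.

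The hard part is this last step, namely the precise transformation law for the polarization-change factor $\lambda$ under the involution $j_{W_0^\triangle}$. Everything else is a formal manipulation of the cocycle relation together with the tensor-product structure of the doubled polarization $\bY^\square=\bY\oplus\bY^-$; the delicate bookkeeping is tracking the Weil indices $\gamma_F$ of the Leray invariants $q(\cdot,\cdot,\cdot)$ through the substitution $W_0^\triangle\rightsquigarrow W_0^\bigtriangledown$, for which one uses the functional equations recorded in \cite{R93}, \cite[\S 3.1.1--3.1.2]{IP16a}.
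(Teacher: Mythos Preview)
Your overall architecture is right and matches what the cited references (\cite[Lemma~1.1]{HKS96}, \cite[Appendix~D]{IP16a}) do: use $i^\square(g_1,g_2,h)=i(g_1,h)\,i^-(g_2,h)$, the fact that $\beta\lambda$ splits $z_{\bY^\square}$ on all of $\U_E(W_0^\square)$, and the vanishing of $z_{\bY^\square}$ on pairs coming from $\Sp(\bV)\times\{1\}$ and $\{1\}\times\Sp(\bV^-)$. The paper itself simply cites these references rather than reproducing the argument, so you are supplying more detail than the paper does.

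There is, however, a genuine error in your last step. You assert that $i^-(g,h)=d_{W_0^\triangle}(-1)\,i(g,h)\,d_{W_0^\triangle}(-1)$. This is false. With $d_{W_0^\triangle}(-1)=\left(\begin{smallmatrix}1&0\\0&-1\end{smallmatrix}\right)$ taken with respect to the decomposition $W_0^\square=W_0\oplus W_0^-$, the element $\sigma=\iota(d_{W_0^\triangle}(-1))$ acts as $(v,v')\mapsto(v,-v')$; it is block-diagonal for $\bV\oplus\bV^-$ and therefore \emph{commutes} with both $\iota\,i(g,h)\in\Sp(\bV)\times\{1\}$ and $\iota\,i^-(g,h)\in\{1\}\times\Sp(\bV^-)$. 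Conjugation by $\sigma$ fixes $i(g,h)$ and cannot move it to $i^-(g,h)$. (Your observations that $\sigma$ preserves $\bY^\square$ and swaps $\bV^\triangle$ with $\bV^\bigtriangledown$ are correct, but they do not help here.)

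A correct way to obtain $\lambda(\iota\,i^-(g,h))=\overline{\lambda(\iota\,i(g,h))}$ is to use the swap $\tau:(v,v')\mapsto(v',v)$ instead. One has $\tau\,(a,1)\,\tau^{-1}=(1,a)$, and $\tau$ preserves both $\bV^\triangle$ and $\bY^\square$; but $\tau$ is \emph{anti}-symplectic on $\bV^\square=\bV\oplus\bV^-$ (it interchanges a form with its negative). Under an anti-symplectic map the Leray invariant of a triple of Lagrangians is carried to the negative quadratic form, and $\gamma_F(\tfrac12\psi\circ(-q))=\overline{\gamma_F(\tfrac12\psi\circ q)}$. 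This yields the required complex conjugation of $\lambda$ and completes your argument. Equivalently, one can bypass $\lambda$ and invoke the relation $\omega_\psi^\square\circ\tilde\iota\cong\omega_\psi\otimes(\omega_\psi\circ\tilde{\mathfrak j}_\bY)$ recalled in Section~\ref{s:theta sim}, which packages the same sign.
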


\begin{proof}
This is \cite[Lemma 1.1]{HKS96}. See also [Lemma D.4, periods2].
\end{proof}

\subsubsection{Splittings for $\G(\U_E(\Res V) \times \U_E(W^\square))$ and $\G(\U_E(\Res V) \times \U_E(W))$}\label{s:splittings'}

This section is completely analogous to Section \ref{s:splittings0}. The $2$-dimensional $E$-space $\Res V \otimes_E W$ with skew-Hermitian form $\overline{(\cdot, \cdot)} \otimes \langle \cdot, \cdot \rangle$ can be identified with $\Res V$. Define
\begin{align*}
i' \from \G(\U_E(\Res V) \times \U_E(W)) &\to \U_E(\Res V^\square), &
(g,h) &\mapsto ((v,v^-) \mapsto (g^{-1}vh, v^-)), \\
i^-{}' \from \G(\U_E(\Res V) \times \U_E(W)) &\to \U_E(\Res V^\square), &
(g,h) &\mapsto ((v,v^-) \mapsto (v, g^{-1}v^-h), \\
i^\square{}' \from \G(\U_E(\Res V) \times \U_E(W^\square)) &\to \U_E(\Res V^\square), &
(g,h) &\mapsto (v \mapsto g^{-1} v h).
\end{align*}
We have natural embeddings
\begin{align*}
&\G(\U_E(\Res V) \times \U_E(W) \times \U_E(W)) \hookrightarrow \G(\U_E(\Res V) \times \U_E(W)) \times \G(\U_E(\Res V) \times \U_E(W)), \\
&\G(\U_E(\Res V) \times \U_E(W) \times \U_E(W)) \hookrightarrow \G(\U_E(\Res V) \times \U_E(W^\square)).
\end{align*}
Observe that for $(g,h_1, h_2) \in \G(\U_E(\Res V) \times \U_E(W) \times \U_E(W))$,
\begin{equation*}
i'(g, h_1) i^-{}'(g,h_2) = i^\square{}'(g,h_1,h_2) \in \U_E(\Res V^\square).
\end{equation*}
We identify $\Res_{B/F}(V^\square) = \bV^\square$ and let
\begin{equation*}
\iota' \from \U_E(\Res V^\square) \to \Sp(\bV^\square)
\end{equation*}
be the natural embedding. We will often identify $\U_E(\Res V^\square)$ with $\iota(\U_E(\Res V^\square))$.

\begin{definition}\label{d:splittings'}
Pick a character $\xi' \from E^\times \to \CC^1$ such that $\xi|_{F^\times} = \epsilon_{E/F}$. Define
\begin{equation*}
\beta' \from \U_E(\Res V^\square) \to \CC^1, \qquad g \mapsto \xi'(x(g)) \cdot ((u, -1)_F \gamma_F(u, \tfrac{1}{2} \psi))^{-j(g)}.
\end{equation*}
Define
\begin{equation*}
\lambda' \colonequals \lambda_{\Res V^\triangle \otimes W \rightsquigarrow \bY^\square} \from \Sp(\bV^\square) \to \CC^1.
\end{equation*}
Define
\begin{align*}
\hat s' &\colonequals (i')^* \beta', & \hat s^-{}' &\colonequals (i^-{}')^* \beta', & \hat s^\square{}' &\colonequals (i^\square{}')^* \beta', \\
s' &\colonequals (i')^* (\beta'\lambda'), & s^-{}' &\colonequals (i^-{}')^* (\beta'\lambda'), & s^\square{}' &\colonequals (i^\square{}')^* (\beta'\lambda').
\end{align*}
\end{definition}

\begin{lemma}\mbox{}
\begin{enumerate}[label=(\alph*)]
\item
$\hat s'$, $\hat s^-{}'$, and $\hat s^\square{}'$ are splittings of $z_{\Res V^\triangle \otimes W}$ on the images of $i'$, $i^-{}'$, and $i^\square{}'$, respectively.

\item
$s'$ is a splitting of $z_{\bY}$ on the image of $i'$, $s^-{}'$ is a splitting of $z_{\bY}^{-1}$ on the image of $i^-{}'$, and $s^\square{}'$ is a splitting of $z_{\bY^\square}$ on the image of $i^\square{}'$.
\end{enumerate}
\end{lemma}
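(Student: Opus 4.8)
The plan is to reduce everything to Kudla's theorem (Theorem \ref{t:kudla splittings}) and Lemma \ref{l:change polarization}, exactly as in the parallel statement for $V_0$ and $W_0$ in Section \ref{s:splittings0}, since the present situation differs only in the names of the spaces. The first step is to compute $\gamma_F(\tfrac12 \psi \circ R(\Res V)')$, where $(\Res V)'$ is the Hermitian space obtained by scaling the skew-Hermitian form on $\Res V$ by $\bi$; one checks that $\dim_E \Res V = 2$ and that $\det$ of this Hermitian form is a unit (up to norms) so that $\gamma_F(\tfrac12\psi \circ R(\Res V)') = \bigl((u,-1)_F \gamma_F(u,\tfrac12\psi)\bigr)^{?}$ — more precisely one must identify $\beta'$ with $\beta_{\U_E(\Res V),\xi'}$ of Definition \ref{d:kudla splitting} in the $\epsilon = -1$ case, absorbing the extra $\xi'(\bi)^j$ factor. (Here I suspect the paper intends the identification $\Res V \otimes_E W \cong \Res V$ to send the relevant Hermitian data to something with trivial discriminant, so that the formula collapses to the displayed $\beta'$; this is the one computation I would actually carry out carefully.) Granting that identification, $\beta'$ is a splitting of $z_{\Res V^\triangle \otimes W}$ on all of $\U_E(\Res V^\square)$ by Kudla's theorem.

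For part (a): since $\hat s' = (i')^*\beta'$, $\hat s^-{}' = (i^-{}')^*\beta'$, and $\hat s^\square{}' = (i^\square{}')^*\beta'$ are pullbacks along the group homomorphisms $i'$, $i^-{}'$, $i^\square{}'$ of the cocycle-trivialization $\beta'$, they are automatically trivializations of the corresponding pullback cocycles; and the pullback of $z_{\Res V^\triangle \otimes W}$ along each of these maps is again $z_{\Res V^\triangle \otimes W}$ restricted to the relevant image, because $i'$, $i^-{}'$, $i^\square{}'$ all land in $\U_E(\Res V^\square)$ and the cocycle is defined on that ambient symplectic group with respect to the fixed polarization $\Res V^\triangle \otimes W$. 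So part (a) is formal once $\beta'$ itself is known to split $z_{\Res V^\triangle \otimes W}$.

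For part (b): apply Lemma \ref{l:change polarization} with the two polarizations $\Res V^\triangle \otimes W$ and $\bY^\square$ of $\bV^\square$. That lemma gives $z_{\bY^\square} = (\partial \lambda') \cdot z_{\Res V^\triangle \otimes W}$ where $\partial\lambda'(g_1,g_2) = \lambda'(g_1g_2)\lambda'(g_1)^{-1}\lambda'(g_2)^{-1}$ and $\lambda' = \lambda_{\Res V^\triangle \otimes W \rightsquigarrow \bY^\square}$. Hence $\beta'\lambda'$ trivializes $z_{\bY^\square}$ on $\U_E(\Res V^\square)$, and pulling back along $i^\square{}'$ shows $s^\square{}'$ splits $z_{\bY^\square}$ on the image of $i^\square{}'$. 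For $s' = (i')^*(\beta'\lambda')$ and $s^-{}' = (i^-{}')^*(\beta'\lambda')$ one must check that the pullback of $z_{\bY^\square}$ along $i'$ (resp. $i^-{}'$) equals $z_{\bY}$ (resp. $z_{\bY}^{-1}$); this is the only genuinely substantive point of part (b). It follows from the fact that, under the identification $\Res V \otimes_E W \cong \Res V$, the map $i'$ embeds $\G(\U_E(\Res V)\times\U_E(W))$ into $\U_E(\Res V^\square)$ so that it acts only on the first $\Res V$-factor of the double, hence its image lies in the stabilizer of the polarization $\bY + \bY$ of $\bV^\square = \bV + \bV^-$ and the composite $\iota' \circ i'$ differs from $\iota \circ (\text{embedding into } \Sp(\bV))$ only by the splitting on the second ($\bV^-$) factor, which is trivial; and similarly $i^-{}'$ acts on the second factor with the negated form, producing $z_\bY^{-1}$. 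This compatibility of $\iota'$ with the block-diagonal embedding $\Mp(\bV)\times\Mp(\bV)^{\op}\to\Mp(\bV^\square)$ is exactly the analogue of the corresponding fact for $i$, $i^-$ used implicitly in Section \ref{s:splittings0}, and I would verify it by the same direct bookkeeping on Rao's cocycle that underlies Lemma \ref{l:change polarization}.

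\textbf{Main obstacle.} The only real work is the first step — pinning down the Hermitian space $(\Res V)'$ and its discriminant so that $\beta'$ genuinely coincides with Kudla's $\beta_{\U_E(\Res V),\xi'}$ (in particular verifying that the $\xi'(\bi)^j$ term in the $\epsilon=-1$ branch of Definition \ref{d:kudla splitting} contributes trivially here, or is what makes the two formulas match). Everything after that is formal pullback and one application of Lemma \ref{l:change polarization}, mirroring Section \ref{s:splittings0} line for line.
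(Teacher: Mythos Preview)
Your overall strategy is exactly right and matches the paper's (implicit) argument: reduce to Kudla's Theorem \ref{t:kudla splittings} for part (a) and then apply Lemma \ref{l:change polarization} for part (b), just as in Section \ref{s:splittings0}. The paper in fact gives no separate proof here, relying on the sentence ``This section is completely analogous to Section \ref{s:splittings0}.''

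However, your ``main obstacle'' is based on a misidentification of which space plays the role of $\V$ in Kudla's setup. In Definition \ref{d:kudla splitting} the splitting $\beta_{\V,\xi}$ is a function on $\U_E(\W)$, where $\W$ is the split $\epsilon$-skew-Hermitian space and $\V$ is the $\epsilon$-Hermitian space. Here $\beta'$ is defined on $\U_E(\Res V^\square)$, so $\W = \Res V^\square$ and $\V = W$, the $1$-dimensional \emph{Hermitian} space with form $(a,b) = a\overline{b}$. This is the $\epsilon = +1$ case, not $\epsilon = -1$; there is no $(\Res V)'$ and no $\xi'(\bi)^j$ factor to absorb. Since $\dim W = 1$ and $\det W = 1$, the computation of $\gamma_F(\tfrac{1}{2}\psi \circ RW)$ is literally identical to the computation of $\gamma_F(\tfrac{1}{2}\psi \circ RV_0)$ in the proof in Section \ref{s:splittings0}, yielding $(u,-1)_F\,\gamma_F(u,\tfrac{1}{2}\psi)$. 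So $\beta' = \beta_{W,\xi'}$ on the nose, with no extra work. Once you correct this bookkeeping, your proof goes through verbatim and in fact spells out part (b) more carefully than the paper does.
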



\begin{lemma}\label{l:double compat'}
For $(g,h_1, h_2) \in \G(\U_E(\Res V) \times \U_E(W) \times \U_E(W))$, 
\begin{equation*}
s^\square{}'(g, h_1, h_2) = s'(g, h_1) \cdot \overline{s'(g,h_2)} \cdot \xi'(\det(i'(g,h_2))).
\end{equation*}
\end{lemma}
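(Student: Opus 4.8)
The plan is to mimic exactly the argument used to prove Lemma \ref{l:double compat0}, since the present statement is its verbatim analogue after replacing $(V_0, W_0, \xi)$ by $(\Res V, W, \xi')$; in particular, the claim reduces to the Howe--Kudla--Sweet splitting compatibility \cite[Lemma 1.1]{HKS96}. First I would observe that, by Definition \ref{d:splittings'}, each of $s'$, $s^-{}'$, $s^\square{}'$ is the pullback of the single function $\beta'\lambda'$ on $\Sp(\bV^\square)$ along the maps $i'$, $i^-{}'$, $i^\square{}'$, so the identity will follow from a pointwise identity among the values of $\beta'$ and $\lambda'$ at the elements $i'(g,h_1)$, $i^-{}'(g,h_2)$, and $i^\square{}'(g,h_1,h_2) = i'(g,h_1)\, i^-{}'(g,h_2)$.

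The key steps, in order: (1) Record the factorization $i^\square{}'(g,h_1,h_2) = i'(g,h_1)\, i^-{}'(g,h_2)$ inside $\U_E(\Res V^\square)$ (already noted in the text just before Definition \ref{d:splittings'}); this is what lets us split the problem into the Kudla piece $\beta'$ and the change-of-polarization piece $\lambda'$. (2) For the $\beta'$ piece: since $\beta' = \beta'_{\U_E(\Res V^\triangle), \xi'}$ is Kudla's splitting datum attached to the Hermitian space $\Res V^\triangle$ (the $\triangle$-copy realizing $\Res V \otimes_E W \cong \Res V$, exactly as in Section \ref{s:splittings'}), the multiplicativity defect of $\beta'$ along products of the form (element of $\U_E(\V^\square)$ coming from the first copy)$\cdot$(element coming from the second copy) is governed by the Weil-index/Leray-invariant formula of \cite{K94}; tracking how $j(\cdot)$ and $x(\cdot)$ behave under this product — using $j(i^-{}'(g,h_2)) = j(i'(g,h_2))$ and $x(i^-{}'(g,h_2)) = (-1)^{j(i'(g,h_2))} x(i'(g,h_2))$ as in the proof of the preceding "$\hat s^-$" lemma, together with $\gamma_F(u,\tfrac12\psi)^2 = (u,-1)_F$ — produces the factor $\overline{\cdot}$ on the $h_2$-term and the extra character value $\xi'(\det(i'(g,h_2)))$. (3) For the $\lambda'$ piece: $\lambda' = \lambda_{\Res V^\triangle \otimes W \rightsquigarrow \bY^\square}$ is multiplicative up to a coboundary that already matches $z_{\Res V^\triangle \otimes W}$ versus $z_{\bY^\square}$ (Lemma \ref{l:change polarization}), so on the doubled subgroup $\G(\U_E(\Res V) \times \U_E(W) \times \U_E(W))$ — where $h_2$ acts only on the negated copy — the $\lambda'$-contribution is compatible with the desired identity; the cleanest way is to invoke \cite[Lemma 1.1]{HKS96} directly, whose statement is precisely this compatibility of the doubling splitting with the two factor splittings, the scaling from $V_0$ to $\Res V$ being immaterial because the argument only uses the Hermitian-space formalism of Section \ref{s:kudla}.

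I expect the main obstacle to be purely bookkeeping rather than conceptual: carefully matching our normalizations (the character $\xi'$ and the choice $\lambda' = \lambda_{\Res V^\triangle \otimes W \rightsquigarrow \bY^\square}$) to those in \cite{HKS96} and \cite{K94}, and verifying that the identification $\Res V \otimes_E W \cong \Res V$ of skew-Hermitian $E$-spaces is compatible with the $\triangle$-embedding used to define $\beta'$ — i.e.\ that the Hermitian space feeding Kudla's $\gamma_F(\tfrac12\psi \circ R(\cdot))$ is indeed the one denoted by the $\triangle$-copy. Once these identifications are pinned down, the proof is a one-line appeal: it is \cite[Lemma 1.1]{HKS96} (compare also the proof of Lemma \ref{l:double compat0} and \cite[Appendix C]{IP16a}, \cite{IP16b}), with $(V_0,W_0,\xi)$ replaced by $(\Res V, W, \xi')$ throughout.
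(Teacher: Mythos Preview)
Your proposal is correct and matches the paper's approach: the paper gives no separate proof of this lemma, having stated at the start of Section \ref{s:splittings'} that it is ``completely analogous'' to Section \ref{s:splittings0}, and the proof of Lemma \ref{l:double compat0} is precisely the one-line citation to \cite[Lemma 1.1]{HKS96} that you arrive at. Your additional bookkeeping (tracking $j$, $x$, and the $\lambda'$ coboundary) is more detailed than what the paper records, but it is the correct unwinding of that citation.
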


\subsection{Compatibility between the splittings for the three seesaws} \label{s:compat}

In this section, we investigate the compatibility of the splittings of the four pairs of unitary groups relative to the three seesaws presented in \eqref{e:single seesaw} and \eqref{e:double seesaw}. Compatibility of the splittings in the two doubling seesaws of \eqref{e:double seesaw} is explicated in Lemmas \ref{l:double compat0} and \ref{l:double compat'}. Hence it remains to investigate the compatibility of the splittings
\begin{equation*}
s \from \G(\U_E(V_0) \times \U_E(W_0)) \to \CC^1 \qquad \text{and} \qquad s' \from \G(\U_E(\Res V) \times \U_E(W)) \to \CC^1.
\end{equation*}
Precisely, we would compare $s$ and $s'$ on the subgroup 
\begin{equation*}
\G(\U_E(V_0) \times \U_E(W)) \cong \{(\alpha, \beta) \in E^\times \times E^\times : \Nm(\alpha) = \Nm(\beta)\}.
\end{equation*}
We prove a sequence of lemmas that to break up the computation showing Proposition \ref{p:compat}. 

Let $\alpha, \beta \in E^\times$ with $\Nm(\alpha) = \Nm(\beta)$ so that $(\alpha, \beta) \in \G(\U_E(V_0) \times \U_E(W))$. Let $g \in \U_E(W_0^\square)$ denote the map $(w, w^-) \mapsto (\alpha^{-1} w \beta, w^-)$ and let $g' \in \U_E(\Res V^\square)$ denote the map $(v, v^-) \mapsto (\alpha^{-1} v \beta, v^-)$. Define:
\begin{equation*}
v_1 \colonequals \left(-\frac{\bi}{2u}, \frac{\bi}{2u}\right), \qquad v_2 \colonequals \left(\frac{\bi\bj}{2uJ}, -\frac{\bi\bj}{2uJ}\right), \qquad v_1' \colonequals (1,1), \qquad v_2' \colonequals (\bj, \bj).
\end{equation*}
This defines an $E$-basis of $W_0^\square$ and of $\Res V^\square$ with the following property:
\begin{equation*}
( v_i, v_j' )_0 = \delta_{ij}, \qquad ( v_i, v_j )_0 = ( v_i', v_j' )_0 = 0, \qquad
\langle v_i, v_j' \rangle = \delta_{ij}, \qquad \langle v_i, v_j \rangle = \langle v_i', v_j' \rangle = 0.
\end{equation*}
With respect to the basis $\{v_1, v_2, v_1', v_2'\}$,
\begin{align} \label{e:g}
g &= \left(\begin{matrix}
\frac{1 + \alpha^{-1}\beta}{2} & 0 & \frac{1 - \alpha^{-1}\beta}{4u} \bi & 0 \\
0 & \frac{1 + \alpha^{-1} \overline \beta}{2} & 0 & -\frac{1 - \alpha^{-1} \overline \beta}{4uJ} \bi \\
(1 - \alpha^{-1} \beta) \bi & 0 & \frac{1 + \alpha^{-1} \beta}{2} & 0 \\
0 & -(1 - \alpha^{-1} \overline \beta) \bi J & 0 & \frac{1 + \alpha^{-1} \overline \beta}{2}
\end{matrix}\right) \\ \label{e:g'}
g' &= \left(\begin{matrix}
\frac{1 + \alpha^{-1}\beta}{2} & 0 & \frac{1 - \alpha^{-1}\beta}{4u} \bi & 0 \\
0 & \frac{1 + \overline \alpha^{-1} \beta}{2} & 0 & \frac{1 - \overline \alpha^{-1} \beta}{4uJ} \bi \\
(1 - \alpha^{-1} \beta) \bi & 0 & \frac{1 + \alpha^{-1} \beta}{2} & 0 \\
0 & (1 - \overline \alpha^{-1} \beta) \bi J & 0 & \frac{1 + \overline \alpha^{-1} \beta}{2}
\end{matrix}\right)
\end{align}
Here, we view each unitary group as a subgroup of $\GL_4(E)$ with $\GL_4(E)$ acting formally by right-multiplication. Note however that $W_0^\square$ is a left $E$-space, and so we interpret the formal multiplication $v \cdot a$ for $v \in W_0^\square$ and $a \in E$ as $av$. Throughout this section, we write $\bg$ when we want to refer to one of $g$ or $g'$ simultaneously.

\begin{lemma}\label{l:pwp}
We have

\begin{equation*}
\begin{tabular}{| c | c | c | c |} \hline
Conditions & $x(g)$ & $x(g')$ & $j(\bg)$ \\ \hline
$\alpha^{-1} \beta = 1,$ $\alpha^{-1} \overline \beta = 1$ & $1$ & $1$ & $0$ \\ \hline
$\alpha^{-1} \beta = 1,$ $\alpha^{-1} \overline \beta \neq 1$ & $-(1-\alpha^{-1} \overline \beta) \bi J$ & $(1 - \overline \alpha^{-1} \beta) \bi J$ & $1$ \\ \hline
$\alpha^{-1} \beta \neq 1$, $\alpha^{-1} \overline \beta = 1$ & $(1 - \alpha^{-1} \beta) \bi$ & $(1 - \alpha^{-1} \beta)\bi$ & $1$ \\ \hline
$\alpha^{-1} \beta \neq 1$, $\alpha^{-1} \overline \beta \neq 1$ & $-(1 - \alpha^{-1} \beta)(1 - \alpha^{-1} \overline \beta) uJ$ & $(1 - \alpha^{-1} \beta)(1 - \overline \alpha^{-1} \beta) u J$ & $2$ \\ \hline
\end{tabular}
\end{equation*}
\end{lemma}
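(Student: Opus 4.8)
The statement is a bookkeeping lemma: for the explicit matrices $g$ and $g'$ written in \eqref{e:g} and \eqref{e:g'}, one must compute the Bruhat data $j(\bg)$, $x(g)$, $x(g')$ in each of the four cases determined by whether $\alpha^{-1}\beta = 1$ and whether $\alpha^{-1}\overline\beta = 1$. The plan is to exploit the fact that both $g$ and $g'$ are block-diagonal: with respect to the ordered basis $\{v_1, v_2, v_1', v_2'\}$ they decompose into two $2\times 2$ blocks, one acting on $\Span\{v_1, v_1'\}$ and one on $\Span\{v_2, v_2'\}$. Since $Y = \Span_E\{v_1', v_2'\}$ and the parabolic $P = P_Y$, the Bruhat cell of a block-diagonal element is the ``sum'' of the cells of its blocks: $j(\bg)$ is the number of $2\times 2$ blocks that do \emph{not} stabilize their piece of $Y$, and $x(\bg)$ multiplies accordingly.

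\textbf{Key steps.} First I would record the structure: in the basis above, $\bg = \diag(A_1, A_2)$ where $A_i \in \U_1(E \oplus E)$ acts on $\Span\{v_i, v_i'\}$ with respect to the hyperbolic form pairing $v_i$ with $v_i'$. For such a rank-one symplectic-type group, $A_i \in P_i$ (the mirabolic fixing the line $Ev_i'$) if and only if its lower-left entry vanishes, which from \eqref{e:g} and \eqref{e:g'} happens exactly when the relevant ``$1 - (\text{something})$'' factor is zero, i.e.\ when $\alpha^{-1}\beta = 1$ (for $A_1$) or $\alpha^{-1}\overline\beta = 1$ (for $g$'s $A_2$) resp.\ $\overline\alpha^{-1}\beta = 1$ (for $g'$'s $A_2$). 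Second, note $\alpha^{-1}\overline\beta = 1 \iff \overline\alpha^{-1}\beta = 1$ (conjugate and use $\Nm\alpha = \Nm\beta$), so the four cases are the same for $g$ and $g'$; this legitimizes the shared column $j(\bg)$. Third, in each case I would compute: when $A_i \in P_i$ it contributes $0$ to $j$ and its diagonal $Y$-entry to $x$; when $A_i \notin P_i$, it lies in $P_i \tau_1 P_i$, contributes $1$ to $j$, and contributes to $x$ the $\tau$-twisted determinant $\det(p_1 p_2|_{Ev_i'})$ — which, for a rank-one element written as $\begin{psmallmatrix} a & b \\ c & d\end{psmallmatrix}$ with $c \neq 0$, works out (via the Bruhat decomposition $\begin{psmallmatrix} a & b \\ c & d\end{psmallmatrix} = \begin{psmallmatrix} 1 & ac^{-1} \\ 0 & 1\end{psmallmatrix} \begin{psmallmatrix} 0 & 1 \\ -1 & 0\end{psmallmatrix}^{\pm}\begin{psmallmatrix} * & * \\ 0 & * \end{psmallmatrix}$ adapted to the unitary normalization in Section \ref{s:kudla}) to essentially the lower-left entry $c$ up to sign. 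Reading off $c$ from \eqref{e:g} gives $(1-\alpha^{-1}\beta)\bi$ for block $A_1$ of $g$, $-(1-\alpha^{-1}\overline\beta)\bi J$ for block $A_2$ of $g$, and correspondingly $(1-\alpha^{-1}\beta)\bi$, $(1-\overline\alpha^{-1}\beta)\bi J$ for $g'$; multiplying the two block contributions in the mixed case produces the quadratic expressions with the $uJ = \bi^2 J$ and $uJ = \bi^2 J$ coefficients listed. Finally I would assemble the table, being careful with the sign bookkeeping coming from $\tau_1$ acting as $e_i' \mapsto e_i$, $e_i \mapsto -\epsilon e_i'$ with $\epsilon = -1$ here (skew-Hermitian case).

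\textbf{Main obstacle.} The only real subtlety is pinning down $x(\bg)$ up to the correct sign and factor: the definition $x(g) = \det(p_1 p_2|_Y)$ depends on the choice of Bruhat factorization $g = p_1 \tau_j p_2$, and one must use the precise normalization of $\tau_j$ from Section \ref{s:kudla} (with $\epsilon = -1$) to get a well-defined value. I expect the computation of the $\tau$-twisted determinant of a rank-one unitary matrix — tracking how the anti-diagonal entry of $\tau_1$ interacts with the off-diagonal entries of the blocks — to be where all the signs (the $-$ in rows $2$ and $4$ of \eqref{e:g}, and the overall signs in the table) are generated, and so this is the step that needs to be done carefully rather than waved through; everything else is the routine observation that Bruhat data is additive over block-diagonal decompositions.
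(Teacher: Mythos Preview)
Your approach is correct and takes a genuinely different route from the paper. The paper simply writes down, in each of the four cases, explicit $4\times 4$ matrices $p_1, p_2 \in P_{W_0^\triangle}$ (resp.\ $p_1', p_2'$) satisfying $g = p_1 \tau_j p_2$ and then reads off $x(g)$ and $j(g)$ from those factorizations; there is no structural observation, just brute-force verification. Your argument instead exploits the fact, visible in \eqref{e:g} and \eqref{e:g'}, that $\bg$ preserves each of the hyperbolic $E$-planes $\Span\{v_i, v_i'\}$ separately, so the Bruhat data are ``additive'' over the two rank-one blocks: $j(\bg) = j(A_1) + j(A_2)$ and $x(\bg) = x(A_1)\, x(A_2)$. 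This reduces the lemma to a single rank-one computation, namely that for $A = \left(\begin{smallmatrix} a & b \\ c & d\end{smallmatrix}\right)$ with $c \neq 0$ one has $x(A) = c$ (indeed, with $p_1$ unipotent one finds $p_2 = \left(\begin{smallmatrix} c & d \\ 0 & \det(A)/c\end{smallmatrix}\right)$, and in the paper's convention the relevant Levi determinant is the upper-left entry). Your approach is cleaner and explains \emph{why} the table entries are products of the lower-left entries of the blocks; the paper's approach has the compensating virtue that the explicit $p_i$ can be checked mechanically and remove any doubt about the sign conventions you correctly flag as the main obstacle. One small point: when only $A_2$ is in the big cell (your case (b)), the block-diagonal $\bg$ naturally lands in $P\sigma\tau_1\sigma^{-1}P$ for the coordinate swap $\sigma$ rather than $P\tau_1 P$ directly, but since $\sigma$ lies in the Levi with $\det(\sigma|_Y)^2 = 1$, this does not affect $x(\bg)$; the paper's explicit $p_1, p_2$ in that case have the permutation built in.
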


\begin{proof}
The proof amounts to giving explicit decompositions
\begin{equation*}
\bg = p_1 w p_2, \qquad \text{where $p_i \in P_{\V^\triangle}$ and $w = \tau_j = \left(\begin{smallmatrix} 1_{2-j} & & & \\ & & & -1_j \\ & & 1_{2-j} & \\ & 1_j & & & \end{smallmatrix}\right)$.}
\end{equation*}
There are four cases:
\begin{enumerate}[label=(\alph*)]
\item
If $\alpha^{-1} \beta = 1$ and $\alpha^{-1} \overline\beta = 1$, then
\begin{equation*}
g = 1, \qquad g' = 1.
\end{equation*}

\item
If $\alpha^{-1} \beta = 1$ and $\alpha^{-1} \overline \beta \neq 1$, then $g = p_1 \tau_1 p_2$ and $g' = p_1' \tau_1 p_2'$ for
\begin{align*}
p_1 &= \left(\begin{smallmatrix}
1 & 0 & 0 & 0 \\
0 & 1 & 0 & \frac{1 + \alpha^{-1} \overline \beta}{2(-1 + \alpha^{-1} \overline \beta) \bi J} \\
0 & 0 & 1 & 0 \\
0 & 0 & 0 & 1
\end{smallmatrix}\right),
&
p_2 &= \left(\begin{smallmatrix}
1 & 0 & 0 & 0 \\
0 & (-1 + \alpha^{-1} \overline \beta) \bi J & 0 & \frac{1+\alpha^{-1} \overline \beta}{2} \\
0 & 0 & 1 & 0 \\
0 & 0 & 0 & \frac{\alpha^{-1} \overline \beta}{(-1 + \alpha^{-1} \overline \beta) \bi J}
\end{smallmatrix}\right), \\
p_1' &= \left(\begin{smallmatrix}
1 & 0 & 0 & 0 \\
0 & 1 & 0 & -\frac{1 + \overline \alpha^{-1} \beta}{2(-1 + \overline \alpha^{-1} \beta) \bi J} \\
0 & 0 & 1 & 0 \\ 
0 & 0 & 0 & 1
\end{smallmatrix}\right),
&
p_2' &= \left(\begin{smallmatrix}
1 & 0 & 0 & 0 \\
0 & -(-1 + \overline \alpha^{-1} \beta) \bi J & 0 & \frac{1 + \overline \alpha^{-1} \beta}{2} \\
0 & 0 & 1 & 0 \\
0 & 0 & 0 & -\frac{\overline \alpha^{-1} \beta}{(-1 + \overline \alpha^{-1} \beta)\bi J}
\end{smallmatrix}\right).
\end{align*}

\item
If $\alpha^{-1} \beta \neq 1$ and $\alpha^{-1} \overline \beta = 1$, then
\begin{align*}
g &= g' = \left(\begin{smallmatrix}
0 & 1 & 0 & \frac{1 + \alpha^{-1} \beta}{2\bi(1-\alpha^{-1} \beta)} \\
1 & 0 & 0 & 0 \\
0 & 0 & 0 & 1 \\
0 & 0 & 1 & 0
\end{smallmatrix}\right) \cdot \tau_1 \cdot
\left(\begin{smallmatrix}
0 & 1 & 0 & 0 \\
(1-\alpha^{-1}\beta) \bi & 0 & \frac{1 + \alpha^{-1} \beta}{2} & 0 \\
0 & 0 & 0 & 1\\
0 & 0 & \frac{\alpha^{-1} \beta}{(1 - \alpha^{-1} \beta) \bi} & 0
\end{smallmatrix}\right). 
\end{align*}

\item
If $\alpha^{-1} \beta \neq 1$ and $\alpha^{-1} \overline \beta \neq 1$, then $g = p_1 \tau_2 p_2$ and $g' = p_1' \tau_2 p_2'$ for
\begin{align*}
p_1 &= 
\left(\begin{smallmatrix}
1 & 0 & \frac{1 + \alpha^{-1} \beta}{2(1 - \alpha^{-1} \beta) \bi} & 0 \\
0 & 1 & 0 & -\frac{1 + \alpha^{-1} \overline\beta}{2(1 -  \alpha^{-1} \overline\beta) \bi J} \\
0 & 0 & 1 & 0 \\
0 & 0 & 0 & 1
\end{smallmatrix}\right),
&
p_2 &= \left(\begin{smallmatrix}
(1 - \alpha^{-1} \beta) \bi & 0 & \frac{1 + \alpha^{-1} \beta}{2} & 0 \\
0 & -(1 -  \alpha^{-1} \overline\beta) \bi J  & 0 & \frac{1 +  \alpha^{-1} \overline\beta}{2} \\ 
0 & 0 & \frac{\alpha^{-1} \beta}{(1 - \alpha^{-1} \beta) \bi} & 0 \\
0 & 0 & 0 & -\frac{ \alpha^{-1} \overline\beta}{(1 -  \alpha^{-1} \overline\beta)\bi J}
\end{smallmatrix}\right), \\
p_1' &=
\left(\begin{smallmatrix}
1 & 0 & \frac{1 + \alpha^{-1} \beta}{2(1 - \alpha^{-1} \beta) \bi} & 0 \\
0 & 1 & 0 & \frac{1 + \overline \alpha^{-1} \beta}{2(1 - \overline \alpha^{-1} \beta) \bi J} \\
0 & 0 & 1 & 0 \\
0 & 0 & 0 & 1
\end{smallmatrix}\right),
&
p_2' &= \left(\begin{smallmatrix}
(1 - \alpha^{-1} \beta) \bi & 0 & \frac{1 + \alpha^{-1} \beta}{2} & 0 \\
0 & (1 - \overline \alpha^{-1} \beta) \bi J  & 0 & \frac{1 + \overline \alpha^{-1} \beta}{2} \\ 
0 & 0 & \frac{\alpha^{-1} \beta}{(1 - \alpha^{-1} \beta) \bi} & 0 \\
0 & 0 & 0 & \frac{\overline \alpha^{-1} \beta}{(1 - \overline \alpha^{-1} \beta)\bi J}
\end{smallmatrix}\right).
\end{align*}
\end{enumerate}
From the above decompositions, we can easily read off the desired information.
\end{proof}

\begin{lemma}\label{l:s hat a,a}
Let $\alpha = a_1 + b_1\bi$. Then
\begin{align*}
\hat s(\alpha,\alpha) &= \begin{cases}
\xi(\alpha^{-1}) \cdot (a_1,u)_F & \text{if $b_1 = 0$,} \\
\xi(\alpha^{-1}) \cdot (-2b_1 u J, u)_F \cdot \gamma_F(u, \frac{1}{2} \psi) \cdot (-1,-u)_F & \text{otherwise.}
\end{cases} \\
\hat s'(\alpha,\alpha) &= \begin{cases}
\xi'(\overline \alpha^{-1}) \cdot (a_1, u)_F & \text{if $b_1 = 0$,} \\
\xi'(\overline \alpha^{-1}) \cdot (-2b_1 u J, u)_F \cdot \gamma_F(u, \frac{1}{2} \psi) \cdot (-1,-u)_F & \text{otherwise.}
\end{cases}
\end{align*}
\end{lemma}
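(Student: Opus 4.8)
The plan is to unwind the definitions directly. By Definitions~\ref{d:splittings0} and~\ref{d:splittings'} we have $\hat s = i^*\beta$ and $\hat s' = (i')^*\beta'$, and $i(\alpha,\alpha)$, $i'(\alpha,\alpha)$ are exactly the elements $g$ of~\eqref{e:g} and $g'$ of~\eqref{e:g'} with the parameter there, also called $\beta$, set equal to $\alpha$. In particular $\alpha^{-1}\beta = 1$, so only the first two rows of the table in Lemma~\ref{l:pwp} can occur: the first when $\alpha = \overline\alpha$ (equivalently $\alpha \in F^\times$, i.e.\ $b_1 = 0$), the second when $b_1 \neq 0$. I would treat these two cases in turn.

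If $b_1 = 0$, the first row of Lemma~\ref{l:pwp} gives $x(g) = x(g') = 1$ and $j(\bg) = 0$, so $\hat s(\alpha,\alpha) = \beta(g) = \xi(1) = 1$ and likewise $\hat s'(\alpha,\alpha) = 1$; since $\xi|_{F^\times} = \epsilon_{E/F} = (\,\cdot\,,u)_F$ one also has $\xi(\alpha^{-1})(a_1,u)_F = \epsilon_{E/F}(a_1^{-1})\epsilon_{E/F}(a_1) = 1$, and the same on the primed side, so the asserted formulas hold. (The factor $(a_1,u)_F$ is there only so that the answer has the same shape as in the generic case $b_1 \neq 0$, which is what is used downstream.) If $b_1 \neq 0$, the second row gives $j(\bg) = 1$, $x(g) = -(1-\alpha^{-1}\overline\alpha)\bi J$, and $x(g') = (1-\overline\alpha^{-1}\alpha)\bi J$. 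Using $\overline\alpha = a_1 - b_1\bi$ one rewrites $1 - \alpha^{-1}\overline\alpha = (\alpha-\overline\alpha)/\alpha = 2b_1\bi/\alpha$ and $1 - \overline\alpha^{-1}\alpha = -2b_1\bi/\overline\alpha$, so, using $\bi^2 = u$, $x(g) = -2b_1uJ/\alpha$ and $x(g') = -2b_1uJ/\overline\alpha$. Since $-2b_1uJ \in F^\times$ and $\xi|_{F^\times} = (\,\cdot\,,u)_F$, this gives $\xi(x(g)) = (-2b_1uJ,u)_F\,\xi(\alpha^{-1})$ and $\xi'(x(g')) = (-2b_1uJ,u)_F\,\xi'(\overline\alpha^{-1})$. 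Substituting these, together with $j(\bg) = 1$, into the defining formula $\beta(g) = \xi(x(g))\bigl((u,-1)_F\gamma_F(u,\tfrac{1}{2}\psi)\bigr)^{-j(g)}$ of Definition~\ref{d:splittings0} and its primed analogue, the claim reduces to rewriting $\bigl((u,-1)_F\gamma_F(u,\tfrac{1}{2}\psi)\bigr)^{-1}$ in terms of $\gamma_F(u,\tfrac{1}{2}\psi)$ and Hilbert symbols.

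The main obstacle, modest as it is, is exactly this last rewriting, and it is where the correction factor $(-1,-u)_F = (-1,-1)_F(u,-1)_F$ in the statement is produced. It is a routine but careful manipulation with the properties of the Weil index recalled in \cite{R93}, \cite[Section 3.1.1]{IP16a} — notably $\gamma_F(a,\tfrac{1}{2}\psi)^2 = (a,-1)_F$, the product formula $\gamma_F(a,\tfrac{1}{2}\psi)\gamma_F(b,\tfrac{1}{2}\psi) = (a,b)_F\,\gamma_F(ab,\tfrac{1}{2}\psi)$, and the relation $(u,-1)_F\gamma_F(u,\tfrac{1}{2}\psi) = \gamma_F(\tfrac{1}{2}\psi\circ RV_0)$ established in the proof of the lemma following Definition~\ref{d:splittings0} — together with bimultiplicativity of $(\,\cdot\,,\,\cdot\,)_F$; care is needed to keep the normalization of $\gamma_F(\,\cdot\,,\tfrac{1}{2}\psi)$ straight. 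Everything else is pure substitution from Lemma~\ref{l:pwp}.
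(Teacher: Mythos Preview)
Your proposal is correct and follows essentially the same route as the paper: read off $x(\bg)$ and $j(\bg)$ from Lemma~\ref{l:pwp} in the two rows with $\alpha^{-1}\beta=1$, rewrite $1-\alpha^{-1}\overline\alpha = 2b_1\bi/\alpha$ and $1-\overline\alpha^{-1}\alpha = -2b_1\bi/\overline\alpha$, and substitute into Definition~\ref{d:splittings0}. The paper's proof is terser---it stops at ``the desired conclusion now follows by Lemma~\ref{l:pwp}''---whereas you spell out that the remaining content is the Weil-index identity $((u,-1)_F\gamma_F(u,\tfrac12\psi))^{-1} = \gamma_F(u,\tfrac12\psi)\cdot(-1,-u)_F$, which is indeed what one checks from $\gamma_F(u,\tfrac12\psi)^2=(u,-1)_F$ together with bimultiplicativity of the Hilbert symbol.
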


\begin{proof}
We use Lemma \ref{l:pwp} in the two cases where $\alpha^{-1} \beta = 1$. If $\alpha^{-1} \overline \alpha = 1$, then $\alpha = \overline \alpha$ and so $b_1 = 0$. By Lemma \ref{l:pwp}, we have
\begin{equation*}
\hat s(\alpha,\alpha) = \hat s'(\alpha,\alpha) = 1 = \xi(\alpha^{-1}) \cdot (a_1, u)_F = \xi'(\alpha^{-1}) \cdot (a_1, u)_F.
\end{equation*}
If $\alpha^{-1} \overline \alpha \neq 1$, then $b_1 \neq 0$. Note that
\begin{equation*}
1 - \alpha^{-1} \overline \alpha = \alpha^{-1}(\alpha - \overline \alpha) = \alpha^{-1} \cdot 2 b_1 \bi, \qquad
1 - \overline \alpha^{-1} \alpha = \overline{1 - \alpha^{-1} \overline\alpha} = -\overline \alpha^{-1} \cdot 2 b_1 \bi.
\end{equation*}
The desired conclusion now follows by Lemma \ref{l:pwp}.
\end{proof}

\begin{lemma}\label{l:s hat 1,z}
Let $\zeta = a + b \bi \in E^1$. Then
\begin{equation*}
\hat s(1,\zeta) = 
\begin{cases}
1 & \text{if $a = 1$,} \\
((2-2a)uJ, u)_F & \text{if $a \neq 1$,}
\end{cases}
\qquad
\hat s'(1,\zeta) =
\begin{cases}
1 & \text{if $a = 1$,} \\
\xi'(\zeta) \cdot ((2-2a)uJ, u)_F & \text{if $a \neq 1$.}
\end{cases}
\end{equation*}
\end{lemma}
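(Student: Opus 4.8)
The plan is to specialize the data of Section~\ref{s:compat} to $\alpha = 1$, $\beta = \zeta$ with $\zeta = a + b\bi \in E^1$, and then simply read off $x(\bg)$ and $j(\bg)$ from Lemma~\ref{l:pwp} and substitute into the definitions of $\beta$ and $\beta'$. First I would record two elementary identities in $E$. Since $\Nm(\zeta) = \zeta\bar\zeta = a^2 - ub^2 = 1$ and $\Tr_{E/F}(\zeta) = \zeta + \bar\zeta = 2a$, we have $(1-\zeta)(1-\bar\zeta) = 1 - \Tr_{E/F}(\zeta) + \Nm(\zeta) = 2 - 2a$; and since $\bar\zeta = \zeta^{-1}$ we have $1 - \zeta = -\zeta(1-\bar\zeta)$, whence $(1-\zeta)^2 = -\zeta(1-\zeta)(1-\bar\zeta) = -\zeta(2-2a)$. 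In particular $\zeta = 1$ if and only if $a = 1$, and likewise $\bar\zeta = 1$ if and only if $a = 1$. Therefore, among the four cases of Lemma~\ref{l:pwp} applied with $\alpha^{-1}\beta = \zeta$ and $\alpha^{-1}\bar\beta = \bar\zeta$, only case (a) (occurring exactly when $a = 1$) and case (d) (occurring exactly when $a \neq 1$) can arise.

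If $a = 1$, then Lemma~\ref{l:pwp} gives $j(\bg) = 0$ and $x(g) = x(g') = 1$, so by Definitions~\ref{d:splittings0} and \ref{d:splittings'} we get $\hat s(1,\zeta) = \xi(1) = 1$ and $\hat s'(1,\zeta) = \xi'(1) = 1$, as claimed. If $a \neq 1$, then $j(\bg) = 2$ and, substituting $\alpha = 1$ into the last row of the table in Lemma~\ref{l:pwp}, $x(g) = -(1-\zeta)(1-\bar\zeta)uJ = -(2-2a)uJ \in F^\times$ and $x(g') = (1-\zeta)^2 uJ = -\zeta(2-2a)uJ$. It then remains to evaluate $\beta$ at $g$ and $\beta'$ at $g'$. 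Using $\gamma_F(u, \tfrac12\psi)^2 = (u,-1)_F$ and the fact that Hilbert symbols are $\pm 1$, one has $\bigl((u,-1)_F\,\gamma_F(u,\tfrac12\psi)\bigr)^{-2} = (u,-1)_F$. For $g$: since $x(g) \in F^\times$ and $\xi|_{F^\times} = \epsilon_{E/F}$ with $\epsilon_{E/F}(x) = (x,u)_F$, we get $\xi(x(g)) = (-(2-2a)uJ, u)_F = (-1,u)_F\,((2-2a)uJ, u)_F$, and multiplying by the Weil-index contribution $(u,-1)_F$ the factor $(-1,u)_F(u,-1)_F = 1$ cancels by symmetry and bimultiplicativity, leaving $\hat s(1,\zeta) = ((2-2a)uJ, u)_F$. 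For $g'$: writing $x(g') = (-1)\cdot\zeta\cdot(2-2a)uJ$ and using $\xi'|_{F^\times} = \epsilon_{E/F}$ gives $\xi'(x(g')) = (-1,u)_F\,\xi'(\zeta)\,((2-2a)uJ, u)_F$, and the same cancellation against $(u,-1)_F$ yields $\hat s'(1,\zeta) = \xi'(\zeta)\,((2-2a)uJ, u)_F$.

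This is a routine computation once Lemma~\ref{l:pwp} is available. The only points requiring a little care are the factorization $(1-\zeta)^2 = -\zeta(2-2a)$, which is precisely what produces the extra twist $\xi'(\zeta)$ on the $\Res V$ side while $x(g)$ already lies in $F^\times$ on the $V_0$ side, and the bookkeeping of the cancellation of $(-1,u)_F$ (coming out of $\xi$, resp.\ $\xi'$, applied to the sign) against the factor $(u,-1)_F = \gamma_F(u,\tfrac12\psi)^{-2}$ arising from $j(\bg) = 2$.
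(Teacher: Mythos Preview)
Your proof is correct and follows essentially the same approach as the paper: apply Lemma~\ref{l:pwp} with $\alpha=1$, $\beta=\zeta$, observe that only cases (a) and (d) occur, and use the identities $(1-\zeta)(1-\bar\zeta)=2-2a$ and $(1-\zeta)^2=-\zeta(2-2a)$ to simplify the resulting expressions. You spell out the cancellation of $(-1,u)_F$ against the Weil-index factor in slightly more detail than the paper, but the argument is the same.
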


\begin{proof}
We use Lemma \ref{l:pwp}. If $\zeta = 1$, this corresponds to the case $\alpha^{-1} \beta = 1$, $\alpha^{-1} \overline \beta = 1$, and
\begin{align*}
\hat s(1,\zeta) = \hat s'(1,\zeta) = 1.
\end{align*}
If $\zeta \neq 1$, this corresponds to the case $\alpha^{-1} \beta \neq 1$, $\alpha^{-1} \overline \beta \neq 1$, and
\begin{equation*}
\hat s(1,\zeta) = \xi(-(1-\zeta)(1-\overline \zeta) uJ) \cdot (-1,u)_F, \qquad
\hat s'(1,\zeta) = \xi'((1-\zeta)^2 uJ) \cdot (-1,u)_F.
\end{equation*}
The desired conclusion follows from the simple observation
\begin{equation*}
(1-\zeta)(1-\overline \zeta) = 2-2a, \qquad
(1-\zeta)^2 = -\zeta(1-\zeta)(1-\overline \zeta) = -\zeta(2-2a). \qedhere
\end{equation*}
\end{proof}

\begin{proposition}\label{p:compat}
Let $g \in \G(\U_E(V_0) \times \U_E(W)) \subset \G(\U_E(V_0) \times \U_E(W_0))$ and $g' \in \G(\U_E(V_0) \times \U_E(W)) \subset \G(\U_E(\Res V) \times \U_E(W))$ correspond to $(\alpha,\beta) \in E^\times \times E^\times$ with $\Nm(\alpha) = \Nm(\beta)$. Then
\begin{equation*}
s'(g') = \xi(\alpha) \xi'(\beta) s(g).
\end{equation*}
\end{proposition}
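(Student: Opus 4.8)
The plan is to compute both sides explicitly and match them, organizing the comparison around the factorizations $s=i^{*}(\beta\lambda)$ and $s'=(i')^{*}(\beta'\lambda')$ into a Hilbert-symbol part $\hat s=i^{*}\beta$, $\hat s'=(i')^{*}\beta'$ (which Lemmas~\ref{l:pwp}, \ref{l:s hat a,a}, \ref{l:s hat 1,z} already describe) and a change-of-polarization part $\lambda$, $\lambda'$. First I would collapse the two $\lambda$'s into one function. Under the identifications $\Res_{E/F}(W_0^{\square})=\bV^{\square}=\Res_{B/F}(V^{\square})$, and using $V_0\otimes W_0=\bV=\Res V\otimes W$, the two source Lagrangians $V_0\otimes W_0^{\triangle}$ and $\Res V^{\triangle}\otimes W$ both equal $\bV^{\triangle}$; hence $\lambda$ and $\lambda'$ coincide as the single function $\lambda_{\bV^{\triangle}\rightsquigarrow\bY^{\square}}$ on $\Sp(\bV^{\square})$. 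Writing $\bg$ for $g$ or $g'$ as in \eqref{e:g}--\eqref{e:g'}, the proposition then reduces to $\hat s'(g')\,\lambda(g')=\xi(\alpha)\xi'(\beta)\,\hat s(g)\,\lambda(g)$.

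Next I would reduce to one-parameter families. Put $\zeta\colonequals\alpha^{-1}\beta$; the hypothesis $\Nm\alpha=\Nm\beta$ forces $\zeta\in E^{1}$, and $(\alpha,\beta)=(\alpha,\alpha)\cdot(1,\zeta)$ in the abelian group $\G(\U_E(V_0)\times\U_E(W))$. Since $s$ and $s'$ split the single cocycle $z_{\bY^{\square}}$ and $i$, $i'$ are homomorphisms, once one checks that $z_{\bY^{\square}}$ restricts trivially to the image of this abelian subgroup, $s$ and $s'$ become genuine characters on it, so it suffices to verify the reduced identity at $(\alpha,\alpha)$ and at $(1,\zeta)$ and then multiply. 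At $(\alpha,\alpha)$, Lemma~\ref{l:s hat a,a} gives $\hat s'(\alpha,\alpha)=\xi'(\overline\alpha^{-1})\xi(\alpha)\,\hat s(\alpha,\alpha)$; since $\overline\alpha^{-1}=\alpha(\Nm\alpha)^{-1}$ with $\Nm\alpha\in F^{\times}$ and $\xi'|_{F^{\times}}=\epsilon_{E/F}$ kills norms, this equals $\xi(\alpha)\xi'(\alpha)\,\hat s(\alpha,\alpha)$. At $(1,\zeta)$, Lemma~\ref{l:s hat 1,z} gives $\hat s'(1,\zeta)=\xi'(\zeta)\,\hat s(1,\zeta)$ (for $\zeta\in E^{1}$ one has $a=1$ iff $\zeta=1$, so the two cases of that lemma glue), which is precisely the factor $\xi(1)\xi'(\zeta)$. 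In each family what remains is the assertion $\lambda(g)=\lambda(g')$; granting it, multiplicativity together with $\xi'(\alpha)\xi'(\zeta)=\xi'(\beta)$ completes the argument. The hypothesis $\Nm\alpha=\Nm\beta$ enters exactly through $\xi'$ (and $\xi$) killing norms.

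The hard part will be the equality $\lambda(g)=\lambda(g')$ on these elements, together with the cocycle triviality invoked in the reduction. For the former I would use the explicit $P\tau_jP$-decompositions of $g$ and $g'$ read off in Lemma~\ref{l:pwp} from the matrices \eqref{e:g}, \eqref{e:g'}, and evaluate $\lambda_{\bV^{\triangle}\rightsquigarrow\bY^{\square}}(\bg)=\gamma_F(\tfrac12\psi\circ q(\bV^{\triangle},\bY^{\square}\bg^{-1},\bY^{\square}))\,\gamma_F(\tfrac12\psi\circ q(\bV^{\triangle},\bY^{\square},\bV^{\triangle}\bg))$ via the Weil index of the Leray invariant; the expectation is that the outcome sees $\bg$ only through invariants unchanged in passing from $g$ to $g'$ --- for instance $j(\bg)$, which Lemma~\ref{l:pwp} shows agrees for the two --- but the Hilbert-symbol and sign bookkeeping, and the identity $\gamma_F(u,\tfrac12\psi)^{2}=(u,-1)_F$, must be handled carefully. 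If instead the cocycle restriction fails to vanish, the resulting coboundary must be tracked and shown to cancel between the two sides.
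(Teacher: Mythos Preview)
Your plan is essentially the paper's plan --- same decomposition $(\alpha,\beta)=(\alpha,\alpha)\cdot(1,\alpha^{-1}\beta)$, same use of Lemmas~\ref{l:s hat a,a} and~\ref{l:s hat 1,z} for the $\hat s$, $\hat s'$ pieces --- but you have misidentified where the work lies. The ``hard part'' you flag (the Leray computation for $\lambda(g)=\lambda(g')$, and the cocycle bookkeeping) evaporates once you notice that the images of $g_\bullet$ and $g_\bullet'$ in $\Sp(\bV^\square)$ are \emph{literally the same element}. Both $(\alpha,\beta)\in\G(\U_E(V_0)\times\U_E(W_0))$ and $(\alpha,\beta)\in\G(\U_E(\Res V)\times\U_E(W))$ act on $\bV=B$ by the single $F$-linear map $b\mapsto\alpha^{-1}b\beta$ (this is the content of the commutative diagram~\eqref{e:compat subgroups}); the matrices~\eqref{e:g} and~\eqref{e:g'} differ only because $W_0$ carries a left $E$-structure while $\Res V$ carries a right one, so $\alpha^{-1}\bj\beta$ is recorded as $(\alpha^{-1}\overline\beta)\cdot\bj$ in one and $\bj\cdot(\overline\alpha^{-1}\beta)$ in the other. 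After restriction of scalars to $F$ there is no difference. Consequently $\lambda(i^\square(g_\bullet))=\lambda(i^\square{}'(g_\bullet'))$ and $z_{\bY}(i(g_1),i(g_2))=z_{\bY}(i'(g_1'),i'(g_2'))$ are automatic, and the ratio $s(g)\cdot s'(g')^{-1}$ collapses to $\hat s(g_1)\hat s(g_2)\hat s'(g_1')^{-1}\hat s'(g_2')^{-1}$ with no residual Weil-index or cocycle contribution to chase.

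Two minor corrections: $s$ and $s'$ are splittings of $z_{\bY}$, not $z_{\bY^\square}$ (cf.\ Lemma following Definition~\ref{d:splittings0}); and you need not verify that the cocycle restricts trivially --- the paper does not use multiplicativity of $s$, $s'$ but rather writes $s(g)=\hat s(g_1)\lambda(g_1)\hat s(g_2)\lambda(g_2)z_{\bY}(i(g_1),i(g_2))$ and the analogous expression for $s'$, then cancels the $\lambda$ and $z_\bY$ terms pairwise using the equality of images just described.
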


\begin{proof}
We use the formulas given in Lemma \ref{l:s hat a,a} and Lemma \ref{l:s hat 1,z} together with Lemma \ref{l:change polarization}. Recall that $g = g_1 \cdot g_2,$ $g' = g_1' \cdot g_2',$ where $\bg_1$ corresponds to $(\alpha,\alpha)$ and $\bg_2$ corresponds to $(1,\beta/\alpha)$.

First notice that under the natural maps
\begin{align*}
i &\from \U_E(V_0 \otimes W_0) \to \Sp(\bV), &
i^\square &\from \U_E(V_0 \otimes W_0) \to \Sp(\bV^\square), \\
i' &\from \U_E(\Res V \otimes W) \to \Sp(\bV), &
i^\square{}' &\from \U_E(\Res V \otimes W) \to \Sp(\bV^\square),
\end{align*}
we have
\begin{equation*}
i(g_\bullet) = i'(g_\bullet') \in \Sp(\bV), \qquad
i^\square(g_\bullet) = i^\square{}'(g_\bullet') \in \Sp(\bV^\square),
\end{equation*}
where $\bg_\bullet$ denotes any of $\bg,$ $\bg_1$, $\bg_2$. This implies that for $\lambda \colonequals \lambda_{\bV^\triangle \rightsquigarrow \bY^\square},$
\begin{equation*}
\lambda(i^\square(g_\bullet)) = \lambda(i^\square{}'(g_\bullet')), \qquad \text{and} \qquad z_\bY(i(g_1), i(g_2)) = z_\bY(i'(g_1'), i'(g_2')).
\end{equation*}
By definition,
\begin{align*}
s(\bg) &= \hat s(\bg_1) \cdot \mu(\bg_1) \cdot \hat s(\bg_2) \cdot \mu(\bg_2) \cdot z_\bY(i(\bg_1), i(\bg_2)), \\
s'(\bg) &= \hat s'(\bg_1) \cdot \mu(\bg_1) \cdot \hat s'(\bg_2) \cdot \mu(\bg_2) \cdot z_\bY(i'(\bg_1), i'(\bg_2)),
\end{align*}
Thus we have
\begin{align*}
\bchi(\alpha,\beta) = s(g) \cdot s'(g')^{-1} = \hat s(g_1) \cdot \hat s(g_2) \cdot \hat s'(g_1')^{-1} \cdot \hat s'(g_2')^{-1}.
\end{align*}

Now we combine the results of Lemmas \ref{l:s hat a,a} and \ref{l:s hat 1,z} to compute $\bchi(\alpha,\beta)$. Using the fact
\begin{equation*}
\overline \alpha^{-1} \cdot \beta \cdot \alpha^{-1} = \overline \beta^{-1},
\end{equation*}
in the calculation of $\hat s'(g_1') \hat s'(g_2')$ when $\alpha \neq \beta$, we have:
\begin{align*}
\hat s(g_1) \cdot \hat s(g_2)
&= \begin{cases}
\xi(\alpha^{-1}) \cdot (a_1, u)_F & \text{$\alpha \in F^\times$, $\alpha = \beta$} \\
\xi(\alpha^{-1}) \cdot (a_1, u)_F \cdot ((2-2a)uJ,u)_F & \text{$\alpha \in F^\times$, $\alpha \neq \beta$} \\
\xi(\alpha^{-1}) \cdot (-2b_1uJ, u)_F \cdot \gamma_F(u, \frac{1}{2} \psi) \cdot (-1,-u)_F & \text{$\alpha \not\in F^\times$, $\alpha = \beta$} \\
\xi(\alpha^{-1}) \cdot (-2b_1 u J, u)_F \cdot \gamma_F(u, \frac{1}{2} \psi) \cdot (-1,-u)_F \\
\qquad\qquad\qquad\qquad\qquad\qquad\qquad \cdot ((2-2a) uJ, u)_F & \text{$\alpha \not\in F^\times$, $\alpha \neq \beta$}
\end{cases} \\
\hat s'(g_1') \cdot \hat s'(g_2')
&= \begin{cases}
\xi'(\overline \alpha^{-1}) \cdot (a_1, u)_F &\text{$\alpha \in F^\times$, $\alpha = \beta$} \\
\xi'(\overline \beta^{-1}) \cdot (a_1, u)_F \cdot ((2-2a)uJ, u)_F &\text{$\alpha \in F^\times$, $\alpha \neq \beta$} \\
\xi'(\overline \alpha^{-1}) \cdot (-2b_1uJ, u)_F \cdot \gamma_F(u, \frac{1}{2}\psi) \cdot (-1,-u)_F & \text{$\alpha \not\in F,$ $\alpha = \beta$} \\
\xi'(\overline \beta^{-1}) \cdot (-2b_1 uJ, u)_F \cdot \gamma_F(u, \frac{1}{2} \psi) \cdot (-1, -u)_F \\
\qquad\qquad\qquad\qquad\qquad\qquad\qquad \cdot ((2-2a)uJ, u)_F & \text{$\alpha \not\in F^\times$, $\alpha \neq \beta$}
\end{cases}
\end{align*}
Therefore
\begin{align*}
\bchi(\alpha,\beta)
&= \begin{cases}
\xi(\alpha^{-1}) \cdot \xi'(\overline \alpha) & \text{$\alpha \in F^\times$, $\alpha = \beta$} \\
\xi(\alpha^{-1}) \cdot \xi'(\overline \beta) & \text{$\alpha \in F^\times$, $\alpha \neq \beta$} \\
\xi(\alpha^{-1}) \cdot \xi'(\overline \alpha) & \text{$\alpha \not \in F^\times$, $\alpha = \beta$} \\
\xi(\alpha^{-1}) \cdot \xi'(\overline \beta) & \text{$\alpha \not\in F^\times$, $\alpha \neq \beta$}
\end{cases} \\
&= \xi(\alpha^{-1}) \cdot \xi'(\overline \beta) = \xi(\alpha^{-1}) \cdot \xi'(\beta^{-1}) \cdot \xi'(\beta \overline \beta) = \xi(\alpha^{-1}) \xi'(\beta^{-1}). \qedhere
\end{align*}
\end{proof}

\subsection{Product formula}\label{s:global splittings}

In this section, we put the local considerations of the Sections \ref{s:kudla}, \ref{s:change polarization}, \ref{s:three seesaws}, and \ref{s:compat} into the global picture. Once and for all, pick Hecke characters
\begin{equation*}
\xi, \xi' \from E^\times \backslash \bA_E^\times \to \CC^1 \qquad \text{such that $\xi|_{\bA_F^\times} = \xi'|_{\bA_F^\times} = \epsilon_{E/F}.$}
\end{equation*}
Note that $\U_E(V_0) \cong E^\times \cong \U_B(V)^0$ and hence we have a natural embeddings
\begin{align*}
\G(\U_B(V)^0 \times \U_B(W)) &\hookrightarrow \G(\U_E(V_0) \times \U_E(W_0)) \\
\G(\U_B(V^\square)^0 \times \U_B(W)) &\hookrightarrow \G(\U_E(V_0^\square) \times \U_E(W_0)).
\end{align*}
Thus functions defined on the unitary spaces pull back to functions on the quaternionic unitary spaces. For each place $v$ of $F$, by Definition \ref{d:splittings0} and \ref{d:splittings'}, we have functions 
\begin{align*}
s_v &\from \G(\U_B(V_{v}) \times \U_B(W_{v}^*)) \to \CC^1, & s_v^\square &\from \G(\U_B(V_{v}^\square)^0 \times \U_B(W_{v}^*)) \to \CC^1, \\
s_v' &\from \G(\U_E(\Res V_v) \times \U_E(W_v)) \to \CC^1, & s_v^\square{}' &\from \G(\U_E(\Res V_v) \times \U_E(W_v^\square)) \to \CC^1.
\end{align*}
Formally define
\begin{equation*}
s \colonequals \prod_v s_v, \qquad s' \colonequals \prod_v s_v', \qquad s^\square \colonequals \prod_v s_v^\square, \qquad s'{}^\square \colonequals \prod_v s_v'{}^\square.
\end{equation*}
These products converge by the following lemma, where we write ``a.a.'' for ``all but finitely many.''

\begin{lemma} \mbox{}
\begin{enumerate}[label=(\alph*)]
\item
Let $\gamma \in \G(\U_B(V)(F) \times \U_B(W)(F))$. Then $s_v(\gamma) = 1$ for a.a.\ $v$ and $s(\gamma) = 1$.

\item
Let $\gamma \in \G(\U_B(V^\square)^0(F) \times \U_B(W)(F))$. Then $s_v^\square(\gamma) = 1$ for a.a.\ $v$ and $s^\square(\gamma) = 1$.

\item
Let $\gamma \in \G(\U_E(\Res V)(F) \times \U_E(W)(F)).$ Then $s_v'(\gamma) = 1$ for a.a.\ $v$ and $s'(\gamma) = 1$.

\item
Let $\gamma \in \G(\U_E(\Res V)(F) \times \U_E(W^\square)(F))$. Then $s_v^\square{}'(\gamma) = 1$ for a.a.\ $v$ and $s^\square{}'(\gamma) = 1$.
\end{enumerate}
\end{lemma}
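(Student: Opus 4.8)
The plan is to establish the ``a.a.\ $v$'' vanishing statements first, place-by-place, and then deduce the global statement by the product formula for local Hilbert symbols and Weil indices.

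\textbf{Step 1: Reduce to the unramified places.} First I would observe that for a given $\gamma$ in one of the four arithmetic subgroups, all but finitely many places $v$ satisfy: $v$ is finite, $v \nmid 2$, $B$ is split at $v$, $E/F$ is unramified at $v$, the additive character $\psi_v$ has conductor $\cO_v$ (i.e.\ $d_v = 0$ at that place), the characters $\xi_v, \xi_v'$ are unramified, and $\gamma$ is integral, i.e.\ its entries lie in $\cO_{E_v}$ with unit ``norm''. It suffices to show $s_v(\gamma) = 1$ (and similarly for $s_v'$, $s_v^\square$, $s_v^\square{}'$) at every such place. This is exactly the content that Section \ref{s:split splittings} is devoted to working out, so I would invoke the explicit description of Kudla's splitting there; alternatively, one can argue directly from Definition \ref{d:splittings0} and Definition \ref{d:splittings'}.

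\textbf{Step 2: Unramified vanishing.} At such a place, using the $P\tau_j P$ decomposition underlying the definition of $j(g)$ and $x(g)$ (cf.\ Lemma \ref{l:pwp}), an integral $\gamma$ with unit similitude factor lies in the parabolic $P = P_Y$ times a Weyl element of bounded length, and in fact $j(\iota(\gamma)) = 0$ with $x(\iota(\gamma)) \in \cO_{E_v}^\times$. Then $\beta(\iota(\gamma)) = \xi_v(x(\iota(\gamma))) \cdot (\text{unit Weil-index factor})^{0} = \xi_v(\text{unit}) = 1$ since $\xi_v$ is unramified. The correction factor $\lambda = \lambda_{\cdots \rightsquigarrow \bY^\square}$ is built out of Weil indices $\gamma_F(\tfrac12 \psi_v \circ q(\cdots))$ of Leray invariants of lattices in general position; at an unramified place with $\psi_v$ of conductor $\cO_v$ and $v \nmid 2$, these Weil indices are trivial (the relevant quadratic forms are unimodular). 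Hence $s_v(\gamma) = \beta_v(\iota(\gamma)) \lambda_v(\iota(\gamma)) = 1$, and the same for the doubled and the $E$-side splittings. The main technical obstacle of the whole lemma lives here: one must verify that the Leray-invariant Weil indices appearing in $\lambda$ and in $z_{\bY}$ genuinely vanish at almost all places, which requires knowing the explicit lattices cut out by $\iota(\gamma)$, $\iota(\gamma)^{-1}$, and the relevant triples of Lagrangians are all self-dual at such $v$; this is precisely why Section \ref{s:split splittings} is needed and why the computation parallels \cite{IP16b}.

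\textbf{Step 3: Convergence and the global product.} With the ``a.a.\ $v$'' statement in hand, the formal products $s = \prod_v s_v$ etc.\ converge, being eventually $1$. To get $s(\gamma) = 1$ for $\gamma$ rational, I would note that $s$ is a splitting of the global cocycle $\prod_v z_{\bY_v}$ on the adelic group, and that the latter cocycle is trivial on the rational points $\G(\U_B(V)(F)\times\U_B(W)(F))$ by Weil's theorem that $\Sp$ over a global field acts on the adelic Schwartz space through a genuine (not merely metaplectic) representation on rational points; equivalently, I would assemble the product of the local Hilbert-symbol and Weil-index identities defining each $\beta_v, \lambda_v$ and invoke the product formulas $\prod_v (a,b)_{F_v} = 1$ and $\prod_v \gamma_{F_v}(a, \tfrac12\psi_v) = 1$ for $a,b \in F^\times$ (and their analogues for $\xi$, $\xi'$ being \emph{Hecke} characters, so $\prod_v \xi_v(x) = 1$ for $x \in E^\times$). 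Feeding the rational point $\gamma$ into Definition \ref{d:splittings0}/\ref{d:splittings'}, every factor is a value of such a global object at a rational argument, so the product over $v$ is $1$. Parts (b), (c), (d) are identical with $i^\square$, $i'$, $i^\square{}'$ in place of $i$, using Definition \ref{d:splittings'} and the compatibility already recorded in Lemmas \ref{l:double compat0}, \ref{l:double compat'}, and Proposition \ref{p:compat}.
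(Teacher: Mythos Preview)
Your overall architecture is right and is the standard one for such product-formula lemmas: handle almost all places by an unramified computation, then close the finite discrepancy by the global product formulas for Hilbert symbols, Weil indices, and Hecke characters. The paper states this lemma without proof, so there is nothing to compare to directly; your Step~3 is exactly the argument one wants.

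However, Step~2 contains a genuine error. You claim that at a good place ``$j(\iota(\gamma)) = 0$ with $x(\iota(\gamma)) \in \cO_{E_v}^\times$,'' and then use the exponent $0$ to kill the Weil-index factor. This is false: look at Lemma~\ref{l:pwp}, where for generic $(\alpha,\beta)$ (i.e.\ $\alpha^{-1}\beta \neq 1$ and $\alpha^{-1}\overline\beta \neq 1$) one has $j(\bg) = 2$, and this value of $j$ is the \emph{same at every place} since it is determined by the rational element $\gamma$. So $j$ is typically not zero anywhere. The correct reason $\hat s_v(\gamma) = 1$ at good $v$ is not that the exponent vanishes but that the \emph{base} does: by Definition~\ref{d:splittings0}, the factor raised to $-j$ is $(u,-1)_{F_v}\,\gamma_{F_v}(u,\tfrac12\psi_v)$, and at a place where $E_v/F_v$ is unramified (so $u \in \cO_v^\times$), $v \nmid 2$, and $\psi_v$ has conductor $\cO_v$, both $(u,-1)_{F_v} = 1$ and $\gamma_{F_v}(u,\tfrac12\psi_v) = 1$ (the latter by \cite[Prop.~A.11]{R93}). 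Separately, $x(i(\gamma))$ is a fixed element of $E^\times$ (again by Lemma~\ref{l:pwp}), hence a unit at almost all $v$, so $\xi_v(x(i(\gamma))) = 1$ when $\xi_v$ is unramified. The same applies to $\lambda_v$: the Leray invariant $q$ is a quadratic form over $F$ depending only on the rational $\gamma$, so its determinant and Hasse--Witt invariant are units at almost all $v$, giving $\gamma_{F_v}(\tfrac12\psi_v \circ q) = 1$ there. Once Step~2 is repaired in this way, your Step~3 goes through unchanged: $x(i(\gamma))$, $j(i(\gamma))$, and the Leray form are all rational and place-independent, so the products over $v$ collapse by Hilbert reciprocity, the Weil-index product formula, and $\xi$, $\xi'$ being Hecke characters.

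A smaller remark: you invoke Section~\ref{s:split splittings} for the unramified computation, but be careful that the splittings computed there are $\bs,\bs'$ of $z_{\bY'}$ (for the polarization coming from the split form of $W_0$ or $\Res V$), whereas the $s,s',s^\square,s^\square{}'$ in this lemma are splittings of $z_{\bY}$ and $z_{\bY^\square}$. They differ by a change-of-polarization $\lambda$, so you cannot cite Section~\ref{s:split splittings} directly without tracking that extra factor; arguing straight from Definitions~\ref{d:splittings0} and~\ref{d:splittings'}, as in the corrected Step~2 above, is cleaner.
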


\begin{proposition}\label{p:global compat}\mbox{}
\begin{enumerate}[label=(\alph*)]
\item {} [Lemma \ref{l:double compat0}]
For $(g_1, g_2, h) \in \G(\U_B(V)^0(\bA) \times \U_B(V)^0(\bA) \times \U_B(W)(\bA))$,
\begin{equation*}
s^\square(g_1, g_2, h) = s(g_1, h) \cdot \overline{s(g_2, h)} \cdot \xi(\det(i(g_2, h))).
\end{equation*}

\item {} [Lemma \ref{l:double compat'}]
For $(h, g_1, g_2) \in \G(\U_E(\Res V)(\bA) \times \U_E(W)(\bA) \times \U_E(W)(\bA))$,
\begin{equation*}
s^\square{}'(h, g_1, g_2) = s'(h, g_1) \cdot \overline{s'(h, g_2)} \cdot \xi'(\det(i'(h,g_2))).
\end{equation*}

\item {} [Proposition \ref{p:compat}]
For $\alpha, \beta \in \bA_E^\times$ such that $\Nm(\alpha) = \Nm(\beta)$,
\begin{equation*}
s'(\alpha, \beta) = \xi(\alpha) \xi'(\beta) s(\alpha, \beta).
\end{equation*}
\end{enumerate}
\end{proposition}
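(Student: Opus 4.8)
The plan is to deduce Proposition \ref{p:global compat} directly from the local statements of the previous subsections by taking products over all places $v$, so the only real content is checking that the products converge and that the pointwise identities globalize.

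First I would handle parts (a) and (b). These are purely formal: the local identities in Lemmas \ref{l:double compat0} and \ref{l:double compat'} hold verbatim at every place $v$ (with $\xi, \xi'$ replaced by their local components $\xi_v, \xi'_v$), since $i$, $i^\square$, $\det$, and the splittings $s_v, s_v^\square$ are all defined place-by-place. Thus for adelic points $(g_1,g_2,h)$ one multiplies the local identities together. The preceding lemma guarantees that $s_v(g_\bullet,h) = s_v^\square(g_\bullet) = 1$ for almost all $v$, so all the products appearing are finite; and $\prod_v \xi_v(\det(i(g_{2,v},h_v))) = \xi(\det(i(g_2,h)))$ by the definition of the adelic Hecke character $\xi$ as the restricted product of its local components. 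Combining gives (a), and (b) is identical with primes inserted.

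For part (c), the local input is Proposition \ref{p:compat}, which says $s'_v(\alpha_v,\beta_v) = \xi_v(\alpha_v)\xi'_v(\beta_v) s_v(\alpha_v,\beta_v)$ whenever $\Nm(\alpha_v) = \Nm(\beta_v)$ in $F_v^\times$. Given $\alpha,\beta \in \bA_E^\times$ with $\Nm(\alpha) = \Nm(\beta)$ as adeles, the norm condition holds componentwise, so the local identity applies at every $v$. Again by the convergence lemma, $s_v(\alpha_v,\beta_v) = s'_v(\alpha_v,\beta_v) = 1$ and $\xi_v(\alpha_v) = \xi'_v(\beta_v) = 1$ for almost all $v$, so multiplying over all $v$ yields $s'(\alpha,\beta) = \left(\prod_v \xi_v(\alpha_v)\right)\left(\prod_v \xi'_v(\beta_v)\right) s(\alpha,\beta) = \xi(\alpha)\xi'(\beta) s(\alpha,\beta)$, using once more that $\xi = \prod_v \xi_v$ and $\xi' = \prod_v \xi'_v$ as Hecke characters.

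The only step requiring genuine care — and the one I expect to be the main (minor) obstacle — is verifying the ``almost all $v$'' claims, i.e.\ that away from a finite set of places the splittings $s_v, s_v'$ are identically $1$ on integral points and that the hypotheses of Proposition \ref{p:compat} are met unramified there. This is exactly the content of the convergence lemma preceding the proposition, so in the write-up I would simply invoke it; the verification itself reduces to the observation that at an unramified place with $\psi_v$ of conductor $\cO_v$, the Weil index $\gamma_{F_v}(u,\tfrac12\psi_v)$ and the Hilbert symbols $(u,-1)_{F_v}$ are trivial and $\xi_v,\xi'_v$ are unramified, so all the defining formulas for $\beta_v,\beta'_v,\lambda_v,\lambda'_v$ collapse to $1$ on the relevant maximal compact subgroups. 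With that in hand, parts (a)--(c) follow immediately by taking products, as above.
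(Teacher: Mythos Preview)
Your proposal is correct and matches the paper's approach exactly: the paper does not give a separate proof of Proposition \ref{p:global compat} at all, but simply records the three global identities with bracketed references to the corresponding local statements (Lemmas \ref{l:double compat0}, \ref{l:double compat'}, and Proposition \ref{p:compat}), implicitly treating the globalization as immediate from the convergence lemma preceding the proposition. Your write-up supplies precisely the routine details the paper omits.
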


\subsection{Two splittings on $E_v^\times \times \GL_2(F_v)$} \label{s:split splittings}

To calculate the theta lift at all the unramified places, we will have to understand the Weil representation more concretely. In particular, we will need to explicate the local splittings defined in Section \ref{ch:splittings} in the cases $v \notin \Sigma_B$ and $v \notin \Sigma_{B'}$. These exactly correspond, respectively, to the cases when $W_{0,v}$ and $\Res V_v$ are split Hermitian spaces. For notational convenience, we drop the subscript $v$ in this section.

Consider the group
\begin{equation*}
R \colonequals \G(E^\times \times \GL_2(F)) = \left\{(\alpha,g) \in E^\times \times \GL_2(F) : \Nm(\alpha) = \det(g)\right\}.
\end{equation*}
Assume that the $2$-dimensional $E$-spaces $W_0$ and $\Res V$ are hyperbolic planes (i.e.\ they are split Hermitian spaces). Then we have embeddings
\begin{align*}
R &\hookrightarrow \G(\U_E(V_0) \times \U_E(W_0)), \qquad (\alpha, g) \mapsto (\alpha, g) \\
R &\hookrightarrow \G(\U_E(\Res V) \times \U_E(W)), \qquad (\alpha,g) \mapsto (g,\alpha).
\end{align*}
Furthermore, any decomposition of $W_0$ or $\Res V$ into maximal isotropic subspaces induces a complete polarization 
\begin{equation*}
\bV = \bX' + \bY'.
\end{equation*}
Our goal in this section is to explicate the values of the splittings in Section \ref{s:three seesaws} associated to this particular polarization. To make it clear that we are working in this specialized context, we let
\begin{equation*}
\bs \from \G(\U_E(V_0) \times \U_E(W_0)) \to \CC^1, \qquad \bs' \from \G(\U_E(\Res V) \times \U_E(W)) \to \CC^1
\end{equation*}
denote the splittings for $z_{\bY'}$ defined in Section \ref{s:three seesaws}. 

We briefly recall the construction of $\bs$, $\bs'$. 
Recall that from Sections \ref{s:splittings0} and \ref{s:splittings'}, we have natural maps
\begin{equation*}
i \from \G(\U_E(V_0) \times \U_E(W_0)) \to \U_E(W_0^\square), \qquad i' \from \G(\U_E(\Res V) \times \U_E(W)) \to \U_E(\Res V^\square).
\end{equation*}
If we let $\blambda \from \Sp(\bV^\square) \to \CC^1$ be given by
\begin{equation*}
\blambda(g) \colonequals \gamma_F(\tfrac{1}{2} \psi \circ q(\bV^\triangle, \bY'{}^\square g^{-1}, \bY'{}^\square)) \cdot \gamma_F(\tfrac{1}{2} \circ q(\bV^\triangle, \bY'{}^\square, \bV^\triangle g)),
\end{equation*}
then we have
\begin{equation*}
\bs \colonequals \hat s \cdot \blambda \from \G(\U_E(V_0) \times \U_E(W_0)) \to \CC^1, \qquad
\bs' \colonequals \hat s' \cdot \blambda \from \G(\U_E(\Res V) \times \U_E(W)) \to \CC^1,
\end{equation*}
where $\partial \hat s = z_{V_0 \otimes W_0^\triangle}$ and $\partial \hat s' = z_{\Res V^\triangle \otimes W}$.

Let $W_1$ and $W_2$ be isotropic subspaces such that $W_0 = W_1 + W_2$ and fix $w_i \in W_i$ so that $\langle w_1, w_2 \rangle = 1$. Analogously, let $V_1$ and $V_2$ be isotropic subspaces such that $\Res V = V_1 + V_2$ and fix $w_i \in V_i$ such that $\langle w_1, w_2 \rangle = 1$.  
Define
\begin{equation*}
\bw_1 = (\tfrac{1}{2} w_1, -\tfrac{1}{2} w_1), \qquad 
\bw_2 = (-\tfrac{1}{2} w_2, \tfrac{1}{2} w_2), \qquad
\bw_1^* = (w_2, w_2), \qquad
\bw_2^* = (w_1, w_1)
\end{equation*}
so that we have $\langle \bw_i, \bw_j \rangle = \langle \bw_i^*, \bw_j^* \rangle,$ $\langle \bw_i, \bw_j^* \rangle = \delta_{ij},$ $\langle \bw_i^*, \bw_j \rangle = -\delta_{ij},$ and\begin{align*}
W_0^\square &= W_0^\bigtriangledown + W_0^\triangle, && \text{where $W_0^\bigtriangledown = \Span\{\bw_1, \bw_2\}$ and $W_0^\triangle = \Span\{\bw_1^*, \bw_2^*\}$}, \\
\Res V^\square &= \Res V^\bigtriangledown + \Res V^\triangle, && \text{where $\Res V^\bigtriangledown = \Span\{\bw_1, \bw_2\}$ and $\Res V^\triangle = \Span\{\bw_1^*, \bw_2^*\}$}.
\end{align*}
Then a symplectic basis preserving the complete polarization$\bV^\square = \bV^\bigtriangledown + \bV^\triangle$ is given by
\begin{equation}\label{e:double F basis}
\bw_1, \quad \tfrac{-1}{u} \bi \bw_1, \quad \bw_2, \quad \tfrac{-1}{u} \bi \bw_2, \quad \bw_1^*, \quad \bi \bw_1^*, \quad \bw_2^*, \quad \bi \bw_2^*.
\end{equation}

\subsubsection{A splitting $\bs$ of $z_{\bY'}$}

For $a,d \in F^\times$, write $D(a,d) \colonequals \diag(a,d)$. 

\begin{lemma}\label{l:bs D}
Let $\left(\alpha, D(a,d)\right) \in R$. Then
\begin{equation*}
\bs(\alpha, D(a,d)) = \xi(-(\alpha^{-1} a - 1)(\alpha^{-1} d - 1)).
\end{equation*}
In particular, for $a \in F^\times$ and $\alpha \in E^\times$,
\begin{equation*}
\bs(1, D(a,a^{-1})) = (u, a)_F, \qquad
\bs(\alpha,D(1,\Nm(\alpha))) = \xi(\alpha^{-1}).
\end{equation*}
\end{lemma}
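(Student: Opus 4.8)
The strategy is to reduce everything to the explicit formula $\bs = \hat s \cdot \blambda$ from the definition, and to exploit the fact that, for diagonal elements, the Weil-index factor $\blambda$ and the cocycle $z_{\bY'}$ that it is designed to correct are both trivial, so $\bs$ coincides with the Kudla datum $\hat s$ on such elements. Concretely, $(\alpha, D(a,d)) \in R$ corresponds (via the embedding $R \hookrightarrow \G(\U_E(V_0) \times \U_E(W_0))$ and then $i$) to an element $g$ of the Siegel parabolic $P_{\bV^\triangle} \subset \U_E(W_0^\square)$: writing out the action on the symplectic basis \eqref{e:double F basis}, a diagonal torus element on the $\GL_2$ side preserves each of the maximal isotropic subspaces $\Res V^\bigtriangledown$ and $\Res V^\triangle$, so it lies in the Levi of $P_{\bV^\triangle}$ and in particular $j(g) = 0$ and $z_{\bV^\triangle}$ vanishes on such pairs.

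\textbf{Key steps.} First I would compute $i(\alpha, D(a,d)) \in \U_E(W_0^\square)$ in the basis $\{\bw_1, \bw_2, \bw_1^*, \bw_2^*\}$ introduced just above; since the element is in the Levi, $x(g) = \det(g|_{W_0^\triangle})$, which is a determinant of a $2 \times 2$ matrix whose entries are built from $\alpha^{-1}a$ and $\alpha^{-1}d$ (and their conjugates via the skew-Hermitian structure on the second $E$-slot). A direct computation — entirely analogous to Lemma \ref{l:pwp} but much simpler since no Weyl element appears — gives $x(g) = -(\alpha^{-1}a - 1)(\alpha^{-1}d - 1)$ up to the conventions for $W_0$ being a left $E$-space. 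Since $j(g) = 0$, Definition \ref{d:splittings0} gives $\beta(g) = \xi(x(g))$, hence $\hat s(\alpha, D(a,d)) = \xi(-(\alpha^{-1}a-1)(\alpha^{-1}d-1))$. Second, I would check $\blambda(i^\square(\alpha, D(a,d))) = 1$: the correcting cocycle $\blambda$ is a product of Weil indices of Leray invariants $q(\bV^\triangle, \bY'^\square g^{-1}, \bY'^\square)$ and $q(\bV^\triangle, \bY'^\square, \bV^\triangle g)$, and because $g$ fixes $\bV^\triangle$ (it is in the Levi preserving this Lagrangian), both Leray invariants degenerate to the trivial form, whose Weil index is $1$. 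Combining, $\bs(\alpha, D(a,d)) = \hat s \cdot \blambda = \xi(-(\alpha^{-1}a-1)(\alpha^{-1}d-1))$.

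\textbf{The two special cases.} For $\bs(1, D(a, a^{-1}))$, set $\alpha = 1$, so the formula gives $\xi(-(a-1)(a^{-1}-1)) = \xi(-(a-1)\cdot a^{-1}(1 - a)) = \xi(a^{-1}(a-1)^2) = \xi(a^{-1})\xi((a-1)^2) = \xi(a)^{-1}$, since $\xi$ of a square in $F^\times$ — wait, $(a-1)^2 \in F^\times$ and $\xi|_{F^\times} = \epsilon_{E/F}$ is quadratic, so $\xi((a-1)^2) = 1$, leaving $\xi(a^{-1}) = \xi(a)^{-1}$. But $\xi(a)^{-1} = \epsilon_{E/F}(a)^{-1} = \epsilon_{E/F}(a) = (u,a)_F$, giving the claimed $(u,a)_F$. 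For $\bs(\alpha, D(1, \Nm(\alpha)))$, one factor is $\alpha^{-1} \cdot 1 - 1 = \alpha^{-1} - 1$ and the other is $\alpha^{-1}\Nm(\alpha) - 1 = \overline{\alpha} - 1$; so $\bs = \xi(-(\alpha^{-1}-1)(\overline\alpha - 1)) = \xi(-\alpha^{-1}(1-\alpha)(\overline\alpha - 1)) = \xi(\alpha^{-1})\,\xi(-(1-\alpha)(\overline\alpha-1))$, and $-(1-\alpha)(\overline\alpha - 1) = (1-\alpha)(1-\overline\alpha) = \Nm(1-\alpha) \in F^\times$, so again $\xi$ kills it and $\bs(\alpha, D(1,\Nm(\alpha))) = \xi(\alpha^{-1})$.

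\textbf{Main obstacle.} The only genuine subtlety is bookkeeping: correctly tracking the left-versus-right $E$-module conventions for $W_0$ (so that "formal right multiplication by $\alpha$" means $av$), getting the skew-Hermitian pairing on the doubled space right so that the matrix of $i(\alpha, D(a,d))$ on $\{\bw_1^*, \bw_2^*\}$ is what one expects, and confirming that $\det(p_1 p_2|_Y)$ with trivial $\tau$-part really produces the product $-(\alpha^{-1}a-1)(\alpha^{-1}d-1)$ rather than, say, its conjugate or reciprocal. I would handle this by specializing the computation of $g$ in \eqref{e:g} (which was done for the element $(\alpha,\beta)$ acting via $w \mapsto \alpha^{-1} w \beta$) to the relevant diagonal situation and reading off $x(g)$, $j(g)$ directly — essentially a degenerate case of Lemma \ref{l:pwp}. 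Everything else is the clean algebra of Hilbert symbols and the quadraticity of $\xi|_{F^\times}$ displayed above.
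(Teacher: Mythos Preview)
Your proposal contains a genuine error in the central claim. You assert that $i(\alpha, D(a,d))$ lies in the Levi of the Siegel parabolic $P_{\bV^\triangle}$ (equivalently, preserves $W_0^\triangle$), so that $j(g)=0$. This is false. Recall that the doubling embedding $i$ sends $(g,h)\in\G(\U_E(V_0)\times\U_E(W_0))$ to the element acting by $(w,w^-)\mapsto(g^{-1}wh,\,w^-)$: it moves the first copy of $W_0$ and fixes the second. Such an element almost never preserves the diagonal $W_0^\triangle=\{(w,w)\}$; it does so only when $(\alpha,D(a,d))=(1,1)$. The paper's explicit matrix for $i(\alpha,D(a,d))$ in the basis $\bw_1,\bw_2,\bw_1^*,\bw_2^*$ has nonzero lower-left entries $-(\alpha^{-1}d-1)$ and $-(\alpha^{-1}a-1)$, so in fact $j(g)=2$, and one needs a $p_1\tau_2 p_2$ decomposition. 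The correct computation gives $x(g)=(\alpha^{-1}a-1)(\alpha^{-1}d-1)$, and the extra factor $((u,-1)_F\gamma_F(u,\tfrac12\psi))^{-2}=(u,-1)_F=\xi(-1)$ is what produces the minus sign inside $\xi$.

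You may be conflating two polarizations. The element $(\alpha,D(a,d))$ \emph{does} preserve $\bY'$ (since $D(a,d)$ preserves $W_2$), and this is what makes $z_{\bY'}$ and part of $\blambda$ tractable; the paper uses exactly this (``since $g$ stabilizes $\bY'{}^\square$'') to kill the first Leray invariant. But $\hat s$ is defined relative to the $\bV^\triangle$-polarization, not the $\bY'$-polarization, so preserving $\bY'$ says nothing about $j(g)$. Your claimed formula $x(g)=-(\alpha^{-1}a-1)(\alpha^{-1}d-1)$ with $j(g)=0$ is also internally inconsistent: if $g$ really sat in the Levi, $x(g)$ would be a determinant of the form $\alpha^{-2}ad$ or similar, with no $(\,\cdot\,-1)$ factors; those differences arise precisely because the doubling embedding acts on one copy and not the other. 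Your algebra for the two special cases is fine once the main formula is in hand, but the route to that formula needs to go through $j(g)=2$.
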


\begin{proof}
We have $(1, D(1,1)) = (1, U(0))$, and this is proved in Lemma \ref{l:bs U}, so we assume that $(\alpha, D(a,d)) \neq (1, D(1,1))$. This assumption will be necessary when we calculate $\hat s$.

Recall that $(\alpha, D(a,d))$ sends $w_1 \mapsto \alpha^{-1} a w_1$ and $w_2 \mapsto \alpha^{-1} d w_2$. Recalling that $i \from \U_E(W_0) \to \U_E(W_0 + W_0^-)$ is defined by $\U_E(W_0)$ acting linearly on $W_0$ and trivially on $W_0^-$, it is a straightforward computation to see that the image of $(\alpha, D(a,d))$ in $\U_E(W_0 + W_0^-)$ with respect to the basis $\bw_1, \bw_2, \bw_1^*, \bw_2^*$ is
\begin{equation*}
\left(\begin{smallmatrix}
\tfrac{\alpha^{-1} a +1}{2} & 0 & 0 & \tfrac{\alpha^{-1} a - 1}{4} \\
0 & \tfrac{\alpha^{-1} d + 1}{2} & -\tfrac{\alpha^{-1} d - 1}{4} & 0 \\
0 & -(\alpha^{-1} d - 1) & \tfrac{\alpha^{-1} d + 1}{2} & 0 \\
-(\alpha^{-1} a - 1) & 0 & 0 & \tfrac{\alpha^{-1} a +1}{2}
\end{smallmatrix}\right).
\end{equation*}
We have
\begin{equation*}
i(\alpha, D(a,d)) = 
p_1 \left(\begin{smallmatrix}
& & 1 & \\
& & & 1 \\
-1 & & & \\
& -1 & & 
\end{smallmatrix}\right) p_2,
\end{equation*}
where
\begin{align*}
p_1 &= \left(\begin{smallmatrix}
0 & -\tfrac{(\alpha^{-1} a + 1)^2}{4(\alpha^{-1} a -1)} + \tfrac{\alpha^{-1} a - 1}{4} & -\tfrac{\alpha^{-1} a + 1}{2} & 0 \\
\tfrac{(\alpha^{-1} d + 1)^2}{4(\alpha^{-1} d - 1)} - \tfrac{\alpha^{-1} d - 1}{4} & 0 & 0 & -\tfrac{\alpha^{-1} d + 1}{2} \\
0 & 0 & 0 & (\alpha^{-1} d - 1) \\
0 & 0 & -(\alpha^{-1} a - 1) & 0
\end{smallmatrix}\right), \\
p_2 &= \left(\begin{smallmatrix}
1 & 0 & 0 & \tfrac{\alpha^{-1} a + 1}{2(\alpha^{-1} a - 1)} \\
0 & 1 & -\tfrac{\alpha^{-1} d + 1}{4(\alpha^{-1} d - 1)} & 0 \\
0 & 0 & 1 & 0 \\
0 & 0 & 0 & 1
\end{smallmatrix}\right).
\end{align*}
This implies that
\begin{equation*}
x(i(\alpha, D(a,d))) = (\alpha^{-1} a - 1)(\alpha^{-1} d - 1), \qquad j(i(\alpha,D(a,d))) = 2,
\end{equation*}
and therefore by Definition \ref{d:splittings0},
\begin{equation*}
\hat s(i(\alpha, D(a,d))) = \xi((\alpha^{-1} a - 1)(\alpha^{-1} d - 1)) \cdot \gamma_F(u, \tfrac{1}{2} \psi)^{-2} = \xi(-(\alpha^{-1} a - 1)(\alpha^{-1} d - 1)).
\end{equation*}
With respect to the symplectic basis given in \eqref{e:double F basis}, the image of $i(\alpha, D(a,d))$ in $\Sp(\bV^\square)$ is
\begin{equation*}
g = \left(\begin{matrix}
\tfrac{xa+1}{2} & -\tfrac{yau}{2} & & & & & \tfrac{xa-1}{4} & \tfrac{ya}{4} \\
-\tfrac{ya}{2} & \tfrac{xa+1}{2} & & & & & \tfrac{ya}{4u} & \tfrac{xa - 1}{4} \\
& & \tfrac{xd+1}{2} & -\tfrac{ydu}{2} & -\tfrac{xd-1}{4} & -\tfrac{yd}{4} & & \\
& & -\tfrac{yd}{2} & \tfrac{xd+1}{2} & -\tfrac{yd}{4u} & -\tfrac{xd-1}{4} & & \\
& & -(xd - 1) & ydu & \tfrac{xd+1}{2} & \tfrac{yd}{2} & & \\
& & yd & -(xd - 1) & \tfrac{yd}{2u} & \tfrac{xd+1}{2} & & \\
xa-1 & -yau & & & & & \tfrac{xa+1}{2} & \tfrac{ya}{2} \\
ya & xa-1 & & & & & \tfrac{ya}{2u} & \tfrac{xa+1}{2}
\end{matrix}\right) \in \Sp(\bV^\square).
\end{equation*}
By definition,
\begin{equation*}
\blambda(\alpha, D(a,d)) = \gamma_F(\tfrac{1}{2} \psi \circ q(\bV^\triangle, \bY'{}^\square g^{-1}, \bY'{}^\square)) \cdot \gamma_F(\tfrac{1}{2} \psi \circ q(\bV^\triangle, \bY'{}^\square, \bV^\triangle g)).
\end{equation*}
Since $g$ stabilizes $\bY'{}^\square$,
\begin{equation*}
\gamma_F(\tfrac{1}{2} \psi \circ q(\bV^\triangle, \bY'{}^\square g^{-1}, \bY'{}^\square)) = 1.
\end{equation*}
To calculate the second factor, notice that
\begin{align*}
\bV^\triangle &= \{(0, 0, 0, 0, z_1, z_2, z_3, z_4)\}, \\
\bY'{}^\square &= \{(0, 0, z_1, z_2, z_3, z_4, 0, 0)\}, \\
\bV^\triangle g &= \{((xa-1)z_2 + ya z_4, -yau z_3 + (xa-1) z_4, -(xd-1) z_1 + y dz_2, ydu z_1 - (xd-1)z_2, \\ &\qquad\qquad\qquad-\tfrac{xd+1}{2} z_1 - \tfrac{yd}{2u} z_2, -\tfrac{yd}{2} z_1 - \tfrac{xd+1}{2} z_2, \tfrac{xa+1}{2} z_3 + \tfrac{ya}{2u} z_4, \tfrac{ya}{2} z_3 + \tfrac{xa+1}{2} z_4)\},
\end{align*}
and one can see that this implies that $\bR = \{(0, 0, *, *, *, *, 0, 0)\}$ and hence 
\begin{equation*}
\gamma_F(\tfrac{1}{2} \psi \circ q(\bV^\triangle, \bY'{}^\square, \bV^\triangle g)) = 1.
\end{equation*}
We therefore have
\begin{equation*}
\bs(\alpha, D(a,d)) = \hat s(\alpha, D(a,d)) = \xi(-(\alpha^{-1} a - 1)(\alpha^{-1} d - 1)).
\end{equation*}
This proves the main assertion and the remaining formulas can be deduced as follows: Assuming $a \neq 1$ and $\alpha \neq 1$ (observe that if $\alpha \in E^1$, then $x = 1$ if and only if $\alpha = 1$),
\begin{equation*}
\bs(1, D(a,a^{-1})) 
= \xi(-(a - 1)(a^{-1} - 1)) = \xi(a^{-1}(a-1)^2) = \xi(a^{-1}) = (u, a)_F.
\end{equation*}
If $\alpha \in E^\times$, then
\begin{equation*}
\bs(\alpha, D(1, \Nm(\alpha)))
= \xi(-(\alpha^{-1}-1)(\alpha^{-1}\alpha\bar\alpha-1)) 
= \xi(\alpha^{-1}) \epsilon_{E/F}(\Nm(\alpha-1)) = \xi(\alpha^{-1}). \qedhere
\end{equation*}
\end{proof}

\begin{lemma}\label{l:bs U}
Let $a \in F$. Then
\begin{equation*}
\bs(1,U(a)) = 1.
\end{equation*}
\end{lemma}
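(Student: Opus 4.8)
The plan is to follow the template of the proof of Lemma~\ref{l:bs D}: evaluate $\hat s$ and $\blambda$ on $i(1,U(a))$ separately and check that their product is $1$. The case $a=0$ is immediate, since $U(0)=\mathbf 1_2$ makes $(1,U(0))$ the identity of $\G(\U_E(V_0)\times\U_E(W_0))$; so assume $a\neq 0$.

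First I would record the image of $(1,U(a))$ in $\U_E(W_0^\square)$. As $U(a)\in\GL_2(F)\subset\U_E(W_0)$ acts $E$-linearly on $W_0$, fixing the second isotropic line $w_2$ and sending $w_1\mapsto w_1+aw_2$, one finds that $i(1,U(a))$ fixes $\bw_2$ and $\bw_1^*$ while sending $\bw_1\mapsto\bw_1-\tfrac a2\bw_2+\tfrac a4\bw_1^*$ and $\bw_2^*\mapsto\bw_2^*-a\bw_2+\tfrac a2\bw_1^*$. With respect to $\bw_1,\bw_2,\bw_1^*,\bw_2^*$ this is a unipotent matrix whose lower-left $2\times2$ block has rank $1$, so $j(i(1,U(a)))=1$. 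An explicit factorization $i(1,U(a))=p_1\tau_1 p_2$ with $p_1,p_2\in P_{W_0^\triangle}$ (the computation is that of Lemma~\ref{l:bs D}, but in the middle cell $\tau_1$ rather than the big cell $\tau_2$) produces a value $x(i(1,U(a)))\in F^\times$, well-defined modulo $\Nm(E^\times)$, and Definition~\ref{d:splittings0} gives
\[
\hat s(1,U(a))=\xi\bigl(x(i(1,U(a)))\bigr)\cdot(u,-1)_F\cdot\gamma_F(u,\tfrac12\psi)^{-1}.
\]

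Next I would compute $\blambda(1,U(a))$. The key point is that $i(1,U(a))$, being $E$-linear and fixing both $\bw_2$ and $\bw_1^*$, fixes $\bY'{}^\square=\Res_{E/F}\Span_E\{\bw_2,\bw_1^*\}$ pointwise; in particular it stabilizes $\bY'{}^\square$, so the first $\gamma_F$-factor of $\blambda$ equals $\gamma_F(\tfrac12\psi\circ q(\bV^\triangle,\bY'{}^\square,\bY'{}^\square))=1$. For the second factor, $\bV^\triangle\cdot i(1,U(a))=\Res_{E/F}\Span_E\{\bw_1^*,\bw_2^*-a\bw_2\}$, and since $\bV^\triangle$, $\bY'{}^\square$ and $\bV^\triangle\cdot i(1,U(a))$ all contain $\Res_{E/F}\Span_E\{\bw_1^*\}$, the Leray invariant $q(\bV^\triangle,\bY'{}^\square,\bV^\triangle\cdot i(1,U(a)))$ reduces modulo that isotropic subspace to the Leray invariant of the three pairwise transverse Lagrangians $\Res_{E/F}\Span_E\{\bw_2^*\}$, $\Res_{E/F}\Span_E\{\bw_2\}$, $\Res_{E/F}\Span_E\{\bw_2^*-a\bw_2\}$, which is the rank-two quadratic form $-\tfrac1a\Nm_{E/F}=-\tfrac1a x^2+\tfrac ua y^2$ on $E=F+F\bi$ (up to a normalization constant in $F^\times$). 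Hence $\blambda(1,U(a))=\gamma_F\bigl(\tfrac12\psi\circ(-\tfrac1a x^2+\tfrac ua y^2)\bigr)$.

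Finally I would multiply the two contributions and simplify. Using $\xi|_{F^\times}=\epsilon_{E/F}=(\,\cdot\,,u)_F$ and the elementary Weil-index identities
\[
\gamma_F(st,\tfrac12\psi)=\gamma_F(s,\tfrac12\psi)\gamma_F(t,\tfrac12\psi)(s,t)_F\gamma_F(1,\tfrac12\psi)^{-1},\qquad \gamma_F(s,\tfrac12\psi)^2=(s,-1)_F\gamma_F(1,\tfrac12\psi)^2,
\]
the powers of $\gamma_F(u,\tfrac12\psi)$ and $\gamma_F(1,\tfrac12\psi)$ and the Hilbert symbols cancel, and the $a$-dependence of $\xi(x(i(1,U(a))))$ is exactly cancelled by that of $\gamma_F(\tfrac12\psi\circ(-\tfrac1a x^2+\tfrac ua y^2))$, leaving $\bs(1,U(a))=\hat s(1,U(a))\cdot\blambda(1,U(a))=1$. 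This last reconciliation of normalizations is the main obstacle --- the precise powers of $\gamma_F(1,\tfrac12\psi)$, the signs in Rao's Leray form, and the norm-ambiguity of $x(\cdot)$ all have to be tracked, and it is easy to be off by a root of unity. One reassurance is that the vanishing is forced conceptually: $i(1,U(a))$ lies in the unipotent radical of the Siegel parabolic of $\Sp(\bV^\square)$ attached to $\bY'{}^\square$, on which the Weil representation acts through the cocycle-free formula~\eqref{e:weil explicit n} in the Schr\"odinger model for $\bY'{}^\square$; as $\bs$ is pulled back along $i$ from the splitting $\hat s\,\blambda$ of $z_{\bY'{}^\square}$, and two splittings of a fixed cocycle on $\U_E(W_0^\square)$ differ by a character trivial on unipotent elements, $\hat s\,\blambda$ must restrict to the trivial splitting there --- so one could also simply invoke the uniqueness of the normalized metaplectic lift (\cite[Theorem~3.5]{R93}).
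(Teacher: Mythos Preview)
Your main approach---compute $\hat s(1,U(a))$ and $\blambda(1,U(a))$ separately and multiply---is exactly what the paper does. The paper is more explicit: it finds $x(i(1,U(a)))=-a^{-1}$ from the factorization $i(1,U(a))=p_1\tau_1 p_2$, giving $\hat s(1,U(a))=(u,a)_F\gamma_F(u,\tfrac12\psi)^{-1}$; it obtains the identical rank-two Leray form $-\tfrac1a x_1^2+\tfrac ua x_2^2$ and evaluates $\blambda(1,U(a))=\gamma_F(-1,\tfrac12\psi)^{-1}\gamma_F(-u,\tfrac12\psi)(-a,u)_F$; and it carries out the cancellation you only sketch. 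Your caveat ``up to a normalization constant in $F^\times$'' for the Leray form is unnecessary---your form matches the paper's on the nose---and would be dangerous if it were real, since $\gamma_F(\tfrac12\psi\circ q)$ is not invariant under scaling $q$.

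Your closing conceptual argument is a genuinely different route the paper does not take, and it works: $i(1,U(a))$ is unipotent in $\U_E(W_0^\square)$, and any two splittings of $z_{\bY'{}^\square}$ on $\U_E(W_0^\square)$ differ by a character of that reductive group, hence by a power of $\xi\circ\det$, which is trivial on unipotents. To close the loop you still need \emph{one} splitting known to vanish on $i(1,U(a))$; the cleanest choice is Kudla's $\beta_{V_0,\xi}$ attached to the isotropic subspace $W_2+W_2^-\subset W_0^\square$ (so that $V_0\otimes(W_2+W_2^-)=\bY'{}^\square$), since $i(1,U(a))$ lies in the associated Siegel parabolic with $j=0$ and $\det=1$. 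This bypasses the explicit Leray and Weil-index bookkeeping entirely, at the cost of invoking a second Kudla splitting.
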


\begin{proof}
The matrix $U(a)$ sends $w_1 \mapsto w_1 + a w_2$ and $w_2 \mapsto w_2$. Recalling that $i \from \U_E(W_0) \to \U_E(W_0 + W_0^-)$ is defined by $\U_E(W_0)$ acting linearly on $W_0$ and trivially on $W_0^-$, it is a straightforward computation to see that 
\begin{equation*}
i(1, U(a)) = \left(\begin{smallmatrix}
1 & -\tfrac{a}{2} & \tfrac{a}{4} & 0 \\
0 & 1 & 0 & 0 \\
0 & 0 & 1 & 0 \\
0 & -a & \tfrac{a}{2} & 1
\end{smallmatrix}\right).
\end{equation*}
We have
\begin{equation*}
\left(\begin{smallmatrix}
1 & \tfrac{a}{2} & \tfrac{a}{4} & 0 \\
0 & 1 & 0 & 0 \\
0 & 0 & 1 & 0 \\
0 & a & \tfrac{a}{2} & 1
\end{smallmatrix}\right)
\left(\begin{smallmatrix}
1 & 0 & 0 & 0 \\
0 & a^{-1} & 0 & 1 \\
0 & 0 & 1 & 0 \\
0 & 0 & 0 & a
\end{smallmatrix}\right)
=
\left(\begin{smallmatrix}
1 & \tfrac{a}{2} & \tfrac{a}{4} & -\tfrac{1}{2} \\
0 & -1 & 0 & a^{-1} \\
0 & 0 & 1 & 0 \\
0 & 0 & \tfrac{a}{2} & -1
\end{smallmatrix}\right)
\left(\begin{smallmatrix}
1 & & & \\
& & & -1 \\
& & 1 & \\
& 1 & &
\end{smallmatrix}\right),
\end{equation*}
and therefore $x(i(1, U(a))) = -a^{-1}$ and $j(i(1, U(a))) = 1.$ By Definition \ref{d:splittings0}, we have
\begin{equation*}
\hat s(1,U(a)) = \begin{cases}
1 & \text{if $a = 0$,} \\
\xi(-a^{-1}) \cdot (u,-1)_F \cdot \gamma_F(u, \tfrac{1}{2} \psi)^{-1} = (u, a)_F \cdot \gamma_F(u, \tfrac{1}{2} \psi)^{-1}, & \text{if $a \in F^\times.$}
\end{cases}
\end{equation*}
We next calculate $\blambda(1, U(a))$. Since $g = (1,U(a))$ stabilizes $\bY'{}^\square$, 
\begin{equation*}
\blambda(g) = \gamma_F(\tfrac{1}{2} \psi \circ q), \qquad q \colonequals q(\bV^\triangle, \bY'{}^\square, \bV^\triangle g).
\end{equation*}
Working in the $F$-basis given in \eqref{e:double F basis}
\begin{align*}
\bV^\triangle &= \{(0, 0, 0, 0, y_1, y_2, y_3, y_4)\}, \\
\bY'{}^\square &= \{(0, 0, y_1, y_2, y_3, y_4, 0, 0)\}, \\
\bV^\triangle g &= \{(0, 0, -a y_3, \tfrac{a}{u} y_4, y_1 + \tfrac{a}{2} y_3, y_2 - \tfrac{au}{2} y_4, y_3, y_4)\}.
\end{align*}
If $a = 0$, then $\blambda(1, U(0)) = 1,$ and the lemma holds. It remains to prove the assertion for when $a \in F^\times$. Then we have $\bR = \{(0, 0, 0, 0, *, *, 0, 0)\},$ $\bR^\perp = \{(0, 0, *, *, *, *, *, *)\}$ so that
\begin{align*}
(\bV^\triangle)_\bR &= \{(0, 0, 0, 0, 0, 0, y_1, y_2)\}, \\
(\bY'{}^\square)_\bR &= \{(0, 0, y_1, y_2, 0, 0, 0, 0)\}, \\
(\bV^\triangle g)_\bR &= \{(0, 0, -ay_1, \tfrac{a}{u}y_2, 0, 0, y_1, y_2)\}.
\end{align*}
It is clear from the above equations that
\begin{equation*}
(\bY'{}^\square)_\bR \left(\begin{smallmatrix} 1 & b \\ 0 & 1 \end{smallmatrix}\right) = (\bV^\triangle g)_\bR, \qquad \text{where $\left(\begin{smallmatrix}
1 & b \\ 0 & 1
\end{smallmatrix}\right) \in P_{(\bV^\triangle)_\bR} \subset \Sp(\bR^\perp/\bR),$ for $b = \left(\begin{smallmatrix} -\tfrac{1}{a} & 0 \\ 0 & \tfrac{u}{a} \end{smallmatrix}\right)$.} 
\end{equation*}
By definition, $q = (\bY'{}^\square)_\bR$ with the symmetric bilinear form given by
\begin{equation*}
q((x_1, x_2), (y_1, y_2)) = -\tfrac{1}{a} x_1 y_1 + \tfrac{u}{a} x_2 y_2.
\end{equation*}
Therefore we have
\begin{equation*}
\dim q = 2, \qquad \det q = -\tfrac{u}{a^2}, \qquad h_F(q) = (-\tfrac{1}{a}, \tfrac{u}{a})_F.
\end{equation*}
Observe that $(-\tfrac{1}{a}, \tfrac{u}{a})_F = (-a, au)_F (-a, a)_F = (-a, u)_F$, and so
\begin{equation*}
\blambda(1, U(a)) 
= \gamma_F(\tfrac{1}{2} \psi)^2 \cdot \gamma_F(-\tfrac{u}{a^2}, \tfrac{1}{2} \psi) \cdot (-\tfrac{1}{a}, \tfrac{u}{a})_F 
= \gamma_F(-1, \tfrac{1}{2} \psi)^{-1} \cdot \gamma_F(-u, \tfrac{1}{2} \psi) \cdot (-a, u)_F.
\end{equation*}
Finally, we have
\begin{equation*}
\bs(1, U(a))
= (u, a)_F \cdot \gamma_F(u, \tfrac{1}{2} \psi)^{-1} \cdot \gamma_F(-1, \tfrac{1}{2} \psi)^{-1} \cdot \gamma_F(-u, \tfrac{1}{2} \psi) \cdot (-a, u)_F = 1. \qedhere
\end{equation*}
\end{proof}

\begin{lemma}\label{l:bs W}
We have
\begin{equation*}
\bs(1, W) = (u, -1)_F \cdot \gamma_F(u, \tfrac{1}{2} \psi).
\end{equation*}
In particular, if $\ord(u)$ is even, then
\begin{equation*}
\bs(1, W) = 1.
\end{equation*}
\end{lemma}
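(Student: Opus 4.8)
The plan is to evaluate $\bs(1,W)$ exactly as in the proofs of Lemmas \ref{l:bs D} and \ref{l:bs U}, namely by factoring $\bs = \hat s \cdot \blambda$ and computing each factor on $g \colonequals i(1,W) \in \U_E(W_0^\square)$, where $W = \left(\begin{smallmatrix} 0 & 1 \\ -1 & 0 \end{smallmatrix}\right)$ is the Weyl element of $\GL_2(F)$ and $i$ lets $W$ act on the $W_0$-factor and trivially on the $W_0^-$-factor. Since $W$ sends $w_1 \mapsto w_2$ and $w_2 \mapsto -w_1$, I would first rewrite the images $(\pm w_i, w_j)$ in the basis $\bw_1, \bw_2, \bw_1^*, \bw_2^*$ of $W_0^\square$ to obtain the matrix of $g$, then produce an explicit decomposition $g = p_1 \tau_j p_2$ with $p_1, p_2 \in P_{W_0^\triangle}$ (I expect $j = 2$, the Weyl element being ``large''), read off $x(g)$, and feed these into Definition \ref{d:splittings0}; using $\gamma_F(u,\tfrac12\psi)^2 = (u,-1)_F$ this collapses to a clean value of $\hat s(1,W)$, with $\xi(x(g))$ trivial or equal to $(u,-1)_F$.

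The hard part will be the computation of $\blambda(1,W) = \gamma_F(\tfrac12\psi\circ q(\bV^\triangle,\bY'{}^\square g^{-1},\bY'{}^\square))\cdot\gamma_F(\tfrac12\psi\circ q(\bV^\triangle,\bY'{}^\square,\bV^\triangle g))$. Unlike the torus and unipotent elements treated in Lemmas \ref{l:bs D} and \ref{l:bs U}, the element $(1,W)$ does \emph{not} stabilize the Lagrangian $\bY'{}^\square$ (it carries the copy $W_2$ to $W_1$), so \emph{both} Leray invariants are genuine triple-Lagrangian invariants and neither $\gamma_F$-factor is automatically $1$. For each factor I would: write out the three Lagrangians in the symplectic $F$-basis \eqref{e:double F basis}, determine the radical $\bR$ of the relevant degenerate form, pass to $\bR^\perp/\bR$, read off the induced nondegenerate symmetric bilinear form $q$, and then apply $\gamma_F(\tfrac12\psi\circ q) = \gamma_F(\tfrac12\psi)^{\dim q}\,\gamma_F(\det q,\tfrac12\psi)\,h_F(q)$, exactly as in the evaluation of $\blambda(1,U(a))$ in the proof of Lemma \ref{l:bs U}. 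Multiplying $\hat s(1,W)$ by $\blambda(1,W)$ and simplifying the resulting product of Hilbert symbols and Weil indices by bimultiplicativity together with $\gamma_F(u,\tfrac12\psi)^2 = (u,-1)_F$ should then yield $(u,-1)_F\,\gamma_F(u,\tfrac12\psi)$.

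Finally, for the ``in particular'' clause: when $\ord(u)$ is even the extension $E/F$ is unramified, so $-1$, having valuation $0$, lies in $\Nm(E^\times)$ and hence $(u,-1)_F = 1$; and by the standard evaluation of the Weil index, the Weil index of a unit against an additive character of even conductor is trivial, so $\gamma_F(u,\tfrac12\psi) = 1$ and therefore $\bs(1,W) = 1$.
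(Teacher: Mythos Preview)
Your plan is essentially the paper's own proof: decompose $i(1,W)=p_1\tau_2 p_2$ to get $\hat s(1,W)$, compute $\blambda(1,W)$ via the two Leray invariants in the $F$-basis \eqref{e:double F basis}, and simplify. Two small corrections to your expectations: first, although $(1,W)$ does not stabilize $\bY'{}^\square$, the Leray invariant $q(\bV^\triangle,\bY'{}^\square g^{-1},\bY'{}^\square)$ nevertheless has full radical and so that $\gamma_F$-factor \emph{is} $1$; second, one finds $x(g)=\tfrac18$, so $\hat s(1,W)=\xi(\tfrac18)\cdot((u,-1)_F\gamma_F(u,\tfrac12\psi))^{-2}=(u,-2)_F$, which combines with $\blambda(1,W)=\gamma_F(u,\tfrac12\psi)\cdot(2,u)_F$ to give the claim.
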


\begin{proof}
The matrix $W$ sends $w_1 \mapsto -w_1$ and $w_2 \mapsto -w_2$. Recalling that $i(W)$ acts linearly on $W_0$ and trivially on $W_0^-$, it is a straightforward computation to see that
\begin{equation*}
i(1, W) = \left(\begin{smallmatrix}
\tfrac{1}{2} & -\tfrac{1}{2} & \tfrac{1}{4} & -\tfrac{1}{4} \\
\tfrac{1}{2} & \tfrac{1}{2} & \tfrac{1}{4} & \tfrac{1}{4} \\
-1 & 1 & \tfrac{1}{2} & -\tfrac{1}{2} \\
-1 & -1 & \tfrac{1}{2} & \tfrac{1}{2}
\end{smallmatrix}\right) = 
\left(\begin{smallmatrix}
1 & -1 & -\tfrac{1}{4} & \tfrac{1}{4} \\
1 & 1 & -\tfrac{1}{4} & -\tfrac{1}{4} \\
0 & 0 & \tfrac{1}{2} & -\tfrac{1}{2} \\
0 & 0 & \tfrac{1}{2} & \tfrac{1}{2}
\end{smallmatrix}\right)
\left(\begin{smallmatrix}
& & 1 & \\
& & & 1 \\
-1 & & & \\
& -1 & &
\end{smallmatrix}\right)
\left(\begin{smallmatrix}
2 & 0 & -1 & 0 \\
0 & 2 & 0 & -1 \\
0 & 0 & \tfrac{1}{2} & 0 \\
0 & 0 & 0 & \tfrac{1}{2}
\end{smallmatrix}\right).
\end{equation*}
Therefore we have $x(i(1,W)) = \tfrac{1}{8}$ and $j(i(1,W)) = 2,$ and by Definition \ref{d:splittings0},
\begin{equation}\label{e:s hat W}
\hat s(1, W) = \xi(\tfrac{1}{8}) \cdot ((u, -1)_F \cdot \gamma_F(u, \tfrac{1}{2} \psi))^{-2} = (u, -2)_F.
\end{equation}
We next calculate $\blambda(1, W)$. With respect to the symplectic basis given in \eqref{e:double F basis}, the image of $i(1,W)$ in $\Sp(\bV^\square)$ is
\begin{equation*}
g = \left(\begin{smallmatrix}
\tfrac{1}{2} & & -\tfrac{1}{2} & & \tfrac{1}{4} & & -\tfrac{1}{4} & \\
& \tfrac{1}{2} & & \tfrac{1}{2} & & -\tfrac{u}{4} & & \tfrac{u}{4} \\
\tfrac{1}{2} & & \tfrac{1}{2} & & \tfrac{1}{4} & & \tfrac{1}{4} & \\
& \tfrac{1}{2} & & \tfrac{1}{2} & & -\tfrac{u}{4} & & -\tfrac{u}{4} \\
-1 & & 1 & & \tfrac{1}{2} & & -\tfrac{1}{2} & \\
& u & & -u & & \tfrac{1}{2} & & -\tfrac{1}{2} \\
-1 & & -1 & & \tfrac{1}{2} & & \tfrac{1}{2} & \\
& u & & u & & \tfrac{1}{2} & & \tfrac{1}{2}
\end{smallmatrix}\right) \in \Sp(\bV^\square).
\end{equation*}
By definition,
\begin{equation*}
\blambda(g) = \gamma_F(\tfrac{1}{2} \psi \circ q(\bV^\triangle, \bY'{}^\square g^{-1}, \bY'{}^\square)) \cdot \gamma_F(\tfrac{1}{2} \psi \circ q(\bV^\triangle, \bY'{}^\square, \bV^\triangle g)).
\end{equation*}
We have
\begin{align*}
\bV^\triangle &= \{(0, 0, 0, 0, y_1, y_2, y_3, y_4)\}, \\
\bY'{}^\square g^{-1} &= \{(y_1, y_2, y_3, y_4, \tfrac{1}{2} y_3, -\tfrac{1}{2u} y_4, \tfrac{1}{2} y_1, -\tfrac{1}{2u} y_2)\}, \\
\bY'{}^\square &= \{(0, 0, y_1, y_2, y_3, y_4, 0, 0)\},
\end{align*}
which implies that $\bR = \{(0, 0, *, *, *, *, 0, 0)\}$ and hence
\begin{equation}\label{e:lambda 1 W}
\gamma_F(\tfrac{1}{2} \psi \circ q(\bV^\triangle, \bY'{}^\square g^{-1}, \bY'{}^\square)) = 1.
\end{equation}
Now we calculate the second factor of $\blambda(1, W)$. We have
\begin{align*}
\bV^\triangle &= \{(0, 0, 0, 0, y_1, y_2, y_3, y_4)\}, \\
\bY'{}^\square &= \{(0, 0, y_1, y_2, y_3, y_4, 0, 0)\}, \\
\bV^\triangle g &= \{(y_1, y_2, y_3, y_4, -\tfrac{1}{2} y_1, \tfrac{1}{2u} y_2, -\tfrac{1}{2} y_3, \tfrac{1}{2u} y_4)\},
\end{align*}
and hence $\bR = \{(0, 0, 0, 0, *, *, 0, 0)\},$ $\bR^\perp = \{(0, 0, *, *, *, *, *, *)\}.$ This implies that
\begin{align*}
(\bV^\triangle)_\bR &= \{(0, 0, 0, 0, 0, 0, y_1, y_2)\}, \\
(\bY'{}^\square)_\bR &= \{(0, 0, y_1, y_2, 0, 0, 0, 0)\}, \\
(\bV^\triangle g)_\bR &= \{(0, 0, y_1, y_2, 0, 0, -\tfrac{1}{2} y_1, \tfrac{1}{2u} y_2)\},
\end{align*}
and we have
\begin{equation*}
(\bY'{}^\square)_\bR \left(\begin{smallmatrix} 1 & b \\ 0 & 1 \end{smallmatrix}\right) = (\bV^\triangle g)_\bR, \qquad \text{for $b = \left(\begin{smallmatrix} -\tfrac{1}{2} & \\ & \tfrac{1}{2u} \end{smallmatrix}\right)$.}
\end{equation*}
It follows that
\begin{equation}\label{e:lambda 2 W}
\gamma_F(\tfrac{1}{2} \psi \circ q(\bV^\triangle, \bY'{}^\square, \bV^\triangle g)) 
= \gamma_F(\tfrac{1}{2} \psi)^2 \cdot \gamma_F(-\tfrac{1}{4u}, \tfrac{1}{2} \psi) \cdot (-\tfrac{1}{2}, \tfrac{1}{2u})_F 
= \gamma_F(u, \tfrac{1}{2} \psi) \cdot (2, u)_F.
\end{equation}
Putting together Equations \eqref{e:s hat W}, \eqref{e:lambda 1 W}, and \eqref{e:lambda 2 W}, we have
\begin{equation*}
\bs(1, W) 
= \hat s(1, W) \cdot \blambda(1, W) 
= (u, -2)_F \cdot \gamma_F(u, \tfrac{1}{2} \psi) \cdot (u, 2)_F 
= (u, -1)_F \cdot \gamma_F(u, \tfrac{1}{2} \psi). 
\end{equation*}
To see the final assertion, first observe that if $\ord(u)$ is even, then either $E$ is split or unramified over $F$. In either case, $(u, -1)_F = 1$. By \cite[Proposition A.11]{R93}, $\ord(u)$ even implies that $\gamma_F(u, \tfrac{1}{2} \psi) = 1$.
\end{proof}

\begin{lemma}
Let $a \in F$. Then
\begin{equation*}
\bs(1, D(-1)) \bs(1, W) \bs(1, U(a)) \bs(1, W) = 1.
\end{equation*}
\end{lemma}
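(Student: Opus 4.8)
The plan is to prove this by direct substitution, since all three factors have effectively been computed already. The expression $\bs(1,D(-1))\,\bs(1,W)\,\bs(1,U(a))\,\bs(1,W)$ is literally a product of four scalars in $\CC^1$ (no cocycle term enters, as this is not $\bs$ of a product but a product of values), so the statement should reduce to collapsing a product of Hilbert symbols and one squared Weil index down to $1$. The relation presumably records a Weyl-element consistency among the explicit formulas, but for the proof only the bookkeeping matters.

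First I would assemble the inputs. Reading $D(-1)$ as $\diag(-1,-1)$ — which is forced, since $(1,D(-1))$ must lie in $R=\G(E^\times\times\GL_2(F))$ — Lemma~\ref{l:bs D} gives $\bs(1,D(-1))=(u,-1)_F$; concretely this is either the special case $\bs(1,D(a,a^{-1}))=(u,a)_F$ at $a=-1$, or the main formula at $\alpha=1$, $a=d=-1$ combined with $\xi|_{F^\times}=\epsilon_{E/F}$ and the triviality of $\xi$ on squares. From Lemma~\ref{l:bs U}, $\bs(1,U(a))=1$ for all $a\in F$. From Lemma~\ref{l:bs W}, $\bs(1,W)=(u,-1)_F\cdot\gamma_F(u,\tfrac{1}{2}\psi)$.

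Then I would multiply the four values together, obtaining $(u,-1)_F^{3}\cdot\gamma_F(u,\tfrac{1}{2}\psi)^{2}$. Invoking the standard Weil-index identity $\gamma_F(u,\tfrac{1}{2}\psi)^{2}=(u,-1)_F$ (already used in Section~\ref{ch:splittings}) together with $(u,-1)_F\in\{\pm1\}$, this collapses to $(u,-1)_F^{4}=1$, which is exactly the asserted identity.

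I do not expect any genuine obstacle: every ingredient is in hand, and the argument is essentially one line of symbol bookkeeping. The only points demanding a bit of care are fixing the meaning of the symbol $D(-1)=\diag(-1,-1)$ and checking the elementary simplification $\xi(-4)=\epsilon_{E/F}(-4)=\epsilon_{E/F}(-1)=(u,-1)_F$.
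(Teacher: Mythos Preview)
Your proposal is correct and matches the paper's proof essentially line for line: the paper also substitutes $\bs(1,D(-1))=(u,-1)_F$, $\bs(1,U(a))=1$, and $\bs(1,W)=(u,-1)_F\,\gamma_F(u,\tfrac{1}{2}\psi)$, then collapses the product using $\gamma_F(u,\tfrac{1}{2}\psi)^2=(u,-1)_F$.
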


\begin{proof}
We have $\bs(1, D(-1)) \bs(1, W) \bs(1, U(a)) \bs(1, W)
= (u, -1)_F ((u, -1)_F \gamma_F(u, \tfrac{1}{2} \psi))^2 = 1.$ 
\end{proof}

\begin{lemma}
If $F = \RR$ and $E = \CC$, then for any $(\alpha, g) \in R$,
\begin{equation*}
\bs(\alpha, g) = \xi(\alpha^{-1}).
\end{equation*}
\end{lemma}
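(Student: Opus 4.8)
The plan is to bootstrap from the generator computations in Lemmas~\ref{l:bs D}, \ref{l:bs U}, and~\ref{l:bs W}. The constraint defining $R$ forces $\det(g) = \lvert\alpha\rvert^{2} > 0$, so $g$ lies in $\GL_{2}^{+}(\RR)$; writing $g = D(a,d)\,h$ with $ad = \Nm(\alpha)$ and $h = D(a,d)^{-1}g \in \SL_{2}(\RR)$, we factor $(\alpha,g) = (\alpha, D(a,d))\cdot(1,h)$ inside $R$, and then further decompose $(1,h)$ into a product of the elements $(1,U(b))$ and $(1,W)$ using the Bruhat decomposition of $\SL_{2}(\RR)$. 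Reassembling via the defining coboundary relation $\bs(r_{1}r_{2}) = \bs(r_{1})\bs(r_{2})\,z_{\bY'}(i(r_{1}), i(r_{2}))$ then expresses $\bs(\alpha,g)$ in terms of known quantities.

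Concretely, I would take $a = \Nm(\alpha)$, $d = 1$: Lemma~\ref{l:bs D} gives $\bs(\alpha, D(\Nm(\alpha),1)) = \xi\bigl(-(\bar\alpha-1)(\alpha^{-1}-1)\bigr)$, and since $-(\bar\alpha-1)(\alpha^{-1}-1) = \alpha^{-1}\lvert\alpha-1\rvert^{2}$ while $\xi$ is trivial on positive reals, this equals $\xi(\alpha^{-1})$ (the case $\alpha=1$ being the degenerate one handled directly). It then remains to show that the complementary factor $\bs(1,h)\cdot z_{\bY'}(i(\alpha, D(\Nm(\alpha),1)), i(1,h))$ equals $1$ for every $h \in \SL_{2}(\RR)$. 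For this I would feed in $\bs(1,U(b)) = 1$ (Lemma~\ref{l:bs U}) and $\bs(1,W) = (u,-1)_{\RR}\,\gamma_{\RR}(u,\tfrac12\psi)$ (Lemma~\ref{l:bs W}), note $\bs(1,W)^{2} = (u,-1)_{\RR}$ coming from $\gamma_{\RR}(u,\tfrac12\psi)^{2} = (u,-1)_{\RR}$, evaluate the relevant cocycles $z_{\bY'}$ by the Weil-index-of-Leray-invariant method used in the proofs of Lemmas~\ref{l:bs U} and~\ref{l:bs W}, and invoke the relation $\bs(1,D(-1))\bs(1,W)\bs(1,U(b))\bs(1,W) = 1$ from the preceding lemma; the expectation is that all the remaining Hilbert symbols and archimedean Weil indices cancel.

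The main obstacle is precisely this final cancellation. Over $\RR$ none of the ingredients $\bs(1,W)$, $\gamma_{\RR}(u,\tfrac12\psi)$, or the individual cocycle values is trivial on its own---they are genuine eighth roots of unity and the symbol $(-1,-1)_{\RR}$---so one must verify, by a case analysis over the Bruhat cells of $\SL_{2}(\RR)$, that they conspire to the constant $1$, with the diagonal factor alone already producing $\xi(\alpha^{-1})$. A convenient organizing device is to choose the Bruhat factorization of $g$ compatibly with the polarization $\bY'$, so that Lemma~\ref{l:change polarization} applies cell by cell and the whole computation reduces to substitution into Definition~\ref{d:splittings0}; this is the only step requiring genuine care, the remainder being a formal manipulation of the cocycle relation together with the connectedness of $R$ to propagate the identity from the open Bruhat cell to all of $R$.
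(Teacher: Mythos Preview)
Your reduction to showing $\bs(1,h) = 1$ for $h \in \SL_{2}(\RR)$ matches the paper's approach. Two minor efficiencies on the paper's side: it uses $D(1,\Nm(\alpha))$ rather than $D(\Nm(\alpha),1)$, which yields $\xi(\alpha^{-1})$ immediately from the second displayed formula in Lemma~\ref{l:bs D} without your positivity argument, and it notes that the cocycle term you carry vanishes because diagonal elements stabilize $\bY'$, so $\bs$ is genuinely multiplicative at that step.

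Where you diverge is in handling the $\SL_{2}(\RR)$ piece. The paper dispatches it in one line by citing \cite[Proposition~A.10(1)]{R93}, which supplies the archimedean triviality directly. You instead propose a Bruhat case analysis, feeding in the values $\bs(1,U(b))=1$, $\bs(1,W)=(u,-1)_{\RR}\gamma_{\RR}(u,\tfrac12\psi)$, and tracking the cocycles cell by cell. This would work and has the virtue of being self-contained, but the cancellation of eighth roots of unity that you correctly identify as the ``main obstacle'' is precisely what Rao's result packages for you; invoking it eliminates the bookkeeping entirely. Your continuity-and-connectedness idea for propagating from the open cell is a reasonable shortcut within your approach, though you would still need to verify that $\bs$ is continuous on $R$ (it is, but this is not stated in the paper).
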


\begin{proof}
Since $(\alpha, D(1, \Nm(\alpha)))$ stabilizes $\bY'$, 
\begin{equation*}
\bs(\alpha, g) = \bs(\alpha, D(1, \Nm(\alpha))) \cdot \bs(1, D(1, \Nm(\alpha)^{-1}) g).
\end{equation*}
By Lemma \ref{l:bs D}, to prove the desired assertion, it remains to show that $\bs(1, g) = 1$ for $g \in \SL_2(\RR)$. But this follows from \cite[Proposition A.10(1)]{R93}. 
\end{proof}


\subsubsection{A splitting $\bs'$ of $z_{\bY'}$}

As in the previous subsection, write $D(a,d) \colonequals \diag(a,d)$.

\begin{lemma}\label{l:bs' D}\mbox{}
\begin{enumerate}[label=(\roman*)]
\item
If $(D(a,d), \alpha) \in \G(\GL_2(F) \times E^\times)$, then $\bs'(D(a,d), \alpha) = \xi'(-(a^{-1} \alpha - 1)(d^{-1} \alpha - 1)).$ In particular, we have $\bs'(D(a,a^{-1}), 1) = (u, a)_F$ and 
$\bs'(D(1,\Nm(\alpha)), \alpha) = \xi'(\alpha).$

\item
For $a \in F$, we have $\bs'(1, U(a)) = 1$.

\item
We have $\bs'(1, W) = (u, -1)_F \cdot \gamma_F(u, \tfrac{1}{2} \psi)$. In particular, if $\ord(u) \in 2\bZ$, then $\bs'(1, W) = 1$.

\item
For $a \in F$, we have $\bs'(1, D(-1)) \bs'(1, W) \bs'(1, U(a)) \bs'(1, W) = 1$.

\item
If $F = \RR$ and $E = \CC$, then $\bs'(\alpha, g) = \xi'(\alpha)$.
\end{enumerate}
\end{lemma}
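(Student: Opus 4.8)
The plan is to establish the five items one at a time by rerunning the computations that proved Lemmas \ref{l:bs D}, \ref{l:bs U}, \ref{l:bs W} and the two unlabeled lemmas immediately following, now with the $E$-space $\Res V$, the map $i'$, the character $\xi'$, and the splitting $\beta'$ of Definition \ref{d:splittings'} in place of $W_0$, $i$, $\xi$, $\beta$. The doubled symplectic space $\bV^\square$, its polarization $\bY'{}^\square$, and the $F$-symplectic basis \eqref{e:double F basis} are all unchanged, and throughout $\bs' = \hat s' \cdot \blambda$ with $\partial \hat s' = z_{\Res V^\triangle \otimes W}$, the factor $\blambda = \lambda_{\bV^\triangle \rightsquigarrow \bY'{}^\square}$ being the same as for $\bs$. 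The one structural change to track is that the embedding $R \hookrightarrow \G(\U_E(\Res V) \times \U_E(W))$ is $(\alpha, g) \mapsto (g, \alpha)$, so the variable $\alpha \in E^\times$ now acts on $\Res V$ by right multiplication through $\U_E(W)$ rather than as $\alpha^{-1}$ on $V_0$; this is precisely what replaces $\alpha^{-1}a, \alpha^{-1}d$ by $a^{-1}\alpha, d^{-1}\alpha$ in (i) and $\xi(\alpha^{-1})$ by $\xi'(\alpha)$ in (v).

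For (i), I would first write the image of $(D(a,d), \alpha)$ under $i'$ as a $4 \times 4$ matrix over $E$ in the basis $\{\bw_1, \bw_2, \bw_1^*, \bw_2^*\}$ of $\Res V^\square$ (it sends $w_1 \mapsto a^{-1}\alpha\, w_1$ and $w_2 \mapsto d^{-1}\alpha\, w_2$), and then, assuming $(D(a,d),\alpha) \neq (1,1)$, exhibit an explicit decomposition $p_1 \tau_2 p_2$ of the Bruhat type used in Lemma \ref{l:bs D}, from which $x(i'(\cdot)) = (a^{-1}\alpha - 1)(d^{-1}\alpha - 1)$ and $j(i'(\cdot)) = 2$. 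Since $\gamma_F(u, \tfrac{1}{2}\psi)^2 = (u,-1)_F$ and $\xi'(-1) = \epsilon_{E/F}(-1) = (u,-1)_F$, Definition \ref{d:splittings'} then gives $\hat s' = \xi'\bigl(-(a^{-1}\alpha - 1)(d^{-1}\alpha - 1)\bigr)$. Next I would push $i'(D(a,d),\alpha)$ into $\Sp(\bV^\square)$ in the basis \eqref{e:double F basis} and check $\blambda = 1$ exactly as in Lemma \ref{l:bs D}: the first Weil-index factor is trivial because $i'(\cdot)$ stabilizes $\bY'{}^\square$, and in the second the radical of the relevant form lands in the middle block of \eqref{e:double F basis}, forcing that factor to be trivial. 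The two displayed special cases are then algebra: $\bs'(D(a,a^{-1}),1) = \xi'\bigl(a^{-1}(a-1)^2\bigr) = \xi'(a^{-1}) = (u,a)_F$ for $a \neq 1$, and, using $(\Nm\alpha)^{-1}\alpha = \overline\alpha^{-1}$, $\bs'(D(1,\Nm\alpha),\alpha) = \xi'\bigl(\overline\alpha^{-1}\Nm(\alpha - 1)\bigr) = \xi'(\overline\alpha^{-1}) = \xi'(\alpha)$, the last equality since $\Nm(\alpha-1)$ is a norm and $\xi'$ kills $\Nm_{E/F}(E^\times)$ (so $\xi'(\overline\alpha)\xi'(\alpha) = \xi'(\Nm\alpha) = 1$).

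Items (ii), (iii), (iv) reproduce Lemmas \ref{l:bs U}, \ref{l:bs W} and the first unlabeled lemma following \ref{l:bs W} essentially verbatim: one computes $i'(U(a),1)$, $i'(W,1)$, $i'(D(-1),1)$ (the $1$ now in the $\U_E(W) = E^\times$ factor), reads off $(x,j)$ from the appropriate Bruhat decomposition ($j=1$ for $U(a)$ with $a=0$ trivial, $j=2$ for $W$), and evaluates $\blambda$ by the same radical-and-quadratic-form bookkeeping (dimension, discriminant, Hasse invariant) as there. This yields $\bs'(1, U(a)) = 1$ and $\bs'(1, W) = (u,-1)_F\, \gamma_F(u, \tfrac{1}{2}\psi)$, the latter equal to $1$ when $\ord(u) \in 2\bZ$ by \cite[Proposition A.11]{R93}; then (iv) follows from $(u,-1)_F \cdot \bigl((u,-1)_F\gamma_F(u,\tfrac12\psi)\bigr)^2 = 1$ together with $\bs'(1,D(-1)) = \xi'(-4) = \xi'(-1) = (u,-1)_F$ coming from (i) applied to $D(-1) = -I$. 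For (v), over $F = \RR$, $E = \CC$ I would factor an element of $\G(\GL_2(\RR) \times \CC^\times)$ as $(D(1,\Nm\alpha),\alpha)$ times an element of $\SL_2(\RR) \times \{1\}$; the first factor stabilizes $\bY'$ so $\bs'$ is multiplicative over this product, $\bs'(D(1,\Nm\alpha),\alpha) = \xi'(\alpha)$ by (i), and $\bs'$ is trivial on $\SL_2(\RR)$ by \cite[Proposition A.10(1)]{R93}, exactly as in the last $\bs$-lemma.

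I do not anticipate a genuine obstacle: the statement is built to be the mirror of the $\bs$-results and no new phenomenon appears. The only real work is bookkeeping — getting the left/right $E$-module conventions and the swapped roles of the two factors right when writing the matrices of $i'$, producing correct decompositions $p_1 \tau_j p_2$, and tracking which Hilbert symbols and powers of $\gamma_F(u,\tfrac12\psi)$ survive, the key collapses being $\gamma_F(u,\tfrac12\psi)^2 = (u,-1)_F$ and $\xi' \circ \Nm_{E/F} = 1$. I will present the $i'(\cdot)$ matrices and $\blambda$-computations in full but refer back to Lemmas \ref{l:bs D}, \ref{l:bs U}, \ref{l:bs W} for the shape of the argument rather than repeating every line.
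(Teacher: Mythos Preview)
Your proposal is correct and matches the paper's approach exactly: the paper's proof simply says that (i) is the proof of Lemma \ref{l:bs D} with $a, d, \alpha^{-1}$ replaced by $a^{-1}, d^{-1}, \alpha$ (since $(D(a,d),\alpha)$ sends $w_1 \mapsto a^{-1}\alpha\, w_1$, $w_2 \mapsto d^{-1}\alpha\, w_2$), and that the remaining parts are proved exactly as the corresponding statements for $\bs$. Your identification of the structural swap $(\alpha,g) \mapsto (g,\alpha)$ as the sole source of the $\alpha^{-1}a \rightsquigarrow a^{-1}\alpha$ and $\xi(\alpha^{-1}) \rightsquigarrow \xi'(\alpha)$ changes is precisely the point.
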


\begin{proof}
The proof of (i) is similar to Lemma \ref{l:bs D} except that $(D(a,d), \alpha)$ sends $w_1 \mapsto a^{-1} \alpha w_1$ and $w_2 \mapsto d^{-1} \alpha w_2$. Thus the image of $(D(a,d), \alpha)$ in $\U_E(\Res V + \Res V^-)$ with respect to the basis $\bw_1, \bw_2, \bw_1^*, \bw_2^*$ is
\begin{equation*}
\left(
\begin{matrix}
\frac{a^{-1} \alpha + 1}{2} & 0 & 0 & \frac{a^{-1} \alpha - 1}{4} \\
0 & \frac{d^{-1} \alpha + 1}{2} & - \frac{d^{-1} \alpha - 1}{4} & 0 \\
0 & -(d^{-1} \alpha - 1) & \frac{d^{-1} \alpha + 1}{2} & 0 \\
-(a^{-1} \alpha - 1) & 0 & 0 & \frac{a^{-1} \alpha + 1}{2}
\end{matrix}
\right).
\end{equation*}
To be more precise, this proof is the proof of Lemma \ref{l:bs D} except with $a$ replaced by $a^{-1}$, $b$ replaced by $b^{-1}$, and $\alpha^{-1}$ replaced by $\alpha$. The proofs of the remaining parts are exactly the same as that of the analogous statements for $\bs$.
\end{proof}

\section{Global theta lifts}\label{ch:global theta}

In this section, we examine the global theta lifts in the similitude seesaw \eqref{e:main seesaw} in comparison to automorphic induction. Let $\chi$ be a Hecke character and recall that its automorphic induction $\pi_\chi$ to $\GL_2(\bA_F)$ has a Jacquet--Langlands transfer to $B^\times$ if and only if the following condition holds:
\begin{center}
If $B_v$ is ramified, then $\chi_v$ does not factor through $\Nm \from E_v^\times \to F_v^\times$.
\end{center}
We write $\pi_\chi^B$ to denote the Jacquet--Langlands transfer to $B^\times$ if the pair $(B, \chi)$ satisfies the above condition, and we set $\pi_\chi^B = 0$ otherwise. The main theorem of this section is:

\begin{theorem}\label{t:theta aut ind}
The theta lifts $\Theta(\chi \cdot \xi)$ from $\GU_B(V)$ to $\GU_B(W^*) \cong B^\times$ and $\Theta'(\overline{\chi' \cdot \xi'{}^{-1}})$ from $\GU_E(W)$ to $\GU_E(\Res V) \cong (E^\times \times (B')^\times)/F^\times$ can be described in terms of automorphic induction and the Jacquet--Langlands correspondence:
\begin{equation*}
\Theta(\chi \cdot \xi) \cong \pi_{\chi}^B, \qquad \text{and} \qquad \Theta'(\overline{\chi' \cdot  \xi'{}^{-1}})^\vee \cong \pi_{\chi'}^{B'} \otimes (\chi'{}^{-1} \cdot \xi'),
\end{equation*}
where the right-hand side is viewed as a representation of $\GU_E(\Res V)$ descended from $(B_\bA')^\times \times \bA_E^\times$.
\end{theorem}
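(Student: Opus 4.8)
The plan is to identify both theta lifts place by place and then reconstitute the global automorphic representations via strong multiplicity one; I will concentrate on the first isomorphism $\Theta(\chi\cdot\xi)\cong\pi_\chi^B$, the second being entirely parallel. The first step is to reduce the similitude theta lift to an isometry theta lift. Under the identifications of \eqref{e:main seesaw}, $\GU_B(V)=E^\times\cup E^\times\bj$ is generated by its identity component $\GU_B(V)^0\cong E^\times$ together with $\bj$, which acts on $E$ by the nontrivial Galois automorphism, so $\Theta(\chi\cdot\xi)$ is morally a dihedral theta lift from a torus to $\GL_2$ composed with Jacquet--Langlands. Using the splittings fixed in Section \ref{ch:splittings} and the compatibility statements of Proposition \ref{p:global compat}, the restriction of the Weil representation to $\U_B(V)\times\U_B(W^*)$ is the isometry Weil representation twisted by an explicit character, and the similitude factor is controlled; by the standard reduction of similitude theta lifts to isometry theta lifts it then suffices to identify the isometry lift of the character of $\U_B(V)^0\cong E^1$ underlying $\chi\cdot\xi$, extended suitably over the second component, while tracking the central and similitude characters.

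The second step is the local computation. At a place $v$ where $B_v$ is split and all the data are unramified, I would use Section \ref{s:split splittings}: Lemmas \ref{l:bs D}, \ref{l:bs U}, and \ref{l:bs W} (together with \ref{l:bs' D}) describe the splitting $\bs_v$ on $\G(E_v^\times\times\GL_2(F_v))$ explicitly enough to read off that the local theta lift of the unramified character $(\chi\cdot\xi)_v$ to $\GL_2(F_v)\cong\GU_B(W^*)_v$ is the unramified local automorphic induction $(\pi_\chi)_v$, with no spurious unramified twist. At the remaining places I would invoke the known local theta dichotomy for the dual pair $(\U_B(V)_v,\U_B(W^*)_v)$ as $B_v$ varies: the local lift to $\GU_B(W^*)_v\cong B_v^\times$ is nonzero if and only if $\chi_v$ does not factor through $\Nm\colon E_v^\times\to F_v^\times$ whenever $B_v$ is ramified, and when nonzero it is the Jacquet--Langlands transfer $(\pi_\chi^B)_v$. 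Combined with \eqref{e:ccc}, these local facts show that $\Theta(\chi\cdot\xi)\ne0$ precisely when $(B,\chi)$ satisfies the Jacquet--Langlands existence condition, and that its local components agree with those of $\pi_\chi^B$ at almost all places.

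The third step is to globalize: an automorphic representation of $\GL_2(\bA_F)$, or of $B_\bA^\times$, is determined up to isomorphism by its local components at almost all places (strong multiplicity one, transported through Jacquet--Langlands), so $\Theta(\chi\cdot\xi)\cong\pi_\chi^B$. For the second isomorphism one runs the same argument on the other leg of the seesaw \eqref{e:main seesaw}, lifting from $\GU_E(W)=E^\times$ to $\GU_E(\Res V)$. The new features are that the splitting characters $\xi$ and $\xi'$ differ --- Proposition \ref{p:compat} records exactly this difference and is what produces the twist by $\chi'^{-1}\cdot\xi'$ --- and that one must descend $\pi_{\chi'}^{B'}\otimes(\chi'^{-1}\cdot\xi')$ from $(B_\bA')^\times\times\bA_E^\times$ to $\GU_E(\Res V)=(E^\times\times(B')^\times)/F^\times$, which is legitimate because the relevant product of central characters is trivial by \eqref{e:ccc}. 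The bar and the contragredient in the statement are the usual bookkeeping produced by the negated form on $\bV^-$ in the doubled Weil representation.

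I expect the main obstacle to be the unramified local computation of the second step: extracting the local theta lift from the explicit cocycle splittings requires matching the formulas of Lemmas \ref{l:bs D}, \ref{l:bs U}, and \ref{l:bs W} against the standard unramified Rankin--Selberg (doubling) calculation and verifying that the twisted Weil representation realizes the local automorphic induction on the nose, with the correct Satake parameter and no residual unramified twist. A secondary difficulty is the careful tracking of the similitude and splitting characters ($\xi$ versus $\xi'$), which is what ultimately yields the twist by $\chi'^{-1}\cdot\xi'$ and the correct $F^\times$-descent in the second isomorphism.
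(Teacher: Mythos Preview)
Your overall strategy---compute the unramified local theta lifts explicitly, match them with the local automorphic induction, and conclude by strong multiplicity one---is the same as the paper's, and the paper carries out the unramified computation essentially as you sketch (Theorem~\ref{t:unram theta lift} combined with Lemmas~\ref{l:bs D} and~\ref{l:bs' D}). The paper also separately establishes cuspidality (Lemma~\ref{l:lift cusp}), which you should mention explicitly, since strong multiplicity one requires it.

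There is, however, a genuine gap in your step~2. You write that local theta dichotomy, combined with \eqref{e:ccc}, shows that $\Theta(\chi\cdot\xi)\neq 0$ precisely when the Jacquet--Langlands existence condition holds. But local nonvanishing of all the $\Theta_v((\chi\xi)_v)$ does \emph{not} by itself imply global nonvanishing of $\Theta(\chi\cdot\xi)$; this is the substantive content of Proposition~\ref{p:JL nonzero}. The paper bridges this gap via the Rallis inner product formula (Section~\ref{s:rallis}): the Petersson norm $\langle\theta_\varphi(\chi\xi),\theta_\varphi(\chi\xi)\rangle$ factors as a product of local doubling zeta integrals $Z_v(\tfrac12,\Phi_v,\chi_v)$, so global nonvanishing follows once one knows each local zeta integral is nonzero for some choice of Schwartz function. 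At split places this is automatic because $R(V_1^+)=I(\tfrac12,\xi^2)$ (Theorem~\ref{t:decomp U(1,1)}(iii)). At ramified places the implication ``$\Theta_{V_1^-}(\chi\xi)\neq 0 \Rightarrow Z(\tfrac12,\chi\xi,\xi^2)|_{R(V_1^-)}\neq 0$'' is Proposition~\ref{p:zeta vanishing}, whose proof requires the Kudla--Sweet decomposition of $I(\pm\tfrac12,\xi^2)$ and the Lapid--Rallis functional equation for the doubling zeta integral. This piece of machinery is missing from your plan, and without it the equivalence you assert in step~2 is unproved in the direction you need.

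A secondary remark: the twist by $\chi'^{-1}\cdot\xi'$ in the second isomorphism does not come from Proposition~\ref{p:compat} (which compares $s$ and $s'$ on the diagonal torus and is used for the period identity, not for identifying the lifts). It comes directly from the splitting computation $\bs'(D(1,\Nm(\alpha)),\alpha)=\xi'(\alpha)$ of Lemma~\ref{l:bs' D}(i), which feeds into Theorem~\ref{t:unram theta lift} and produces the extra $E^\times$-character in the description of $\Theta'$.
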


To prove Theorem \ref{t:theta aut ind}, we will need two arguments. 
\begin{enumerate}[label=(\arabic*)]
\item
If $\Theta(\chi \cdot \xi) = 0$, then $\pi_\chi^B = 0$.

\item
If $\Theta(\chi \cdot \xi) \neq 0$, then $\Theta(\chi \cdot \xi) \cong \pi_\chi^B$.
\end{enumerate}
To prove (1), we will need to make use of the theory of doubling zeta integrals. Since the nonvanishing of the global theta lift $\Theta(\chi \cdot \xi)$ is determined by the nonvanishing of local doubling zeta integrals (Section \ref{s:rallis}), the crux of (1) is to establish the local zeta integral is vanishing only if the local theta lift is. To prove (2), we will need to calculate the local theta lift from $\GU(1)_v$ to $\GU(2)_v$ at all places where $\GU(2)_v \cong \GU(1,1)_v$. After showing that $\Theta(\chi \cdot \xi)$ must be cuspidal if it is nonzero, we apply Jacquet--Langlands \cite{JL} to conclude.

\subsection{Theta lifts with similitudes}\label{s:theta sim}

We first recall some general properties of Weil representations. Denote by $\omega_\psi$ and $\omega_\psi^\square$ the Weil representations of $\Mp(\bV)$ on $\cS(\bX)$ and of $\Mp(\bV^\square)$ on $\cS(\bX^\square) = \cS(\bX) \otimes \cS(\bX)$. We have a natural map
\begin{equation*}
\tilde \iota \from \Mp(\bV) \times \Mp(\bV) \to \Mp(\bV^\square)
\end{equation*}
inducing $(z_1, z_2) \mapsto z_1 \overline z_2$ on $\CC^1$, and $\omega_\psi$, $\omega_\psi^\square$ enjoy the following compatibility:
\begin{equation*}
\omega_\psi^\square \circ \tilde \iota \cong \omega_\psi \otimes (\omega_\psi \circ \tilde{\mathfrak j}_\bY),
\end{equation*}
where $\tilde{\mathfrak j}_\bY$ is the automorphism of $\Mp(\bV)_\bY = \Sp(\bV) \times \CC^1$ defined by
\begin{equation*}
\tilde{\mathfrak j}_\bY(g,z) = (\mathfrak j_\bY(g), z^{-1}), \qquad \mathfrak j_\bY(g) = d_\bY(-1) \cdot g \cdot d_\bY(-1).
\end{equation*}

We make the following definitions:
\begin{align*}
G^\square &\colonequals \GU_B(V^\square) &
G^\square{}' &\colonequals \GU_E(W^\square) \\
G &\colonequals \GU_B(V)^\circ \cong E^\times \cong \GU_E(V_0) & G' &\colonequals \GU_E(W) \\
H &\colonequals \GU_B(W^*) \cong B^\times \subset \GU_E(W_0) &
H' &\colonequals \GU_E(\Res V) \cong ((B')^\times \times E^\times)/F^\times
\end{align*}
Recall that these groups fit into the following seesaws:
\begin{equation}\label{e:all seesaws}
\begin{tikzcd}[row sep=small]
H' \ar[dash]{d} \ar[dash]{dr} & H \ar[dash]{d} \ar[dash]{dl} \\
G & G'
\end{tikzcd} \quad
\begin{tikzcd}[row sep=small]
G^\square \ar[dash]{d} \ar[dash]{dr} & H \times H \ar[dash]{d} \ar[dash]{dl} \\
G \times G & H
\end{tikzcd} \quad
\begin{tikzcd}[row sep=small]
H' \times H' \ar[dash]{d} \ar[dash]{dr} & G^\square{}' \ar[dash]{d} \ar[dash]{dl} \\
H' & G' \times G'
\end{tikzcd}
\end{equation}
Adding a subscript $1$ to any of the above groups indicates that we take the kernel of the similitude character. If $G^{(1)}, \ldots, G^{(n)}$ is a collection of unitary similitude groups, we define
\begin{equation*}
\cG_{G^{(1)} \times \cdots \times G^{(n)}} \colonequals \{(g_1, \ldots, g_n) \in G^{(1)} \times \cdots \times G^{(n)} : \nu(g_1) = \cdots = \nu(g_n)\}.
\end{equation*}

We also define $Z \colonequals F^\times$ and $\cC\colonequals (\bA^\times)^2 (F^\times)^+ \quotient (\bA^\times)^+$, where
\begin{align*}
(\bA^\times)^+ &\colonequals \nu(G(\bA)) \cap \nu(H(\bA)) = \nu(G'(\bA)) \cap \nu(H'(\bA)) = \Nm_{E/F}(\bA_E^\times), \\
(F^\times)^+ &\colonequals F^\times \cap (\bA^\times)^+.
\end{align*}
Adding a superscript $+$ to any of the groups $G, H, G', H'$ means we take the preimage of $(\bA^\times)^+$ (or $(F^\times)^+$, etc.) under the similitude map. 

Fix sections 
\begin{equation*}
\cC \to G(\bA)^+, \qquad \cC \to H(\bA)^+, \qquad \cC \to G'(\bA)^+, \qquad \cC \to H'(\bA)^+,
\end{equation*}
of the natural surjections induced by the similitude character. We write $g_c, h_c, g_c', h_c'$ for the images of $c \in \cC$ under these sections. The following lemma is straightforward:

\begin{lemma}\label{l:index 2}
The similitude character induces isomorphisms
\begin{align*}
Z(\bA) G_1(\bA) G(F)^+ \quotient G(\bA)^+ &\cong \cC, & Z(\bA) H_1(\bA) H(F)^+ \quotient H(\bA)^+ &\cong \cC, \\
Z(\bA) G_1'(\bA) G'(F)^+ \quotient G'(\bA)^+ &\cong \cC, & Z(\bA) H_1'(\bA) H'(F)^+ \quotient H'(\bA)^+ &\cong \cC.
\end{align*}
and
\begin{align*}
H(\bA)/(H(F) H(\bA)^+) &\cong H'(\bA)/(H'(F) H'(\bA)^+) \cong \Gal(E/F), \\
G^\square(\bA)/(G^\square(F) G^\square(\bA)^+) &\cong G^\square{}'(\bA)/(G^\square{}'(F) G^\square{}'(\bA)^+) \cong \Gal(E/F).
\end{align*}
\end{lemma}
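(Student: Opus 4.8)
The plan is to prove every isomorphism by pushing forward along the similitude character $\nu$ and then identifying the image using two global norm theorems: the Hasse norm theorem for the cyclic extension $E/F$, which gives $\Nm_{E/F}(E^\times) = F^\times \cap \Nm_{E/F}(\bA_E^\times) = (F^\times)^+$, and the Hasse--Schilling--Maass norm theorem, which identifies $\Nrd(B^\times)$ (and $\Nrd((B')^\times)$) with the group of elements of $F^\times$ that are positive at the real places ramified in $B$ (resp. $B'$). I will repeatedly use that every real place ramified in $B$ or $B'$ satisfies $E_v \cong \CC$ -- this is forced by the embeddings $E \hookrightarrow B$ and $E \hookrightarrow B'$ -- so that $\Nm_{E/F}(\bA_E^\times)$ is contained in $\Nrd(B_\bA^\times)$ and in $\Nrd((B')_\bA^\times)$, and likewise $(F^\times)^+ = \Nm_{E/F}(E^\times)$ is contained in $\Nrd(B^\times)$ and $\Nrd((B')^\times)$ (the last containments because $E \subseteq B$, $E \subseteq B'$).

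For the first four isomorphisms I would argue uniformly. In each case $\nu$ restricts to a surjection of $G(\bA)^+$ (resp. $H(\bA)^+$, $G'(\bA)^+$, $H'(\bA)^+$) onto $(\bA^\times)^+ = \Nm_{E/F}(\bA_E^\times)$ with kernel $G_1(\bA)$ (resp. $H_1(\bA)$, etc.): surjectivity holds because the local reduced norm is onto $F_v^\times$, the local norm is onto $\Nm_{E_v/F_v}(E_v^\times)$, and $(\bA^\times)^+$ lies inside every local image by the previous paragraph. Since the reduced norm of a central scalar (and the norm of an element of $F^\times$) is its square, $\nu$ carries $Z(\bA)$ onto $(\bA_F^\times)^2$; by the two norm theorems it carries $G(F)^+$ (resp. $H(F)^+$, etc.) onto $(F^\times)^+$. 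Hence $\nu$ descends to a map $Z(\bA)\,G_1(\bA)\,G(F)^+ \backslash G(\bA)^+ \to (\bA_F^\times)^2\,(F^\times)^+ \backslash (\bA^\times)^+ = \cC$, which is surjective by the image computation and injective by clearing denominators: if $\nu(x) = z^2 a$ with $z \in \bA_F^\times$ and $a = \nu(\gamma)$ for some $\gamma \in G(F)^+$, then $z^{-1}\gamma^{-1}x$ lies in the kernel $G_1(\bA)$, so $x$ already lies in $Z(\bA)\,G_1(\bA)\,G(F)^+$.

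For the remaining isomorphisms I would compose $\nu$ with the global reciprocity map $\mathrm{rec}_{E/F}\colon \bA_F^\times \twoheadrightarrow \Gal(E/F)$, whose kernel is $F^\times \cdot \Nm_{E/F}(\bA_E^\times)$. In each of the four cases $\nu(H(F)) \subseteq F^\times$ and $\nu(H(\bA)^+) \subseteq \Nm_{E/F}(\bA_E^\times)$ by the definition of the superscript $+$, so $\mathrm{rec}_{E/F} \circ \nu$ annihilates $H(F)\,H(\bA)^+$; it is surjective because $\nu$ hits the idele that is a uniformizer at a chosen finite place $w$ inert in $E$ and trivial elsewhere, whose reciprocity class is the nontrivial $\mathrm{Frob}_w$. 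For $H$ and $H'$ this uses that $\Nrd$ is surjective at every finite place; for $G^\square = \GU_B(V^\square)$ and $G^\square{}' = \GU_E(W^\square)$ one chooses $w$ inert in $E$ and split in $B$ (resp. $B'$), so that the local group is of type $\GSp_4$ and its similitude is onto $F_w^\times$. Finally the kernel is no larger than $H(F)\,H(\bA)^+$: given $x$ with $\nu(x) = a n$, $a \in F^\times$, $n \in \Nm_{E/F}(\bA_E^\times)$, positivity of $n$ at the $\CC$-places of $E$ (which contain the real places ramified in $B$) forces $a$ to be positive there, hence $a \in \Nrd(B^\times) = \nu(H(F))$ by Hasse--Schilling--Maass, while $n \in \Nm_{E/F}(\bA_E^\times) = \nu(H(\bA)^+)$; modifying $x$ by the corresponding elements of $H(F)$ and $H(\bA)^+$ produces an element of $H_1(\bA) \subseteq H(\bA)^+$, as desired.

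The only real difficulty is bookkeeping: keeping the various ``$+$'' decorations and the identification $(\bA^\times)^+ = \Nm_{E/F}(\bA_E^\times)$ straight, and pinning down, via $E \hookrightarrow B$ and $E \hookrightarrow B'$, exactly which archimedean places ramify in $B$ and $B'$. The one structural input worth isolating is the local surjectivity of the similitude character of the doubled quaternionic unitary groups $G^\square$ and $G^\square{}'$ at the finite places, which follows at a split inert place from the identification with $\GSp_4$; everything else is routine.
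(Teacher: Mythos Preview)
The paper does not give a proof of this lemma; it simply labels it ``straightforward'' and moves on. Your argument supplies the details correctly: pushing forward along $\nu$, identifying $(F^\times)^+$ with $\Nm_{E/F}(E^\times)$ via the Hasse norm theorem, and handling the quaternionic cases via the embedding $E \hookrightarrow B$ (resp.\ $B'$) together with Hasse--Schilling--Maass is exactly the right bookkeeping.

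One minor simplification: for $G^\square = \GU_B(V^\square)$ and $G^\square{}' = \GU_E(W^\square)$ you do not need to pick a place split in $B$ to get local surjectivity of $\nu$. The doubled spaces $V^\square$ and $W^\square$ are hyperbolic (they admit the totally isotropic decomposition $V^\triangle \oplus V^\bigtriangledown$), so the element acting by a scalar $\lambda \in F_v^\times$ on $V^\triangle$ and trivially on $V^\bigtriangledown$ already has similitude $\lambda$; hence $\nu$ is surjective onto $F_v^\times$ at \emph{every} place, and onto $F^\times$ over $F$. This also makes the injectivity step for $G^\square$ immediate (no analogue of Hasse--Schilling--Maass is needed there, since $\nu(G^\square(F)) = F^\times$).
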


Recall that in Section \ref{ch:splittings} (see Definitions \ref{d:splittings0} and \ref{d:splittings'}), for each place $v$ of $F$, we defined splittings of $z_{\bY_v}$ and $z_{\bY_v^\square}$ on certain unitary groups. Recall also that the discussion in Section \ref{s:global splittings} allowed us to multiply the local splittings to obtain global splittings of $z_\bY$
\begin{equation*}
s \from \cG_{G \times H}(\bA) \to \CC^1, \qquad
s' \from \cG_{H' \times G'}(\bA) \to \CC^1,
\end{equation*}
and global splittings of $z_{\bY^\square}$ 
\begin{equation*}
s^\square \from \cG_{G^\square \times H}(\bA) \to \CC^1, \qquad
s^\square{}' \from \cG_{H' \times G^\square{}'}(\bA) \to \CC^1.
\end{equation*}
These allow us to define corresponding Weil representations $\omega_\psi, \omega_\psi', \omega_\psi^\square, \omega_\psi^\square{}'$. By Proposition \ref{p:global compat},
\begin{align}
\label{e:doubled weil0}
\omega_\psi^\square(g_1, g_2, h) &= \omega_\psi(g_1, h) \otimes \xi(\det(g_2,h)) \overline{\omega_\psi(g_2,h)}, && (g_1, g_2, h) \in \cG_{G \times G \times H}(\bA), \\
\label{e:doubled weil'}
\omega_\psi^\square{}'(h, g_1, g_2) &= \omega_\psi'(h,g_1) \otimes \xi'(\det(h, g_2)) \overline{\omega_\psi'(h,g_2)}, && (h, g_1, g_2) \in \cG_{H' \times G' \times G'}(\bA), \\
\label{e:weil compat}
\omega_\psi(g,g') &= \xi(g) \xi'(g') \omega_\psi'(g,g'), && (g,g') \in \cG_{G \times G'}(\bA).
\end{align}

Define a theta distribution
\begin{equation*}
\Theta \from \cS(\bX(\bA)) \to \CC, \qquad \varphi \mapsto \sum_{x \in \bX(F)} \varphi(x).
\end{equation*}
Let $\varphi \in \cS(\bX(\bA))$ and let $\chi$ be a Hecke character. For $h = h_1 h_c \in H(\bA)^+$ where $h_1 \in H_1(\bA)$, define
\begin{equation*}
\theta_\varphi(\chi)(h) \colonequals \int_{G_1(F) \quotient G_1(\bA)} \Theta(\omega_\psi(g_1g_c,h) \varphi) \chi(g_1g_c) \, dg_1.
\end{equation*}
Here, $dg = \prod_v dg_{1,v}$ is the Tamagawa measure on $G_1(\bA)$. Note that $\theta_\varphi(\chi)(\gamma h) = \theta_\varphi(f)(\gamma h)$ for $\gamma \in H(F) \cap H(\bA)^+$ and $h \in H(\bA)^+$. By declaring 
\begin{equation*}
\theta_\varphi(\chi)(\gamma h) = \theta_\varphi(\chi)(h), \qquad \text{for all $\gamma \in H(F)$ and $h \in H(\bA)^+$},
\end{equation*}
we obtain an automorphic form on the subgroup $H(F) H(\bA)^+$ of $H(\bA)$. Let $\varphi \in \cS(\bX(\bA))$ and let $\chi'$ be a Hecke character. For $h' = h_1' h_c' \in H'(\bA)^+$ where $h_1' \in H_1'(\bA)$, define
\begin{equation*}
\theta_\varphi'(\chi')(h') \colonequals \int_{G_1'(F) \quotient G_1'(\bA)} \overline{\Theta(\omega_\psi'(h',g_1'g_c') \varphi)} \chi'(g_1'g_c') \, dg_1'.
\end{equation*}
Here, $dg_1' = \prod_v dg_{1,v}'$ is the Tamagawa measure on $G_1'(\bA).$ 

Let $\Theta_+(\chi)$ be the automorphic representation of $H(F) H(\bA)^+$ generated by $\theta_\varphi(\chi)$ for $\varphi \in \cS(\bX(\bA))$ and let $\Theta_+'(\chi')$ be the automorphic representation of $H'(F)H'(\bA)^+$ generated by $\theta_\varphi'(\chi')$ for all $\varphi \in \cS(\bX(\bA))$. Define
\begin{equation*}
\Theta(\chi) \colonequals \Ind_{H(F)H(\bA)^+}^{H(\bA)}\left(\Theta_+(\chi)\right), \qquad \Theta'(\chi') \colonequals \Ind_{H'(F)H'(\bA)^+}^{H'(\bA)}\left(\Theta_+'(\chi')\right).
\end{equation*}
By Lemma \ref{l:index 2}, $[H(\bA): H(F) H(\bA)^+] = 2$, so $\theta_\varphi(\chi)$ extends to an automorphic form in $\Theta(\chi)$ via
\begin{equation*}
\theta_\varphi(\chi)(h) \colonequals 
\begin{cases}
\theta_\varphi(\chi)(h_+) & \text{if $h = \gamma h_+$ for $\gamma \in H(F)$ and $h_+ \in H(\bA)^+$,} \\
0 & \text{otherwise.}
\end{cases}
\end{equation*}
Similarly, $\theta_\varphi'(\chi')$ extends to an automorphic form in $\Theta'(\chi')$ by setting
\begin{equation*}
\theta_\varphi'(\chi')(h') \colonequals 
\begin{cases}
\theta_\varphi'(\chi')(h_+') & \text{if $h' = \gamma h_+'$ for $\gamma \in H'(F)$ and $h_+' \in H'(\bA)^+$,} \\
0 & \text{otherwise.}
\end{cases}
\end{equation*}
The theta lifts for $\omega_\psi^\square$ and $\omega_\psi^\square{}'$ are defined analogously.

\subsection{The Rallis inner product formula}\label{s:rallis}

In this section, we will write down an equation relating the Petersson inner product of a theta lift to a theta lift to a doubled unitary similitude group. To this end, we will use the doubled seesaws in \eqref{e:double seesaw}, \eqref{e:all seesaws}.

For automorphic forms $f_1, f_2$ on $H(\bA) \cong B_\bA^\times$ and $f_1', f_2'$ on $H'(\bA) \cong (B_\bA'{}^\times \times \bA_E^\times)/\bA_F^\times$, define
\begin{equation*}
\langle f_1, f_2 \rangle_H \colonequals \int_{[H]} f_1(h) \cdot \overline{f_2(h)} \, dh, \qquad 
\langle f_1', f_2' \rangle_{H'} \colonequals \int_{[H']} f_1'(h') \cdot \overline{f_2'(h')} \, dh,
\end{equation*}
where $dh = \prod_v dh_v$ and $dh' = \prod_v dh_v'$ are the Tamagawa measures of $H(\bA)$ and $H'(\bA)$.

Recall from Proposition \ref{p:global compat} that the splittings $s \from \cG_{G \times H}(\bA) \to \CC^1$ and $s^\square \from \cG_{G^\square \times H}(\bA) \to \CC^1$ enjoy the property that for $(g_1, g_2, h) \in \cG_{G \times G \times H}$,
\begin{equation*}
s^\square(g_1,g_2,h) = s(g_1,h) \cdot \overline{s(g_2, h)} \cdot \xi(\det(i(g_2,h))).
\end{equation*}
This compatibility implies that for any $h_1 \in H_1$, $g_1, g_1' \in G_1$, and $(g_c, h_c) \in \cG_{G \times H}(\bA)$,
\begin{align*}
\Theta(\omega_\psi(g_1g_c, &h_1h_c) \varphi_1) \cdot \overline{\Theta(\omega_\psi(g_1'g_c,h_1h_c)\varphi_2)} \\
&= \Theta(\omega_\psi^\square((g_1g_c, g_1'g_c),h_1h_c) \varphi_1 \otimes \overline \varphi_2) \cdot \xi(\det(h_1h_c))^{-1} \cdot \xi(g_1' g_c)^2.
\end{align*}
Hence for $\varphi_1, \varphi_2 \in \cS(\bX(\bA))$ and Hecke characters $\chi_1, \chi_2$ of $E^\times$, by formally switching the integrals at the equality, we have
\begin{align}
\nonumber
\langle \theta_{\varphi_1}(\chi_1 \cdot \xi)&, \theta_{\varphi_2}(\chi_2 \cdot \xi) \rangle_H 
\\ \nonumber
&= \int_{\cC} \int_{[H_1]} \theta_{\varphi_1}(\chi_1 \cdot \xi)(h_1 h_c) \cdot \overline{\theta_{\varphi_2}(\chi_2 \cdot \xi)(h_1 h_c)} \, dh_1 \, dc 
\\ \nonumber
&= \int_{\cC} \int_{[H_1]} \int_{[G_1]} \int_{[G_1]} \Theta(\omega_\psi(g_1g_c, h_1h_c) \varphi_1) (\chi_1 \xi)(g_1g_c) \cdot 
\\ \nonumber
&\qquad\qquad\qquad \overline{\Theta(\omega_\psi(g_1'g_c,h_1h_c)\varphi_2) (\chi_2\xi)(g_1'g_c)} \, dg_1 \, dg_1' \, dh \, dc 
\\ \label{e:formal outer}
&= \int_{\cC} \int_{[G_1]} \int_{[G_1]} (\chi_1 \xi)(g_c g_c) \cdot (\overline \chi_2 \xi)(g_1' g_c) \cdot 
\\ \label{e:formal double}
&\qquad \int_{[H_1]} \Theta(\omega_\psi^\square((g_1 g_c, g_1'g_c), h_1 h_c)(\varphi_1 \otimes \overline \varphi_2)) \cdot \xi(\det(h_1 h_c))^{-1}  \, dh_1 \, dg_1 \, dg_1' \, dc.
\end{align}
The inner integral in Equation \eqref{e:formal double} is the theta lift of $\xi(\det)^{-1}$ to $\GU_B(V^\square)$, but to make actual sense of the above, one must be careful about convergence issues. In the case that $B$ is division, the quotient $B^\times \backslash B_\bA^\times$ is compact, and therefore the integral in \eqref{e:formal double} is absolutely convergent. Hence the formal manipulation above is completely justified. We can then use the Siegel--Weil formula together with the theory of doubling integrals \cite{PSR87} to obtain a Rallis inner product formula. In the case that $B$ is split (i.e.\ $B \cong M_2(F)$), \eqref{e:formal double} does not converge absolutely in general, so the last equality does not make sense. In this case, we use the regularized Siegel--Weil formula of \cite{GQT}.
 
\subsubsection{The Siegel--Weil formula for division quaternion algebras}

In this section, we explain how to obtain a Rallis inner product formula in the case that $B$ is division. For $\varphi \in \cS(\bX^\bigtriangledown(\bA))$, define
\begin{equation*}
E(g, \cF_\varphi) = \sum_{\gamma \in P(F) \backslash \U(1,1)} \cF_\varphi(\gamma g), \qquad \text{where $\cF_\varphi(g) \colonequals (\omega_\psi^\square(d(\nu(g)^{-1})g) \varphi)(0)$.}
\end{equation*}
This is the value of an Eisenstein series at $s = \tfrac{1}{2}$. In this case, the Siegel--Weil formula states that for $g, g' \in \GU(1)$ such that $\nu(g) = \nu(g')$,
\begin{equation*}
E(i(g, g'), \cF_\varphi) = \int_{[H_1]} \Theta(\omega_\psi^\square((g,g'),h)(\varphi_1 \otimes \overline \varphi_2)) \cdot \xi(\det(h))^{-1} \, dh
\end{equation*} 
where $i \from \G(\U(1) \times \U(1)) \to \U(1,1)$ and $\varphi \in \cS(\bV^\bigtriangledown(\bA))$ is the partial Fourier transform of $\varphi_1 \otimes \overline \varphi_2 \in \cS(\bX^\square(\bA))$. We now see that, continuing from \eqref{e:formal outer}, \eqref{e:formal double}, we have
\begin{equation*}
\langle \theta_{\varphi_1}(\chi_1 \cdot \xi), \theta_{\varphi_2}(\chi_2 \cdot \xi) \rangle_H = \int_{\cC} \int_{[G_1]} \int_{[G_1]} (\chi_1 \xi)(g_1 g_c) \cdot (\overline \chi_2 \xi)(g_1' g_c) \cdot E(i(g_1 g_c, g_1' g_c), \cF_\varphi) \, dg_1 \, dg_1' \, dc.
\end{equation*}
We have $\cF_\varphi(i(g_1g_c,g_1'g_c)) = \cF(i(g_1'{}^{-1} g_1, 1)) \overline\xi{}^2(g_1')$, and hence unfolding the above integral and making the substitution $g = g_1 g_c$, $g' = g_1'{}^{-1} g_1$ gives
\begin{equation*}
=  \int_{G_1(\bA)} \int_{[G]} (\chi_1 \xi)(gg') \cdot (\overline {\chi_2 \xi})(g) \cdot \cF_\varphi(i(g,1)) \, dg \, dg'.
\end{equation*}
The Tamagawa measure on $G_1(\bA)$ can be written as a product of local measures $dg_{1,v}$ on $G_{1,v}$ times a global factor $\rho_F/\rho_E$ (see Section \ref{s:tamagawa}). Hence if $\chi_1 = \chi_2 = \chi$ and $\varphi_1 = \varphi_2 = \phi = \otimes_v \phi_v$, we have
\begin{align*}
\langle \theta_\varphi(\chi \cdot \xi), \theta_\phi(\chi \cdot \xi) \rangle_H 
&= \int_{G_1(\bA)} \cF_\varphi(i(g,1)) \langle (\chi \xi)(g') (\chi\xi), (\chi\xi) \rangle_{[G]} \, dg' = \frac{\rho_F}{\rho_E} \cdot \prod_v Z(\tfrac{1}{2}, \cF_{\varphi_v}, \chi_v),
\end{align*}
where
\begin{equation}\label{e:local zeta nonsplit}
Z(\tfrac{1}{2}, \cF_{\varphi_v}, \chi_v) \colonequals \int_{G_{1,v}} \langle \omega_\psi(g_{1,v}) \phi, \phi \rangle \cdot (\chi_v \xi_v)(g_{1,v}) \, dg_{1,v}.
\end{equation}

\subsubsection{The regularized Siegel--Weil formula for $(E^\times, \GL(2))$}

In this section, we follow \cite{GQT} and describe how to make sense of \eqref{e:formal double} and obtain a Rallis inner product formula in the case that $B$ is split. We will need to translate between the quaternionic unitary groups $(\GU_B(V)^\circ, \GU_B(W^*)) \cong (E^\times, \GL_2(F))$ and the dual reductive pair $(\GO(2), \GSp(2)) \cong (E^\times, \GL_2(F))$. In the notation of \cite{GQT}, we have $n = m = 2$, $r = 1$, $\epsilon = 1$, which puts us in the second term range since $1 < 2 \leq 2 \cdot 1$. 

Recall that we have an embedding
\begin{equation*}
\G(\U_B(V)^\circ, \U_B(W^*)) \hookrightarrow \G(\U_E(V_0) \times \U_E(W_0)).
\end{equation*}
When $B$ is split, then there is a decomposition $W_0 = W_1 + W_2$ of the $E$-space $W_0$ into isotropic subspaces of dimension 1. Set
\begin{equation*}
\bX' = \Res_{E/F}(V_0 \otimes W_1), \qquad \bY' = \Res_{E/F}(V_0 \otimes W_2)
\end{equation*}
so that $\bV = \bX' + \bY'$ forms a complete polarization. In Section \ref{s:split splittings}, we explicated a splitting $\bs$ of $z_{\bY'}$. Comparing $\bs$ to the splitting 
\begin{equation*}
s_{(\O(2),\Sp(2))} \from \G(\O(2) \times \Sp(2))_\bA \to \CC^1
\end{equation*}
defined in \cite{K94}, we see that for $\alpha \in E^\times$, $a \in F^\times$, and $a' \in F$,
\begin{align*}
\bs(\alpha,d(\Nm(\alpha))) &= \xi(\alpha)^{-1} \cdot s_{(\O(2),\Sp(2))}(\alpha, d(\Nm(\alpha))), \\
\bs\left(1,\diag(a,a^{-1})\right) &= \xi(a)^{-1} \cdot s_{(\O(2),\Sp(2))}\left(1,\diag(a,a^{-1})\right), \\
\bs\left(1, \left(\begin{smallmatrix} 1 & a' \\ 0 & 1 \end{smallmatrix}\right)\right) &= s_{(\O(2),\Sp(2))}\left(1, \left(\begin{smallmatrix} 1 & a' \\ 0 & 1 \end{smallmatrix}\right)\right), \\
\bs\left(1, \left(\begin{smallmatrix} 0 & 1 \\ -1 & 0 \end{smallmatrix}\right)\right) &= s_{(\O(2),\Sp(2))}\left(1, \left(\begin{smallmatrix} 0 & 1 \\ -1 & 0 \end{smallmatrix}\right)\right).
\end{align*}
Now set $V_0^\bigtriangledown \colonequals \{(v,-v) : v \in V_0\}$ and $V_0^\triangle \colonequals \{(v,v) : v \in V_0\}$ so that
\begin{equation*}
\bV^\bigtriangledown = \Res_{E/F}(V_0^\bigtriangledown \otimes W_0), \qquad \bV^\triangle = \Res_{E/F}(V_0^\triangle \otimes W_0)
\end{equation*}
gives a complete polarization $\bV^\square = \bV^\bigtriangledown + \bV^\triangle$ of the doubled symplectic space. Let $\hat s_{(\O(2,2),\Sp(2))}$ denote the splitting of $z_{\bV^\triangle}$ defined in \cite{K94} and define
\begin{equation*}
s_{(\O(2,2), \Sp(2))}(h,g) \colonequals \hat s_{(\O(2,2), \Sp(2))}(h,g) \cdot \lambda_{\bY'{}^\square \rightsquigarrow \bV^\triangle}^{-1}(g,h) \qquad \text{for $(g,h) \in \G(\O(2,2),\Sp(2))$},
\end{equation*}
where $\lambda \colonequals \lambda_{\bY'{}^\square \rightsquigarrow \bV^\triangle}$ is the change-of-polarization function defined in Lemma \ref{l:change polarization}. Then using Proposition \ref{p:global compat}(a),
\begin{align}
\nonumber
\hat s&(g_1, g_2, h) \\
\nonumber
&= \bs^\square(g_1,g_2, 1) \cdot \lambda(g_1,g_2,h) \\
\nonumber
&= \bs(g_1, h) \cdot \overline{\bs(g_2, h)} \cdot \xi(\det(i(g_2,h))) \cdot \lambda(g_1,g_2,h) \\
\nonumber
&= s_{(\O(2),\Sp(2))}(g_1,h) \xi(g_1)^{-1} \cdot \overline{s_{(\O(2), \Sp(2))}(g_2,h)  \xi(g_2)^{-1}} \cdot \xi(g_2)^{-2} \xi(\det(h)) \cdot \lambda(g_1,g_2,h)  \\
\nonumber
&= s_{(\O(2),\Sp(2))}(g_1,h) \cdot \overline{s_{(\O(2), \Sp(2))}(g_2,h)} \cdot \xi(g_1)^{-1} \xi(g_2)^{-1} \xi(\det(h)) \cdot \lambda(g_1,g_2,h) \\
\nonumber
&= s_{(\O(2,2),\Sp(2))}(g_1,g_2,h) \cdot \xi(g_1)^{-1} \xi(g_2)^{-1} \cdot \lambda(g_1,g_2,h) \\
\label{e:O U double compat}
&= \hat s_{(\O(2,2),\Sp(2))}(g_1,g_2,h) \cdot \xi(g_1)^{-1} \xi(g_2)^{-1}.
\end{align}

Define $P_O \subset \GO(\Res_{E/F} V_0^\square) \cong \GO(2,2)$ to be the stabilizer of the totally isotropic subspace $\Res_{E/F} V_0^\triangle$ of $\Res_{E/F} V_0^\square$. For $\phi \in \cS(\bV^\bigtriangledown(\bA))$, define the Siegel--Weil sections
\begin{align*}
\Phi(\phi)(g) &\colonequals (\omega_\psi^\square(g) \phi)(0), && \text{for $g \in \GO(2,2)_\bA \subset \GU_E(V_0^\square)_\bA$}, \\
\Phi^{\O,\Sp}(\phi)(g) &\colonequals (\omega_\psi^{\O(2,2),\Sp(2)}(g) \phi)(0), && \text{for $g \in \GO(2,2)_\bA$}.
\end{align*}
Observe that $\Phi(\phi)(g) = \hat s(g) \cdot \hat s_{(\O(2,2),\Sp(2))}(g)^{-1} \cdot \Phi^{\O,\Sp}(\phi)(g)$. We make the analogous definitions for the local objects $\Phi_v(\phi_v)$ and $\Phi_v^{\O,\Sp}(\phi_v)$. The Siegel--Weil section $\Phi^{\O,\Sp}(\phi) \in \Ind_{P_O}^{\GO(2,2)}(\det) \cdot |\det|^{1/2}$ determines a standard section $\Phi^{\O,\Sp}_s(\phi) \in \Ind_{P_O}^{\GO(2,2)}(\det) \cdot |\det|^s$ and we may form the associated Eisenstein series
\begin{equation*}
E(s, \Phi^{\O,\Sp}(\phi))(g) \colonequals \sum_{\gamma \in P_O(F) \backslash \GO(2,2)} \Phi_s^{\O,\Sp}(\gamma g), \qquad \text{for $g \in \GO(2,2)_\bA$.}
\end{equation*}
Define
\begin{equation*}
Z(s, \Phi, \chi) \colonequals \int_{[\G(\O(2) \times \O(2))]} E(s, \Phi)(g_1,g_2) \cdot \chi(g_1) \cdot \overline \chi(g_2) \, dg_1 \, dg_2.
\end{equation*}
If $\Phi = \otimes_v \Phi_v$, define
\begin{equation*}
Z_v(s,\Phi_v,\chi_v) = \int_{E_v^1} \Phi_v(g_v,1) \cdot \chi_v(g_v)  \, dg_v.
\end{equation*}
By construction of the Tamagawa measure of $\bA_E^1$ (see Section \ref{s:tamagawa}), one has
\begin{equation*}
Z(s, \Phi, \chi) \colonequals \frac{\rho_F}{\rho_E} \cdot \prod_v Z_v(s, \Phi_v, \chi_v).
\end{equation*}

Define the partial Fourier transform $\delta \from \cS(\bX'{}^\square(\bA)) \to \cS(\bV^\bigtriangledown(\bA))$ by
\begin{equation*}
\delta(\varphi)(u) = \int_{((\bV^\triangle \cap \bY'{}^\square)\backslash\bV^\triangle)(\bA)} \varphi(x) \psi\left(\tfrac{1}{2}(\llangle x,y \rrangle - \llangle u,v \rrangle)\right) \, dv,
\end{equation*}
where we write $u + v = x + y$ with $u \in \bV^\bigtriangledown(\bA)$, $v \in \bV^\triangle(\bA)$, $x \in \bX'{}^\square(\bA)$, $y \in \bY'{}^\square(\bA)$, and $dv$ is the Tamagawa measure.

Observe that if $\phi \in \cS(\bV^\bigtriangledown(\bA))$ is the partial Fourier transform of $\varphi_1 \otimes \overline \varphi_2$ for $\varphi_1,\varphi_2 \in \cS(\bX'(\bA))$, then for the Siegel--Weil section $\Phi = \Phi^{\O,\Sp}(\delta(\varphi_1 \otimes \overline \varphi_2))$, we have
\begin{align}
\nonumber
Z_v(\tfrac{1}{2}, \Phi_v, \chi_v) 
&= \vol(E_v^1)  \int_{E_v^1} \Phi^{\O,\Sp}(\delta(\varphi_1 \otimes \overline \varphi_2)) (i(g_{1,v},1)) \cdot \chi_v(g_v) \, dg_v \\
\nonumber
&= \vol(E_v^1)  \int_{E_v^1} (\omega^{\O(2,2),\Sp(2)}_\psi(g_v,1) \delta(\varphi_1 \otimes \overline \varphi_2))(0) \cdot \chi_v(g_v) \, dg_v \\
\nonumber
&= \vol(E_v^1)  \int_{E_v^1} (\omega^\square_\psi(g_v,1) \delta(\varphi_1 \otimes \overline \varphi_2))(0) \cdot \chi_v(g_v) \cdot \xi_v(g_v) \, dg_v \\
\label{e:local zeta}
&= \vol(E_v^1)  \int_{E_v^1} \langle \omega_\psi(g_v) \varphi_1, \varphi_2 \rangle \cdot (\chi_v\xi_v)(g_v) \, dg_v
\end{align}

\begin{proposition}\label{p:split rallis}
For $\varphi_1, \varphi_2 \in \cS(\bX'(\bA))$, we have
\begin{equation*}
\langle \theta_{\varphi_1}(\chi \xi), \theta_{\varphi_2}(\chi \xi) \rangle 
=
\frac{\rho_F}{\rho_E} \cdot \prod_v Z_v(\tfrac{1}{2}, \Phi_v^{\O,\Sp}(\delta(\varphi_1 \otimes \overline \varphi_2)), \chi_v).
\end{equation*}
\end{proposition}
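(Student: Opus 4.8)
The plan is to make rigorous the formal chain \eqref{e:formal outer}--\eqref{e:formal double}, whose only unjustified step is the inner theta integral of \eqref{e:formal double}: when $B \cong \M_2(F)$ it does not converge absolutely, so it must be interpreted through the regularized Siegel--Weil formula of \cite{GQT}. First I would transport the whole computation to the orthogonal--symplectic picture. Fixing a polarization $W_0 = W_1 \oplus W_2$ into isotropic lines (available precisely because $B$ is split) and the induced complete polarization $\bV = \bX' + \bY'$ as in Section \ref{s:split splittings}, the pair $(\GU_B(V)^\circ, \GU_B(W^*))$ becomes $(\GO(2), \GSp(2))$ and, after doubling $V_0$, the pair $(\GU_E(V_0^\square), \GU_B(W^*))$ becomes $(\GO(2,2), \GSp(2))$. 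The splitting comparison \eqref{e:O U double compat} (together with the place-by-place comparisons of $\bs$ with $s_{(\O(2),\Sp(2))}$ recorded just before it) shows that, on $\cG_{G \times G \times H}(\bA)$, the doubled Weil representation $\omega_\psi^\square$ agrees with Kudla's $\omega_\psi^{\O(2,2),\Sp(2)}$ up to the explicit twists by $\xi$ appearing there. Applying the partial Fourier transform $\delta$ then rewrites the inner integral of \eqref{e:formal double} as the theta integral over $[H_1]$ attached to the dual pair $(\O(2,2), \Sp(2))$ with test function $\delta(\varphi_1 \otimes \overline\varphi_2)$, again up to those $\xi$-twists.

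Next I would apply the regularized Siegel--Weil formula. As noted in the text, with $m = n = 2$, $r = 1$, $\epsilon = 1$ we lie in the second term range ($1 < 2 \le 2 \cdot 1$), and \cite{GQT} then identifies this regularized theta integral with (a constant multiple of) the value at $s = \tfrac12$ of the Eisenstein series $E(s, \Phi^{\O,\Sp}(\delta(\varphi_1 \otimes \overline\varphi_2)))$ on $\GO(2,2)_\bA$, restricted to $\G(\O(2) \times \O(2))$. Substituting this into \eqref{e:formal outer} and unfolding the integral over $\cC \times [G_1] \times [G_1]$ against the Eisenstein series, collapsing the similitude sections exactly as in the division-algebra computation of the preceding subsection, the outer integral becomes the global doubling zeta integral $Z(\tfrac12, \Phi^{\O,\Sp}(\delta(\varphi_1 \otimes \overline\varphi_2)), \chi)$; the $\xi$-twists and the discrepancy between the two Weil-representation normalizations are bookkept exactly as in the local identity \eqref{e:local zeta}, producing the character $\chi\xi$ inside the local factors.

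Finally, writing the Tamagawa measure of $\bA_E^1$ as $\tfrac{\rho_F}{\rho_E}$ times the product of the local measures (Section \ref{s:tamagawa}), the global doubling integral factors as $Z(\tfrac12, \Phi, \chi) = \tfrac{\rho_F}{\rho_E}\prod_v Z_v(\tfrac12, \Phi_v, \chi_v)$ with $\Phi = \Phi^{\O,\Sp}(\delta(\varphi_1 \otimes \overline\varphi_2))$, which is precisely the right-hand side of the proposition.

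The main obstacle is the regularization. One must check that the regularized Siegel--Weil formula of \cite{GQT} applies with the normalizations in force here (the additive character $\psi$, the Tamagawa measures, and the particular Siegel--Weil section built from $\delta(\varphi_1 \otimes \overline\varphi_2)$), that the regularization of the inner theta integral commutes with the outer integration over $\cC \times [G_1]^2$ so that the formal interchange of integrals between \eqref{e:formal outer} and \eqref{e:formal double} is legitimate, and that $Z(s, \Phi, \chi)$ is holomorphic at $s = \tfrac12$ so that its value there is meaningful. Since we are in the second term range, one must in particular verify that the lower-rank correction term in the second term identity contributes nothing after integration against $\chi \otimes \overline\chi$ over $[\G(\O(2) \times \O(2))]$ --- a point governed by the shape of the degenerate Eisenstein series on $\O(2)$.
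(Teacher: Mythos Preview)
Your proposal is correct and follows essentially the same route as the paper: translate to the $(\O(2),\Sp(2))$ picture via the splitting comparison \eqref{e:O U double compat}, invoke the regularized Siegel--Weil formula of \cite{GQT} (the paper cites \cite[Proposition 11.1]{GQT} directly, which packages the second-term identity in the form of a Rallis inner product and absorbs the lower-rank correction you flag), and then factor the resulting global doubling integral using the Tamagawa measure decomposition of $\bA_E^1$. The paper's proof is terser---it writes the chain of equalities ending in $\Val_{s=1/2} Z(s,\Phi,\chi)$ and lets the citation to \cite{GQT} carry the analytic burden---but the substance is the same.
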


\begin{proof}
We use \eqref{e:O U double compat} to translate between our setting and that of \cite[Proposition 11.1]{GQT}. We have
\begin{align*}
\langle \theta_{\varphi_1}&(\chi \cdot \xi), \theta_{\varphi_2}(\chi \cdot \xi) \rangle_H 
\\ 
&= \int_{\cC} \int_{[H_1]} \theta_{\varphi_1}(\chi \cdot \xi)(h_1 h_c) \cdot \overline{\theta_{\varphi_2}(\chi \cdot \xi)(h_1 h_c)} \, dh_1 \, dc 
\\ 
&= \int_{\cC} \int_{[H_1]} \int_{[G_1]} \int_{[G_1]} \Theta(\omega_\psi(g_1g_c, h_1h_c) \varphi_1) (\chi \xi)(g_1g_c) \cdot \\ 
&\qquad\qquad\qquad \overline{\Theta(\omega_\psi(g_1'g_c,h_1h_c)\varphi_2) (\chi \xi)(g_1'g_c)} \, dg_1 \, dg_1' \, dh \, dc 
\\
&= \int_{\cC} \int_{[\Sp(2)]} \int_{[\O(2)]} \int_{[\O(2)]} \Theta(\omega_\psi^{\O,\Sp}(g_1g_c, h_1h_c) \varphi_1) (\chi \xi)(g_1g_c) \cdot \\ 
&\qquad\qquad\qquad \overline{\Theta(\omega_\psi^{\O,\Sp}(g_1'g_c,h_1h_c)\varphi_2) (\chi \xi)(g_1'g_c)} \cdot \xi^{-1}(g_1) \overline \xi^{-1}(g_1') \, dg_1 \, dg_1' \, dh \, dc 
\\ 
&= \Val_{s = 1/2} \int_{\cC} \int_{[\O(2)]} \int_{[\O(2)]} E(s, \Phi_{\O(2,2_,\Sp(2))}(\delta(\varphi_1 \otimes \overline \varphi_2)))(g_1 g_c, g_1' g_c) \cdot \\
&\qquad\qquad\qquad\qquad\qquad \chi(g_1 g_c) \cdot \overline \chi(g_1' g_c) \, dg_1 \, dg_1' \, dc
\\
&= \Val_{s=1/2} Z(s,\Phi(\delta(\varphi_1 \otimes \overline \varphi_2)), \chi). \qedhere
\end{align*}
\end{proof}

\subsection{Local doubling zeta integrals}\label{s:local zeta}

Let $v$ be a nonsplit place of $F$. For notational convenience, we drop all subscripts $v$ in this section. We preemptively note that the notation we use to describe the zeta integrals in this section differ from the notation used to describe the same (local) zeta integrals in the rest of the paper. We learned the proof of Proposition \ref{p:zeta vanishing} from A.\ Ichino. Similar arguments appear in \cite{GI14}.

Consider the Siegel parabolic subgroup
\begin{equation*}
P = \left\{\left(\begin{matrix} a & * \\ 0 & (a^*)^{-1} \end{matrix}\right) \in \GL_2(E)\right\} \subset \U(1,1),
\end{equation*}
and for any unitary character $\eta \from \U(1) \to \CC^1$, consider the functional
\begin{equation*}
Z(s, \eta, \xi^2) \from I(s, \xi^2) \to \CC, \qquad \cF \mapsto \int_{E^1} \cF(i(g,1)) \eta(g) \, dg,
\end{equation*}
where $\iota \from \U(1) \times \U(1) \to \U(1,1)$ is the natural map and
\begin{equation*}
I(s, \xi^2) 
\colonequals \Ind_P^{\U(1,1)}(\xi^2 \cdot | \cdot |^s) \colonequals \left\{\cF \from \U(1,1) \to \CC \; \Bigg| \;
\begin{gathered} \text{$\cF(pg) = \xi^2(a) |a|_E^{s + 1/2} \cF(g)$} \\ \text{for all $g \in \U(1,1)$ and $p = \left(\begin{smallmatrix} a & * \\ 0 & \overline a^{-1} \end{smallmatrix}\right) \in P$} \end{gathered} \right\}
\end{equation*}
is the normalized principal series representation. One has an intertwining operator
\begin{equation*}
M(s,\xi^2) \from I(s, \xi^2) \to I(-s, \overline \xi{}^{-2}) \cong I(-s, \xi^2)
\end{equation*}
given by 
\begin{equation*}
M(s, \xi^2) \cF(g) = \int_{N_P} \cF(w n g) \, dn,
\end{equation*}
where $w = \diag(1, -1)$ and $N_P$ is the unipotent radical of the parabolic $P$. 

Following Lapid--Rallis (see also Gan--Ichino, Section 10), after normalizing the intertwining operator by some rational function $c_\psi(s, \xi^2)$,
\begin{equation*}
M_\psi^{\textrm{LR}}(s, \xi^2) \colonequals c_\psi(s, \xi^2) M(s, \xi)
\end{equation*}
has a functional equation of the shape
\begin{equation}\label{e:zeta functional}
Z(-s, \eta, \xi^2)(M_\psi^{\textrm{LR}}(s,\xi^2) \cF) = * \cdot \gamma\left(s + \tfrac{1}{2}, \eta, \overline\xi, \psi\right) \cdot Z(s, \eta, \xi^2)(\cF),
\end{equation}
where $*$ denotes some nonzero factors. In particular, if we understand the behavior of the intertwining operator $M(s,\eta)$ and if $\gamma(s_0 + \tfrac{1}{2}, \eta) \neq 0$, the functional equation gives a relation between the nonvanishing of $Z(-s_0, \eta, \xi^2)$ and the nonvanishing of $Z(s_0, \eta, \xi^2)$.

We take a short detour to examine when the local theta lift to the nonsplit unitary group $\U(2)$ vanishes. Define
\begin{equation*}
V_n^+ \colonequals \bH_n, \qquad V_n^- \colonequals D \oplus \bH_{n-1},
\end{equation*}
where $\bH_n$ is the $2n$-dimensional split Hermitian $E$-space and $D$ is the nonsplit quaternion algebra over $F$ viewed as a 2-dimensional Hermitian $E$-space via $\langle x,y \rangle = \pr_E(x^* y)$. For a character $\eta \from \U(1) \cong E^1 \to \CC^1$, denote its theta lift to $\U(V_n^\pm)$ by $\Theta_{V_n^\pm}(\eta)$. To make tower ``compatible'' one takes the Weil representation for $\U(1) \times \U(V_n^+)$ to be such that the splitting on $\U(1)$ is given by $\xi.$ In particular, the Weil representation on $\U(1) \times \U(V_0^+) = \U(1) \times \{1\}$ is given by the one-dimensional representation $\xi.$ The \textit{first occurrence} of the theta lift in the towers $\{\U(V_n^+) : n \geq 0\}$, $\{\U(V_n^-) : n \geq 0\}$ is defined to be 
\begin{equation*}
n^+ = \min\{n : \Theta_{V_n^+}(\eta) \neq 0\}, \qquad n^- = \min\{n : \Theta_{V_n^-}(\eta) \neq 0\}.
\end{equation*}
The following result is a special case of a theorem of Sun--Zhu \cite{SZ15}:

\begin{theorem}[Sun--Zhu]
$n^+(\eta) + n^-(\eta) = 2$.
\end{theorem}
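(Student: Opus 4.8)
The plan is to prove the conservation relation by the doubling method, using the zeta integrals and their functional equation set up in Sections~\ref{s:rallis} and~\ref{s:local zeta}, specialized to rank one. I first record the standard structural reductions. By Rallis's tower property, within each of the Witt towers $\{V_n^+\}$, $\{V_n^-\}$ the theta lift, once nonzero, stays nonzero at every larger $n$, so $n^+(\eta)$ and $n^-(\eta)$ are well defined; by the stable-range estimate the lift of any character of $\U(1)$ to $\U(V_n^\epsilon)$ is nonzero once $2n\geq 3$, so $n^+,n^-\leq 2$. Since $V_0^-$ is undefined, the $-$-tower begins at $n=1$, hence $n^-\geq 1$. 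Finally, by the choice of the $+$-tower splitting, $\U(1)\times\U(V_0^+)=\U(1)\times\{1\}$ carries the one-dimensional Weil representation $\xi$, so $n^+=0$ if and only if $\eta=\xi|_{E^1}$. Thus $(n^+,n^-)\in\{0,1,2\}\times\{1,2\}$ and the content of the theorem is that the two lifts to $\U(2)$, namely $\Theta_{V_1^+}(\eta)$ (the quasi-split case $\U(1,1)$) and $\Theta_{V_1^-}(\eta)$ (the nonsplit case $\U(D)$), satisfy $\Theta_{V_1^+}(\eta)\neq 0$ always, while $\Theta_{V_1^-}(\eta)\neq 0$ exactly when $\eta\neq\xi|_{E^1}$; granting this, $n^-=2-n^+$ in all cases.

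To prove this I would use the local Rallis inner product formula---the non-archimedean analogue of Proposition~\ref{p:split rallis} together with its converse: $\Theta_{V_1^\epsilon}(\eta)\neq 0$ precisely when the doubling zeta functional $Z(\tfrac{1}{2},\eta,\xi^2)$ of Section~\ref{s:local zeta} does not vanish on the span inside $I(\tfrac{1}{2},\xi^2)$ of the Siegel--Weil sections attached to the Hermitian plane $V_1^\epsilon$. At the edge point $s=\tfrac{1}{2}$ (the value where the relevant Eisenstein series lives, cf.\ Section~\ref{s:rallis}), the degenerate principal series $I(\tfrac{1}{2},\xi^2)$ of $\U(1,1)$ has Jordan--H\"older length two, and the Kudla--Rallis analysis identifies its two constituents with the theta components of $V_1^+$ and of $V_1^-$, one occurring as a submodule and the other as a quotient. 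Because $I(\tfrac{1}{2},\xi^2)$ always pairs nontrivially with $\eta$ under the doubling functional, at least one constituent participates; the claim is that it is always the $V_1^+$-constituent, and that the $V_1^-$-constituent participates exactly when $\eta\neq\xi|_{E^1}$.

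To decide this I would bring in the Lapid--Rallis functional equation \eqref{e:zeta functional}, which relates $Z(\tfrac{1}{2},\eta,\xi^2)$ to $Z(-\tfrac{1}{2},\eta,\xi^2)$ through the normalized intertwining operator $M_\psi^{\mathrm{LR}}(\tfrac{1}{2},\xi^2)$ and the factor $*\cdot\gamma(1,\eta,\overline\xi,\psi)$. Here the point $s=-\tfrac{1}{2}$ is the one governing the lift to $\U(V_0^+)=\{1\}$, so $Z(-\tfrac{1}{2},\eta,\xi^2)\neq 0$ if and only if $\eta=\xi|_{E^1}$; and $M_\psi^{\mathrm{LR}}(\tfrac{1}{2},\xi^2)$, at this reducibility point, kills exactly the constituent detected at $s=-\tfrac{1}{2}$ and is injective on the other. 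A short computation shows $\gamma(1,\eta,\overline\xi,\psi)$ has neither a zero nor a pole, so the functional equation transports non-vanishing faithfully between $s=\pm\tfrac{1}{2}$. Chasing these identifications yields: both constituents of $I(\tfrac{1}{2},\xi^2)$ pair with $\eta$ when $\eta\neq\xi|_{E^1}$ (so $n^+=1$, $n^-=1$), while only the $V_1^+$-constituent does when $\eta=\xi|_{E^1}$ (so $n^+=0$, and $\Theta_{V_1^-}(\eta)=0$ forces $n^-=2$ by the stable-range bound and tower property). Either way $n^++n^-=2$.

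The main obstacle is the step in the middle: pinning down the two Jordan--H\"older constituents of $I(\tfrac{1}{2},\xi^2)$, matching them with the Siegel--Weil images of $V_1^\pm$, keeping straight which is a submodule and which a quotient, and giving the compatible description of $\ker M_\psi^{\mathrm{LR}}(\tfrac{1}{2},\xi^2)$. For $\U(1,1)$ this principal series is small enough that the analysis is explicit and short, so one can carry it out directly; in general this is precisely the Kudla--Rallis conservation relation proved by Sun--Zhu, so one may instead simply quote \cite{SZ15}. The one genuinely new local input in our situation is the non-vanishing of $\gamma(1,\eta,\overline\xi,\psi)$, which is a one-line check from the explicit formula for the local $\gamma$-factor.
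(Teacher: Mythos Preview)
The paper does not prove this theorem; it simply cites \cite{SZ15}. Your proposal instead sketches a direct rank-one proof using exactly the ingredients the paper assembles for Proposition~\ref{p:zeta vanishing} (Theorem~\ref{t:decomp U(1,1)}, the functional equation \eqref{e:zeta functional}, and the local Rallis criterion). This is a genuinely different and worthwhile route: it shows that in this low-rank case the paper's own machinery already suffices, so the appeal to \cite{SZ15} could in principle be avoided.

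There is, however, a concrete error in your execution. You run \eqref{e:zeta functional} at $s=\tfrac12$, but the claim that $\gamma(1,\eta,\overline\xi,\psi)$ is regular fails precisely in the decisive case $\eta=\xi|_{E^1}$: the underlying character is then trivial and the local $\gamma$-factor has a pole at $s=1$. Your description of $\ker M_\psi^{\mathrm{LR}}(\tfrac12,\xi^2)$ is also backwards: by Theorem~\ref{t:decomp U(1,1)}(iv) the only nonzero proper submodule of $I(\tfrac12,\xi^2)$ is $R(V_1^-)$, so a nonzero intertwiner out of $I(\tfrac12,\xi^2)$ must kill $R(V_1^-)$, not the one-dimensional constituent; plugging that into \eqref{e:zeta functional} at $s=\tfrac12$ with a regular $\gamma$-factor would force $Z(\tfrac12,\eta,\xi^2)|_{R(V_1^-)}=0$ for $\eta\neq\xi$, the opposite of what is needed. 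The fix is to use $s=-\tfrac12$, as the paper does. There \cite{KS97} gives $\im M_\psi^{\mathrm{LR}}(-\tfrac12,\xi^2)=R(V_1^-)$; the $\gamma$-factor at $0$ is nonzero when $\eta\neq\xi|_{E^1}$ (yielding $\Theta_{V_1^-}(\eta)\neq 0$ by the paper's argument) and \emph{vanishes} when $\eta=\xi|_{E^1}$, forcing $Z(\tfrac12,\xi,\xi^2)|_{R(V_1^-)}=0$. Since $\U(V_1^-)$ is compact, your ``converse'' local Rallis is legitimate here and gives $\Theta_{V_1^-}(\xi)=0$. Combined with $R(V_1^+)=I(\tfrac12,\xi^2)$ and your opening reductions, this yields $n^++n^-=2$.
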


We can describe the first occurrence in this setting more explicitly. By the compatible choice of splittings in the tower of unitary groups $\U(V_n^+)$, we have that $\Theta_{V_0^+}(\chi \xi) \neq 0$ if and only if $\chi$ is the trivial character. Hence we must necessarily be in the setting $n^+(\chi \xi) + n^-(\chi \xi) = 0 + 2$, and in particular, $\Theta_{V_1^-}(\chi \xi) = 0$. 

Now suppose that $\chi$ is nontrivial. Then by the previous paragraph, $\Theta_{V_0^+}(\chi \xi) = 0$. We now argue that $\Theta_{V_1^+}(\chi \xi) \neq 0$. One explicit way to see this is as follows. Let $V_1^+ = V_1^\bigtriangledown + V_1^\triangle$ be a decomposition of $V_1^+$ into totally isotropic $E$-subspaces. For the Schwartz function $\varphi(x) = \chi(x) \ONE_{\cO_E^\times}(x) \in \cS(\Res_{E/F} V_1^\bigtriangledown)$, we have
\begin{equation*}
\int_{E^1} (\omega_\psi(g)\varphi)(0) \cdot (\chi \xi)(g) \, dg \neq 0,
\end{equation*}
which proves that there is a nontrivial $E^1$-equivariant map \begin{equation*}
(\cS(\Res_{E/F} V_1^\bigtriangledown), \omega_\psi) \to (\CC, \chi \xi).
\end{equation*}
Hence $\Theta_{V_1^+}(\chi \xi) \neq 0$ by definition of the local theta lift. This now implies that we must necessarily be in the setting $n^+(\chi \xi) + n^-(\chi \xi) = 1 + 1$, and $\Theta_{V_1^-}(\chi \xi) \neq 0$. 

In summary, the above arguments prove:

\begin{lemma}\label{l:lift to U2}\mbox{}
\begin{enumerate}[label=(\alph*)]
\item
$\Theta_{V_1^-}(\chi \xi) \neq 0$ if and only if $\chi \from E^1 \to \CC^\times$ is nontrivial.

\item
If $\chi \from E^1 \to \CC^1$ is nontrivial, $\Theta_{V_1^+}(\chi \xi) \neq 0$.
\end{enumerate}
\end{lemma}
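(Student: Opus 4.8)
The plan is to prove Lemma \ref{l:lift to U2} by combining the Sun--Zhu conservation relation with an explicit first-occurrence computation on the bottom of the two Witt towers $\{\U(V_n^+)\}$ and $\{\U(V_n^-)\}$. The key observation is that by the compatible normalization of the Weil representation in the tower $\{\U(V_n^+)\}$---where the splitting on $\U(1)$ is taken to be $\xi$---the Weil representation of $\U(1)\times\U(V_0^+)=\U(1)\times\{1\}$ is literally the one-dimensional representation $\xi$. Hence $\Theta_{V_0^+}(\chi\xi)\neq 0$ if and only if $\chi\xi$ is the character $\xi$, i.e.\ if and only if $\chi\from E^1\to\CC^1$ is trivial. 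This pins down the bottom of the $(+)$-tower completely.

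From here I would split into two cases according to whether $\chi$ is trivial. If $\chi$ is trivial, then $n^+(\chi\xi)=0$, so the Sun--Zhu identity $n^+(\chi\xi)+n^-(\chi\xi)=2$ forces $n^-(\chi\xi)=2$, and in particular $\Theta_{V_1^-}(\chi\xi)=0$. If $\chi$ is nontrivial, then $n^+(\chi\xi)\geq 1$; to conclude both (a) and (b) I need $n^+(\chi\xi)=1$, i.e.\ that the lift to $\U(V_1^+)\cong\U(1,1)$ is nonzero. This I would establish by the explicit Schwartz-function argument already sketched in the excerpt: taking $\varphi(x)=\chi(x)\ONE_{\cO_E^\times}(x)\in\cS(\Res_{E/F}V_1^\bigtriangledown)$ and computing the local period integral $\int_{E^1}(\omega_\psi(g)\varphi)(0)\cdot(\chi\xi)(g)\,dg$, which is a (nonzero) sum of values of $\chi$ over the unit group of $E$, exhibiting a nonzero $E^1$-equivariant map $(\cS(\Res_{E/F}V_1^\bigtriangledown),\omega_\psi)\to(\CC,\chi\xi)$ and hence showing $\Theta_{V_1^+}(\chi\xi)\neq 0$ by the definition of the local theta lift in Section \ref{s:howe duality}. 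Given $n^+(\chi\xi)=1$, the conservation relation gives $n^-(\chi\xi)=1$, so $\Theta_{V_1^-}(\chi\xi)\neq 0$, proving (a) in the nontrivial direction, while (b) is exactly the statement $\Theta_{V_1^+}(\chi\xi)\neq 0$.

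The main obstacle is the explicit nonvanishing computation of the local integral $\int_{E^1}(\omega_\psi(g)\varphi)(0)\cdot(\chi\xi)(g)\,dg$: one must unwind the action of $\omega_\psi$ via the formulas \eqref{e:weil explicit m}--\eqref{e:weil explicit w} together with the splitting $\bs$ computed in Section \ref{s:split splittings}, check that evaluating at $0$ and integrating against $\chi\xi$ over the compact group $E^1$ does not accidentally annihilate the integral, and handle the ramified and Archimedean places uniformly (this is where a careful choice of $\varphi$ supported on $\cO_E^\times$ is essential, so that the integrand is essentially $\chi$ itself on $E^1\cap\cO_E^\times$ and $0$ outside). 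Everything else---the case split, the invocation of Sun--Zhu, and the bookkeeping on the bottom of the $(+)$-tower---is formal once this nonvanishing is in hand.
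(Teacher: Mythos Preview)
Your proposal is correct and follows essentially the same route as the paper: compute the bottom of the $(+)$-tower via the normalization $\omega_\psi|_{\U(1)\times\U(V_0^+)}=\xi$, invoke Sun--Zhu conservation to handle the trivial-$\chi$ case, and for nontrivial $\chi$ verify $\Theta_{V_1^+}(\chi\xi)\neq 0$ by the explicit Schwartz-function argument to force $n^+=n^-=1$. One minor remark: your concern about ``handling the ramified and Archimedean places uniformly'' is slightly misplaced, since this is a purely local statement at a single fixed nonsplit place $v$; at a real place one simply replaces $\chi(x)\ONE_{\cO_E^\times}(x)$ by an appropriate Gaussian, but the structure of the argument is unchanged.
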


We now discuss the relationship between the theory of the doubling zeta integral and the local theta correspondence. Consider the two doubling seesaws for $V_1^+$ and $V_1^-$:
\begin{equation*}
\begin{tikzcd}
\U(1,1) \ar[dash]{d} \ar[dash]{dr} & \U(V_1^\pm) \times \U(V_1^\pm) \ar[dash]{d} \ar[dash]{dl} \\
\U(1) \times \U(1) & \U(V_1^\pm)
\end{tikzcd}
\end{equation*}
If we have $\U(1,1) = \U(W)$, then one has a decomposition $W = W_1 + W_2$ of $W$ into 1-dimensional isotropic $E$-spaces, and hence by viewing $V_1^\pm$ as the $F$-space $\Res_{E/F}(W_1 \otimes_E V_1^\pm) = \Res_{E/F}(V_1^\pm),$ the Weil representation $\omega_\psi^\square$ for $\U(1,1) \times \U(V_1^\pm)$ can then be modeled on the space of Schwartz functions $\cS(V_1^\pm)$. Define
\begin{equation*}
\cS(V_1^\pm) \to I(\tfrac{1}{2}, \xi^2), \qquad \varphi \mapsto (g \mapsto (\omega_\psi^\square(i(g,1)) \varphi)(0)),
\end{equation*}
where $i \from \U(1) \times \U(1) \to \U(1,1)$ is the natural map. Let $R(V_1^\pm)$ denote the image of this map. Since $\xi^2|_{F^\times} = 1$, there is a unique one-dimensional representation $\widetilde \xi^2$ of $\U(1,1)$ extending the representation defined by $\left(\begin{smallmatrix} a & * \\ 0 & \overline a{}^{-1} \end{smallmatrix}\right) \mapsto \xi^2(a)$.  For the $0$-dimensional Hermitian space $V_0^+$, we define a map
\begin{equation*}
\cS(V_0^+) = \CC \to I(-\tfrac{1}{2}, \xi^2), \qquad z \mapsto (g \mapsto \widetilde \xi^2(g)).
\end{equation*}
Let $R(V_0^+)$ denote the image of this map. We say that $\Theta_{V_0^+}(\chi \xi) \neq 0$ if and only if $\Hom_{\U(1)}(\widetilde \xi^2, \chi \xi) \neq 0$. Since $\widetilde \xi^2$ is one-dimensional, we have $\Hom_{\U(1)}(\widetilde \xi^2, \chi \xi) \neq 0$ if and only if $Z(-\tfrac{1}{2}, \chi \xi, \xi^2)|_{R(V_0^+)} \neq 0$. Observe also that $\Theta_{V_0^+}(\chi \xi) \neq 0$ if and only if $\chi = 1$.

The goal of the remainder of this section is to prove the following:

\begin{proposition}\label{p:zeta vanishing}
Let $\xi \from \bA_E^\times \to \CC^1$ be such that $\xi|_{\bA_F^\times} = \epsilon_{E/F}$. Then 
\begin{equation*}
\Theta_{V_1^-}(\chi \xi) \neq 0 \qquad \Longrightarrow \qquad Z(\tfrac{1}{2}, \chi \xi, \xi^2)|_{R(V_1^-)} \neq 0.
\end{equation*}
\end{proposition}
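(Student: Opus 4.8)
The plan is to combine Lemma~\ref{l:lift to U2} with the submodule structure of the degenerate principal series $I(\tfrac12,\xi^2)$ of $\U(1,1)$ and the relationship, already recorded above, between doubling zeta integrals and $\U(1)$-invariant pairings on $\widetilde\xi^2$. By Lemma~\ref{l:lift to U2}(a) the hypothesis $\Theta_{V_1^-}(\chi\xi)\neq 0$ is equivalent to $\chi$ being nontrivial on $E^1$, so it suffices to prove: if $\chi$ is nontrivial, then $Z(\tfrac12,\chi\xi,\xi^2)$ does not vanish on $R(V_1^-)$.

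First I would recall the structure of $I(\tfrac12,\xi^2)$ at this reducibility point: it has length two, with a unique irreducible submodule $R^\circ$ (its socle) and the one-dimensional quotient $\widetilde\xi^2$. Since $R(V_1^-)$ is the image of the $\U(1,1)$-equivariant map $\varphi\mapsto\bigl(h\mapsto(\omega_\psi^\square(h)\varphi)(0)\bigr)$ out of the Weil representation on $\cS(V_1^-)$, it is a $\U(1,1)$-submodule of $I(\tfrac12,\xi^2)$, and it is nonzero (take $\varphi$ with $\varphi(0)\neq 0$, so the associated section is nonzero at the identity); hence $R(V_1^-)\supseteq R^\circ$, and it is enough to show $Z(\tfrac12,\chi\xi,\xi^2)|_{R^\circ}\neq 0$.

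For this, note that $Z(\tfrac12,\chi\xi,\xi^2)$ is a nonzero functional on all of $I(\tfrac12,\xi^2)$: the defining integral is over the compact group $E^1$, and a section supported near $i(1,1)$ gives a nonzero value (alternatively this follows from Lemma~\ref{l:lift to U2}(b) and the explicit Schwartz function above, using the identification of $\omega_\psi^\square|_{i(\U(1),1)}$ with the Weil representation of $\U(1)\times\U(V_1^+)$, which equals $R(V_1^+)\subseteq I(\tfrac12,\xi^2)$). Now suppose $Z(\tfrac12,\chi\xi,\xi^2)$ vanished on $R^\circ$; then it would factor nontrivially through the one-dimensional quotient $\widetilde\xi^2$. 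But the $\U(1)$-equivariance character of the zeta functional $\cF\mapsto\int_{E^1}\cF(i(g,1))(\chi\xi)(g)\,dg$ does not depend on $s$, and the character $\widetilde\xi^2$ of $\U(1,1)$ is the same whether it appears as a quotient of $I(\tfrac12,\xi^2)$ or as the submodule $R(V_0^+)$ of $I(-\tfrac12,\xi^2)$; hence $Z(\tfrac12,\chi\xi,\xi^2)$ is nonzero on the $\widetilde\xi^2$-subquotient of $I(\tfrac12,\xi^2)$ under exactly the condition — namely $\Hom_{\U(1)}(\widetilde\xi^2,\chi\xi)\neq 0$, i.e. $\chi$ trivial — under which $Z(-\tfrac12,\chi\xi,\xi^2)|_{R(V_0^+)}\neq 0$. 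This contradicts $\chi$ nontrivial, so $Z(\tfrac12,\chi\xi,\xi^2)|_{R^\circ}\neq 0$ and we are done. (The same non-vanishing on $R^\circ$ can instead be read off from the functional equation \eqref{e:zeta functional} at $s=-\tfrac12$: for $\chi$ nontrivial the Tate $\gamma$-factor $\gamma(0,\chi\xi,\overline\xi,\psi)$ is regular and nonzero, while $M_\psi^{\textrm{LR}}(-\tfrac12,\xi^2)$ is a nonzero intertwiner $I(-\tfrac12,\xi^2)\to I(\tfrac12,\xi^2)$ with image $R^\circ$, so $Z(\tfrac12,\chi\xi,\xi^2)\circ M_\psi^{\textrm{LR}}(-\tfrac12,\xi^2)$ is a nonzero multiple of $Z(-\tfrac12,\chi\xi,\xi^2)$, which is nonzero on all of $I(-\tfrac12,\xi^2)$.)

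The hard part is the representation theory feeding the second and third steps: one needs the precise submodule structure of $I(\pm\tfrac12,\xi^2)$ at the reducibility point — in particular that $R(V_1^-)$ contains the socle and that the $\widetilde\xi^2$-constituent is the complementary one-dimensional piece — and, for the functional-equation variant, the kernels and images of the normalized intertwining operators $M_\psi^{\textrm{LR}}(\pm\tfrac12,\xi^2)$ together with the regularity and non-vanishing of the relevant $\gamma$-factor at the special point exactly when $\chi$ is nontrivial. These are supplied by the Kudla–Rallis and Lapid–Rallis theory, and the bookkeeping of which one-dimensional constituent sits on which side is the point that requires real care.
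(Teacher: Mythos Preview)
Your proof is correct and in fact offers two valid routes. The parenthetical alternative is essentially the paper's own proof: the paper applies the functional equation \eqref{e:zeta functional} at $s=-\tfrac12$, uses \cite[Proposition 6.4]{KS97} to identify $\ker M_\psi^{\mathrm{LR}}(-\tfrac12,\xi^2)=R(V_0^+)$ and $\im M_\psi^{\mathrm{LR}}(-\tfrac12,\xi^2)=R(V_1^-)$, and then argues exactly as you do. One point you state without justification is that $\gamma(0,\chi\xi,\overline\xi,\psi)$ is finite and nonzero; the paper establishes this by a global argument, embedding the local character into a Hecke character and invoking $L^S(0,\chi)\neq 0$, $L^S(1,\overline\chi)\neq 0$ together with the Lapid--Rallis ``Ten Commandments.''

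Your main argument is a genuinely different and more direct route: you work entirely at $s=\tfrac12$ and avoid both the intertwining operator and the global detour through the $\gamma$-factor. The paper transports nonvanishing from $I(-\tfrac12,\xi^2)$ to $R(V_1^-)$ via the intertwiner; you instead observe that since $R(V_1^-)$ is the socle of $I(\tfrac12,\xi^2)$, a nonzero functional vanishing on it must factor through the one-dimensional quotient, and then match characters. The one point needing care is your assertion that this one-dimensional quotient is $\widetilde\xi^2$; Theorem~\ref{t:decomp U(1,1)}(iv) only says it has codimension one. You can fill this in either by noting that the contragredient of $I(\tfrac12,\xi^2)$ is $I(-\tfrac12,\xi^{-2})$, whose one-dimensional socle is $\widetilde{\xi^{-2}}$ (so the quotient you want is $(\widetilde{\xi^{-2}})^\vee=\widetilde\xi^2$), or by the $M_\psi^{\mathrm{LR}}$-analysis already in the paper. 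What your approach buys is that it is purely local and representation-theoretic; what the paper's approach buys is that it quotes the kernel/image of $M_\psi^{\mathrm{LR}}(-\tfrac12,\xi^2)$ directly from Kudla--Sweet without needing the extra identification of the quotient.
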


We first remark that the converse of Proposition \ref{p:zeta vanishing} is true and straightforward to see: If $Z(\tfrac{1}{2}, \chi\xi, \xi^2)|_{R(V_1^-)} \neq 0$, then this immediately implies that $\Hom_{\U(1)}(\omega_\psi^\square|_{i(\U(1) \times \{1\})}, (\chi \xi)^{-1}) \neq 0$. But since $\omega_\psi^\square \cong \omega_\psi \otimes \overline \omega_\psi \xi^2$ (see Lemma \ref{l:double compat0}) as a representation of $\U(1) \times \U(1)$, we have $\Hom_{\U(1)}(\omega_\psi, (\chi \xi)^{-1}) \neq 0$, and so $\Theta_{V_1^-}(\chi \xi) \neq 0$ by definition.


Before we prove Proposition \ref{p:zeta vanishing}, we recall a special case of a theorem of Kudla--Sweet:.

\begin{theorem}[Kudla--Sweet, {\cite[Theorem 1.2(1),(4)]{KS97}}]\label{t:decomp U(1,1)} \mbox{}
\begin{enumerate}[label=(\roman*)]
\item
$R(V_0^+)$ is the unique irreducible submodule of $I(-\tfrac{1}{2}, \xi^2)$.
\item
$I(-\tfrac{1}{2}, \xi^2)/R(0, \xi^2)$ is an irreducible representation of $\U(1,1)$.
\item
$R(V_1^+) = I(\tfrac{1}{2}, \xi^2)$.
\item
$R(V_1^-)$ is the unique maximal submodule of $I(\tfrac{1}{2}, \xi^2)$ and is irreducible of codimension $1$.
\end{enumerate}
\end{theorem}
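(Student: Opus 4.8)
The plan is to prove Theorem~\ref{t:decomp U(1,1)} (the Kudla--Sweet description of the principal series $I(\pm\tfrac12,\xi^2)$ of $\U(1,1)$) by the standard intertwining-operator analysis, since this is precisely the $n=1$, $m=1$ case of the main theorem of \cite{KS97} specialized to our setup. First I would fix a nonsplit place $v$ and recall the structure of $I(s,\xi^2) = \Ind_P^{\U(1,1)}(\xi^2|\cdot|^s)$ (normalized induction from the Siegel parabolic $P$). For generic $s$ this is irreducible; reducibility occurs exactly at the points where the normalized intertwining operator $M_\psi^{\mathrm{LR}}(s,\xi^2)\colon I(s,\xi^2)\to I(-s,\xi^2)$ degenerates, which for the character $\xi^2$ (whose restriction to $F^\times$ is trivial since $\xi|_{F^\times}=\epsilon_{E/F}$) happens at $s=\pm\tfrac12$. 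The key computational input is the explicit evaluation of the local factor $c_\psi(s,\xi^2)$ normalizing the intertwining operator: its zeros and poles determine whether $M_\psi^{\mathrm{LR}}(\pm\tfrac12,\xi^2)$ is zero, an isomorphism, or has a one-dimensional kernel/image. This is exactly the content of \cite[Theorem 1.2]{KS97}, so the real task is bookkeeping: match our normalizations (Rao's cocycle, Kudla's splitting, the additive character $\tfrac12\psi$) to theirs.

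The argument then proceeds as follows. The composite $M_\psi^{\mathrm{LR}}(-s,\xi^2)\circ M_\psi^{\mathrm{LR}}(s,\xi^2)$ is scalar (a product of $\gamma$-factors), so analyzing it at $s=\tfrac12$ shows $M_\psi^{\mathrm{LR}}(\tfrac12,\xi^2)$ has a nontrivial kernel and $M_\psi^{\mathrm{LR}}(-\tfrac12,\xi^2)$ a nontrivial image, and (by the rank-one reducibility, since $\U(1,1)$ has semisimple rank $1$) each is of "corank one." Concretely: $I(-\tfrac12,\xi^2)$ has a unique irreducible submodule, which I claim equals $R(V_0^+)$, and the quotient is irreducible — that is parts (i) and (ii). Dually, $I(\tfrac12,\xi^2)$ has a unique irreducible quotient and a unique maximal (codimension-one) submodule; the submodule is $R(V_1^-)$ and the full module is $R(V_1^+)$, giving (iii) and (iv). The identification of the ends of the tower with these particular submodules is the Rallis tower / first-occurrence dichotomy: the theta lift $\Theta_{V_0^+}(\eta)$ contributes the one-dimensional piece $R(V_0^+)$ (via $\widetilde\xi^2$), while the full space $\cS(V_1^+)$ surjects onto all of $I(\tfrac12,\xi^2)$ because $V_1^+$ is split of the right dimension (so the theta integral at $s=\tfrac12$ is "on the nose"), and $\cS(V_1^-)$ lands in the submodule because the smaller Hermitian space gives the degenerate principal series only up to the maximal proper submodule. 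Both of these identifications are proved in \cite{KS97} using the doubling method and the local Siegel--Weil / theta dichotomy (Sun--Zhu, stated above as the balance $n^+(\eta)+n^-(\eta)=2$).

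Here is how I would structure the steps concretely: (1) Recall $I(s,\xi^2)$ is a length-$\le 2$ representation, irreducible away from $s\in\{\pm\tfrac12\}$; (2) compute $c_\psi(s,\xi^2)$ explicitly and deduce that $M_\psi^{\mathrm{LR}}(\tfrac12,\xi^2)$ is nonzero with one-dimensional kernel — hence $I(\tfrac12,\xi^2)$ has exactly one maximal submodule of codimension~$1$, and it is the image of $M_\psi^{\mathrm{LR}}(-\tfrac12,\xi^2)$ followed by transpose/MVW duality, giving (iv) modulo identifying this with $R(V_1^-)$; (3) identify $R(V_1^+)=I(\tfrac12,\xi^2)$ by noting the map $\varphi\mapsto(g\mapsto(\omega_\psi^\square(i(g,1))\varphi)(0))$ from $\cS(V_1^+)$ is $\U(1,1)$-equivariant into $I(\tfrac12,\xi^2)$, and its image is a nonzero submodule that is not contained in the unique maximal submodule (because the "big cell" value $(\omega_\psi^\square(w)\varphi)(0)$ for suitable Gaussian $\varphi$ realizes the spherical/generic vector), hence is everything; (4) for $V_1^-$, the analogous map has image a proper nonzero submodule by a nonvanishing-Fourier-coefficient argument on the anisotropic kernel $D$, so it must be the unique maximal submodule, giving (iv); (5) dualize to get (i) and (ii) for $I(-\tfrac12,\xi^2)$, with $R(V_0^+)$ being the one-dimensional $\widetilde\xi^2$-isotypic submodule, which is irreducible and unique by the same length-two analysis.

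The main obstacle will be step (2)--(4): pinning down the \emph{exact} value of the normalizing factor $c_\psi(s,\xi^2)$ in our conventions, and then making the identifications $R(V_1^+)=I(\tfrac12,\xi^2)$ versus $R(V_1^-)=$ the maximal submodule honestly rather than by black-boxing \cite{KS97}. The safest route is to invoke \cite[Theorem 1.2(1),(4)]{KS97} directly for the abstract module structure — this is legitimate since $\U(1,1)$ is exactly the group treated there — and then only verify the compatibility of the splitting $\bs$ computed in Section~\ref{s:split splittings} with Kudla's splitting $s_{(\O(2),\Sp(2))}$, which is already recorded in the displayed equations preceding Proposition~\ref{p:split rallis}. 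With that compatibility in hand, the submodules $R(V_1^\pm)$ and $R(V_0^+)$ defined via our Weil representation coincide with the ones in \cite{KS97}, and the theorem follows. In short: the conceptual content is entirely in \cite{KS97} and \cite{SZ15}; the work here is the translation of normalizations, which the earlier sections have already carried out.
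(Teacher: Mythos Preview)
The paper does not prove this theorem at all: it is stated as a direct citation of \cite[Theorem 1.2(1),(4)]{KS97} and immediately used as a black box in the proof of Proposition~\ref{p:zeta vanishing}. Your final paragraph already arrives at this conclusion---``the safest route is to invoke \cite[Theorem 1.2(1),(4)]{KS97} directly''---and that is precisely, and only, what the paper does. The intertwining-operator sketch you give in the earlier paragraphs is a reasonable outline of the Kudla--Sweet argument itself, but none of it appears in the paper, and no translation-of-normalizations step is needed here since the statement is about the abstract $\U(1,1)$-module structure of $I(\pm\tfrac12,\xi^2)$ and the images $R(V_n^\pm)$, which are defined intrinsically.
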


We are now ready to prove the proposition.

\begin{proof}[Proof of Proposition \ref{p:zeta vanishing}]
By Lemma \ref{l:lift to U2}(a), we may assume that $\chi_v \from E_v^1 \to \CC^\times$ is nontrivial. Since $\chi \xi \overline \xi = \chi$ and $\overline{\chi \xi} \xi = \overline \chi$, by the ``Ten Commandments'' for $\gamma$-factors \cite[Theorem 4]{LR05}, we have
\begin{equation*}
L^S(s, \chi) = \prod_{v \in S} \gamma_v(s, (\chi \xi)_v, \overline\xi_v, \psi_v) \cdot L^S(1-s, \overline\chi),
\end{equation*}
where $S$ is a finite set of places containing all the archimedean places and all places where $\chi_v$ is ramified. Now, since $\chi$ is nontrivial, we must have $L^S(0, \chi) \neq 0$ and $L^S(1,\overline\chi) \neq 0$
, and therefore 
\begin{equation*}
\gamma_v(0, (\chi \xi)_v, \xi_v, \psi_v) \neq 0.
\end{equation*}
This implies that Equation \eqref{e:zeta functional} gives
\begin{equation}\label{e:functional -1/2}
Z\left(\tfrac{1}{2}, \chi \xi, \xi^2\right)\left(M_\psi^{\textrm{LR}}(-\tfrac{1}{2}, \xi^2)(\cF)\right) = * \cdot Z\left(-\tfrac{1}{2}, \chi \xi, \xi^2\right)\left(\cF\right),
\end{equation}
where $*$ is nonzero. We now investigate the intertwining operator 
\begin{equation*}
M_\psi^{\textrm{LR}}(-\tfrac{1}{2}, \xi) \from I(-\tfrac{1}{2}, \xi^2) \to I(\tfrac{1}{2}, \xi^2).
\end{equation*}
We refer to Theorem \ref{t:decomp U(1,1)} for the decomposition of the $\U(1,1)$-representations $I(\pm \tfrac{1}{2}, \xi^2)$. By \cite[Proposition 6.4]{KS97},
\begin{equation*}
\ker(M_\psi^{\textrm{LR}}(-\tfrac{1}{2}, \xi^2)) = R(0, \xi^2), \qquad \im(M_\psi^{\textrm{LR}}(-\tfrac{1}{2}, \xi^2)) = R(V_1^-).
\end{equation*}
Since $\chi$ is nontrivial, $\Theta_{V_0^+}(\chi \xi) = 0$, and therefore $Z(-\tfrac{1}{2}, \chi\xi, \xi^2)|_{R(V_0^+)} = 0$. On the other hand, $Z(-\tfrac{1}{2}, \chi \xi, \xi^2)$ is a nonzero functional, and therefore one can find $\cF \in I(-\tfrac{1}{2}, \xi^2)$ such that $M_\psi^{\textrm{LR}}(-\tfrac{1}{2}, \xi^2)(\cF) \neq 0$. By Theorem \ref{t:decomp U(1,1)}(iv), it follows that $Z(\tfrac{1}{2}, \xi\chi, \xi^2)|_{R(V_1^-)} \neq 0$.
\end{proof}

\subsection{Unramified local theta lifts from $\GU(1)$ to $\GU(1,1)$}\label{s:unram theta lift}

For convenience of notation, in this subsection we drop the subscript $v$. We denote by $\overline x$ the image of $x \in E$ under the nontrivial involution of $E/F$.

Consider the $2$-dimensional $E$-space $V' = V_1' + V_2'$ with skew-Hermitian form
\begin{equation*}
\langle (x_1, x_2), (y_1, y_2) \rangle = \overline x_1 y_2 + \overline x_2 y_1
\end{equation*}
for $(x_1, x_2), (y_1, y_2) \in V_1' + V_2'$. Then
\begin{equation*}
\GU(V') = \GU(1,1) = \left\{g \in \GL_2(E) : \text{$\overline g^\trans \left(\begin{smallmatrix} 0 & 1 \\ -1 & 0 \end{smallmatrix}\right) g = \nu(g) \left(\begin{smallmatrix} 0 & 1 \\ -1 & 0 \end{smallmatrix}\right)$ for some $\nu(g) \in F^\times$}\right\}.
\end{equation*}
The upper-triangular matrices in $\GU(V')$ form a parabolic subgroup 
\begin{equation*}
P \colonequals \left\{\left(\begin{smallmatrix} a & \nu' a \\ 0 & \nu a \end{smallmatrix}\right)  \in \GL_2(E) : a \in E^\times, \, \nu \in F^\times, \, \nu' \in F \right\}.
\end{equation*}
Let $P_F$ denote the Borel subgroup of $\GL_2(F)$ consisting of upper-triangular matrices in $\GL_2(F)$. Observe that there are natural inclusions $\GL_2(F) \hookrightarrow \GU(V')$ and $E^\times \hookrightarrow \GU(V')$ given by
\begin{equation*}
\GL_2(F) = \left\{\left(\begin{smallmatrix} a & b \\ c & d \end{smallmatrix}\right) \in \GU(V') : a,b,c,d \in F\right\}, \qquad E^\times = \left\{\left(\begin{smallmatrix} a & \\ & a \end{smallmatrix}\right) \in \GU(V') : a \in E^\times\right\}.
\end{equation*}
We have $\GU(V') \cong (\GL_2(F) \times E^\times)/F^\times$ and $P \cong (P_F \times E^\times)/F^\times$.

Endow $E$ with the Hermitian form $(x,y) = x \overline y$ so that $\GU(E) = \GU(1) = E^\times.$ Note that the similitude character on $\GU(E)$, which we also denote by $\nu$, is given by the norm map $E^\times \to F^\times.$ Now consider the group
\begin{equation*}
R \colonequals \{(h,g) \in E^\times \times \GU(V') : \nu(g) = \nu(h)\}.
\end{equation*}
Endow the $4$-dimensional $F$-space $\bV' = \Res_{E/F}(V')$ with the symplectic form $\llangle v, w \rrangle = \frac{1}{2} \Tr_{E/F}(\langle v, w \rangle).$ There is a natural map
\begin{equation*}
\iota \from R \to \Sp(\bV), \qquad (h, g) \mapsto (v \mapsto h^{-1} v g).
\end{equation*}
The decomposition $V_1' + V_2'$ of $V'$ into isotropic subspaces induces a polarization of $\bV'$ given by
\begin{equation*}
\bV' = \bX' + \bY', \qquad \text{where $\bX' = \Res_{E/F}(V_1')$ and $\bY' = \Res_{E/F}(V_2')$.}
\end{equation*}
Choose a basis $\e_1$, $\e_2$, $\e_1^*$, $\e_2^*$ of $\bV'$ such that
\begin{equation*}
\bX' = F \e_1 + F \e_2, \qquad \bY' = F \e_1^* + F \e_2^*, \qquad \llangle \e_i, \e_j^* \rrangle = \delta_{ij}.
\end{equation*}
Now assume that we have a splitting $\beta \from R \to \CC^1$ of $z_{\bY'}$. Then
\begin{equation*}
R \to \Mp(\bV')_{\bY'}, \qquad g \mapsto (\iota(g), \beta(g))
\end{equation*}
is a group homomorphism and the Weil representation $\omega_\psi$ on $\Mp(\bV')_{\bY'}$ pulls back to a representation of $R$, which we also denote by $\omega_\psi$. 

Abusing notation, define
\begin{equation*}
\beta \from E^\times \to \CC^1, \qquad h \mapsto \beta(h, d(\nu(h))).
\end{equation*}
Observe that this defines a character since for any $h \in E^\times$, $\iota(h, d(\nu(h)))$ stabilizes $\bY'$. Define
\begin{equation*}
L(h)\phi(x) \colonequals \omega_\psi(h, d(\nu(h)))\phi(x) = \beta(h) |h|^{-1/2} \phi(x h^{-1})
\end{equation*}
for $h \in E^\times$ and $\phi \in \cS(\bX')$. Then for any $(h,g) \in R$, 
\begin{equation}\label{e:Weil L}
\omega_\psi(h, g)\phi(x) = L(h) \omega_\psi(d(\nu(g)^{-1}) g)\phi(x) = \beta(h) |h|^{-1/2} (\omega_\psi(d(\nu(g)^{-1})g)\phi)(x h^{-1}).
\end{equation}

Consider the semidirect product $E^\times \ltimes \U(V')$ with multiplication
\begin{equation*}
(h_1, g_1) * (h_2, g_2) = (h_1 h_2, d(\nu(h_2)) g_1 d(\nu(h_2)^{-1}) g_2), \qquad \text{where $h \in E^\times$ and $g \in \U(V')$.}
\end{equation*}
This defines a group multiplication since the map $d$ is multiplicative and $\nu$ is a group homomorphism to $F^\times$, an abelian group. It is easy to show:

\begin{lemma}\label{l:semidirect}
The Weil representation $\omega_\psi$ on $R$ extends to a representation of $E^\times \ltimes \U(V')$ defined by
\begin{equation*}
\omega_\psi(h,g) = L(h) \omega_\psi(g), \qquad \text{$h \in E^\times$, $g \in \U(V')$}.
\end{equation*}
In particular, the Weil representation on the quotient 
\begin{equation*}
\Theta^{(1)}(\triv) \colonequals \cS(\bX')/\bigcap_{\alpha \in \Hom_{E^1}(\cS(\bX'), \triv)} \ker(\alpha)
\end{equation*}
extends to a representation of $\GU(V')^+ \cong \{d(\nu) : \nu \in \Nm(E^\times)\} \ltimes \U(V')$ satisfying
\begin{equation*}
\omega_\psi(d(\nu)) = L(h),
\end{equation*}
where $h \in E^\times$ is any element such that $\nu(h) = \nu$.
\end{lemma}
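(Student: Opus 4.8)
The plan is to realize $R$ itself as the semidirect product and simply read off the asserted formula. First I would record two elementary facts: (i) for $\nu \in \Nm(E^\times)$ the element $d(\nu) = \diag(1,\nu)$ lies in $\GU(V')$ with similitude $\nu(d(\nu)) = \nu$, and $d$ is multiplicative, so $h \mapsto (h, d(\nu(h)))$ is a homomorphism of $E^\times$ onto an abelian subgroup of $R$; (ii) $\iota(h, d(\nu(h)))$ stabilizes $\bY'$, so that $\beta(h) := \beta(h, d(\nu(h)))$ is a \emph{character} of $E^\times$ and $L(h) = \omega_\psi(h, d(\nu(h)))$ acts by $\phi \mapsto \beta(h)|h|^{-1/2}\phi(\,\cdot\,h^{-1})$, as in \eqref{e:Weil L}. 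Now every element of $R$ is uniquely of the form $(h,d(\nu(h)))\cdot(1,g)$ with $h \in E^\times$, $g \in \U(V')$ (given $(h,G) \in R$ put $g = d(\nu(h))^{-1}G$, which lies in $\U(V')$ since $\nu(g) = \nu(h)^{-1}\nu(G) = 1$), and $(h,d(\nu(h)))$ normalizes $\{1\}\times\U(V')$ by $(h,d(\nu(h)))(1,g)(h,d(\nu(h)))^{-1} = (1, d(\nu(h))\,g\,d(\nu(h))^{-1})$. Hence $(h,g) \mapsto (h,d(\nu(h)))\cdot(1,g)$ is an isomorphism $E^\times \ltimes \U(V') \cong R$ realizing exactly the multiplication in the statement (this step uses only the multiplicativity of $d$ and $\nu$). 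Transporting $\omega_\psi$ through this isomorphism gives $\omega_\psi(h,g) = \omega_\psi(h,d(\nu(h)))\,\omega_\psi(1,g) = L(h)\,\omega_\psi(g)$, which is the first assertion.

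For the quotient statement, put $N := \bigcap_{\alpha \in \Hom_{E^1}(\cS(\bX'),\triv)}\ker\alpha$, so $\Theta^{(1)}(\triv) = \cS(\bX')/N$. Since $\{1\}\times\U(V')$ and $E^1\times\{1\}$ commute inside $R$, each $\omega_\psi(g)$ with $g \in \U(V')$ commutes with the $E^1$-action; similarly $(h,d(\nu(h)))$ commutes with $(k,1)$ for $k \in E^1$, so each $L(h)$ commutes with the $E^1$-action. Therefore, for $\alpha \in \Hom_{E^1}(\cS(\bX'),\triv)$, both $\alpha\circ\omega_\psi(g)$ and $\alpha\circ L(h)$ again lie in $\Hom_{E^1}(\cS(\bX'),\triv)$, whence $N$ is stable under every $\omega_\psi(g)$ and every $L(h)$. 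Thus the representation of $E^\times\ltimes\U(V')$ constructed above descends to $\Theta^{(1)}(\triv)$.

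It remains to check that this descended action factors through $\GU(V')^+$. For $k \in E^1$ one has $L(k) = \omega_\psi(k,d(1)) = \omega_\psi(k,1)$, i.e.\ $L(k)$ \emph{is} the action of $k \in E^1 = \U(1)$ on $\cS(\bX')$, which by construction of $\Theta^{(1)}(\triv)$ (cf.\ Section \ref{s:howe duality}) is the identity on $\Theta^{(1)}(\triv)$. Consequently $L(h)$ and $L(h')$ act identically on $\Theta^{(1)}(\triv)$ whenever $\nu(h) = \nu(h')$, so $\omega_\psi(d(\nu)) := L(h)$ (for any $h$ with $\nu(h) = \nu$) is well defined; and the surjection $E^\times\ltimes\U(V') \to \{d(\nu):\nu\in\Nm(E^\times)\}\ltimes\U(V') = \GU(V')^+$ sending $(h,g)$ to $(d(\nu(h)),g)$, equivalently to $d(\nu(h))g$, has kernel $E^1\times\{1\}$, which acts trivially on $\Theta^{(1)}(\triv)$. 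Hence the action descends to $\GU(V')^+$ with $d(\nu)$ acting by $L(h)$, as claimed.

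The argument is entirely formal, and there is no serious obstacle: the only delicate points are conventional — one must track the direction of the $\GU(V')$-action in $\iota$ (which dictates whether $d(\nu(h))$ or $d(\nu(h))^{-1}$ appears in the twisted product) so that the isomorphism $E^\times\ltimes\U(V')\cong R$ is literally the one asserted — and the verification that $N$ is a subrepresentation, which is routine but is the one place where commutativity of $\U(1)$ with $\U(V')$ (and with the operators $L(h)$) is genuinely invoked.
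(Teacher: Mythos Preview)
Your approach is exactly the one the paper has in mind: the paper offers no proof beyond ``It is easy to show,'' and the natural argument is precisely to recognize that $R$ decomposes as the internal semidirect product of $\{(h,d(\nu(h)))\}\cong E^\times$ and $\{1\}\times\U(V')$, then read off $\omega_\psi(h,G)=L(h)\,\omega_\psi(d(\nu(h))^{-1}G)$ from \eqref{e:Weil L}. Your treatment of the quotient part---showing that $L(h)$ and $\omega_\psi(g)$ commute with the $E^1$-action, hence preserve $N$, and that $E^1$ acts trivially on $\Theta^{(1)}(\triv)$ so the action descends to $\GU(V')^+$---is clean and correct.

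One caution worth making explicit, since you already flag it: with the paper's literal multiplication rule $(h_1,g_1)*(h_2,g_2)=(h_1h_2,\,d(\nu(h_2))g_1d(\nu(h_2))^{-1}g_2)$, the conjugation action of $(h,1)$ on $(1,g)$ is $g\mapsto d(\nu(h))^{-1}g\,d(\nu(h))$, which is \emph{inverse} to the conjugation action of $(h,d(\nu(h)))$ on $(1,g)$ inside $R$. So the map $(h,g)\mapsto(h,d(\nu(h))g)$ is not a homomorphism for that particular convention; one needs either $(h,g)\mapsto(h,g\,d(\nu(h)))$ or the inverse twist in the semidirect product. This is exactly the conventional point you mention, and it does not affect the substance of the argument---but your sentence ``realizing exactly the multiplication in the statement'' overstates the match, and you should verify which convention makes the isomorphism literal before writing it that way.
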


%
%

\begin{definition}
For any character $\eta_0 \from F^\times \to \CC$ and any $\phi \in \cS(\bX')$, define 
\begin{equation*}
\cF_{\phi,\eta_0} \from \GU(V') \to \CC^\times, \qquad g \mapsto |\nu(g)|^{-1/2} \eta_0(\nu(g))^{-1} (\omega_\psi(d(\nu(g)^{-1}) g)\phi)(0).
\end{equation*}
\end{definition}

The following is straightforward:

\begin{lemma}
For any $p = \left(\begin{smallmatrix} a & b \\ 0 & d \end{smallmatrix}\right) \in \GU(V')$,
\begin{equation*}
\cF_{\phi, \eta_0}(p g) = |a|^{1/2} |d|^{-1/2}  \eta_0(\overline a d)^{-1} \beta(a)^{-1} \cF_{\phi, \eta_0}(g)
\end{equation*}
for all $g \in \GU(V')$ so that
\begin{equation*}
\cF_{\phi, \eta_0} \in \Ind_P^{\GU(V')}(\widetilde \eta_0), \qquad \text{where $\widetilde \eta_0\left(\begin{smallmatrix} a & b \\ 0 & d \end{smallmatrix}\right) \colonequals \eta_0(\overline a d)^{-1} \beta(a)^{-1}.$}
\end{equation*} 
In particular, $\cF_{\phi, \eta_0}|_{\GSp(2)}$ is an element of the (normalized) principal series representation
\begin{equation*}
\Ind_{B}^{\GSp(2)}(\eta_0^{-1} \beta^{-1} \otimes \eta_0^{-1}).\end{equation*}
\end{lemma}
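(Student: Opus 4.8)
The plan is to unwind the definition of $\cF_{\phi,\eta_0}$ and factor out the left translation by $p$, reducing to the action of the Siegel Levi and of the unipotent radical of the Siegel parabolic on $\cS(\bX')$ at the origin, for which we have the explicit formulas \eqref{e:weil explicit m}--\eqref{e:weil explicit n}. First I would record the similitude of an upper triangular element: if $p = \left(\begin{smallmatrix} a & b \\ 0 & d \end{smallmatrix}\right) \in \GU(V')$ then $\nu(p) = \overline a\, d = a\,\overline d \in F^\times$, and hence $\nu(p)^2 = \Nm(a)\Nm(d)$. Writing $g_1 \colonequals d(\nu(g)^{-1})\,g \in \U(V')$, so that $\cF_{\phi,\eta_0}(g) = |\nu(g)|^{-1/2}\,\eta_0(\nu(g))^{-1}\,(\omega_\psi(g_1)\phi)(0)$, the key algebraic identity is
\begin{equation*}
d(\nu(pg)^{-1})\,pg = q\cdot g_1, \qquad q \colonequals \begin{pmatrix} a & b\,\nu(g) \\ 0 & \overline a^{-1}\end{pmatrix} \in \U(V'),
\end{equation*}
obtained by conjugating $p$ past $d(\nu(g))$, using that $d(\cdot)$ is multiplicative and $\nu(p)^{-1}d = \overline a^{-1}$; note $q$ is unitary since $\overline a\cdot\overline a^{-1} = 1$.

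Next I would compute $(\omega_\psi(q)\varphi)(0)$ by factoring $q = m(a)\,n(a^{-1}b\,\nu(g))$ with $m(a) = \diag(a,\overline a^{-1})$ and $n(c) = \left(\begin{smallmatrix} 1 & c \\ 0 & 1 \end{smallmatrix}\right)$. The element $n(c)$ lies in the unipotent radical of the Siegel parabolic of $\U(V')$, so by \eqref{e:weil explicit n} the operator $\omega_\psi(n(c))$ is multiplication by an additive character of $\bX'$, hence acts trivially at the origin; and $m(a)$ lies in the Siegel Levi, acting on $\bX' = \Res_{E/F}(V_1')$ as multiplication by $a$, whose $F$-linear determinant is $\Nm_{E/F}(a)$, so by \eqref{e:weil explicit m} one has $(\omega_\psi(m(a))\varphi)(0) = \beta(m(a))\,|a|\,\varphi(0)$, where $|\cdot|$ on $E^\times$ is normalized to extend that of $F$ and $\beta(m(a))$ is the value at $m(a)$ of the splitting defining $\omega_\psi$ on $\U(V')$. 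A short check against the definition of the character $\beta\from E^\times\to\CC^1$ (using Lemma \ref{l:semidirect}, that $\omega_\psi$ is a genuine representation of $R$, and $\iota(h,d(\nu(h))) = \iota(m(h^{-1}))$ in $\Sp(\bV')$) identifies $\beta(m(a)) = \beta(a)^{-1}$. Assembling, and using $\nu(pg) = \nu(p)\nu(g)$, $\nu(p) = \overline a\,d$, and the absolute-value identity $|\nu(p)|^{-1/2}|a| = |a|^{1/2}|d|^{-1/2}$ (a consequence of $\nu(p)^2 = \Nm(a)\Nm(d)$), we obtain
\begin{equation*}
\cF_{\phi,\eta_0}(pg) = |a|^{1/2}|d|^{-1/2}\,\eta_0(\overline a\, d)^{-1}\,\beta(a)^{-1}\,\cF_{\phi,\eta_0}(g).
\end{equation*}
A one-line verification that $\widetilde\eta_0$ is a character of $P$ then gives $\cF_{\phi,\eta_0}\in\Ind_P^{\GU(V')}(\widetilde\eta_0)$, and restricting to $\GL_2(F) = \GSp(2)\subset\GU(V')$, where $\overline a = a$ and $|a|^{1/2}|d|^{-1/2} = |a/d|^{1/2}$, places $\cF_{\phi,\eta_0}|_{\GSp(2)}$ in the normalized principal series $\Ind_B^{\GSp(2)}(\eta_0^{-1}\beta^{-1}\otimes\eta_0^{-1})$.

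The only genuinely delicate point is the bookkeeping around the splitting and the normalizations: pinning down the value $\beta(m(a))$ of the $\U(V')$-splitting on the Levi element as the inverse of the character $\beta$ of $E^\times$ defined just above the lemma, and keeping the absolute values on $F$ and on $E$ consistently normalized so that the modulus factor emerges as $|a/d|^{1/2}$ on $\GSp(2)$. Everything else is a direct matrix computation with the explicit Weil-representation formulas.
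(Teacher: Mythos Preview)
The paper gives no proof of this lemma, labeling it ``straightforward,'' so there is nothing to compare against; your argument supplies exactly the computation the paper elides. The reduction $d(\nu(pg)^{-1})\,pg = q\cdot g_1$ with $q = m(a)\,n(c)$ in the Siegel parabolic of $\U(V')$ is correct (note $c = a^{-1}b\,\nu(g)\in F$ follows from the $\GU(V')$ condition on $p$, which forces $a^{-1}b\in F$), and the evaluation at the origin via \eqref{e:weil explicit m}--\eqref{e:weil explicit n} is the right mechanism.

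One sharpening of the point you already flag as delicate: the identity $\beta(m(a)) = \beta(a)^{-1}$ is equivalent, via the factorization $(1,m(a)) = (a^{-1}, d(\nu(a^{-1})))\cdot(a,aI)$ inside $R$, to the splitting $\beta$ being trivial on the central kernel $\ker\iota = \{(h,hI):h\in E^\times\}$. This is a normalization of the splitting rather than an automatic consequence of $\partial\beta = z_{\bY'}$; it holds for the Kudla-type splittings actually used in the paper, so your argument is sound in context, but it is worth stating as the hypothesis that makes the ``short check'' go through. With that in hand, your absolute-value bookkeeping (taking $|\cdot|$ on $E^\times$ to extend $|\cdot|_F$) is consistent and yields the stated transformation law and its $\GL_2(F)$ restriction.
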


%

\begin{lemma}\label{l:R equiv}
We have a nonzero $R$-equivariant map
\begin{equation*}
(\omega_\psi, \cS(\bX')) \to \Ind_{P}^{\GU(V')}(\widetilde \eta_0) \otimes (\eta_0(\Nm) \cdot \beta), \qquad \phi \mapsto \cF_{\phi, \eta_0}.
\end{equation*}
The right-hand side is irreducible and we have an isomorphism
\begin{equation*}
\Ind_P^{\GU(V')}(\widetilde \eta_0) \cong \Ind_{P_F}^{\GL_2(F)}(\eta_0^{-1} \otimes (\eta_0 \cdot \beta)^{-1}) \otimes (\eta_0(\Nm) \cdot \beta)^{-1},
\end{equation*}
where the right-hand side is a representation of $\GL_2(F) \times E^\times$ that descends to the quotient $(\GL_2(F) \times E^\times)/F^\times \cong \GU(V')$.
\end{lemma}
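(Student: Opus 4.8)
The plan is to verify the three assertions of the lemma in turn: that $\phi \mapsto \cF_{\phi,\eta_0}$ is a nonzero $R$-equivariant map into $\Ind_P^{\GU(V')}(\widetilde\eta_0)\otimes(\eta_0(\Nm)\cdot\beta)$, that this target is isomorphic to the stated induced representation of $(\GL_2(F)\times E^\times)/F^\times$, and that this representation is irreducible.

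For the first assertion, the membership $\cF_{\phi,\eta_0}\in\Ind_P^{\GU(V')}(\widetilde\eta_0)$ was already recorded in the preceding lemma, so only equivariance remains. Here I would compute directly from the explicit formula \eqref{e:Weil L} for $\omega_\psi$ on $R$, together with the fact (Lemma \ref{l:semidirect}) that $\omega_\psi$ extends to a genuine representation of $E^\times\ltimes\U(V')$. Given $(h,g)\in R$ and $x\in\GU(V')$, one unwinds
\[
\cF_{\omega_\psi(h,g)\phi,\eta_0}(x) = |\nu(x)|^{-1/2}\,\eta_0(\nu(x))^{-1}\,\bigl(\omega_\psi(d(\nu(x)^{-1})x)\,\omega_\psi(h,g)\,\phi\bigr)(0);
\]
writing $g=d(\nu(h))u$ with $u\in\U(V')$, the composite operator is $\omega_\psi$ applied to a product in $E^\times\ltimes\U(V')$, and carrying the $d(\cdot)$-conjugations through and evaluating at $0$ yields exactly $(\eta_0(\Nm)\cdot\beta)(h)\cdot\cF_{\phi,\eta_0}(xg)$, which is the action of $(h,g)$ on the target. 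Nonvanishing is immediate, since $\cF_{\phi,\eta_0}(1)=\phi(0)$, so any $\phi$ with $\phi(0)\neq 0$ gives a nonzero map.

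For the isomorphism I would transport $\Ind_P^{\GU(V')}(\widetilde\eta_0)$ along the identifications $\GU(V')\cong(\GL_2(F)\times E^\times)/F^\times$ and $P\cong(P_F\times E^\times)/F^\times$. Pulled back to $\GL_2(F)\times E^\times$ it becomes $\Ind_{P_F\times E^\times}^{\GL_2(F)\times E^\times}$ of the pullback of $\widetilde\eta_0$, and since parabolic induction commutes with direct products this splits as $\Ind_{P_F}^{\GL_2(F)}(-)\boxtimes(-)$. The only real content is matching the inducing data: the factorization $\left(\begin{smallmatrix}a & \nu'a\\ 0 & \nu a\end{smallmatrix}\right)=\left(\begin{smallmatrix}1 & \nu'\\ 0 & \nu\end{smallmatrix}\right)\cdot\diag(a,a)$ separates the $P_F$-part from the $E^\times$-part, and one must keep careful account of the two normalizing modulus characters $\delta_P^{1/2}$ and $\delta_{P_F}^{1/2}$ (both unipotent radicals are one-dimensional over $F$; $\diag(a,\nu a)$ scales $N_P$ by $\nu^{-1}$, while $\diag(a',d')$ scales $N_{P_F}$ by $a'/d'$). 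This bookkeeping produces the asymmetric pair $\eta_0^{-1}\otimes(\eta_0\beta)^{-1}$ together with the twist $(\eta_0(\Nm)\cdot\beta)^{-1}$; one then checks the resulting character of $\GL_2(F)\times E^\times$ is trivial on the (anti-)diagonal copy of $F^\times$, so the representation descends to $\GU(V')$.

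Finally, for irreducibility, note that tensoring by a one-dimensional representation has no effect, so by the isomorphism it suffices to prove $\Ind_{P_F}^{\GL_2(F)}(\eta_0^{-1}\otimes(\eta_0\beta)^{-1})$ is irreducible as a representation of $\GL_2(F)$; descent to the quotient group preserves this since $F^\times$ acts by scalars, and the external tensor with the one-dimensional $E^\times$-factor preserves it as well. A normalized principal series $\Ind_{P_F}^{\GL_2(F)}(\mu_1\otimes\mu_2)$ is reducible only when $\mu_1\mu_2^{-1}=|\cdot|_F^{\pm 1}$, and here $\mu_1\mu_2^{-1}=\beta|_{F^\times}$, which is $\CC^1$-valued — hence a unitary character of $F^\times$ — so it is never $|\cdot|_F^{\pm 1}$, regardless of $\eta_0$. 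The main obstacle is purely computational: in the first two steps one must get the $d(\nu)$-conjugations, the normalizing factors $|\nu(\cdot)|^{\pm 1/2}$, and the two modulus characters to line up precisely so that exactly the characters $\eta_0^{-1}\otimes(\eta_0\beta)^{-1}$ and $(\eta_0(\Nm)\beta)^{-1}$ emerge; nothing here is conceptually deep, but the constants are unforgiving.
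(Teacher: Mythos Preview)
Your proposal is correct and follows essentially the same approach as the paper's proof: direct computation for $R$-equivariance via \eqref{e:Weil L} and the $L(h)$-operator, the isomorphism via the identifications $\GU(V')\cong(\GL_2(F)\times E^\times)/F^\times$ and $P\cong(P_F\times E^\times)/F^\times$, and irreducibility via the observation that the ratio of the inducing characters is $\beta^{\pm 1}|_{F^\times}$, which is unitary and hence not $|\cdot|^{\pm 1}$. Your treatment is in fact more detailed than the paper's on the isomorphism and irreducibility steps, where the paper is quite terse.
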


\begin{proof}
It is clear by definition that the map is nonzero. For $R$-equivariance:
\begin{align*}
\cF_{\omega_\psi(h', g')\phi, \eta_0}(g) 
&= |\nu(g)|^{-1/2} \eta_0(\nu(g))^{-1} (\omega_\psi(d(\nu(g)^{-1})g)L(h') \omega_\psi(d(\nu(g')^{-1})g')\phi)(0) \\
&= |\nu(g)|^{-1/2} \eta_0(\nu(g))^{-1} (L(h') \omega_\psi(d(\nu(gg')^{-1}) gg')\phi)(0) \\
&= |\nu(g)|^{-1/2} \eta_0(\nu(g))^{-1} |\nu(h')|^{-1/2} \beta(h') (\omega_\psi(d(\nu(gg')^{-1}) gg')\phi)(0) \\
&= \beta(h') \eta_0(\nu(h')) |\nu(gg')|^{-1/2} \eta_0(\nu(gg'))^{-1} (\omega_\psi(d(\nu(gg')^{-1}) gg')\phi)(0) \\
&= \beta(h') \eta_0(\nu(h')) \cF_{\phi, \eta_0}(gg').
\end{align*}
The last assertion in the lemma holds since $P \cong (P_F \times E^\times)/F^\times$ and $\GU(V') \cong (\GL_2(F) \times E^\times)/F^\times$. The representation $\Ind_{P_F}^{\GL_2(F)}(\widetilde \eta_0)$ is irreducible since the character $\eta_0^{-1} \beta^{-1} \eta_0 = \beta^{-1}$ is not $| \cdot |$ or $| \cdot |^{-1}$. It follows that $\Ind_P^{\GU(V')}(\widetilde \eta_0)$ is irreducible.
\end{proof}

The map defined in Lemma \ref{l:R equiv} factors through 
\begin{equation*}
\Theta^{(1)}(\beta) \colonequals \cS(\bX')/\bigcap_{\alpha \in \Hom_{E^1}(\cS(\bX'), \beta)} \ker \alpha,
\end{equation*}
the largest quotient of $\cS(\bX')$ such that $E^1$ acts by $\beta$. Note that by construction, $\Theta^{(1)}(\beta)$, as a representation of $\U(V')$, is the local theta lift of $\beta$ to $\U(V')$.

There are many extensions of $\Theta^{(1)}(\beta)$ to a representation of $E^\times \times \GU(V')^+$, but specifying an action of $E^\times$ determines such an extension. Explicitly, define $\Theta_{\ur,\beta}(\beta \cdot \eta_0(\Nm))$ to be the unique representation of $\GU(V')^+$ such that for $g = \left(\begin{smallmatrix} 1 & 0 \\ 0 & \nu \end{smallmatrix}\right) \in \GU(V')^+$, 
\begin{equation*}
\Theta_{\ur,\beta}(\beta \cdot \eta_0(\Nm))(g) \colonequals \eta_0(\Nm(h))^{-1} \cdot \Theta^{(1)}(\beta)(h, g),
\end{equation*}
where $h \in E^\times$ is any element such that $\nu(h) = \nu(g) = \nu$.

\begin{theorem}[Rallis]\label{t:unram theta lift}
The $R$-equivariant map in Lemma \ref{l:R equiv} factors through $\Theta_{\ur,\beta}(\beta \cdot \eta_0(\Nm))$ and induces an injective map:
\begin{equation*}
\begin{tikzcd}
(\omega_\psi, \cS(\bX')) \ar{r} \ar{d} & \Ind_P^{\GU(V')}(\widetilde \eta_0) \\
\Theta_{\ur,\beta}(\beta \cdot \eta_0(\Nm)) \ar[hookrightarrow]{ru}
\end{tikzcd}
\end{equation*}
Moreover,
\begin{equation*}
\Theta_{\ur,\beta}(\beta \cdot \eta_0(\Nm)) \cong \Ind_{P_F}^{\GL_2(F)}(\eta_0^{-1} \epsilon_{E/F} \otimes \eta_0^{-1}) \otimes (\eta_0(\Nm)^{-1} \cdot \beta^{-1}),
\end{equation*}
where the right-hand side is viewed as a representation of $\GL_2(F) \times E^\times$ that descends to the quotient $(\GL_2(F) \times E^\times)/F^\times \cong \GU(V')$.
\end{theorem}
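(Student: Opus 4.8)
The plan is to deduce the theorem from Lemma~\ref{l:R equiv} together with Rallis's explicit computation of the theta correspondence for the dual pair $(\GU(1),\GU(1,1))$; in the present framework most of the work is a translation. Recall from the discussion preceding the theorem that the $R$-equivariant map $\phi \mapsto \cF_{\phi,\eta_0}$ of Lemma~\ref{l:R equiv} is nonzero and factors through $\Theta^{(1)}(\beta)$, which, as a $\U(V')$-representation, is the theta lift of the character $\beta$ of $\U(1) = E^1$.

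The first step is to upgrade this to a factorization through $\Theta_{\ur,\beta}(\beta\cdot\eta_0(\Nm))$ --- i.e.\ to check that the $\GU(V')^+$-action on $\Theta^{(1)}(\beta)$ that \emph{defines} $\Theta_{\ur,\beta}(\beta\cdot\eta_0(\Nm))$ is carried by $\cF_{\bullet,\eta_0}$ to right translation in $\Ind_P^{\GU(V')}(\widetilde\eta_0)$. This is a direct computation: by definition $d(\nu)\in\GU(V')^+$ acts on $\Theta_{\ur,\beta}(\beta\cdot\eta_0(\Nm))$ by $\eta_0(\Nm(h))^{-1}\,\Theta^{(1)}(\beta)(h,d(\nu)) = \eta_0(\Nm(h))^{-1}L(h)$ for any $h$ with $\nu(h)=\nu$ (using \eqref{e:Weil L} and Lemma~\ref{l:semidirect}), and substituting this into the equivariance formula of Lemma~\ref{l:R equiv} --- while keeping track of the twist $\eta_0(\Nm)\cdot\beta$ recorded there --- returns exactly the map displayed in the theorem. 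Injectivity of the induced map is then immediate once the image is pinned down in the next step.

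The second step is the identification of $\Theta^{(1)}(\beta)$ --- equivalently of the image of $\cF_{\bullet,\eta_0}$ inside $\Ind_P^{\GU(V')}(\widetilde\eta_0)$ --- with the representation named in the theorem. This is where Rallis's computation enters: it determines the theta lift of $\beta$ from $\GU(1)$ to $\GU(1,1)$ explicitly and shows that $\cF_{\bullet,\eta_0}$ realizes $\Theta_{\ur,\beta}(\beta\cdot\eta_0(\Nm))$ inside $\Ind_P^{\GU(V')}(\widetilde\eta_0)$ as asserted. Granting that, the last move is purely formal. For the splitting $\beta$ at hand one has $\beta|_{F^\times}=\epsilon_{E/F}$ --- Kudla's normalization, and for the explicit splitting it is the computation of Lemma~\ref{l:bs D} --- so that in the $\GL_2(F)$-model furnished by Lemma~\ref{l:R equiv}, namely $\Ind_P^{\GU(V')}(\widetilde\eta_0) \cong \Ind_{P_F}^{\GL_2(F)}(\eta_0^{-1}\otimes(\eta_0\beta)^{-1})\otimes(\eta_0(\Nm)\beta)^{-1}$, the second inducing character restricts on $F^\times$ to $\eta_0^{-1}\epsilon_{E/F}$; interchanging the two inducing characters (harmless since the $\GL_2(F)$-principal series is irreducible) rewrites the right-hand side as $\Ind_{P_F}^{\GL_2(F)}(\eta_0^{-1}\epsilon_{E/F}\otimes\eta_0^{-1})\otimes(\eta_0(\Nm)^{-1}\cdot\beta^{-1})$, which is the stated form.

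The step I expect to be the real obstacle is the second one --- Rallis's identification of the $(\GU(1),\GU(1,1))$ theta lift, together with its translation into the present conventions. The delicacy is that the relevant $\U(1,1)$-principal series sit on the reducibility locus (because $\beta$ is trivial on $\Nm(E^\times)$, equivalently $\beta^{-1}|_{F^\times}=\epsilon_{E/F}$), so one cannot simply invoke a blunt irreducibility statement: one must track the images precisely, in the spirit of the degenerate-principal-series analysis of Kudla--Rallis and Kudla--Sweet underlying Theorem~\ref{t:decomp U(1,1)}, and carefully match up the $\U(V')$-, $\GU(V')^+$-, and $\GU(V')$-structures and the three twisting characters $\eta_0$, $\eta_0(\Nm)$, and $\beta$.
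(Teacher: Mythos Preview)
Your approach is essentially the paper's: factor through $\Theta_{\ur,\beta}(\beta\cdot\eta_0(\Nm))$ using the equivariance of Lemma~\ref{l:R equiv}, cite Rallis \cite[Theorem~II.1.1]{R84} for the injectivity, identify the target via the $\GL_2(F)$-model in Lemma~\ref{l:R equiv}, and use Lemma~\ref{l:bs D} to rewrite $\beta|_{F^\times}$ as $\epsilon_{E/F}$.

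The ``obstacle'' you flag in your final paragraph, however, is a red herring, and the paper's argument is shorter than you anticipate. The representation in play is not a $\U(1,1)$-principal series but the $\GU(V')$-representation $\Ind_P^{\GU(V')}(\widetilde\eta_0)$, and Lemma~\ref{l:R equiv} already records that it is \emph{irreducible}: under $\GU(V')\cong(\GL_2(F)\times E^\times)/F^\times$ it becomes $\Ind_{P_F}^{\GL_2(F)}(\eta_0^{-1}\beta^{-1}\otimes\eta_0^{-1})$ twisted by a character, and this $\GL_2(F)$-principal series is irreducible because the ratio of the inducing characters is $\beta^{-1}$, which is unitary and therefore not $|\cdot|^{\pm1}$. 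Once Rallis supplies the injection $\Theta_{\ur,\beta}(\beta\cdot\eta_0(\Nm))\hookrightarrow\Ind_P^{\GU(V')}(\widetilde\eta_0)$, irreducibility of the target forces it to be an isomorphism --- no submodule-structure analysis in the style of Theorem~\ref{t:decomp U(1,1)} is needed. You appear to be conflating this with the degenerate principal series $I(\pm\tfrac{1}{2},\xi^2)$ of Section~\ref{s:local zeta}, which does sit on a reducibility line but lives on the \emph{doubled} $\U(1,1)$ in the doubling seesaw and plays no role in the present statement. (Incidentally, you invoke this same irreducibility yourself when you swap the two inducing characters at the end --- so your proof already implicitly relies on the fact you later worry about.)
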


\begin{proof}
This is due to Rallis \cite[Theorem II.1.1]{R84}. By the injectivity of 
\begin{equation*}
\Theta_{\ur,\beta}(\beta \cdot \eta_0(\Nm)) \hookrightarrow \Ind_P^{\GU(V')}(\widetilde \eta_0)
\end{equation*}
and the irreducibility of $\Ind_P^{\GU(V')}(\widetilde \eta_0)$, by Lemma \ref{l:R equiv}, we have an isomorphism
\begin{equation*}
\Theta_{\ur,\beta}(\beta \cdot \eta_0(\Nm)) \cong \Ind_{P_F}^{\GL_2(F)}(\eta_0^{-1} \beta^{-1} \otimes \eta_0^{-1}) \otimes (\eta_0(\Nm)^{-1} \cdot \beta^{-1}).
\end{equation*}
Finally, by Lemma \ref{l:bs D}, the restriction of $\beta$ to $F^\times$ is exactly the quadratic character $\epsilon_{E/F}$, and this completes the proof.
\end{proof}

%

\subsection{Proof of Theorem \ref{t:theta aut ind}}\label{s:theta aut ind proof}

In this section, we use the calculations in the preceding sections to prove Theorem \ref{t:theta aut ind}, the main theorem of this section. 

Let $\chi$ and $\chi'$ be Hecke characters of $E^\times$. Recall from Section \ref{s:theta sim} that for every Schwartz function $\varphi \in \cS(\bX(\bA))$ we have automorphic forms $\theta_\varphi(\chi)$ and $\theta_\varphi'(\chi')$ on the adelic groups $H(\bA) \cong B_\bA^\times$ and $H'(\bA) \cong ((B_\bA')^\times \times \bA_E^\times)/\bA_F^\times$, respectively. Let $\Theta(\chi)$ denote the automorphic representation of $H(\bA)$ generated by $\theta_\varphi(\chi)$ for all $\varphi \in \cS(\bX(\bA))$ and let $\Theta'(\chi')$ denote the automorphic representation of $H'(\bA)$ generated by $\theta_\varphi'(\chi')$ for all $\varphi \in \cS(\bX(\bA))$.

\begin{proposition}\label{p:JL nonzero}
If $\pi_\chi^B \neq 0$, then $\Theta(\chi \cdot \xi) \neq 0$. Analogously, if $\pi_{\chi'}^{B'} \neq 0$, then $\Theta'(\chi' \cdot \xi') \neq 0$.
\end{proposition}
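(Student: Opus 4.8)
The plan is to prove the contrapositive-friendly direction directly: assuming the Jacquet--Langlands transfer $\pi_\chi^B$ exists (equivalently, $\chi_v$ does not factor through $\Nm$ at every ramified place $v \in \Sigma_B$), exhibit a Schwartz function $\varphi \in \cS(\bX(\bA))$ whose theta lift $\theta_\varphi(\chi \cdot \xi)$ is nonzero, so that $\Theta(\chi \cdot \xi) \neq 0$. The natural tool is the Rallis inner product formula: by Proposition~\ref{p:split rallis} (when $B \cong M_2(F)$) or the division-algebra computation preceding it (when $B$ is division), for a factorizable $\varphi = \otimes_v \varphi_v$ one has
\begin{equation*}
\langle \theta_\varphi(\chi \cdot \xi), \theta_\varphi(\chi \cdot \xi) \rangle_H = \frac{\rho_F}{\rho_E} \cdot \prod_v Z_v(\tfrac{1}{2}, \cF_{\varphi_v}, \chi_v),
\end{equation*}
so it suffices to choose each local $\varphi_v$ making the local doubling zeta integral $Z_v(\tfrac{1}{2}, \cF_{\varphi_v}, \chi_v)$ nonzero, with all but finitely many factors equal to a fixed nonzero unramified value.

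First I would handle the split places and the unramified places. At a place $v \notin \Sigma_B$ the Hermitian space $W_{0,v}$ is split, so the local theta lift from $\GU(1)_v$ to $\GU(1,1)_v$ is governed by Theorem~\ref{t:unram theta lift} (Rallis); in particular the lift is always nonzero there, and for unramified data the standard spherical choice $\varphi_v = \ONE_{\cO_{E_v}}$ (or its analogue in the model $\cS(\bX_v')$) gives a nonzero unramified zeta integral, which one computes to be a ratio of local $L$-factors in the usual way. This pins down the product away from a finite bad set $S$. Then at each $v \in S$ — the archimedean places and the finitely many ramified finite places — I would invoke nonvanishing of the local theta lift to produce a local $\varphi_v$ with $Z_v(\tfrac{1}{2}, \cF_{\varphi_v}, \chi_v) \neq 0$. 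For the split places $v \notin \Sigma_B$ this is immediate from Theorem~\ref{t:unram theta lift}; for the ramified places $v \in \Sigma_B$, the hypothesis $\pi_\chi^B \neq 0$ says exactly that $\chi_v$ does not factor through the norm, which by Lemma~\ref{l:lift to U2}(a) (together with the similitude-to-isometry bookkeeping of Section~\ref{s:theta sim}) means the local theta lift $\Theta_{V_1^-}(\chi_v \xi_v)$ to the relevant unitary group is nonzero. The link between nonvanishing of the local theta lift and nonvanishing of the local zeta integral at $s = \tfrac12$ is precisely Proposition~\ref{p:zeta vanishing} and the remark following it (which records the easy converse).

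The main obstacle I anticipate is the bookkeeping required to translate between the similitude picture — in which $\theta_\varphi(\chi \cdot \xi)$ lives and in which the Rallis formula is stated — and the isometry picture in which Lemma~\ref{l:lift to U2} and Proposition~\ref{p:zeta vanishing} are proved. Concretely: the global integral defining $\langle \theta_\varphi(\chi\xi), \theta_\varphi(\chi\xi)\rangle_H$ decomposes over $\cC$ and $[H_1]$ (Lemma~\ref{l:index 2}), and one must check that the Siegel--Weil unfolding carried out in Section~\ref{s:rallis} — valid unconditionally for $B$ division, and via the regularized Siegel--Weil of \cite{GQT} for $B$ split — really does reduce positivity of the global pairing to the product of the genuine isometry-group local zeta integrals $Z(\tfrac12, \chi_v\xi_v, \xi_v^2)$ appearing in Proposition~\ref{p:zeta vanishing}. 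Once that identification is in place, the argument is a finite product of nonvanishing statements, one per place, and I would finish by choosing $\varphi = \otimes_v \varphi_v$ accordingly and concluding $\langle \theta_\varphi(\chi\xi), \theta_\varphi(\chi\xi)\rangle_H \neq 0$, hence $\theta_\varphi(\chi\xi) \neq 0$ and $\Theta(\chi \cdot \xi) \neq 0$. The statement for $\Theta'(\chi' \cdot \xi')$ follows by the identical argument applied to the mirror seesaw, replacing $(B, \chi, \xi)$ by $(B', \chi', \xi')$ and using $\bs'$ in place of $\bs$ throughout (Lemma~\ref{l:bs' D} and Proposition~\ref{p:split rallis} apply verbatim).
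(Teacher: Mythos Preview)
Your proposal is correct and follows essentially the same route as the paper: reduce nonvanishing of $\Theta(\chi\xi)$ to nonvanishing of every local doubling zeta integral via the Rallis inner product formula, then handle the places $v \in \Sigma_B$ using the hypothesis $\chi_v|_{E_v^1} \neq 1$ together with Lemma~\ref{l:lift to U2}(a) and Proposition~\ref{p:zeta vanishing}.

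One small correction at the split places $v \notin \Sigma_B$: citing Theorem~\ref{t:unram theta lift} gives you nonvanishing of the \emph{local theta lift}, but that is not by itself the nonvanishing of the local zeta functional $Z_v(\tfrac{1}{2},-,\chi_v)$ (the easy implication goes the other way, as you note). The paper instead invokes Theorem~\ref{t:decomp U(1,1)}(iii) (Kudla--Sweet): since $R(V_1^+) = I(\tfrac{1}{2},\xi^2)$, the zeta functional---which is visibly nonzero on the full induced representation---is automatically nonzero on $R(V_1^+)$. This replaces your appeal to Theorem~\ref{t:unram theta lift} at bad split places and makes the argument uniform over all $v \notin \Sigma_B$ without separating out an unramified computation.
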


\begin{proof}
Recall that $\pi_\chi^B \neq 0$ if and only if $\chi_v|_{E_v^1} \neq 1$ for every place $v$ where $B_v$ is nonsplit. Let $v$ such a place, i.e.\ $B_v$ is nonsplit and $\chi_v|_{E_v^1} \neq 1$. By Lemma \ref{l:lift to U2}(a), we have $\Theta_v(\chi_v \xi_v) \neq 0$, and by Proposition \ref{p:zeta vanishing}, we have $Z_v(\tfrac{1}{2}, -, \chi_v \xi_v) \neq 0$. Now let $v$ be a place such that $B_v$ is split. By Lemma \ref{l:lift to U2}(b), we have $\Theta_v(\chi_v \xi_v) \neq 0$, and by Theorem \ref{t:decomp U(1,1)}(c), we have $Z_v(\tfrac{1}{2}, -, \chi_v \xi_v) \neq 0$. By Rallis inner product formula (Proposition \ref{p:split rallis}), $\Theta(\chi \cdot \xi) \neq 0$ if and only if all the local zeta integrals $Z_v(\tfrac{1}{2}, -, \chi_v \xi_v) \neq 0$, and hence we have shown that $\Theta(\chi \cdot \xi) \neq 0$.
\end{proof}

\begin{lemma}\label{l:lift cusp}
If $\chi, \chi'$ are Hecke characters of $\bA_E^\times$ whose restriction to $\bA_E^1$ is nontrivial, then $\Theta(\chi \cdot \xi)$ is a cuspidal automorphic representation of $B_\bA^\times$ and $\Theta'(\chi' \cdot \xi')$ is a cuspidal automorphic representation of $B_\bA'{}^\times$.
\end{lemma}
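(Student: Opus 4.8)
The plan is to show directly that the constant term of $\theta_\varphi(\chi \cdot \xi)$ along the unique parabolic of $B^\times$ vanishes identically, from which cuspidality follows. Of course, if $B$ is division then $B^\times$ is anisotropic modulo center, $[B^\times]$ is compact, and there is nothing to prove; so we may assume $B \cong M_2(F)$, and likewise $B' \cong M_2(F)$ for the primed statement. (Note that it cannot happen in our setup that both $B$ and $B'$ are division, but that is irrelevant here—each statement is treated on its own.) Thus it suffices to treat the case $H \cong \GL_2(F)$, with its standard Borel $P = N T$, $N$ the unipotent radical.

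First I would write the constant term. Using the description of $\theta_\varphi(\chi \cdot \xi)$ on $H(\bA)^+$ from Section \ref{s:theta sim}, namely
\[
\theta_\varphi(\chi \cdot \xi)(h) = \int_{[G_1]} \Theta\bigl(\omega_\psi(g_1 g_c, h)\varphi\bigr)\, (\chi\xi)(g_1 g_c)\, dg_1, \qquad h = h_1 h_c,
\]
the constant term $\int_{[N]} \theta_\varphi(\chi\cdot\xi)(n h)\, dn$ becomes, after unfolding the theta distribution $\Theta(\phi) = \sum_{x \in \bX(F)} \phi(x)$ and integrating the unipotent variable, an integral against $\sum_{x}$ of $\omega_\psi(g_1 g_c, n h)\varphi$ evaluated via the explicit formulas \eqref{e:weil explicit m}, \eqref{e:weil explicit n} for the action of $N$. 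The $[N]$-integral collapses the sum over $\bX(F)$ to the sublattice of $x$ for which the relevant pairing $\langle x, x\rangle$-type quantity vanishes—i.e. to an isotropic condition. Because $V_0$ is one-dimensional anisotropic (its Hermitian form is $\overline a b$, which is anisotropic over $E$), and the tensor construction $\bX = \Res_{E/F}(V_0 \otimes W_1)$ forces the surviving $x$ to lie in $V_0 \otimes (\text{a point})$, the isotropy condition cuts the sum down to $x = 0$ alone. This is the mechanism: the anisotropy of $V_0$ rules out a rank-one isotropic $E$-subspace in $V_0 \otimes W_0$ meeting the domain, so the inner Fourier-analytic step leaves only the zero vector.

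What then remains of the constant term is $\int_{[G_1]} (\omega_\psi(g_1 g_c, h)\varphi)(0)\,(\chi\xi)(g_1 g_c)\, dg_1$, and here I would invoke the hypothesis that $\chi|_{\bA_E^1}$ is nontrivial. The function $g_1 \mapsto (\omega_\psi(g_1 g_c, h)\varphi)(0)$ on $[G_1] = E^1 \backslash \bA_E^1$ is, by \eqref{e:weil explicit m}, simply $g_1 \mapsto \beta(g_1)\,|g_1|^{-1/2}\varphi(0 \cdot g_1^{-1}) = \beta(g_1)\varphi(0)$ up to the fixed twist by $h, g_c$; that is, it is (the restriction to $\bA_E^1$ of) a fixed unitary character times a constant. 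Since $\xi|_{\bA_F^\times} = \epsilon_{E/F}$, the combined character $\chi \cdot \xi$ restricted to $\bA_E^1$ is a nontrivial character of $[E^1]$ (its nontriviality is exactly the hypothesis on $\chi$, as $\xi$ is fixed and the obstruction is governed by $\chi|_{\bA_E^1}$), and integrating a nontrivial character of the compact group $[E^1]$ against a constant gives $0$. Hence the constant term vanishes for all $h$ and all $\varphi$, so every $\theta_\varphi(\chi\cdot\xi)$ is cuspidal, and therefore so is $\Theta(\chi \cdot \xi)$. The argument for $\Theta'(\chi'\cdot\xi')$ is identical, with the roles of $(V_0, W_0)$ and $(\Res V, W)$ interchanged: the relevant anisotropic one-dimensional space is $W = E$ with form $a\overline b$, and $\xi'|_{\bA_F^\times} = \epsilon_{E/F}$ plays the role $\xi$ did.

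The main obstacle I anticipate is making the unfolding–then–isotropy step fully rigorous: one must justify the interchange of $\sum_{x \in \bX(F)}$ with $\int_{[N]}$ and $\int_{[G_1]}$ (absolute convergence of the theta integral over $[G_1]$, which holds since $[G_1]$ is compact), and one must correctly identify which $x \in \bX(F)$ survive the unipotent integration—this is a computation with the explicit cocycle/Weil-representation formulas of Section \ref{ch:weil} and the specific polarization $\bX = \Res_{E/F}(V_0 \otimes W_1)$ chosen in Section \ref{s:rallis}. Once one has set up coordinates so that $N$ acts by $\psi$ of a quadratic form in $x$ whose radical (over $F$) is $V_0 \otimes W_1$ intersected with an anisotropic line, the vanishing is immediate; the bookkeeping to get there is the only real work.
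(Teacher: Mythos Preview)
Your approach is correct and is essentially the same as the paper's: reduce to $B \cong M_2(F)$, compute the constant term using the split polarization $\bX' = \Res_{E/F}(V_0 \otimes W_1)$, observe that the $[N]$-integral of $\psi(\tfrac{1}{2} b\, \Nm_{E/F}(x))$ kills all $x \neq 0$ by anisotropy of the norm form, and conclude by integrating a nontrivial character over $[E^1]$. One small clarification: the ``fixed unitary character'' coming from the Weil action on $E^1$ is exactly $\xi^{-1}$ (see Lemma~\ref{l:bs D} / equation~\eqref{e:weil split similitude}), so it cancels the $\xi$ in the integrand and leaves precisely $\int_{[E^1]} \chi\, d\alpha$, which vanishes by hypothesis---your parenthetical lands on the right condition, but the deduction should go through this cancellation rather than through $\xi|_{\bA_F^\times} = \epsilon_{E/F}$.
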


\begin{proof}
If $B \neq M_2(F)$, then the statement holds trivially. Now assume $B = M_2(F)$. We would like to prove that for any Schwartz function $\phi \in \cS(\bX(\bA))$, 
\begin{equation}\label{e:GL2 cuspidality}
\int_{F \backslash \bA_F} \theta_\phi(\chi)(\bn(b)g) \, db = 0, \qquad \text{where $\bn(b) \colonequals \left(\begin{matrix} 1 & b \\ 0 & 1 \end{matrix}\right)$.}
\end{equation}
Observe that if $g \notin \GL_2^+(\bA_F)$, then $\bn(b)g \notin \GL_2^+(\bA_F)$, and hence the integrand in \eqref{e:GL2 cuspidality} is identically zero. Now assume $g \in \GL_2^+(\bA_F)$ and pick $\alpha \in \bA_E^\times$ with $\Nm(\alpha) = \det(g)$. Then by definition
\begin{equation*}
\theta_\phi(\chi)(\bn(b)g) = \theta_{\omega_\psi(\alpha,g)\phi}(\chi)(\bn(b)),
\end{equation*}
and therefore it remains only to show 
\begin{equation*}
\int_{F \backslash \bA_F} \theta_\phi(\chi)(\bn(b)) \, db = 0.
\end{equation*}

Recall that if $B$ is split, then the $2$-dimensional $E$-space $W_0$ is a split Hermitian space and one has a decomposition $W_0 = W_1 + W_2$ into isotropic subspaces of dimension $1$. This induces a complete polarization $\bV = \bX' + \bY'$ given by $\bX' = \Res_{E/F}(V_0 \otimes W_1)$ and $\bY' = \Res_{E/F}(V_0 \otimes W_2)$. Then $\bA_E^1 \subset \U(V_0)$ stabilizes $\bX'$ and $\bY'$, and so for $\alpha \in \bA_E^1$, $b \in \bA_F$, and $\phi' \in \cS(\bX'(\bA))$,
\begin{equation*}
\omega_\psi(\alpha, \bn(b))\phi'(x) = \xi^{-1}(\alpha) \cdot \psi\left(\tfrac{1}{2} b x x^\trans\right) \cdot \phi'(x \alpha).
\end{equation*}
We have
\begin{align*}
\int_{F \backslash \bA_F} \theta_\phi(\chi)(\bn(b)) \, db
&= \int_{F \backslash \bA_F} \int_{E^1 \backslash \bA_E^1} \sum_{x \in \bX'(F)} (\omega_\psi(\alpha, \bn(b)))\phi'(x) \cdot (\chi \xi)(\alpha) \, d\alpha \, db \\
&= \int_{E^1 \backslash \bA_E^1} \sum_{x \in \bX'(F)} \int_{F \backslash \bA_F} \xi^{-1}(\alpha) \cdot \psi\left(\tfrac{1}{2} b x x^\trans\right) \cdot \phi'(x \alpha) \cdot (\chi \xi)(\alpha)  \, db \, d\alpha \\
&= \int_{E^1 \backslash \bA_E^1} \xi^{-1}(\alpha) \cdot \phi'(0) \cdot (\chi \xi)(\alpha) \, d\alpha 
= \phi'(0) \int_{E^1 \backslash \bA_E^1} \chi(\alpha) \, d\alpha = 0. \qedhere
\end{align*}
\end{proof}

\begin{theorem}\mbox{}\label{t:global lift}
Let $\chi$, $\chi'$ be Hecke characters of $\bA_E^\times$ whose restriction to $\bA_E^1$ is nontrivial. 
\begin{enumerate}[label=(\alph*)]
\item
If $\Theta(\chi \cdot \xi)$ is nonzero, then
\begin{equation*}
\Theta(\chi \cdot \xi) \cong \pi_{\chi}^B.
\end{equation*}

\item
If $\Theta'(\overline{\chi' \cdot \xi'{}^{-1}})$ is nonzero, then
\begin{equation*}
\Theta'(\overline{\chi' \cdot \xi'{}^{-1}})^\vee \cong \pi_{\chi'{}}^{B'} \otimes (\chi' \cdot \xi'{}^{-1}),
\end{equation*}
where the right-hand side is viewed as a representation of $H'(\bA) \cong ((B_\bA')^\times \times \bA_E^\times)/\bA_F^\times$ descended from the $(B_\bA')^\times \times \bA_E^\times$ representation written above.
\end{enumerate}
\end{theorem}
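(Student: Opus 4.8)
The plan is to run a local--global argument. First I would note that since $\chi|_{\bA_E^1}$ is nontrivial, Lemma~\ref{l:lift cusp} shows that $\Theta(\chi\cdot\xi)$, if nonzero, is a \emph{cuspidal} automorphic representation of $B_\bA^\times$. By the formalism recalled in Section~\ref{s:howe duality}, the local components of this global theta lift are the local theta lifts $\Theta_v((\chi\xi)_v)$ of the character $(\chi\xi)_v$ from $\GU_B(V)_v\cong\GU(1)_v$ to $\GU_B(W^*)_v$. The plan is then to identify each of these, show it agrees with the local component $\pi_{\chi,v}^{B_v}$ of $\pi_\chi^B$, and conclude by multiplicity one.

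For the local identification I would split into three regimes. At the cofinitely many places $v$ where $E_v/F_v$, $\chi_v$, and $\psi_v$ are all unramified --- so in particular $v\notin\Sigma_B$ and $\GU_B(W^*)_v\cong\GL_2(F_v)$ --- the splittings were made explicit in Section~\ref{s:split splittings}, and Rallis's computation (Theorem~\ref{t:unram theta lift}) identifies $\Theta_v((\chi\xi)_v)$ with the unramified constituent of the local automorphic induction $\pi_{\chi,v}=\pi_{\chi,v}^{B_v}$. At the finitely many remaining places with $B_v$ split, I would combine Lemma~\ref{l:lift to U2}(b) with the Kudla--Sweet description of $I(\pm\tfrac{1}{2},\xi^2)$ (Theorem~\ref{t:decomp U(1,1)}) and the explicit model of the $(\GU(1),\GU(1,1))$-correspondence from Section~\ref{s:unram theta lift} to see $\Theta_v((\chi\xi)_v)\cong\pi_{\chi,v}$. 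Finally, at the places $v\in\Sigma_B$ the target $\GU_B(W^*)_v\cong B_v^\times$ is the non-quasi-split similitude unitary group $\GU(V_1^-)$; Lemma~\ref{l:lift to U2}(a) shows $\Theta_v((\chi\xi)_v)\neq0$ precisely when $\chi_v|_{E_v^1}\neq1$, and in that case one checks it is the local Jacquet--Langlands transfer of the dihedral representation $\pi_{\chi,v}$, i.e.\ $\pi_{\chi,v}^{B_v}$. In particular, nonvanishing of $\Theta(\chi\cdot\xi)$ forces $\chi_v|_{E_v^1}\neq1$ at every $v\in\Sigma_B$, so $\pi_\chi^B\neq0$ and the right-hand side of (a) makes sense.

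At this point $\Theta(\chi\cdot\xi)$ and $\pi_\chi^B$ are both cuspidal automorphic representations of $B_\bA^\times$ with isomorphic components at every place. Since $\pi_\chi^B$ is the Jacquet--Langlands transfer of the cuspidal automorphic representation $\pi_\chi$ of $\GL_2(\bA_F)$, I would invoke strong multiplicity one for $\GL_2$ together with the Jacquet--Langlands correspondence \cite{JL} to conclude $\Theta(\chi\cdot\xi)\cong\pi_\chi^B$, which is part (a). Part (b) I would handle by the same pattern, now for the theta lift from $\GU_E(W)\cong\GU(1)$ to $\GU_E(\Res V)$, using the seesaw \eqref{e:all seesaws} and the splittings $s'$, $s^\square{}'$ of Section~\ref{ch:splittings} in place of $s$, $s^\square$. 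Under the identification $\GU_E(\Res V)\cong(E^\times\times(B')^\times)/F^\times$ of Section~\ref{s:unitary groups}, the resulting cuspidal representation factors as a representation of $(B'_\bA)^\times$ tensored with a Hecke character of $\bA_E^\times$: the $(B')^\times$-component is identified with $\pi_{\chi'}^{B'}$ exactly as in (a), while the action of the $E^\times$-factor is pinned down by the compatibility~\eqref{e:weil compat} between $\omega_\psi$ and $\omega_\psi'$, producing the twist by $\chi'\cdot\xi'{}^{-1}$; the contragredient and the complex conjugation in the statement are bookkeeping forced by the conjugate appearing in the definition of $\theta_\varphi'(\chi')$.

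The hard part will be the precise local identification at the finitely many ``bad'' places --- identifying the local theta lift from $\GU(1)$ to the (possibly non-quasi-split) $\GU(2)$ with the local automorphic induction, respectively its Jacquet--Langlands transfer, including the ramified and dihedral-supercuspidal cases where Theorem~\ref{t:unram theta lift} does not directly apply --- together with keeping the similitude normalizations and the $\xi,\xi'$ twists consistent throughout. By contrast, once the local picture is secure, the global multiplicity-one step is routine.
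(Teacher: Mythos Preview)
Your overall architecture---cuspidality from Lemma~\ref{l:lift cusp}, then local identification, then multiplicity one via Jacquet--Langlands---matches the paper. But you are doing substantially more work than necessary, and the ``hard part'' you flag is exactly the part the paper sidesteps.

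The paper's proof does \emph{not} attempt to identify the local theta lift at every place. It only computes $\Theta_v$ at the places where $\chi_v$ factors through the norm (equivalently, where $v$ is split in $E$ or $\chi_v|_{E_v^1}=1$), since at exactly those places Theorem~\ref{t:unram theta lift} applies and yields the principal series $\Ind_{P_{F_v}}^{\GL_2(F_v)}(\chi_{v,0}\epsilon_{E_v/F_v}\otimes\chi_{v,0})$, which is visibly $\pi_{\chi,v}$. This set of places is cofinite, so strong multiplicity one for $\GL_2$ together with \cite{JL} already pins down $\Theta(\chi\cdot\xi)$ as $\pi_\chi^B$. Your second and third regimes---bad split places and the $v\in\Sigma_B$ places where you would need to match the local theta lift with the Jacquet--Langlands transfer of a dihedral supercuspidal---are never touched. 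The cost of your route is real: the local theta correspondence at those bad places is not computed anywhere in the paper, so you would be importing outside results; the paper's route buys you a clean proof using only what has already been set up.

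One normalization point to fix: because of the sign conventions in Sections~\ref{s:howe duality} and~\ref{s:theta sim}, the local component of the global lift is $\Theta(\chi\cdot\xi)_v\cong\Theta_v\bigl((\chi_v\xi_v)^{-1}\bigr)$, not $\Theta_v((\chi\xi)_v)$. The paper is explicit about this, and it matters when you unwind the principal series parameters. Similarly for part~(b), the conjugate in the definition of $\theta_\varphi'$ gives $\Theta'(\overline{\chi'\cdot\xi'{}^{-1}})_v^\vee\cong\Theta_v(\chi_v'{}^{-1}\cdot\xi_v')$; the twist by $\chi'\cdot\xi'{}^{-1}$ on the $E^\times$-factor then falls out of Theorem~\ref{t:unram theta lift} and Lemma~\ref{l:bs' D} directly, rather than from the compatibility~\eqref{e:weil compat} as you suggest.
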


\begin{proof}
We prove (a) first. By our normalization (compare the local definition in Section \ref{s:howe duality} to the global definition in Section \ref{s:theta sim}), at a place $v$, the local representation corresponding to the global theta lift of $\chi \cdot \xi$ is the local theta lift of $(\chi_v \cdot \xi_v)^{-1}$. That is,
\begin{equation*}
\Theta(\chi \cdot \xi)_v \cong \Theta_v((\chi_v \cdot \xi_v)^{-1}) \cong \Theta_v(\chi_v^{-1} \cdot \xi_v^{-1}).
\end{equation*}
Theorem \ref{t:unram theta lift} gives a description of the right-hand side for every place $v$ such that 
\begin{enumerate}[label=$\cdot$]
\item
$v$ splits completely in $E$, or

\item
$v$ lies under a single place $w$ of $E$ and $\chi_w \from E_w^\times \to \CC^\times$ factors through $\Nm \from E_w^\times \to F_v^\times$.
\end{enumerate}
For each such place $v$, by Lemma \ref{l:bs D}, we have
\begin{equation*}
\bs(\alpha, d(\nu(\alpha))) = \xi(\alpha)^{-1}, \qquad \text{for all $\alpha \in E_v^\times$}.
\end{equation*}
Writing $\chi_v = \chi_{v,0}(\Nm)$, we have
\begin{equation*}
\Theta_v(\chi_v^{-1} \cdot \xi_v^{-1}) \cong \Theta_{\ur,\xi_v^{-1}}(\chi_v^{-1} \xi_v^{-1}) \cong \Ind_{P_{F_v}}^{\GL_2(F_v)}(\chi_{v,0} \epsilon_{E_v/F_v} \otimes \chi_{v,0}),
\end{equation*}
and therefore by Jacquet--Langlands, we have that $\Theta(\chi \cdot \beta) \cong \pi_{\chi}^B.$

The proof of (b) is very similar. In this case, because we complex-conjugate the theta kernel in the definition of the global theta lift $\Theta'$ (see Section \ref{s:theta sim}), we have
\begin{equation*}
\Theta'(\overline{\chi' \cdot \xi'{}^{-1}})_v^\vee \cong \Theta_v((\chi_v' \cdot \xi_v'{}^{-1})^{-1}) = \Theta_v(\chi_v'{}^{-1} \cdot \xi_v').
\end{equation*}
At every place $v$ of $F$ where everything is unramified, by Lemma \ref{l:bs' D},
\begin{equation*}
\bs'(d(\nu(\alpha)), \alpha) = \xi'(\alpha), \qquad \text{for all $\alpha \in E_v^\times$}.
\end{equation*}
Writing $\chi_v' = \chi_{v,0}'(\Nm)$ at each such place, Theorem \ref{t:unram theta lift} implies
\begin{equation*}
\Theta_v(\chi_v'{}^{-1} \cdot \xi_v') \cong \Theta_{\ur, \xi_v'}(\chi_v'{}^{-1} \cdot \xi_v') \cong \Ind_{P_{F_v}}^{\GL_2(F_v)}(\chi_{v,0}' \xi_v' \otimes \chi_{v,0}') \otimes (\chi_{v,0}' \cdot \xi_v'{}^{-1}).
\end{equation*}
Since $\xi_v'|_{F_v^\times} = \epsilon_{E_v/F_v}$, therefore by Jacquet--Langlands, $\Theta'(\overline{\chi' \cdot \xi'{}^{-1}})^\vee \cong \pi_{\chi'}^{B'} \otimes (\chi'{}^{-1} \cdot \xi').$
\end{proof}

Theorem \ref{t:theta aut ind} now follows from Proposition \ref{p:JL nonzero} and Theorem \ref{t:global lift}.

\begin{proof}[Proof of Theorem \ref{t:theta aut ind}]
If $\Theta(\chi \cdot \xi) = 0$, then by Proposition \ref{p:JL nonzero} we must have $\pi_\chi^B = 0$ and therefore $\Theta(\chi \cdot \xi) = \pi_\chi^B$. If $\Theta(\chi \cdot \xi) \neq 0$, then by Theorem \ref{t:global lift} we must have $\Theta(\chi \cdot \xi) \cong \pi_\chi^B$. The same argument holds to conclude the desired isomorphism for $\Theta'(\overline{\chi' \cdot \xi'{}^{-1}})$.
\end{proof}

\subsection{Period identities of CM forms}

We are now ready to prove an identity of toric integrals of automorphic forms in $\pi_\chi^B$ and $\pi_{\chi'}^{B'}$. We use the seesaw
\begin{equation*}
\begin{tikzcd}
H' \ar[dash]{d} \ar[dash]{dr} & H \ar[dash]{d} \ar[dash]{dl} \\
G & G'
\end{tikzcd} 
=
\begin{tikzcd}
\GU_E(\Res V) \ar[dash]{d} \ar[dash]{dr} & \GU_B(W^*) \ar[dash]{d} \ar[dash]{dl} \\
\GU_B(V)^\circ & \GU_E(W)
\end{tikzcd} 
\cong
\begin{tikzcd}
((B')^\times \times E^\times)/F^\times \ar[dash]{d} \ar[dash]{dr} & B^\times \ar[dash]{d} \ar[dash]{dl} \\
E^\times & E^\times
\end{tikzcd} 
\end{equation*}
Recall from Proposition \ref{p:compat} that our choice of splittings
\begin{equation*}
s \from \cG_{G \times H}(\bA) \to \CC^1, \qquad s' \from \cG_{G' \times H'}(\bA) \to \CC^1
\end{equation*}
enjoys the property that for $(\alpha, \beta) \in \cG_{G \times G'}(\bA)$,
\begin{equation*}
s'(\alpha,\beta) = \xi(\alpha) \cdot \xi'(\beta) \cdot s(\alpha,\beta).
\end{equation*}

\begin{theorem}\label{t:B B' adjoint}
For any Hecke characters $\chi$ and $\chi'$ of $E$,
\begin{equation*}
\langle \theta_\varphi(\chi \cdot \xi), \overline{\chi'} \rangle_{G'} = \langle \chi, \theta_\varphi'(\overline{\chi' \cdot \xi'{}^{-1}}) \rangle_G.
\end{equation*}
\end{theorem}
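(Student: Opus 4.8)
The plan is to prove Theorem~\ref{t:B B' adjoint} by unwinding both sides to a common double integral over $\cG_{G\times G'}(\bA)$ against the theta distribution, and then matching integrands using the compatibility of the two splittings proved in Proposition~\ref{p:compat}. First I would write out the left-hand side: by definition,
\begin{equation*}
\langle \theta_\varphi(\chi\cdot\xi), \overline{\chi'} \rangle_{G'} = \int_{[G']} \theta_\varphi(\chi\cdot\xi)(g') \cdot \chi'(g') \, dg',
\end{equation*}
and then substitute the definition of $\theta_\varphi(\chi\cdot\xi)(h)$ as an integral over $[G_1]$ of $\Theta(\omega_\psi(g_1 g_c, h)\varphi)\,(\chi\xi)(g_1g_c)$. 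Because $G'=\GU_E(W)$ and $G=\GU_B(V)^\circ$ both sit inside the seesaw with the \emph{same} similitude, evaluating $\theta_\varphi(\chi\cdot\xi)$ at $h=g'\in G'$ and then integrating over $g'$ turns the left-hand side into a single integral over $\cG_{G\times G'}(\bA)$ (after reorganizing via the section $\cC\to G(\bA)^+$, $\cC\to G'(\bA)^+$ and Lemma~\ref{l:index 2}). Doing the symmetric manipulation on the right-hand side, using the definition of $\theta_\varphi'(\overline{\chi'\cdot\xi'{}^{-1}})$ with its complex-conjugated theta kernel, produces an integral of $\overline{\Theta(\omega_\psi'(h',g_1'g_c')\varphi)}$ against $\overline{\chi'\cdot\xi'{}^{-1}}(g_1'g_c')$ and then $\chi(h')$ over the same group $\cG_{G\times G'}(\bA)$.

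The second step is to reconcile the two integrands. On the left I have $\omega_\psi$ restricted to $\cG_{G\times G'}(\bA)$, paired with the character $(\chi\xi)$ on the $G$-variable and $\chi'$ on the $G'$-variable; on the right I have $\overline{\omega_\psi'}$ paired with $\chi$ on the $G$-variable and $\overline{\chi'\xi'{}^{-1}}=\overline{\chi'}\,\xi'$ on the $G'$-variable. Equation~\eqref{e:weil compat}, namely $\omega_\psi(g,g')=\xi(g)\xi'(g')\,\omega_\psi'(g,g')$ for $(g,g')\in\cG_{G\times G'}(\bA)$, is exactly the input that converts one integrand into the other: pulling the scalar $\xi(g)\xi'(g')$ out of $\omega_\psi$ and redistributing it across the two character weights makes the $G$-weight become $(\chi\xi)\cdot\xi^{-1}=\chi$ and the $G'$-weight become $\chi'\cdot\xi'{}^{-1}$ (up to a harmless rewriting $\overline{\chi'}\xi'=\chi'{}^{-1}\xi'\cdot|\chi'|$ using that $\chi'$ is unitary, i.e.\ $\chi'\overline{\chi'}=1$ on the relevant group). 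One must be careful with the complex conjugation: since the theta distribution $\Theta$ has real coefficients (it is $\varphi\mapsto\sum_{x}\varphi(x)$), $\overline{\Theta(\psi)}=\Theta(\overline\psi)$, and $\overline{\omega_\psi'(g,g')\varphi}=\omega_{\overline\psi}'(g,g')\overline\varphi$; tracking this should land both sides on the same expression. The main bookkeeping is that the doubled/outer integral over $\cC$ (coming from the index-$2$ subgroup $H(\bA)^+\subset H(\bA)$ and the analogous one for $H'$) appears symmetrically on both sides and can be matched directly.

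The step I expect to be the main obstacle is the careful handling of the extension-by-zero in the definitions of $\theta_\varphi(\chi)$ and $\theta_\varphi'(\chi')$ off $H(F)H(\bA)^+$ and $H'(F)H'(\bA)^+$, and making sure the two outer $\cC$-integrals really do coincide term-by-term rather than merely after a global change of variables. Concretely, one needs that restricting $\theta_\varphi(\chi\cdot\xi)$ to $G'(\bA)\subset H(\bA)$ only picks up the $H(\bA)^+$-part (since $\nu(G'(\bA))=(\bA^\times)^+=\Nm_{E/F}(\bA_E^\times)$ by the definition of $(\bA^\times)^+$ in Section~\ref{s:theta sim}), so the extension-by-zero is invisible here; the same for $\theta_\varphi'$ restricted to $G(\bA)$. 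Once that is checked, the rest is the substitution via \eqref{e:weil compat} and \eqref{e:doubled weil0}/\eqref{e:doubled weil'} as needed, together with Proposition~\ref{p:global compat}(c), and the identity falls out. I would also double-check the Tamagawa-measure normalizations on $G_1(\bA)$ and $G_1'(\bA)$ match (they do, both being $\prod_v dg_{1,v}$ with the global factor $\rho_F/\rho_E$ appearing symmetrically), so no stray constant survives.
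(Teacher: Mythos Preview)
Your proposal is correct and follows essentially the same approach as the paper: unwind the left-hand side via the $\cC$-decomposition of $[G']$, insert the definition of $\theta_\varphi(\chi\cdot\xi)$ as an integral over $[G_1]$, apply the splitting compatibility \eqref{e:weil compat} to convert $\omega_\psi$ into $\omega_\psi'$ while redistributing the characters $\xi$, $\xi'$, and then regroup the inner integral as $\overline{\theta_\varphi'(\overline{\chi'\cdot\xi'^{-1}})}$. The extra caution you flag (extension-by-zero is invisible since $\nu(G'(\bA))=(\bA^\times)^+$; the Tamagawa normalizations match) is exactly right and the paper leaves those points implicit. One small cleanup: your aside ``$\overline{\chi'}\xi'=\chi'^{-1}\xi'\cdot|\chi'|$'' is unnecessary---Hecke characters here are unitary, so $\overline{\chi'}=\chi'^{-1}$ on the nose and no modulus factor appears.
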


\begin{proof}
Unwinding definitions and using Proposition \ref{p:compat}, we have
\begin{align*}
\langle \theta_\varphi(\chi \cdot \xi), \overline{\chi'} \rangle_{G'}
&= \int_{[G']} \theta_\varphi(\chi \cdot \xi)(g') \cdot \chi'(g') \, dg' \\
&= \int_\cC \int_{[G_1']} \theta_\varphi(\chi \cdot \xi)(g_1'g_c') \cdot \chi'(g_1' g_c') \, dg_1' \, dc \\
&= \int_\cC \int_{[G_1']} \int_{[G_1]} \Theta(\omega_\psi(g_1g_c, g_1' g_c')\varphi) \cdot \chi(g_1 g_c) \cdot \xi(g_1 g_c) \cdot \chi'(g_1' g_c') \, dg_1 \, dg_1' \, dc \\
&= \int_\cC \int_{[G_1]} \int_{[G_1']} \chi(g_1 g_c) \Theta(\omega_\psi'(g_1g_c, g_1'g_c')\varphi) \cdot \xi'(g_1'g_c')^{-1} \cdot \chi'(g_1' g_c') \, dg_1' \, dg_1 \, dc \\
&= \int_\cC \int_{[G_1]} \chi(g_1 g_c) \overline{\theta_\varphi'(\overline{\chi' \cdot \xi'{}^{-1}})}(g_1 g_c) \, dg_1 \, dc \\
&= \langle \chi, \theta_\varphi'(\overline{\chi' \cdot \xi'{}^{-1}}) \rangle_G. \qedhere
\end{align*}
\end{proof}

Combining Theorems \ref{t:global lift} and \ref{t:B B' adjoint}, we obtain the following result:

\begin{theorem}\label{t:identity of periods}
Let $\chi, \chi'$ be Hecke characters of $E$ and let $\varphi \in \cS(\bX(\bA))$. Then
\begin{equation*}
f_\chi^B \colonequals \theta_\varphi(\chi \cdot \xi) \in \pi_\chi^B, \qquad f_{\chi'}^{B'} \colonequals \overline{\theta_\varphi'(\overline{\chi' \cdot \xi'{}^{-1}})} \in \pi_{\chi'}^{B'},
\end{equation*}
and we have
\begin{equation*}
\int_{\bA_F^\times E^\times \backslash \bA_E^\times} f_\chi(g) \cdot \chi'(g) \, dg = \int_{\bA_F^\times E^\times \backslash \bA_E^\times} \chi(g) \cdot f_{\chi'}(g) \, dg.
\end{equation*}
\end{theorem}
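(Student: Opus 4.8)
The plan is to derive Theorem \ref{t:identity of periods} by combining the representation-theoretic description of the global theta lifts (Theorem \ref{t:theta aut ind}, which packages Proposition \ref{p:JL nonzero} and Theorem \ref{t:global lift}) with the seesaw adjointness of Theorem \ref{t:B B' adjoint}. First I would record, using Theorem \ref{t:global lift}(a), that $\Theta(\chi\cdot\xi)\cong\pi_\chi^B$, so that $f_\chi^B=\theta_\varphi(\chi\cdot\xi)$ --- a priori an automorphic form on $H(\bA)\cong B_\bA^\times$ --- lies in $\pi_\chi^B$, and vanishes exactly when $\pi_\chi^B=0$. Dually, Theorem \ref{t:global lift}(b) gives $\Theta'(\overline{\chi'\cdot\xi'{}^{-1}})^\vee\cong\pi_{\chi'}^{B'}\otimes(\chi'\cdot\xi'{}^{-1})$; passing to the contragredient by complex conjugation and untwisting by $\chi'\cdot\xi'{}^{-1}$ then shows that $f_{\chi'}^{B'}=\overline{\theta_\varphi'(\overline{\chi'\cdot\xi'{}^{-1}})}$, regarded as a form on $(B'_\bA)^\times$, lies in $\pi_{\chi'}^{B'}$.

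Next I would identify the two sides of the equality in Theorem \ref{t:B B' adjoint} as torus periods. In the seesaw \eqref{e:main seesaw} the group $G'=\GU_E(W)\cong E^\times$ is the standard torus $E^\times\subset B^\times\cong\GU_B(W^*)=H$, and the pairing $\langle\,\cdot\,,\,\cdot\,\rangle_{G'}$ is integration over $[G']=\bA_F^\times E^\times\backslash\bA_E^\times=[E^\times]$, so that $\langle\theta_\varphi(\chi\cdot\xi),\overline{\chi'}\rangle_{G'}=\int_{[E^\times]}f_\chi^B(t)\,\chi'(t)\,dt=\sP(f_\chi^B,\chi')$. Symmetrically, $G=\GU_B(V)^\circ\cong E^\times$ is embedded in $H'=\GU_E(\Res V)\cong((B')^\times\times E^\times)/F^\times$, and I would check that restricting $\overline{\theta_\varphi'(\overline{\chi'\cdot\xi'{}^{-1}})}$ to this copy of $E^\times$ and compensating for the twist by $\chi'\cdot\xi'{}^{-1}$ reproduces the restriction to the torus $E^\times\subset(B')^\times$ of $f_{\chi'}^{B'}\in\pi_{\chi'}^{B'}$, so that $\langle\chi,\theta_\varphi'(\overline{\chi'\cdot\xi'{}^{-1}})\rangle_G=\int_{[E^\times]}\chi(t)\,f_{\chi'}^{B'}(t)\,dt=\sP(f_{\chi'}^{B'},\chi)$. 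Chaining these identifications with Theorem \ref{t:B B' adjoint} then yields the asserted equality of periods.

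I expect the main obstacle to be the bookkeeping on the $B'$-side of the second step: pinning down that the seesaw embedding $\GU_B(V)^\circ\hookrightarrow\GU_E(\Res V)$ lands on the torus $E^\times\subset(B')^\times$, and tracking how the isomorphism $\Theta'(\overline{\chi'\cdot\xi'{}^{-1}})^\vee\cong\pi_{\chi'}^{B'}\otimes(\chi'\cdot\xi'{}^{-1})$ behaves under complex conjugation and under the quotient by $F^\times$ --- in particular, that the central-character compatibility needed for the descent is exactly the condition \eqref{e:ccc} that also makes the two period integrals well-defined. A secondary point is to confirm that the measures on $[G]$ and $[G']$ implicit in Theorem \ref{t:B B' adjoint} agree with the Tamagawa measure on $[E^\times]$ used to define $\sP$; this follows from the normalizations in Section \ref{s:tamagawa} and the unitarity of Kudla's splittings. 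Neither step requires any input beyond results already proved in the paper.
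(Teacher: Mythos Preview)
Your proposal is correct and follows the same route as the paper: the paper simply states that Theorem \ref{t:identity of periods} follows by combining Theorem \ref{t:global lift} (equivalently Theorem \ref{t:theta aut ind}) with Theorem \ref{t:B B' adjoint}, and your plan unpacks precisely this combination. The bookkeeping concerns you flag on the $B'$-side (the twist by $\chi'\cdot\xi'{}^{-1}$ and the identification of the torus inside $\GU_E(\Res V)$) are exactly the details the paper leaves implicit, and they are handled by the explicit description of $H'\cong((B')^\times\times E^\times)/F^\times$ in Section \ref{s:unitary groups} together with Theorem \ref{t:global lift}(b).
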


\section{Special vectors in the Weil representation}\label{ch:schwartz}

Recall that $F$ is a totally real field and $E = F(\bi)$ is a CM extension of $F$. We choose the trace-free element $\bi \in E$ so that $u = \bi^2 \in F$ has the property that for any finite place $v$ of $F$,
\begin{equation*}
\val_v(u) = \begin{cases}
0 & \text{if $E_v/F_v$ is unramified} \\
1 & \text{if $E_v/F_v$ is ramified.}
\end{cases}
\end{equation*}
For the rest of the paper, we take $\psi$ to be the standard additive character of $F \backslash \bA_F$ (see Section \ref{s:tamagawa}). Recall that if $v$ is a finite place of $F$, then $\psi_v$ is trivial on $\pi_v^{-d_v} \cO_v$ but nontrivial on $\pi_v^{-d_v-1} \cO_v$. Furthermore, recall that we let $dx$ be the additive Haar measure on $\bA_F$ self-dual with respect to $\psi$ and that $\vol(\cO_{F_v}, dx_v) = q_v^{-d_v/2}.$

In this section, we will explicitly realize the positive-weight Hecke eigenforms as theta lifts. More precisely, we will specify Schwartz functions $\phi_l'$ for $l \in \bZ_{\geq 0}$ such that if $\chi_\infty(z) = z^{-k}$ on $\CC^1$, then the theta lift $\theta_{\phi_l'}(\chi \xi)$ is a Hecke eigenform of weight $|k|+1+2l$. Note that by construction (Section \ref{s:theta sim}), negative-weight Hecke eigenforms are not theta lifts since they are not supported on $\GL_2(F) \GL_2(\bA_F)^+$.

Fix a place $v$ of $F$. In this section, we work place by place, and drop the subscript $v$ throughout. Let $\W$ be a $2$-dimensional $E$-vector space endowed with the skew-Hermitian form
\begin{equation*}
\langle (x_1, x_2), (y_1, y_2) \rangle = \overline x_1 y_2 - \overline x_2 y_1
\end{equation*}
with respect to a fixed basis $w_1, w_2$ of $\W$. Let $\V$ be a $1$-dimensional $E$-vector space endowed with the Hermitian form $(\alpha, \beta) = \alpha \overline \beta.$ Setting $\W_i = \Span_\CC(w_i)$ for $i = 1,2$, we have a decomposition $\W = \W_1 + \W_2$ of $\W$ into maximal isotropic subspaces, and this induces a complete polarization of $\bV$ given by
\begin{equation*}
\bV = \bX' + \bY', \qquad \bX' = \V \otimes \W_1, \qquad \bY' = \V \otimes \W_2.
\end{equation*}
Fix a splitting
\begin{equation*}
\bs \from \G(\U(\V) \times \U(\W)) \to \CC^1
\end{equation*}
of the cocycle $z_{\bY'}$ with respect to the map
\begin{equation*}
\iota \from \G(\U(\V) \times \U(\W)) \to \Sp(\bV), \qquad (h,g) \mapsto (v \otimes w \mapsto h^{-1} v \otimes w g).
\end{equation*}
This determines a homomorphism
\begin{equation*}
\tilde \iota \from \G(\U(\V) \times \U(\W)) \to \Mp(\bV)_{\bY'}, \qquad (h,g) \mapsto (\iota(h,g), \bs(h,g)).
\end{equation*}
Recall from Equation \eqref{e:Weil L} and Lemma \ref{l:bs D} that for $\phi \in \cS(\bX')$ and $(h,g) \in \G(\U(\V) \times \U(\W))$,
\begin{equation}\label{e:weil split similitude}
\omega_\psi(h,g) \phi(x) = \xi^{-1}(h) |h|^{-1/2} (\omega_\psi(d(\nu(g)^{-1})g)\phi)(xh^{-1}).
\end{equation}
One can choose a basis of $\bX'$ and $\bY'$ so that
\begin{equation*}
\iota(D(a)) = \left(\begin{smallmatrix}
a & & & \\ & a & & \\ & & a^{-1} & \\ & & & a^{-1}
\end{smallmatrix}\right), \;\;
\iota(U(a')) = \left(\begin{smallmatrix}
1 & & a' & \\ & 1 & & a' \\ & & 1 & \\ & & & 1
\end{smallmatrix}\right), \;\;
\iota(W) = \left(\begin{smallmatrix}
& & 1 & \\ & & & 1 \\ -1 & & & \\ & -1 & & 
\end{smallmatrix}\right).
\end{equation*}
By the computations of Section \ref{s:split splittings} and Equations \eqref{e:weil explicit m}, \eqref{e:weil explicit n}, and \eqref{e:weil explicit w},
\begin{align}
\label{e:weil SL2 d}
\omega_\psi(1, D(a)) \varphi(x) &= \xi(a)^{-1} \cdot |\det a| \cdot \varphi(xa) \\
\label{e:weil SL2 u}
\omega_\psi(1, U(a')) \varphi(x) &= \psi\left(\tfrac{1}{4} \Tr_{E/F}(a' x \overline x)\right) \cdot \varphi(x) \\
\label{e:weil SL2 w}
\omega_\psi(1, W) \varphi(x) &= (u, -1)_F \cdot \gamma_F(u, \tfrac{1}{2} \psi) \cdot \int_{F^2} \varphi(y) \psi\left(\tfrac{1}{2} \Tr_{E/F} (x \overline y)\right) \, dy
\end{align}

If $v$ is a finite place, let $c(\pi_\chi)$ be the conductor of $\pi_\chi$ and let $K_0'(N)$ be the compact open subgroup as defined in Section \ref{s:conductor}. Writing $d(\nu) = \left(\begin{smallmatrix} 1 & 0 \\ 0 & \nu \end{smallmatrix}\right) \in \GL_2(F)$ for $\nu \in F^\times$, define
\begin{equation*}
K_0(N) \colonequals
\begin{cases}
K_0'(N) & \text{if $F$ has odd residue characteristic,} \\
d(2) K_0'(N) d(1/2) & \text{if $F$ has even residue characteristic.}
\end{cases}
\end{equation*}

\subsection{Schwartz functions}\label{s:schwartz}

In this section, we introduce Schwartz functions that transform nicely under the Weil representation. These functions have been considered in various places before. At the finite places, they have appeared for example in \cite[Proposition 2.5.1]{P06}, \cite[N1]{X07}. At the infinite places, our choice is constructed from a confluent hypergeometric function ${}_1 F_1(a,b,t)$ of the first type. This is related to the role of hypergeometric functions in matrix coefficients of representations of $\SL_2(\RR)$ (see for example \cite[Appendix]{X07}, \cite[Chapters 6, 7]{VK91}).

\subsubsection{Infinite places}

In this section, let $v$ be an infinite place of $F$. 

\begin{definition}
For $k \in \bZ$ and $l \in \bZ_{\geq 0}$, define
\begin{equation*}
\phi_{k,l}'(z) \colonequals
\begin{cases}
{}_1 F_1(-l, k+1,4 \pi z \overline z) \overline z^k e^{-2 \pi z \overline z} & \text{if $k \geq 0$,} \\
{}_1 F_1(-l,-k+1,4 \pi z \overline z) z^{-k} e^{-2 \pi z \overline z} & \text{if $k < 0$,}
\end{cases}
\end{equation*}
where ${}_1 F_1(a,b,t)$ is the Kummer confluent hypergeometric function for constants $a,b$
\begin{equation*}
{}_1 F_1(a,b,t) \colonequals \sum_{j=0}^\infty \frac{(a)_j}{(b)_j} \frac{1}{j!} t^j, \qquad \text{where $(a)_0 \colonequals 1,$ and $(a)_j \colonequals a(a+1)(a+2) \cdots (a+j-1).$}
\end{equation*}
Observe that ${}_1 F_1(a,b,t)$ is entire in $t$ so long as $b \notin \bZ_{\leq 0}$, so that in particular, $\phi_{k,l}'$ is entire for all $k \in \bZ$ and $l \in \bZ_{\geq 0}$.
\end{definition}


The following lemma is well known.

\begin{lemma}\label{l:1F1 props}
\begin{enumerate}[label=(\alph*)]
\item
The function ${}_1 F_1(a,b,t)$ is a solution to the differential equation
\begin{equation*}
t f''(t) + (b-t) f'(t) - af(t) = 0.
\end{equation*}

\item
If $\real(\alpha) > 0$ and $\real(c) > 0$, then
\begin{equation*}
\int_0^\infty t^{\alpha-1} e^{-ct} {}_1 F_1(a,b,-t) \, dt = c^{-\alpha} \Gamma(\alpha) {}_2 F_1\left(a, \alpha, b, -\tfrac{1}{c}\right),
\end{equation*}
where
\begin{equation*}
{}_2 F_1(a,\alpha,b,-\tfrac{1}{c}) = \sum_{j=0}^\infty \frac{(a)_j (\alpha)_j}{(b)_j} \frac{1}{j!} \left(-\frac{1}{c}\right)^j.
\end{equation*}
\end{enumerate}
\end{lemma}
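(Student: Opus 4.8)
The plan is to derive both parts directly from the defining power series; Lemma~\ref{l:1F1 props} is entirely classical (see e.g.\ \cite{VK91}), so what follows is bookkeeping rather than insight.

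\emph{Part (a).} I would write $f(t) = {}_1 F_1(a,b,t) = \sum_{j \geq 0} c_j t^j$ with $c_j = (a)_j / ((b)_j\, j!)$, which is legitimate since $b \notin \bZ_{\leq 0}$ makes the series entire (as noted just before the lemma). From $(a)_{j+1} = (a+j)(a)_j$ and $(b)_{j+1} = (b+j)(b)_j$ one reads off the two-term recurrence $(j+1)(b+j)\,c_{j+1} = (a+j)\,c_j$. On the other hand, substituting the series into $t f'' + (b-t) f' - a f$ and collecting the coefficient of $t^j$ produces exactly $(j+1)(b+j)\,c_{j+1} - (a+j)\,c_j$, which therefore vanishes for every $j$; since the series converges locally uniformly, $f$ is a genuine solution of Kummer's equation.

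\emph{Part (b).} I would first establish the identity under the extra hypothesis $\real(c) > 1$. In that range $\sum_j |(a)_j|\, t^j / (|(b)_j|\, j!) = O(e^{t})$, so one may integrate ${}_1 F_1(a,b,-t) = \sum_j (a)_j (-1)^j t^j / ((b)_j\, j!)$ term by term against $t^{\alpha-1} e^{-ct}$. Using $\int_0^\infty t^{\alpha+j-1} e^{-ct}\, dt = \Gamma(\alpha+j)\, c^{-(\alpha+j)}$ and $\Gamma(\alpha+j) = (\alpha)_j\, \Gamma(\alpha)$ collapses the result to $c^{-\alpha} \Gamma(\alpha) \sum_j \frac{(a)_j (\alpha)_j}{(b)_j\, j!} (-1/c)^j = c^{-\alpha}\Gamma(\alpha)\, {}_2 F_1(a,\alpha,b,-1/c)$, the series on the right converging precisely for $|c| > 1$. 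To reach all $c$ with $\real(c) > 0$, I would invoke analytic continuation: the left-hand side is holomorphic in $c$ on $\real(c) > 0$ (differentiation under the integral sign is justified by the bound ${}_1 F_1(a,b,-t) = O(t^{-\real(a)})$ as $t \to \infty$), the right-hand side extends holomorphically by continuation of ${}_2 F_1$, and they agree on $\real(c) > 1$, hence everywhere on the half-plane by the identity theorem.

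The only step that is not purely formal is the interchange of summation and integration in (b), which is why I would isolate the case $\real(c) > 1$ first and then continue analytically; I do not expect any genuine difficulty. Alternatively, since in the applications later in the paper the parameter $c$ is always a positive real number bounded away from $0$, one could simply rescale the integration variable at the outset so that termwise integration is directly valid, sidestepping the continuation argument altogether.
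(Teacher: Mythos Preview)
Your proof is correct. The paper does not actually prove this lemma: it is introduced with the sentence ``The following lemma is well known'' and no argument is given. Your termwise verification of Kummer's equation in (a) and your term-by-term integration plus analytic continuation in (b) are the standard derivations one finds in the references the paper cites (e.g.\ \cite{VK91}), so there is nothing to compare.
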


\begin{lemma}\label{l:infinite hecke}
For $\alpha \in \CC^1$ and $r(\theta) = \left(\begin{smallmatrix} \cos(\theta) & \sin(\theta) \\ -\sin(\theta) & \cos(\theta) \end{smallmatrix}\right) \in \SO(2)$,
\begin{equation*}
\omega_\psi(\alpha, r(\theta)) \phi_{k,l}' = \xi(\alpha^{-1}) \alpha^{-k} e^{i(|k| + 1 + 2l)\theta} \phi_{k,l}'.
\end{equation*}
\end{lemma}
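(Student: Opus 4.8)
The strategy is to reduce the claimed identity to two separate statements: the diagonal-torus behavior (governed by $\xi$ and the power $\alpha^{-k}$) and the $\SO(2)$-behavior (governed by the weight $|k|+1+2l$). For the first, since $(\alpha, r(\theta))$ factors through the product structure $\GU(\V)^+ \cong \{d(\nu): \nu \in \Nm(E^\times)\} \ltimes \U(\V)$ and $\SO(2) \subset \U(\W) = \SL_2(\RR)$ has trivial similitude, one writes $(\alpha, r(\theta)) = (\alpha, D(1,\Nm(\alpha))) \cdot (1, D(1,\Nm(\alpha))^{-1} r(\theta))$; then $(1, D(1,\Nm(\alpha))^{-1} r(\theta)) \in \SL_2(\RR)$ and Equation \eqref{e:weil split similitude} reduces everything to understanding $\omega_\psi(\alpha, D(1,\Nm(\alpha)))$ and $\omega_\psi(1, r(\theta))$ separately. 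By Lemma \ref{l:bs D} (its archimedean incarnation, noting $F = \RR$, $E = \CC$), we have $\bs(\alpha, D(1,\Nm(\alpha))) = \xi(\alpha^{-1})$, and the formula \eqref{e:weil SL2 d} combined with \eqref{e:weil split similitude} shows that $\omega_\psi(\alpha, D(\nu(\alpha)))$ acts on $\phi'_{k,l}$ by $\xi(\alpha^{-1}) \alpha^{-k}$ up to a $\phi'_{k,l}$-fixing scalar, because $\phi'_{k,l}(z \alpha^{-1}) = \alpha^{\mp k}|\alpha|^{\mp k}\cdot({}_1F_1\text{-factor unchanged})$ since $z\bar z$ is $\U(1)$-invariant and $|\alpha| = 1$ on $\CC^1$.

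\textbf{The $\SO(2)$-computation.} The substantive part is showing $\omega_\psi(1, r(\theta))\phi'_{k,l} = e^{i(|k|+1+2l)\theta}\phi'_{k,l}$. The standard approach is to differentiate: it suffices to verify that $\phi'_{k,l}$ is an eigenvector of the Lie-algebra element $\left(\begin{smallmatrix} 0 & 1 \\ -1 & 0 \end{smallmatrix}\right)$ generating $\SO(2)$, acting via the derived Weil representation $d\omega_\psi$, with eigenvalue $i(|k|+1+2l)$. Using the infinitesimal versions of \eqref{e:weil SL2 d}, \eqref{e:weil SL2 u}, \eqref{e:weil SL2 w} (or directly: the Weil representation of $\slin_2$ on $\cS(\RR^2)$ realized via the skew-Hermitian form sends the rotation generator to a second-order differential operator of the shape $c_1(z\partial_z + \bar z \partial_{\bar z} + 1) + c_2 \bar z^2 - c_3 \partial_z^2$-type terms in the appropriate model), one computes $d\omega_\psi\!\left(\left(\begin{smallmatrix} 0 & 1 \\ -1 & 0 \end{smallmatrix}\right)\right)\phi'_{k,l}$. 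The key input is Lemma \ref{l:1F1 props}(a): after substituting $\phi'_{k,l}(z) = {}_1F_1(-l, k+1, 4\pi z\bar z)\bar z^k e^{-2\pi z\bar z}$ (take $k \geq 0$; the $k<0$ case is symmetric) and setting $t = 4\pi z\bar z$, the differential operator applied to $\phi'_{k,l}$ collapses, via Kummer's ODE $t f'' + (b-t)f' - af = 0$ with $a = -l$, $b = k+1$, exactly to the eigenvalue $i(k+1+2l)$ times $\phi'_{k,l}$. Then $\omega_\psi(1,r(\theta))\phi'_{k,l} = e^{i(|k|+1+2l)\theta}\phi'_{k,l}$ follows by exponentiating, using that $r(\theta)$ traces the one-parameter subgroup generated by $\left(\begin{smallmatrix} 0 & 1 \\ -1 & 0 \end{smallmatrix}\right)$.

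\textbf{Main obstacle.} The genuinely delicate step is bookkeeping the constants and the polarization conventions so that the rotation generator's action is correctly identified, and then executing the $\slin_2$-eigenvector computation cleanly. Because $\W$ here carries the skew-Hermitian form $\langle(x_1,x_2),(y_1,y_2)\rangle = \bar x_1 y_2 - \bar x_2 y_1$ (rather than a symmetric form), the realization of $\SL_2(\RR) = \SO(\W) \subset \Sp(\bV)$ and the resulting differential operators pick up factors of $u = \bi^2 < 0$ and of $\gamma_F(u, \tfrac12\psi)$; one must check these contribute only the overall $e^{i\theta}$ (the "$+1$" in the weight $|k|+1+2l$, i.e.\ the half-integral shift coming from the metaplectic/Weil normalization) and nothing spurious. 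A clean way to sidestep some of this is to note the problem is a purely local archimedean statement about the oscillator representation of $\widetilde{\SL_2(\RR)}$ on $\cS(\RR^2) \cong \cS(\CC)$ tensored with the $\U(1)$-character $\alpha \mapsto \alpha^{-k}$, and that ${}_1F_1(-l, |k|+1, 4\pi r^2) r^{|k|} e^{-2\pi r^2}$ is, up to normalization, exactly the $(l)$-th element of the weight-$(|k|+1)$ $K$-type ladder (a Laguerre-type lowering/raising calculation). I would carry out the direct ODE computation but remark on this representation-theoretic interpretation as a sanity check.
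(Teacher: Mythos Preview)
Your proposal is correct and follows essentially the same approach as the paper: both separate the action into the $\alpha$-part and the $r(\theta)$-part, and for the latter both differentiate and compute the action of the Lie algebra generator $X_+ - X_- = \left(\begin{smallmatrix} 0 & 1 \\ -1 & 0 \end{smallmatrix}\right)$ of $\SO(2)$, reducing the eigenvalue computation to Kummer's ODE (Lemma~\ref{l:1F1 props}(a)). One small simplification: since $\alpha \in \CC^1$ already has $\Nm(\alpha)=1$, your similitude factorization through $D(1,\Nm(\alpha))$ is trivial, and the paper accordingly just writes $\omega_\psi(\alpha,1)\phi_{k,l}' = \xi(\alpha^{-1})\alpha^{-k}\phi_{k,l}'$ directly.
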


\begin{proof}
We follow a similar proof strategy to \cite[Proposition 2.2.5]{X07}. We compute on the Lie algebra $\frak{sl}_2(\RR)$. It is well known that for $X_+ = \left(\begin{smallmatrix} 0 & 1 \\ 0 & 0 \end{smallmatrix}\right),$ $X_- = \left(\begin{smallmatrix} 0 & 0 \\ 1 & 0 \end{smallmatrix}\right)$,
\begin{equation*}
\omega_\psi(X_+) \phi = 2 \pi i z \overline z \phi, \qquad
\omega_\psi(X_-) \phi = -\frac{1}{2 \pi i} \frac{\partial}{\partial z} \left(\frac{\partial}{\partial \overline z} \phi\right).
\end{equation*}
We first handle the case $k \geq 0$. For any doubly differentiable function $f$ satisfying the differential equation
\begin{equation*}
t f''(t) + (k+1-t) f'(t) = -l f(t),
\end{equation*}
we have, following from a long calculus computation,
\begin{align*}
\omega_\psi(X_+ &- X_-)(f(4 \pi z \overline z) \overline z^k e^{-2 \pi z \overline z}) \\
&= i \left[(k+1) f(4 \pi z \overline z) - 2((k+1-4 \pi z \overline z) f'(4 \pi z \overline z) + 4 \pi z \overline z f''(4 \pi z \overline z))\right] \overline z^k e^{-2 \pi z \overline z}  \\
&= i (k+1+2l) f(4 \pi z \overline z) \overline z^k e^{-2 \pi z \overline z}. 
\end{align*}
By Lemma \ref{l:1F1 props}(a), ${}_1 F_1(-l,k+1,t)$ is such an $f(t)$ and hence the desired conclusion follows. 

Now assume $k < 0$. For any doubly differentiable function $f$ satisfying the differential equation
\begin{equation*}
t f''(t) + (-k+1-t) f'(t) = -l f(t),
\end{equation*}
we have
\begin{align*}
\omega_\psi(X_+ &- X_-)(f(4 \pi z \overline z) z^{-k} e^{-2 \pi z \overline z}) \\
&= i \left[(-k+1) f(4 \pi z \overline z) - 2((-k+1-4 \pi z \overline z) f'(4 \pi z \overline z) + 4 \pi z \overline z f''(4 \pi z \overline z))\right] z^{-k} e^{-2 \pi z \overline z} \\
&= i (-k+1+2l) f(4 \pi z \overline z) z^{-k} e^{-2 \pi z \overline z}.
\end{align*}
By Lemma \ref{l:1F1 props}(a), ${}_1 F_1(-l,-k+1,t)$ is such an $f(t)$, and so the desired conclusion follows. 

Finally, it is easy to see that $\omega_\psi(\alpha,1) \phi_{k,l}' = \xi(\alpha^{-1}) \alpha^{-k} \phi_{k,l}',$ and it follows that
\begin{equation*}
\omega_\psi(\alpha, r(\theta))\phi_{k,l}' = \xi(\alpha^{-1}) \alpha^{-k} e^{-(|k|+1+2l)\theta} \phi_{k,l}'. \qedhere
\end{equation*}
\end{proof}

The following lemma is useful in understanding the relationship between the $\phi_{k,l}'$ with respect to the Maass--Shimura operator on modular forms.

\begin{lemma}\label{l:schwartz maass shimura}
Write $z = x + y i \in \CC$. For $y \neq 0$, we have
\begin{equation*}
\delta_{|k|+1}^l\left(y^{1/2} e^{2 \pi i x v \overline v} \phi_{k,0}'(v \sqrt{y})\right)
= \frac{(|k|+1)_l}{(2\pi i)^l (2i)^l} \cdot \left(y^{-l+1/2} e^{2 \pi i x v \overline v} \phi_{k,l}'(v \sqrt{y})\right).
\end{equation*}
\end{lemma}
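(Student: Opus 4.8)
The plan is to strip away everything that does not depend on $z=x+iy$, reduce to a one-variable identity for Kummer's function, and then induct on $l$. Since ${}_1 F_1(0,b,t)=1$, for $k\ge 0$ one has $\phi_{k,0}'(v\sqrt y)=\overline v^{\,k}y^{k/2}e^{-2\pi v\overline v y}$, and for $k<0$ one has $\phi_{k,0}'(v\sqrt y)=v^{-k}y^{-k/2}e^{-2\pi v\overline v y}$. Writing $\kappa:=|k|+1$ and $r:=v\overline v$, and using $e^{2\pi i x r}e^{-2\pi r y}=e^{2\pi i r z}$, the function $y^{1/2}e^{2\pi i x v\overline v}\phi_{k,0}'(v\sqrt y)$ is a monomial in $v$ (independent of $z$) times $y^{\kappa/2}e^{2\pi i r z}$, and the same manipulation turns the right-hand side into the \emph{same} $z$-free monomial times $y^{\kappa/2-l}\,{}_1F_1(-l,\kappa,4\pi r y)\,e^{2\pi i r z}$. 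Since $\delta_{\kappa}$ acts only in $z$, the monomial cancels, and the two sign cases of $k$ collapse to the single assertion
\[
\delta_{\kappa}^{\,l}\!\left(y^{\kappa/2}e^{2\pi i r z}\right)=\frac{(\kappa)_l}{(2\pi i)^l(2i)^l}\;y^{\kappa/2-l}\;{}_1F_1(-l,\kappa,4\pi r y)\;e^{2\pi i r z},\qquad \delta_{\kappa}^{\,l}=\delta_{\kappa+2(l-1)}\circ\cdots\circ\delta_{\kappa+2}\circ\delta_{\kappa}.
\]

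I would prove this by induction on $l$, the case $l=0$ being immediate. For the inductive step I would compute $\delta_{\kappa+2j}$ applied to a function of the shape $y^{\kappa/2-j}g(4\pi r y)e^{2\pi i r z}$ with $g$ a polynomial. Using the Wirtinger calculus ($\partial_z y=\tfrac1{2i}$, $\partial_z e^{2\pi i r z}=2\pi i r\,e^{2\pi i r z}$, and $2\pi i r y=-\tfrac{t}{2i}$ with $t:=4\pi r y$) together with the product rule, one obtains
\[
\delta_{\kappa+2j}\!\left(y^{\kappa/2-j}g(t)e^{2\pi i r z}\right)=\tfrac{1}{(2\pi i)(2i)}\,y^{\kappa/2-j-1}\bigl[t\,g'(t)+(C_j-t)\,g(t)\bigr]e^{2\pi i r z}
\]
for an explicit constant $C_j$ depending on $\kappa$, $j$ and the normalization of $\delta_\kappa$; in particular the output has the same shape, with $\tilde g(t):=\tfrac{1}{(2\pi i)(2i)}\bigl(tg'+(C_j-t)g\bigr)$ a polynomial of degree $\deg g+1$, the top term coming from differentiating the exponential against the leading term of $g$. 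Thus, granting the induction hypothesis, it suffices to identify $\tilde g$ — when $g={}_1F_1(-j,\kappa,\cdot)$ — with a scalar multiple of ${}_1F_1(-(j+1),\kappa,\cdot)$ and to track the scalar.

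This last point can be handled two equivalent ways. One invokes the standard contiguous relation $t\,{}_1F_1'(a,b,t)=(b-a)\,{}_1F_1(a-1,b,t)+(a-b+t)\,{}_1F_1(a,b,t)$, which with $a=-j$, $b=\kappa$ (and $C_j$ equal to the value $\kappa+j$ forced by the clean form of the lemma) rewrites $tg'+(C_j-t)g$ as $(\kappa+j)\,{}_1F_1(-(j+1),\kappa,t)$; alternatively one checks, using Lemma \ref{l:1F1 props}(a) — that $g$ solves $tg''+(\kappa-t)g'+jg=0$ — that $\tilde g$ solves $tf''+(\kappa-t)f'+(j+1)f=0$, and since the polynomial solutions of this confluent hypergeometric equation form a one-dimensional space (the second solution not being polynomial), $\tilde g$ must be proportional to ${}_1F_1(-(j+1),\kappa,\cdot)$; evaluating at $t=0$ (equivalently, extracting the $r$-independent part of the computation) pins down the recursion $c_{j+1}=\dfrac{\kappa+j}{(2\pi i)(2i)}\,c_j$, $c_0=1$, whence $c_l=(\kappa)_l/\bigl((2\pi i)(2i)\bigr)^l$, exactly the claimed constant. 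The step I expect to be the main obstacle is precisely this inductive verification: confirming that $\delta_{\kappa+2j}$ carries ${}_1F_1(-j,\kappa,\cdot)$ (sitting on the power $y^{\kappa/2-j}$) to a \emph{clean} multiple of ${}_1F_1(-(j+1),\kappa,\cdot)$ with no admixture of lower levels ${}_1F_1(-m,\kappa,\cdot)$, $m<j+1$ — this cancellation is where the weight index $\kappa+2j$ of the $j$-th operator in the iterate must be matched against the power of $y$ produced at the previous stage, and it is the mechanism keeping iterated Shimura–Maass operators inside the one-parameter family $\{\,{}_1F_1(-m,\kappa,\cdot)\,\}_m$; after that the constant is pure bookkeeping.
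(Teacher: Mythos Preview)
Your approach is exactly the paper's: induct on $l$, strip the $z$-independent monomial, and reduce the inductive step to a contiguous relation for ${}_1F_1$. The paper states the single-step identity and calls it a ``straightforward calculation''; you make the mechanism (contiguous relation, or the ODE of Lemma~\ref{l:1F1 props}(a)) explicit, which is a nice addition.

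There is one genuine gap. After your (correct) stripping you are applying $\delta_{\kappa+2j}$ to $y^{\kappa/2-j}g(t)e^{2\pi irz}$ with $t=4\pi r y$. A direct computation using $\partial_zy=\tfrac{1}{2i}$ and $2\pi i r=-\tfrac{t}{2iy}$ gives
\[
\delta_{\kappa+2j}\!\left(y^{\kappa/2-j}g(t)e^{2\pi irz}\right)
=\tfrac{1}{(2\pi i)(2i)}\,y^{\kappa/2-j-1}\bigl[t\,g'(t)+\bigl(\tfrac{3\kappa}{2}+j-t\bigr)g(t)\bigr]e^{2\pi irz},
\]
so $C_j=\tfrac{3\kappa}{2}+j$, not $\kappa+j$. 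With this value the contiguous relation yields $(\kappa+j)\,{}_1F_1(-(j+1),\kappa,t)+\tfrac{\kappa}{2}\,{}_1F_1(-j,\kappa,t)$, which is \emph{not} a clean multiple of ${}_1F_1(-(j+1),\kappa,t)$; your ``no admixture of lower levels'' expectation fails, and already at $l=1$ the two sides of the asserted identity differ ($3\kappa/2-t$ versus $\kappa-t$). The source of the mismatch is the extra $y^{|k|/2}$ coming from $\phi_{k,l}'(v\sqrt y)$: the paper's own proof quietly drops it when passing to the second displayed equation, working instead with $y^{-l}\,{}_1F_1(-l,\kappa,t)\,e^{2\pi i r z}$, and \emph{there} one does get $C_l=\kappa+l$ and the contiguous relation closes up cleanly. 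So your strategy is sound, but you should either compute $C_j$ rather than assert its value, or---equivalently---apply $\delta_\kappa^l$ to the classical form $y^{-|k|/2}$ times the expression in the lemma (as the paper implicitly does), in which case your argument goes through verbatim with $C_j=\kappa+j$.
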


\begin{proof}
This amounts to showing
\begin{equation*}
\left(\frac{\partial}{\partial z} + \frac{|k|+1+2l}{z - \overline z}\right)\left[y^{-l+1/2} e^{2 \pi i x v \overline v} \phi_{k,0}'(v \sqrt{y})\right] 
= \frac{|k|+1+l}{2i} \cdot \left(y^{-l-1+1/2} e^{2 \pi i x v \overline v} \phi_{k,l+1}'(v \sqrt{y})\right).
\end{equation*}
Unwinding definitions, this amounts to showing
\begin{align*}
\left(\frac{\partial}{\partial z} + \frac{|k|+1+2l}{z - \overline z}\right)&\left(y^{-l} {}_1F_1(-l,|k|+1,4 \pi v \overline v y) e^{2 \pi i v \overline v z}\right) \\
&=
\frac{|k|+1+l}{2i} \cdot \left(y^{-l-1} {}_1F_1(-l-1,|k|+1,4 \pi v \overline v y) e^{2 \pi i v \overline v z}\right).
\end{align*}
Verifying this is a straightforward calculation. For example, the coefficient of $y^{-l-1}$ on the left-hand side is equal to $\left(\frac{-l}{2i} + \frac{|k|+1+2l}{2i}\right) \cdot e^{2 \pi i v \overline v z}$, and this agrees with the right-hand side.
\end{proof}

\subsubsection{Finite nonsplit places}

In this section, let $v$ be a finite nonsplit place of $F$ lying under a single prime $w$ of $E$. Then $E_w$ is a field and $E_w/F_v$ is either unramified or ramified. Assume that $E_w, F_v$ have odd residue characteristic. We drop the subscripts $w$ and $v$ throughout this section.

\begin{definition}
Define
\begin{equation*}
\phi'(x) \colonequals \begin{cases}
\ONE_{\cO_E}(x) & \text{if $\chi$ is unramified,} \\
\chi(x) \ONE_{\cO_E^\times}(x) & \text{otherwise.}
\end{cases}
\end{equation*}
\end{definition}

\begin{lemma}\label{l:nonsplit hecke unram}
Let $\psi'$ be an unramified nontrivial additive character of $F$. For $h \in \cO_E^\times$ and $g = \left(\begin{smallmatrix} a & b \\ c & d \end{smallmatrix}\right) \in K_0 \colonequals K_0(c(\rho_\chi))$ such that $\Nm(h) = \det(g)$, we have
\begin{equation*}
\omega_{\psi'}(h,g) \phi' =  (\chi \xi)^{-1}(h) \cdot (\chi \epsilon_{E/F})(a) \cdot \phi'.
\end{equation*}
\end{lemma}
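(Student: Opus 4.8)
The plan is to verify the transformation law generator-by-generator, using the explicit formulas \eqref{e:weil SL2 d}, \eqref{e:weil SL2 u}, \eqref{e:weil SL2 w} for the action of the diagonal torus, the unipotent subgroup, and the Weyl element, together with the similitude twist \eqref{e:weil split similitude}. First I would note that it suffices to check the claim for $g$ ranging over a set of generators of $K_0 = K_0(c(\rho_\chi))$ with $\Nm(h) = \det(g)$, and for $h$ ranging over $\cO_E^\times$ separately (the case $g = 1$). Using \eqref{e:weil split similitude} to peel off the $h$-action, we have $\omega_{\psi'}(h,g)\phi' = \xi^{-1}(h)|h|^{-1/2}(\omega_{\psi'}(d(\nu(g)^{-1})g)\phi')(xh^{-1})$; since $h \in \cO_E^\times$ we have $|h| = 1$, and the substitution $x \mapsto xh^{-1}$ on $\phi'$ produces $\chi^{-1}(h)\phi'(x)$ when $\chi$ is ramified (because $\phi'(xh^{-1}) = \chi(xh^{-1})\ONE_{\cO_E^\times}(xh^{-1}) = \chi^{-1}(h)\phi'(x)$) and produces $\phi'(x)$ unchanged when $\chi$ is unramified (in which case $\chi^{-1}(h) = 1$ anyway since $h$ is a unit). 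This yields the factor $(\chi\xi)^{-1}(h)$, so the problem reduces to showing $\omega_{\psi'}(d(\det g^{-1})g)\phi' = (\chi\epsilon_{E/F})(a)\phi'$ for $g \in K_0$.

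For the remaining $\SL_2$-type statement I would use the Iwasawa/Bruhat structure of $K_0$. When $F_v$ has odd residue characteristic, $K_0 = K_0'(c(\rho_\chi))$, and every element factors (up to the central $d(\det g^{-1})$ twist) as a product of a lower-triangular unipotent piece in $\pi^N\cO$, a diagonal piece $D(a_1)$ with $a_1 \in \cO^\times$, and an upper-triangular unipotent piece; I would track each through \eqref{e:weil SL2 d}, \eqref{e:weil SL2 u}. The upper unipotent $U(a')$ with $a' \in \cO_F$ acts by multiplication by $\psi'(\tfrac14\Tr_{E/F}(a'x\overline x))$; on the support $x \in \cO_E$ of $\phi'$ (or $\cO_E^\times$), $x\overline x \in \cO_F$ and $a' \in \cO_F$, so since $\psi'$ is unramified this character is trivial and $U(a')$ fixes $\phi'$. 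The diagonal $D(a_1)$ for $a_1 \in \cO^\times$ acts by $\xi(a_1)^{-1}|a_1|\phi'(xa_1) = \phi'(xa_1)$; when $\chi$ is unramified this is $\phi'$, and when $\chi$ is ramified $\phi'(xa_1) = \chi(xa_1)\ONE_{\cO_E^\times} = \chi(a_1)\phi'(x)$, producing the claimed $\chi(a)$ factor after identifying $a_1$ with the $(1,1)$-entry $a$ modulo the center. The lower unipotent with entry in $\pi^N\cO$ (where $N = c(\rho_\chi)$) requires using $W$ twice via \eqref{e:weil SL2 w}: I would conjugate by $W$, apply the upper-unipotent formula, and conjugate back, checking that the two appearances of the constant $(u,-1)_F\gamma_F(u,\tfrac12\psi')$ cancel (the lemma just before this one records exactly $\bs(1,D(-1))\bs(1,W)\bs(1,U(a))\bs(1,W) = 1$), and that the Fourier transform of $\phi' = \ONE_{\cO_E}$ or $\chi\cdot\ONE_{\cO_E^\times}$ against the unramified character $\psi'$ is again supported on $\cO_E$ (resp.\ is, up to a unit scalar controlled by the conductor $N = c(\chi) + \text{(ramification term)}$, again $\phi'$); here the precise matching of the level $N$ with the conductor of $\chi$ via \eqref{e:c pi chi} is what makes the lower-unipotent contribution trivial. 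The $\epsilon_{E/F}(a)$ factor then emerges from the Weil-index bookkeeping when $E_w/F_v$ is ramified, exactly as in Kudla's splitting $\bs$, whose restriction to $F^\times$ is $\epsilon_{E/F}$ (Lemma \ref{l:bs D} together with the identification in Theorem \ref{t:unram theta lift}).

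The main obstacle I anticipate is the lower-unipotent / Weyl-element computation: one must show that conjugating the elementary unipotent by $W$ and Fourier-transforming $\phi'$ genuinely returns $\phi'$ (not merely a function with the same support), and that the scalar is exactly $(\chi\epsilon_{E/F})(a)$ with no leftover Gauss-sum or Weil-index factor. This is the step where the hypotheses ``odd residue characteristic'' and ``$N = c(\rho_\chi)$'' are essential: they guarantee the relevant Gauss sums degenerate and the partial Fourier transform of $\chi\cdot\ONE_{\cO_E^\times}$ stays in the span of $\phi'$. I would organize this by first treating the unramified case $E_w/F_v$ (where $\phi' = \ONE_{\cO_E}$, $u \in \cO_F^\times$, and $\gamma_F(u,\tfrac12\psi') = 1$, so everything is clean) and then the ramified case (where $\val(u) = 1$ and one picks up precisely $\epsilon_{E/F}(a)$ from the extra Weyl-element passage), citing \cite[Proposition A.11]{R93} for the needed Weil-index evaluations and the lemmas of Section \ref{s:split splittings} for the cocycle identities. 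All of the intermediate manipulations are the same calculus-with-Gauss-sums that appears in \cite[Proposition 2.5.1]{P06} and \cite[N1]{X07}, so I would cite those for the routine parts and only write out the cocycle/Weil-index cancellation in detail.
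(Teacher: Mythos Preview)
Your approach---generator-by-generator verification using \eqref{e:weil split similitude} and \eqref{e:weil SL2 d}--\eqref{e:weil SL2 w}---is exactly the standard one; the paper does not prove this lemma at all but defers entirely to \cite[Proposition 2.2.4]{X07}, \cite[Proposition 2.5.1]{P06}, and \cite[Lemma 8.6]{Ch18}, which carry out the computation you sketch. So your strategy is correct and matches the cited references.

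There is one bookkeeping slip worth fixing. In the diagonal step you write that $D(a_1)$ for $a_1 \in \cO_F^\times$ acts by ``$\xi(a_1)^{-1}|a_1|\phi'(xa_1) = \phi'(xa_1)$,'' silently dropping $\xi(a_1)^{-1}$. But $\xi|_{F^\times} = \epsilon_{E/F}$, so $\xi(a_1)^{-1} = \epsilon_{E/F}(a_1)$, and \emph{this} is where the $\epsilon_{E/F}(a)$ in the final answer comes from---directly from \eqref{e:weil SL2 d}, in both the unramified and ramified cases. It does not emerge later from ``Weil-index bookkeeping when $E_w/F_v$ is ramified'' during the lower-unipotent/Weyl passage, as you suggest. (In the unramified case $\epsilon_{E/F}$ is trivial on $\cO_F^\times$, which is why you do not notice the omission there.) The lower-unipotent contribution, once $N = c(\pi_\chi)$ is matched to the support of $\widehat{\phi'}$ via \eqref{e:c pi chi}, is trivial in \emph{both} cases: the two $W$-constants cancel and no residual $\epsilon_{E/F}$ appears. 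After relocating the $\epsilon_{E/F}(a)$ factor to the diagonal step, your outline is complete.
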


\begin{proof}
See \cite[Proposition 2.2.4]{X07}, \cite[Proposition 2.5.1]{P06}, \cite[Lemma 8.6]{Ch18}.
\end{proof}

\begin{lemma}\label{l:nonsplit hecke}
For $h \in \cO_E^\times$ and $g = \left(\begin{smallmatrix} a & b \\ c & d \end{smallmatrix}\right)$ such that $\Nm(h) = \det(g)$, we have
\begin{equation*}
\omega_\psi(h,d(\delta)^{-1} g d(\delta)) \phi' = (\chi \xi)^{-1}(h) \cdot (\chi \epsilon_{E/F})(a) \cdot \phi'.
\end{equation*}
\end{lemma}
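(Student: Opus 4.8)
The plan is to reduce Lemma~\ref{l:nonsplit hecke} (at ramified $v$) to Lemma~\ref{l:nonsplit hecke unram} via the change-of-character formula \eqref{e:change psi}. Recall that $\psi$ is the \emph{standard} additive character, which at a finite place $v$ is trivial on $\pi_v^{-d_v}\cO_v$ but not on $\pi_v^{-d_v-1}\cO_v$; in particular $c(\psi_v) = -d_v$, so $\psi$ need not be unramified. Set $\psi' \colonequals \psi_{\delta}$, i.e.\ $\psi'(x) = \psi(\delta x)$ with $\delta = \delta_v = \pi_v^{-d_v}$. Then $\psi'$ is trivial on $\cO_v$ and nontrivial on $\pi_v^{-1}\cO_v$, so $\psi'$ is an unramified nontrivial additive character of $F_v$, which is precisely the hypothesis under which Lemma~\ref{l:nonsplit hecke unram} applies. (One should double-check that the Schwartz function $\phi'$ does not depend on the choice of additive character — it is defined purely in terms of $\chi$ and the integers of $E$ — so the same $\phi'$ is being used in both lemmas.)

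Next I would invoke \eqref{e:change psi}, which in the form we need reads $\omega_\psi(d(\nu)^{-1} g\, d(\nu), z) = \omega_{\psi_\nu}(g,z)$. There is a subtlety: \eqref{e:change psi} is stated for $\omega_\psi$ on the metaplectic group $\Mp(\bV)$, whereas here we want it on the similitude dual pair $\G(\U(\V)\times\U(\W))$ with the splitting $\bs$ built in. So the real content is that the splitting $\bs$ of $z_{\bY'}$ is compatible with conjugation by $d(\delta)$ in the following sense: for $(h,g) \in \G(\U(\V)\times\U(\W))$, conjugating the $\GL_2$-component by $d(\delta)$ intertwines the $\psi$-Weil representation with the $\psi_\delta$-Weil representation, and the splitting values match up because $\bs$ is defined from $\gamma_F(\tfrac12\psi\circ R\V)$ and Hilbert symbols — the Hilbert-symbol part is independent of $\psi$, and the Weil-index part transforms exactly by the factor that \eqref{e:change psi} predicts. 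Concretely I would verify that $\omega_\psi(h, d(\delta)^{-1} g\, d(\delta))$ and $\omega_{\psi_\delta}(h,g)$ agree as operators on $\cS(\bX')$; this is a direct check on the generators $D(a)$, $U(a')$, $W$ using the explicit formulas \eqref{e:weil SL2 d}, \eqref{e:weil SL2 u}, \eqref{e:weil SL2 w} — conjugation by $d(\delta)$ rescales the variable in the Fourier-transform generator and the argument of $\psi$ in the $U(a')$ generator precisely so as to replace $\psi$ by $\psi_\delta$.

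With that compatibility in hand the lemma is immediate: for $h \in \cO_E^\times$ and $g = \left(\begin{smallmatrix} a & b \\ c & d\end{smallmatrix}\right)$ with $\Nm(h) = \det(g)$ (so $g \in K_0(c(\rho_\chi))$ in the sense appropriate to $\psi'$), we have
\begin{equation*}
\omega_\psi(h, d(\delta)^{-1} g\, d(\delta)) \phi' = \omega_{\psi'}(h, g)\phi' = (\chi\xi)^{-1}(h)\cdot(\chi\epsilon_{E/F})(a)\cdot\phi',
\end{equation*}
where the first equality is the change-of-character compatibility above with $\nu = \delta$ and the second is exactly Lemma~\ref{l:nonsplit hecke unram}. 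Note that conjugating by $d(\delta)$ with $\delta \in \cO_v^\times$-unit-times-uniformizer-power does not disturb the entries $a$ (upper-left) in the relevant way, so the character value $(\chi\epsilon_{E/F})(a)$ is unchanged — this is why the definition of $K_0(N)$ in Section~\ref{ch:schwartz} twists by $d(2)$ at even residue characteristic but the statement still comes out in terms of the same $a$.

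The main obstacle I anticipate is not the idea but the bookkeeping in the compatibility step: one must confirm that the \emph{splitting} $\bs$ — not merely the bare projective Weil representation — transforms correctly under $d(\delta)$-conjugation, since \eqref{e:change psi} as literally stated is about $\Mp(\bV)$. Because $\bs = \hat s \cdot \blambda$ with $\hat s$ a pullback of Kudla's $\beta$ (whose nontrivial part is a power of $\gamma_F(u,\tfrac12\psi)$ times $\xi$ of an $x(g)$-value) and $\blambda$ a product of Weil indices of Leray invariants, I would trace through how each of these scales when $\psi \rightsquigarrow \psi_\delta$, using the standard identity $\gamma_F(a,\psi_\nu) = (a,\nu)_F\,\gamma_F(a,\psi)\,\gamma_F(1,\psi_\nu)\gamma_F(1,\psi)^{-1}$ and the fact that $x(g)$ and $j(g)$ are unchanged by the substitution. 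Everything should cancel to give exactly the asserted equality; the risk is only in getting the normalization of $d(\delta)$ (versus $d(\delta^{-1})$) and the direction of the character twist right, which is why I would state the generator-by-generator check explicitly rather than appeal to \eqref{e:change psi} as a black box.
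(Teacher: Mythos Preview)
Your approach is correct and is essentially identical to the paper's: define $\psi'(x) = \psi(\delta x)$ so that $\psi'$ is unramified, then combine the change-of-character formula \eqref{e:change psi} with Lemma~\ref{l:nonsplit hecke unram}. The paper's proof is a two-line citation of exactly these two inputs, whereas you spell out the splitting-compatibility bookkeeping that the paper leaves implicit.
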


\begin{proof}
Since $\psi$ has conductor $\delta$, $\psi'(x) \colonequals \psi(\delta x)$ is an unramified nontrivial additive character of $F$. The conclusion follows by Equation \eqref{e:change psi} and Lemma \ref{l:nonsplit hecke unram}.
\end{proof}

\subsubsection{Finite split places}\label{s:schwartz finite split}

In this section we let $v$ be a finite split place of $F$. Then $E_v \cong F_v \oplus F_v$. We drop the subscript $v$ throughout this section.

\begin{definition}
For a character $\chi = \chi_1 \otimes \chi_2 \from F^\times \times F^\times \to \CC^\times$, define
\begin{equation*}
\phi'(x_1, x_2) \colonequals 
\begin{cases}
\ONE_{\cO_F}(x_1) \ONE_{\cO_F}(x_2) & \text{if $\chi$ is unramified,} \\
\chi(x_1,x_2) \ONE_{\cO_F^\times}(x_1) \ONE_{\cO_F^\times}(x_2) & \text{otherwise.}
\end{cases}
\end{equation*}
\end{definition}

\begin{lemma}\label{l:split hecke unram}
Let $\psi'$ be an unramified nontrivial additive character of $F$. For $h \in \cO_F^\times \times \cO_F^\times$ and $g = \left(\begin{smallmatrix} a & b \\ c & d \end{smallmatrix}\right) \in K_0$ with $\Nm(h) = \det(g)$, we have
\begin{equation*}
\omega_{\psi'}(h,g) \phi' = (\chi \xi)^{-1}(h) \cdot \chi_1(a) \chi_2(a) \cdot \phi'.
\end{equation*}
\end{lemma}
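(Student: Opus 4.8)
The statement to prove is Lemma~\ref{l:split hecke unram}: for a finite split place $v$ with $E_v \cong F_v \oplus F_v$, an unramified additive character $\psi'$, and the explicitly defined Schwartz function $\phi'$, one has $\omega_{\psi'}(h,g)\phi' = (\chi\xi)^{-1}(h)\cdot\chi_1(a)\chi_2(a)\cdot\phi'$ for $h \in \cO_F^\times \times \cO_F^\times$ and $g = \left(\begin{smallmatrix} a & b \\ c & d\end{smallmatrix}\right) \in K_0$ with $\Nm(h) = \det(g)$.

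\textbf{Plan of proof.} The approach mirrors the nonsplit case (Lemma~\ref{l:nonsplit hecke unram}), the only difference being that at a split place the Hermitian space $\W$ decomposes completely over $F_v$ and the Weil representation on $\cS(\bX')$ becomes the classical Weil representation of $\GL_2(F_v)$ acting on $\cS(F_v^2)$, via the formulas \eqref{e:weil SL2 d}, \eqref{e:weil SL2 u}, \eqref{e:weil SL2 w} (with $E_v/F_v$ split, so $u \in F_v^{\times 2}$, $\epsilon_{E_v/F_v}$ trivial, $(u,-1)_{F_v} = 1$, and $\gamma_{F_v}(u,\tfrac12\psi') = 1$). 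First I would reduce to the action of $\SL_2$: using \eqref{e:weil split similitude}, write $\omega_{\psi'}(h,g)\phi'(x) = \xi^{-1}(h)|h|^{-1/2}(\omega_{\psi'}(d(\nu(g)^{-1})g)\phi')(xh^{-1})$, so it suffices to (a) check the action of the diagonal torus element $h \in \cO_F^\times \times \cO_F^\times$ on $\cS(\cO_F^\times \times \cO_F^\times)$ or $\cS(\cO_F \times \cO_F)$, which by \eqref{e:weil SL2 d} just multiplies $\phi'$ by $\chi(h)^{-1}$ (in the ramified case) or leaves it fixed (unramified case, since $|h| = 1$), and (b) check that $\omega_{\psi'}(d(\det g)^{-1}g)\phi' = \chi_1(a)\chi_2(a)\phi'$ for $d(\det g)^{-1}g \in \SL_2(\cO_F)\cap(\text{right coset data})$.

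For step (b), the standard move is to use the Iwasawa/Bruhat decomposition of $K_0 = K_0'(c(\rho_\chi))$ into cells $\left(\begin{smallmatrix} a & b \\ c & d\end{smallmatrix}\right)$ with $c \in \pi^{c(\rho_\chi)}\cO_F$; since $c(\rho_\chi) = c(\chi_1) + c(\chi_2)$ at a split place (Equation~\eqref{e:c pi chi}), one writes $g = \left(\begin{smallmatrix} a & b \\ 0 & a^{-1}\end{smallmatrix}\right)\left(\begin{smallmatrix} 1 & 0 \\ a^{-1}c & 1\end{smallmatrix}\right)$ after scaling, and applies \eqref{e:weil SL2 d}, \eqref{e:weil SL2 u}, \eqref{e:weil SL2 w} term by term. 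The lower-triangular unipotent piece is handled by conjugating through the Weyl element: $\left(\begin{smallmatrix} 1 & 0 \\ c' & 1\end{smallmatrix}\right) = W^{-1}\left(\begin{smallmatrix} 1 & -c' \\ 0 & 1\end{smallmatrix}\right)W$ (up to sign), and one uses that $\omega_{\psi'}(W)$ is the Fourier transform on $\cS(F_v^2)$, under which $\phi'$ (a product of characteristic functions of cosets, times a ramified character) transforms into a similar function supported near $0$; the condition $c' \in \pi^{c(\chi_1)+c(\chi_2)}\cO_F$ is exactly what forces $\psi'(\tfrac12\Tr_{E/F}(c' x\overline x))$ to be trivial on the relevant support, so the unipotent acts trivially after Fourier transform. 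This is precisely the calculation referenced in \cite[Proposition 2.5.1]{P06}, \cite[N1]{X07}, and \cite[Lemma 8.6]{Ch18}, done there for $\SL_2$ or $\GL_2$; at a split place it is literally that classical computation with $\chi = \chi_1\otimes\chi_2$, so I would simply cite those references and indicate that the split case follows by the same argument after the identification $E_v \cong F_v \oplus F_v$ makes \eqref{e:weil SL2 d}--\eqref{e:weil SL2 w} into the usual $\GL_2(F_v)$ Weil representation formulas.

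\textbf{Main obstacle.} The only real subtlety is bookkeeping of the even residue characteristic case, which is why $K_0(N)$ is defined with the conjugation by $d(2)$ when $F_v$ has even residue characteristic: the formulas \eqref{e:weil SL2 u}, \eqref{e:weil SL2 w} carry a factor of $\tfrac14$ or $\tfrac12$ inside $\psi'$, and at even residue characteristic $\tfrac12 \notin \cO_F^\times$, so the naive support computation shifts by $\val_v(2)$; conjugating by $d(2)$ absorbs this shift, which is exactly the content of the definition of $K_0(N)$. Since the statement of Lemma~\ref{l:split hecke unram} is phrased with $g \in K_0$ (not $K_0'$), I expect the proof to go through uniformly, but I would be careful to note where the conjugation enters, by invoking \eqref{e:change psi} as in the proof of Lemma~\ref{l:nonsplit hecke}. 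I would then conclude by referencing \cite[Proposition 2.2.4]{X07}, \cite[Proposition 2.5.1]{P06}, and \cite[Lemma 8.6]{Ch18}, noting that the split case is the product of two copies of the $\GL_1$-twisted computation.
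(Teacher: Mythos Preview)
Your proposal is correct and aligns with the paper's approach: the paper's own proof consists solely of citations to \cite[Proposition 2.2.4]{X07}, \cite[Proposition 2.5.1]{P06}, and \cite[Lemma 8.9]{Ch18} (note: Lemma 8.9, not 8.6, which is the nonsplit reference), whereas you have additionally sketched the underlying computation those references contain. Your outline of the reduction via \eqref{e:weil split similitude}, the Bruhat decomposition of $K_0$, and the handling of the even residue characteristic via the $d(2)$-conjugation is accurate and more informative than the paper's bare citation.
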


\begin{proof}
See \cite[Proposition 2.2.4]{X07}, \cite[Proposition 2.5.1]{P06}, \cite[Lemma 8.9]{Ch18}.
\end{proof}

By the same argument as in Lemma \ref{l:nonsplit hecke},

\begin{lemma}\label{l:split hecke}
For $h \in \cO_F^\times \times \cO_F^\times$ and $g = \left(\begin{smallmatrix} a & b \\ c & d \end{smallmatrix}\right) \in K_0$ with $\Nm(h) = \det(g)$, we have
\begin{equation*}
\omega_{\psi}(h,d(\delta)^{-1} g d(\delta)) \phi' = (\chi \xi)^{-1}(h) \cdot \chi_1(a) \chi_2(a) \cdot \phi'.
\end{equation*}
\end{lemma}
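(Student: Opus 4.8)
The final statement to prove is Lemma~\ref{l:split hecke}: for $h \in \cO_F^\times \times \cO_F^\times$ and $g = \left(\begin{smallmatrix} a & b \\ c & d \end{smallmatrix}\right) \in K_0$ with $\Nm(h) = \det(g)$, one has $\omega_\psi(h, d(\delta)^{-1} g d(\delta))\phi' = (\chi\xi)^{-1}(h) \cdot \chi_1(a)\chi_2(a)\cdot \phi'$, where the text has already indicated ``by the same argument as in Lemma~\ref{l:nonsplit hecke}.''

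The plan is to imitate the proof of Lemma~\ref{l:nonsplit hecke} verbatim, replacing the nonsplit input (Lemma~\ref{l:nonsplit hecke unram}) with its split counterpart (Lemma~\ref{l:split hecke unram}). First I would observe that $\psi$ was chosen to be the standard additive character of $F\backslash\bA_F$, so at the finite place $v$ its conductor is $\delta = \delta_v = \pi_v^{-d_v}$ (see Section~\ref{s:tamagawa} and the opening of Section~\ref{ch:schwartz}). Therefore $\psi'(x) \colonequals \psi(\delta x)$ is an \emph{unramified} nontrivial additive character of $F_v$; indeed $\psi'$ is trivial on $\cO_{F_v}$ but nontrivial on $\pi_v^{-1}\cO_{F_v}$, which is precisely the normalization under which Lemma~\ref{l:split hecke unram} is stated.

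Next I would invoke the change-of-character formula~\eqref{e:change psi}, which says $\omega_\psi(d(\nu)^{-1} g\, d(\nu), z) = \omega_{\psi_\nu}(g,z)$ with $\psi_\nu(x) = \psi(\nu x)$. Taking $\nu = \delta$ gives $\psi_\delta = \psi'$, so that for any $(h, g)$ in the relevant similitude group,
\begin{equation*}
\omega_\psi\bigl(h,\, d(\delta)^{-1} g\, d(\delta)\bigr)\phi' = \omega_{\psi'}(h, g)\phi'.
\end{equation*}
(Here one should note that the $\GU$-action on the $V_0$-factor commutes with the conjugation by $d(\delta)$ living in the $\GL_2$-factor, so the similitude pair structure is preserved; this is the only point requiring a moment's care, since~\eqref{e:change psi} as stated is about the $\Sp$-level cocycle and one must check the splitting $\bs$ is compatible — but this follows from the explicit formulas for $\bs$ in Section~\ref{s:split splittings}, which only involve $\psi$ through the Weil index factors $\gamma_F$.) Finally, applying Lemma~\ref{l:split hecke unram} to the right-hand side with $g \in K_0 = K_0(c(\rho_\chi))$ and $h \in \cO_F^\times \times \cO_F^\times$ satisfying $\Nm(h) = \det(g)$ yields $\omega_{\psi'}(h,g)\phi' = (\chi\xi)^{-1}(h)\cdot \chi_1(a)\chi_2(a)\cdot\phi'$, which is exactly the claimed identity.

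The only genuine subtlety — and hence the ``hard part'' — is bookkeeping around the definition of $K_0(N)$ in even residue characteristic, where $K_0(N) = d(2)K_0'(N)d(1/2)$: one must make sure that conjugating by $d(\delta)$ and the extra conjugation by $d(2)$ interact correctly, so that the hypothesis $g \in K_0$ in the present lemma matches the hypothesis $g\in K_0$ of Lemma~\ref{l:split hecke unram} after transporting through~\eqref{e:change psi}. In odd residue characteristic this is vacuous; in even residue characteristic the factors of $2$ and $\delta$ combine into the single conjugation that the definition of $K_0$ was designed to absorb, and unwinding this is routine. Everything else is a direct transcription of the proof of Lemma~\ref{l:nonsplit hecke}.
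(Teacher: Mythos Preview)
Your proposal is correct and follows exactly the same approach as the paper: the paper's proof is literally the sentence ``By the same argument as in Lemma~\ref{l:nonsplit hecke},'' and that argument consists of defining $\psi'(x) = \psi(\delta x)$ as an unramified additive character and then invoking Equation~\eqref{e:change psi} together with the unramified-$\psi$ version of the lemma, which is precisely what you do. Your additional remarks about the splitting compatibility and the even-residue-characteristic bookkeeping are more detail than the paper supplies, but not a different route.
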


\subsection{Local zeta integrals}\label{ss:local}

In this section, we calculate the local zeta integrals $Z(\tfrac{1}{2}, \Phi_v, \chi_v)$ for the Siegel--Weil section $\Phi_v = \Phi_v^{\O,\Sp}(\delta(\phi_v' \otimes \overline {\phi_v'}))$ (see Section \ref{s:rallis}).

\subsubsection{Infinite nonsplit places}

Let $v$ be an infinite nonsplit place. We say that $\chi_v$ has infinity type $(k_1, k_2)$ if
\begin{equation*}
\chi_v \from \CC^\times \to \CC^\times, \qquad z \mapsto z^{-k_1} \overline z^{-k_2}.
\end{equation*}
Assume that $\chi_v(z) = z^k$ for $z \in \CC^1$, so that either $\chi_v$ is of type $(-k+j,j)$ or $(-j,k-j)$ for some integer $j$. Pick an integer $l \in \bZ_{\geq 0}$ and take 
\begin{equation*}
\phi_v'(z) \colonequals \phi_{k,l}'(z) = \begin{cases}
{}_1 F_1(-l, k+1,4 \pi z \overline z) \overline z^k e^{-2 \pi z \overline z} & \text{if $k \geq 0$,} \\
{}_1 F_1(-l,-k+1,4 \pi z \overline z) z^{-k} e^{-2 \pi z \overline z} & \text{if $k < 0$.}
\end{cases}
\end{equation*}

\begin{lemma}
Let $v$ be an infinite nonsplit place. Then
\begin{equation*}
Z_v(\tfrac{1}{2}, \Phi_v, \chi_v) = \vol(\CC^1) \langle \phi', \phi' \rangle = \frac{(2\pi)^2}{4^{|k|+1} \pi^{|k|+1}} \cdot \frac{l! (|k|)!^2}{(l+|k|)!}.
\end{equation*}
\end{lemma}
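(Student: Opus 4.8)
The statement has two parts: first, the identification $Z_v(\tfrac12,\Phi_v,\chi_v) = \vol(\CC^1)\,\langle\phi',\phi'\rangle$, and second, the explicit evaluation of the Petersson-type inner product $\langle\phi',\phi'\rangle = \int_\CC \phi_{k,l}'(z)\overline{\phi_{k,l}'(z)}\,dz$. For the first part I would appeal directly to Equation \eqref{e:local zeta}: unwinding the definition of $\Phi_v^{\O,\Sp}(\delta(\phi_v'\otimes\overline{\phi_v'}))$ against $i(g,1)$ for $g\in E_v^1 = \CC^1$, the local zeta integral becomes $\vol(\CC^1)\int_{\CC^1}\langle\omega_\psi(g)\phi',\phi'\rangle(\chi_v\xi_v)(g)\,dg$. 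By Lemma \ref{l:infinite hecke}, $\omega_\psi(g)\phi_{k,l}' = \xi(g^{-1})g^{-k}\phi_{k,l}'$ for $g\in\CC^1$ (taking $\theta=0$, i.e.\ $r(\theta)=1$), so $\langle\omega_\psi(g)\phi',\phi'\rangle = \xi(g^{-1})g^{-k}\langle\phi',\phi'\rangle$, and the character $(\chi_v\xi_v)(g) = g^k\xi(g)$ exactly cancels the equivariance factor. Hence the integrand is the constant $\langle\phi',\phi'\rangle$, and integrating over $\CC^1$ contributes the factor $\vol(\CC^1) = 2\pi$ recorded in the measure discussion of Section \ref{s:tamagawa}.

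For the second part, I would reduce to a one-variable radial integral. Writing $z = \rho e^{i\phi}$ (or working directly with $t = 4\pi z\overline z = 4\pi\rho^2$), the angular integration of $\overline z^k z^k = \rho^{2k}$ (for $k\geq 0$; symmetrically $z^{-k}\overline z^{-k}$ for $k<0$) is trivial, reducing $\langle\phi_{k,l}',\phi_{k,l}'\rangle$ to a constant times $\int_0^\infty \rho^{2|k|+1}\,{}_1F_1(-l,|k|+1,4\pi\rho^2)^2\,e^{-4\pi\rho^2}\,d\rho$. After the substitution $t = 4\pi\rho^2$ this becomes a constant multiple of $\int_0^\infty t^{|k|}\,e^{-t}\,{}_1F_1(-l,|k|+1,t)^2\,dt$. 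Since $-l$ is a non-positive integer, ${}_1F_1(-l,|k|+1,t)$ is a polynomial of degree $l$ (essentially a Laguerre polynomial $L_l^{(|k|)}(t)$ up to normalization), so this integral is an orthogonality integral for Laguerre polynomials: $\int_0^\infty t^{a}e^{-t}L_l^{(a)}(t)^2\,dt = \Gamma(a+l+1)/l!$. Translating the ${}_1F_1$-normalization to the Laguerre normalization (${}_1F_1(-l,a+1,t) = L_l^{(a)}(t)/\binom{l+a}{l}$) yields, after bookkeeping of the powers of $4\pi$ from the substitution and the angular constant, the claimed value $\frac{(2\pi)^2}{4^{|k|+1}\pi^{|k|+1}}\cdot\frac{l!\,(|k|)!^2}{(l+|k|)!}$.

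Alternatively, and perhaps more cleanly, one can avoid quoting Laguerre orthogonality directly and instead use Lemma \ref{l:1F1 props}(b): expanding one of the two ${}_1F_1$ factors as its (finite) power series $\sum_{j=0}^l \frac{(-l)_j}{(|k|+1)_j j!}t^j$ and integrating term by term against $t^{|k|}e^{-t}\,{}_1F_1(-l,|k|+1,-(-t))$ — here one must be careful about the sign convention in the argument, since Lemma \ref{l:1F1 props}(b) is stated with ${}_1F_1(a,b,-t)$, whereas our integrand has ${}_1F_1(-l,|k|+1,+t)$; using ${}_1F_1(-l,b,t) = {}_1F_1(-l,b,-(-t))$ and Kummer's transformation ${}_1F_1(a,b,t) = e^t\,{}_1F_1(b-a,b,-t)$ if needed — one obtains a terminating ${}_2F_1$ that can be summed in closed form (it is a Saalschützian-type $_2F_1$ at argument $1$ or $-1$, evaluable by Gauss or Pfaff--Saalschütz). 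The main obstacle I anticipate is precisely this bookkeeping: tracking the interplay of sign conventions in the hypergeometric arguments, the normalization constants relating ${}_1F_1$ to Laguerre polynomials, and the powers of $4\pi$ introduced by the radial substitution, so that the final constant comes out exactly as stated rather than off by an elementary factor. The underlying analysis is routine; getting the constant right is the delicate part.
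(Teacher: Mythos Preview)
Your approach is correct and essentially identical to the paper's. The paper uses precisely the equivariance $\omega_\psi(\alpha,1)\phi_{k,l}' = \xi(\alpha^{-1})\alpha^{-k}\phi_{k,l}'$ from Lemma~\ref{l:infinite hecke} to reduce the zeta integral to $\vol(\CC^1)\langle\phi',\phi'\rangle$, then passes to polar coordinates and the substitution $t = 4\pi z\overline z$ to arrive at the same integral $\int_0^\infty t^{|k|}e^{-t}\,{}_1F_1(-l,|k|+1,t)^2\,dt$; it simply asserts the value $\tfrac{l!\,(|k|)!^2}{(l+|k|)!}$ without further comment, so your Laguerre-orthogonality justification is exactly the missing explanation. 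Your alternative route via Lemma~\ref{l:1F1 props}(b) and Kummer's transformation is unnecessary here---the Laguerre identity is the clean way, and the paper implicitly relies on it.
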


\begin{proof}
By Lemma \ref{l:infinite hecke}, $\omega_\psi(\alpha, 1) \phi_v' = \xi(\alpha^{-1}) \alpha^{-k} \phi_v'.$ Thus
\begin{align*}
Z_v(\tfrac{1}{2}, \Phi_v, \chi_v)
&= \int_{\CC^1} \langle \omega_\psi(g,1) \phi_v', \phi_v' \rangle (\chi_v \xi_v)(g) \, dg = \vol(\CC^1) \langle \phi_v', \phi_v' \rangle = \pi^{-1} \langle \phi_v', \phi_v' \rangle. 
\end{align*}
We have
\begin{align*}
\langle \phi_v', \phi_v' \rangle
&= \int_\CC {}_1 F_1(-l, |k|+1, 4 \pi z \overline z)^2 \cdot (z \overline z)^{|k|} \cdot e^{-4 \pi z \overline z} \, dz \, d\overline z \\
&= \frac{2 \pi}{(4 \pi)(4 \pi)^{|k|}} \int_0^\infty {}_1 F_1(-l, |k|+1, t)^2 \cdot t^{|k|} \cdot e^{-t} \, dt  \\
&= \frac{2 \pi}{(4 \pi)^{|k|+1}} \frac{l! (|k|)!^2}{(l+|k|)!} = \frac{2 \pi}{(4 \pi)^{|k|+1}} \frac{(|k|)!}{\binom{l+|k|}{|k|}}. \qedhere
\end{align*}
\end{proof}

\subsubsection{Finite nonsplit places}

Recall from Section \ref{ch:schwartz} that we set
\begin{equation*}
\phi_v'(x) = \begin{cases}
\ONE_{\cO_{E_v}}(x) & \text{if $\chi_v$ is unramified,} \\
\chi_v(x) \ONE_{\cO_{E_v}^\times}(x) & \text{if $\chi_v$ is ramified.}
\end{cases}
\end{equation*}

\begin{lemma}
Let $v$ be a finite nonsplit place. If $E_v/F_v$ is unramified, then
\begin{equation*}
Z_v(\tfrac{1}{2}, \Phi_v, \chi_v) =
\begin{cases}
q_v^{-d_v/2} & \text{if $E_v/F_v$ is unramified and $\chi_v$ is unramified,} \\
q_v^{-d_v/2} (1 - q_v^{-2}) & \text{if $E_v/F_v$ is unramified and $\chi_v$ is ramified,} \\
q_{v}^{-1} q_v^{-d_v/2} & \text{if $E_v/F_v$ is ramified and $\chi_v$ is unramified,} \\
q_{v}^{-1}q_v^{-d_v/2}(1 - q_{v}^{-1})  & \text{if $E_v/F_v$ is ramified and $\chi_v$ is ramified.}
\end{cases}
\end{equation*}
\end{lemma}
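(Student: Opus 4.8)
The plan is to compute $Z_v(\tfrac{1}{2},\Phi_v,\chi_v)$ directly from Equation \eqref{e:local zeta}, which identifies it with $\vol(E_v^1)$ times the local integral $\int_{E_v^1} \langle \omega_\psi(g_v)\phi_v', \phi_v' \rangle \cdot (\chi_v\xi_v)(g_v)\,dg_v$. The key observation is that by Lemma \ref{l:bs D} and Equation \eqref{e:weil split similitude}, the action of $g_v \in E_v^1$ on $\phi_v'$ is simply $\omega_\psi(g_v,1)\phi_v'(x) = \xi(g_v)^{-1}\phi_v'(x g_v^{-1})$, so the integrand becomes $\xi(g_v)^{-1} \cdot \chi_v(g_v) \cdot \xi(g_v) \cdot \langle R(g_v^{-1})\phi_v', \phi_v'\rangle = \chi_v(g_v) \langle R(g_v^{-1})\phi_v',\phi_v'\rangle$, where $R$ is right translation. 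Thus everything reduces to an explicit Schwartz-function computation on $E_v$, made possible by the fact that $\phi_v'$ is supported on $\cO_{E_v}$ (or $\cO_{E_v}^\times$) and transforms by $\chi_v$ there.

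First I would treat the case $\chi_v$ unramified. Then $\phi_v' = \ONE_{\cO_{E_v}}$, which is fixed by $R(g_v^{-1})$ for $g_v \in E_v^1 \cap \cO_{E_v}^\times$; since $E_v^1$ is compact and $\ONE_{\cO_{E_v}}$ is $\cO_{E_v}^\times$-invariant (as $E_v^1 \subseteq \cO_{E_v}^\times$), the integrand is the constant $\chi_v(g_v) \cdot \langle \ONE_{\cO_{E_v}}, \ONE_{\cO_{E_v}}\rangle = \langle \phi_v',\phi_v'\rangle$ (using $\chi_v$ unramified). So $Z_v = \vol(E_v^1) \cdot \vol(\cO_{E_v})$ with respect to the appropriate self-dual measures. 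Here I invoke the measure computations in Section \ref{s:tamagawa}: $\vol(E_v^1 \cap \cO_{E_v}^\times, d^1 x_v^{\Tam}) = q_{F_v}^{-1/2}$ if $v$ ramifies in $E$ and $q_{F_v}^{-d_{F_v}/2}$ if inert, while $\vol(\cO_{E_v}, dx_v)$ is computed from the self-dual measure on $E_v$ with respect to $\psi_v \circ \Tr_{E_v/F_v}$ (or whatever normalization is implicit in \eqref{e:local zeta}). Matching the ramification contribution of $E_v/F_v$ with the different gives the stated $q_v^{-d_v/2}$ in the unramified-over-$F$ case and the extra $q_v^{-1}$ in the ramified-over-$F$ case.

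Next I would handle $\chi_v$ ramified. Now $\phi_v' = \chi_v \cdot \ONE_{\cO_{E_v}^\times}$, and $R(g_v^{-1})\phi_v'(x) = \chi_v(x g_v^{-1})\ONE_{\cO_{E_v}^\times}(x g_v^{-1}) = \chi_v(g_v)^{-1}\phi_v'(x)$ for $g_v \in E_v^1 \subseteq \cO_{E_v}^\times$, so the integrand is again constant and equal to $\langle \phi_v',\phi_v'\rangle = \vol(\cO_{E_v}^\times, dx_v)$. The measure of $\cO_{E_v}^\times$ differs from that of $\cO_{E_v}$ by the factor $(1 - q_{E_v}^{-1})$ where $q_{E_v}$ is the residue cardinality of $E_v$: this is $1-q_v^{-2}$ when $E_v/F_v$ is inert (residue field of size $q_v^2$) and $1-q_v^{-1}$ when $E_v/F_v$ is ramified (residue field of size $q_v$). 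Multiplying the unramified answers by these factors yields exactly the four displayed cases.

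\textbf{Main obstacle.} The genuinely delicate point is pinning down the precise normalization of the Haar measures so that the volume factors come out to exactly $q_v^{-d_v/2}$ and its companions, rather than off by a power of $q_v$. This requires carefully tracking: (i) which additive character $\psi_v$ or $\psi_v \circ \Tr_{E_v/F_v}$ the Schwartz-space model in \eqref{e:local zeta} uses, and hence the self-dual measure $dx_v$ on $E_v$ (recall $\vol(\cO_{F_v}, dx_v) = q_v^{-d_v/2}$ for the standard $\psi$, and the conductor of $\psi_v \circ \Tr_{E_v/F_v}$ shifts by the different of $E_v/F_v$); (ii) the Tamagawa normalization of $d^1 g_v^{\Tam}$ on $E_v^1$ as recorded in the third Example of Section \ref{s:tamagawa}, which already contains the ramified-versus-inert dichotomy; and (iii) the $\vol(E_v^1)$ prefactor explicitly present in \eqref{e:local zeta}. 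Assembling these bookkeeping ingredients correctly — and checking that the local different $d_v$ of $F$ enters only through $dx_v$ while the ramification of $E_v/F_v$ contributes the separate $q_v^{-1}$ — is the crux; the representation theory itself is trivial here because $E_v^1$ acts on $\phi_v'$ through a character.
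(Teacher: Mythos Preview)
Your proposal is correct and follows essentially the same route as the paper. The paper's proof cites Lemma \ref{l:nonsplit hecke} to obtain the eigenfunction relation $\omega_\psi(g,1)\phi' = (\chi_v\xi_v)^{-1}(g)\,\phi'$ in one stroke (that lemma packages exactly the translation computation you outline from \eqref{e:weil split similitude} and Lemma \ref{l:bs D}), then writes $Z_v = \vol(E_v^1)^2\,\langle\phi',\phi'\rangle$ and appeals to the volume formulas of Section \ref{s:tamagawa}.

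One small slip in your outline: your quick formula $Z_v = \vol(E_v^1)\cdot\vol(\cO_{E_v})$ drops one factor of $\vol(E_v^1)$. You correctly noted the prefactor from \eqref{e:local zeta}, but integrating the constant $\langle\phi_v',\phi_v'\rangle$ over $E_v^1$ contributes a \emph{second} factor of $\vol(E_v^1)$, so the answer is $\vol(E_v^1)^2\,\langle\phi_v',\phi_v'\rangle$. This is exactly what produces the extra $q_v^{-1}$ in the two ramified-extension cases, since $\vol(E_v^1,d^1x_v^{\Tam}) = q_v^{-1/2}$ there.
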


\begin{proof}
By Lemma \ref{l:nonsplit hecke}, for $g \in E_v^1$, we have $\omega_\psi(g,1) \phi' = (\chi_v \xi_v)^{-1}(g) \cdot \phi'.$ This implies that
\begin{align*}
Z_v(\tfrac{1}{2}, \Phi_v, \chi_v)
&= \vol(E_v^1,d^1 x_v^{\Tam}) \int_{E_v^1} \langle \omega_\psi(g,1) \phi', \phi' \rangle (\chi \xi_{\bY'})_v(g) \, dg \\
&= \vol(E_v^1, d^1 x_v^{\Tam})^2 \langle \phi', \phi' \rangle 
= \begin{cases}
\vol(E_v^1, d^1 x_v^{\Tam})^2 \vol(\cO_{E_v},dx_v) & \text{if $\chi_v$ is unram,} \\
\vol(E_v^1, d^1 x_v^{\Tam})^2 \vol(\cO_{E_v}^\times,dx_v) & \text{otherwise.} 
\end{cases}
\end{align*}
The desired conclusion now follows from the measures in Section \eqref{s:tamagawa}.
\end{proof}

\subsubsection{Finite split places}

Let $v$ be a finite split place and write $\chi_v = \chi_{1,v} \otimes \chi_{2,v} \from F_v^\times \times F_v^\times \to \CC^\times.$ Recall that
\begin{equation*}
\phi'(x_1,x_2) \colonequals
\begin{cases}
\ONE_{\cO_{F_v}}(x_1) \ONE_{\cO_{F_v}}(x_2) & \text{if $\chi_v$ is unramified,} \\
\chi_v(x_1,x_2) \ONE_{\cO_{F_v}^\times}(x_1) \ONE_{\cO_{F_v}^\times}(x_2) & \text{otherwise.}
\end{cases}
\end{equation*}

\begin{lemma}
Let $v$ be a finite split place and assume that $\chi_v$ is unramified. Then
\begin{equation*}
Z_v(\tfrac{1}{2}, \Phi_v, \chi_v) = q_v^{-3d_v/2} \cdot \frac{L_v(1,\chi_{1,v} \otimes \chi_{2,v}^{-1}) L_v(1,\chi_{1,v}^{-1} \otimes \chi_{2,v})}{L_v(2, \varepsilon_{E/F})}.
\end{equation*}
\end{lemma}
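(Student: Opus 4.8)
The plan is to compute the local zeta integral $Z_v(\tfrac12,\Phi_v,\chi_v)$ at a finite split unramified place directly by unwinding the definition in \eqref{e:local zeta}, namely
\begin{equation*}
Z_v(\tfrac12,\Phi_v,\chi_v) = \vol(E_v^1) \int_{E_v^1} \langle \omega_\psi(g_v)\phi_v', \phi_v' \rangle \cdot (\chi_v\xi_v)(g_v)\, dg_v,
\end{equation*}
but since this only integrates over the norm-one torus it does not by itself produce the $L$-factors. Instead, following the structure set up in Section \ref{s:rallis}, I would go back to the global unfolded expression appearing just before Proposition \ref{p:split rallis} and recognize that at a split place the relevant local object is the doubling zeta integral $Z_v(s,\Phi_v,\chi_v) = \int_{E_v^1} \Phi_v(g_v,1)\chi_v(g_v)\,dg_v$ with $\Phi_v = \Phi_v^{\O,\Sp}(\delta(\phi_v'\otimes\overline{\phi_v'}))$, and that this is (by the basic theory of doubling integrals for $(\O(2),\Sp(2))$) essentially a Rankin--Selberg-type integral over the split torus $E_v^\times \cong F_v^\times\times F_v^\times$. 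So the first step is to write $\Phi_v(g,1)$ explicitly as a function on $\U(1,1)$ by evaluating $(\omega_\psi^\square(g)\,\delta(\phi_v'\otimes\overline{\phi_v'}))(0)$, using the explicit formulas \eqref{e:weil explicit m}, \eqref{e:weil explicit n}, \eqref{e:weil explicit w} for the Weil representation together with the splitting formulas from Lemma \ref{l:bs D} (in the split case $E_v \cong F_v\oplus F_v$, where $\bs$ on the diagonal torus is a product of $\xi$-twists).

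The second step is to carry out the torus integral. Since $\chi_v = \chi_{1,v}\otimes\chi_{2,v}$ is unramified and $\phi_v' = \ONE_{\cO_{F_v}}\otimes\ONE_{\cO_{F_v}}$, the partial Fourier transform $\delta(\phi_v'\otimes\overline{\phi_v'})$ is again a characteristic function (up to a volume constant $q_v^{-d_v/2}$-type factor coming from the self-dual measure normalization in Section \ref{s:tamagawa}), and the integral $\int \Phi_v(g,1)\chi_v(g)\,dg$ becomes a product of two geometric series in $q_v^{-1}$ indexed by the valuations of the two $F_v$-coordinates of $g\in F_v^\times\times F_v^\times$ with norm-condition constraints. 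Summing these geometric series produces exactly $L_v(1,\chi_{1,v}\otimes\chi_{2,v}^{-1})$ and $L_v(1,\chi_{1,v}^{-1}\otimes\chi_{2,v})$ in the numerator (these are the two unramified $L$-factors of a $\GL_1\times\GL_1$ Rankin--Selberg pair at $s=1$), and the normalization of the Eisenstein/Siegel--Weil section at $s=\tfrac12$ contributes the inverse $L_v(2,\varepsilon_{E/F})^{-1}$ in the denominator (the standard degree-one correction factor; at a split place $\varepsilon_{E/F}$ is trivial but the $L$-factor is still $(1-q_v^{-2})^{-1}$). The prefactor $q_v^{-3d_v/2}$ should assemble from three sources: the self-dual volume $\vol(\cO_{F_v},dx_v) = q_v^{-d_v/2}$ entering the Fourier transform, the volume $\vol(\cO_{E_v}^\times$ or $\cO_{E_v},\ldots)$ entering $\langle\phi',\phi'\rangle$, and the Tamagawa volume of $E_v^1$, matching the bookkeeping in the preceding nonsplit-place lemmas.

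The third step is to confirm that the constant produced by this unramified computation agrees with the $s=\tfrac12$ specialization of the normalized doubling integral used in Proposition \ref{p:split rallis}; this is where one must be careful that the change-of-polarization factor $\lambda_{\bY'^\square\rightsquigarrow\bV^\triangle}$ and the $\xi$-twist discrepancies recorded in \eqref{e:O U double compat} do not introduce an extra unramified factor. The cleanest route is to cite the analogous unramified doubling-integral computation for $(\O(2),\Sp(2))$ (e.g.\ the local computations in \cite{GI14} or the standard references on doubling), observing that at a split unramified place all the $\xi$-factors and $\gamma_F$-factors in Lemma \ref{l:bs D} and Lemma \ref{l:bs U} are trivial, so $\bs$ and $s_{(\O(2),\Sp(2))}$ agree up to the harmless $\xi$-twist which is unramified and hence evaluates to $1$ on $\cO_{E_v}^\times$.

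The main obstacle I expect is \emph{not} the geometric-series summation itself — that is routine once the integrand is pinned down — but rather getting the exact powers of $q_v$ and the precise $L$-factor normalization right, i.e.\ tracking the interplay between the self-dual additive Haar measure (with its $q_v^{-d_v/2}$ volumes), the Tamagawa multiplicative measure on $E_v^1$ and $E_v^\times$, and the normalization of the Siegel--Weil section at the center. A secondary subtlety is correctly identifying which of the two $\GL_1\times\GL_1$ factors appears: because the doubling construction pairs $\chi_v$ against $\overline{\chi_v}$ on the two copies of $\O(2)$, and $\overline{\chi_{i,v}} = \chi_{i,v}^{-1}$ for unitary characters, one gets the symmetric pair $\{L_v(1,\chi_{1,v}\otimes\chi_{2,v}^{-1}),\,L_v(1,\chi_{1,v}^{-1}\otimes\chi_{2,v})\}$ rather than, say, $L_v(1,\chi_{1,v}\otimes\chi_{2,v})$; one should double-check this against the shape of $L(\pi_\chi\otimes\pi_{\chi'},s)$ from \eqref{e:L symmetry} restricted to a split place to make sure the answer is consistent with the global Rallis inner product formula feeding into Theorem \ref{t:theta aut ind}.
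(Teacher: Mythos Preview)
Your proposal rests on a misconception in the first paragraph: you write that the integral over $E_v^1$ ``does not by itself produce the $L$-factors,'' and then spend the rest of the outline building a workaround through the full doubling/Siegel--Weil machinery. But at a \emph{split} place $E_v \cong F_v \oplus F_v$, the norm-one torus is
\[
E_v^1 = \{(a,a^{-1}) : a \in F_v^\times\} \cong F_v^\times,
\]
which is noncompact. The matrix-coefficient integral over this copy of $F_v^\times$ is precisely what generates the $L$-factors, and this is exactly the route the paper takes.

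The paper's proof is a direct five-line computation starting from \eqref{e:local zeta}. Using Lemma \ref{l:bs D} one has $\omega_\psi((a,a^{-1}),1)\phi'(x_1,x_2) = \xi_v(a,a^{-1})^{-1}\ONE_{a\cO_{F_v}}(x_1)\ONE_{a^{-1}\cO_{F_v}}(x_2)$, so
\[
\langle \omega_\psi((a,a^{-1}),1)\phi', \phi'\rangle = \xi_v(a,a^{-1})^{-1}\, q_v^{-|\val(a)|}\, q_v^{-d_v}.
\]
Integrating against $(\chi_v\xi_v)(a,a^{-1})$ over $F_v^\times$ and summing over $n = \val(a) \in \bZ$ gives two geometric series that combine to the displayed ratio of $L$-factors; the $q_v^{-3d_v/2}$ comes from $q_v^{-d_v}$ above together with $\vol(\cO_{F_v}^\times, d^\times x_v^{\Tam}) = q_v^{-d_v/2}$. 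No partial Fourier transforms, no change-of-polarization factors, and no appeal to external doubling references are needed. Your ``second step'' eventually arrives at the same geometric-series picture, but by an unnecessarily circuitous path prompted by the initial misreading of what $E_v^1$ is in the split case.
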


\begin{proof}
In this setting, $E_v^1 = \{(a, a^{-1}) \in F_v^\times \times F_v^\times\}$. By Lemma \ref{l:bs D},
\begin{equation*}
\omega_\psi((a,a^{-1}), 1) \phi'(x_1, x_2) 
= \xi_v(a,a^{-1})^{-1} \phi'(x_1 a^{-1}, x_2a) 
= \xi_v(a, a^{-1})^{-1} \ONE_{a \cO_{F_v}}(x_1) \ONE_{a^{-1} \cO_{F_v}}(x_2).
\end{equation*}
Hence
\begin{align*}
\langle \omega_\psi((a,a^{-1}), 1) \phi', \phi' \rangle
&= \int_{\bX_v'} \xi_v(a,a^{-1})^{-1} \ONE_{a\cO_{F_v}}(x_1) \ONE_{a^{-1} \cO_{F_v}}(x_2) \ONE_{\cO_{F_v}}(x_1) \ONE_{\cO_{F_v}}(x_2) \, dx_1 \, dx_2 \\
&= \xi_v(a,a^{-1})^{-1} \vol(a \cO_{F_v} \cap \cO_{F_v},dx_v) \vol(a^{-1} \cO_{F_v} \cap \cO_{F_v},dx_v) \\
&= \xi_v(a,a^{-1})^{-1} \tfrac{1}{q_v^{|\val(a)|}} \vol(\cO_{F_v},dx_v)^2 = \xi_v(a,a^{-1})^{-1} \tfrac{1}{q_{v}^{|\val(a)|}} q_v^{-d_v}.
\end{align*}
We therefore have, writing $\pi = \pi_v$ for a uniformizer of $F_v$,
\begin{align*}
Z_v(\tfrac{1}{2}, \Phi_v, \chi_v)
&= \int_{F_v^\times} \langle \omega_\psi(a, a^{-1}) \phi_v', \phi_v' \rangle \xi_v(a,a^{-1}) \chi_v(a,a^{-1}) \, da \\
&= \sum_{n \in \bZ} \int_{\cO_{F_v}^\times} \langle \omega_\psi(\pi^n a, \pi^{-n} a^{-1}) \phi_v', \phi_v' \rangle \xi_v(\pi^n a, \pi^{-n} a^{-1}) \chi_v(\pi^n a, \pi^{-n} a^{-1}) \, da \\
&= q_v^{-3d_v/2} \cdot \frac{1-q_v^{-2}}{(1-q_v^{-1}\chi_v(\pi^{-1},\pi))(1-q_v^{-1}\chi_v(\pi,\pi^{-1}))} \\
&= q_v^{-3d_v/2} \cdot \frac{L_v(1,\chi_{1,v} \otimes \chi_{2,v}^{-1}) L_v(1,\chi_{1,v}^{-1} \otimes \chi_{2,v})}{L_v(2, \varepsilon_{E/F})}. \qedhere
\end{align*}
\end{proof}

\begin{lemma}
Let $v$ be a finite split place and assume that $\chi_v$ is ramified. Then
\begin{equation*}
Z_v(\tfrac{1}{2}, \Phi_v, \chi_v) = q_v^{-3d_v/2}(1-q_v^{-1})^2.
\end{equation*}
\end{lemma}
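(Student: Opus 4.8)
The statement is a local computation entirely parallel to the unramified finite split case just treated, so the plan is to carry out the same unfolding but now track the ramification of $\chi_v = \chi_{1,v} \otimes \chi_{2,v}$. First I would recall from Lemma~\ref{l:bs D} (specifically the formula $\bs(1,D(a,a^{-1})) = (u,a)_F$, which in the split setting is trivial, together with $\bs(\alpha, D(1,\Nm(\alpha))) = \xi(\alpha^{-1})$) how $\omega_\psi$ acts: for $(a,a^{-1}) \in E_v^1 = \{(a,a^{-1}) : a \in F_v^\times\}$,
\begin{equation*}
\omega_\psi((a,a^{-1}),1)\phi'(x_1,x_2) = \xi_v(a,a^{-1})^{-1}\,\phi'(x_1 a^{-1}, x_2 a),
\end{equation*}
and then compute the matrix coefficient $\langle \omega_\psi((a,a^{-1}),1)\phi', \phi'\rangle$. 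Since $\phi' = \chi_v(x_1,x_2)\ONE_{\cO_{F_v}^\times}(x_1)\ONE_{\cO_{F_v}^\times}(x_2)$ is supported on units in both coordinates, the translate $\phi'(x_1 a^{-1}, x_2 a)$ is supported on $a\cO_{F_v}^\times \times a^{-1}\cO_{F_v}^\times$, so the overlap with the support of $\phi'$ is empty unless $\val(a) = 0$, i.e.\ $a \in \cO_{F_v}^\times$. This is the key simplification: the zeta integral over $F_v^\times$ collapses to an integral over $\cO_{F_v}^\times$ only.

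Next I would evaluate that integral. On $a \in \cO_{F_v}^\times$ the matrix coefficient becomes $\xi_v(a,a^{-1})^{-1}\int_{\cO_{F_v}^\times \times \cO_{F_v}^\times}\chi_v(x_1 a^{-1}, x_2 a)\overline{\chi_v(x_1,x_2)}\,dx_1\,dx_2 = \xi_v(a,a^{-1})^{-1}\chi_{1,v}(a^{-1})\chi_{2,v}(a)\,\vol(\cO_{F_v}^\times, dx_v)^2$, using $|\chi_{i,v}| = 1$. Then
\begin{equation*}
Z_v(\tfrac12, \Phi_v, \chi_v) = \vol(E_v^1 \cap \cO_{E_v}^\times, d^1x_v^{\Tam})\int_{\cO_{F_v}^\times} \langle \omega_\psi(a,a^{-1})\phi', \phi'\rangle\, \xi_v(a,a^{-1})\chi_v(a,a^{-1})\, da,
\end{equation*}
and the integrand reduces to $\chi_{1,v}(a^{-1})\chi_{2,v}(a)\chi_{1,v}(a)\chi_{2,v}(a^{-1})\vol(\cO_{F_v}^\times)^2 = \vol(\cO_{F_v}^\times)^2$ — a constant. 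So the whole thing is $\vol(E_v^1 \cap \cO_{E_v}^\times)\cdot\vol(\cO_{F_v}^\times, d^1 x_v^{\Tam})\cdot\vol(\cO_{F_v}^\times, dx_v)^2$ (being careful about which measure normalizes the $E_v^1$-integration, the Tamagawa $d^1x_v^{\Tam}$, versus the additive $dx_v$ on $\bX_v'$). Plugging in the volume computations from Section~\ref{s:tamagawa} — at a split place $v$ one has $\vol(E_v^1 \cap \cO_{E_v}^\times, d^1 x_v^{\Tam}) = q_v^{-d_v/2}$, $\vol(\cO_{F_v}^\times, d^\times x_v^{\Tam}) = q_v^{-d_v/2}$, and $\vol(\cO_{F_v}, dx_v) = q_v^{-d_v/2}$ with $\vol(\cO_{F_v}^\times, dx_v) = (1-q_v^{-1})q_v^{-d_v/2}$ — yields $q_v^{-d_v/2}\cdot q_v^{-d_v/2}\cdot (1-q_v^{-1})^2 q_v^{-d_v} = q_v^{-3d_v/2}(1-q_v^{-1})^2$, as claimed.

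\textbf{Main obstacle.} The only delicate point is bookkeeping of measures and normalizations: one must identify exactly which Haar measure appears in the definition $Z_v(\tfrac12,\Phi_v,\chi_v)$ (from \eqref{e:local zeta}, an integral over $E_v^1$ against $\chi_v\xi_v$ with a $\vol(E_v^1)$ prefactor), match it with the Tamagawa normalizations of Section~\ref{s:tamagawa}, and track the self-dual additive measure $dx_v$ on $\bX_v'$ through the inner product $\langle\cdot,\cdot\rangle$. Once this is set up correctly, the character cancellation makes the computation immediate, so no genuine analytic difficulty arises; it is essentially the same argument as the unramified split case with the $L$-factors replaced by the single constant $(1-q_v^{-1})^2$ coming from integrating the (now) ramified character over units rather than all of $F_v^\times$.
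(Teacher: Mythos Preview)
Your approach is exactly the paper's --- compute the Weil action on $\phi'$, observe the matrix coefficient is supported on $a\in\cO_{F_v}^\times$, and read off the volumes --- but your volume bookkeeping has a slip that invalidates the final line. You insert a prefactor $\vol(E_v^1\cap\cO_{E_v}^\times,d^1x_v^{\Tam})$ in front of the integral $\int_{\cO_{F_v}^\times}\ldots\,da$ and then \emph{also} pick up $\vol(\cO_{F_v}^\times,d^1x_v^{\Tam})$ from evaluating that integral, so you have counted the same volume twice. Concretely, your displayed product
\[
q_v^{-d_v/2}\cdot q_v^{-d_v/2}\cdot (1-q_v^{-1})^2 q_v^{-d_v}
\]
equals $q_v^{-2d_v}(1-q_v^{-1})^2$, not $q_v^{-3d_v/2}(1-q_v^{-1})^2$; the exponents $-d_v/2-d_v/2-d_v$ sum to $-2d_v$.

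The fix is simply to remove the spurious prefactor. The paper (compare its unramified split computation) takes $Z_v(\tfrac12,\Phi_v,\chi_v)=\int_{F_v^\times}\langle\omega_\psi((a,a^{-1}),1)\phi',\phi'\rangle\,(\chi_v\xi_v)(a,a^{-1})\,da$ with the Tamagawa measure, and after the support reduction this is
\[
\vol(\cO_{F_v}^\times,dx_v)^2\cdot\vol(\cO_{F_v}^\times,d^1x_v^{\Tam})
=(1-q_v^{-1})^2 q_v^{-d_v}\cdot q_v^{-d_v/2}
=q_v^{-3d_v/2}(1-q_v^{-1})^2,
\]
matching the statement. (The ``$\vol(E_v^1)$'' in \eqref{e:local zeta} cannot be taken literally at a split place since $E_v^1\cong F_v^\times$ is noncompact; the actual working definition, as the paper's split computations make clear, is the bare integral over $E_v^1$.) Once you drop the extra factor, your argument is identical to the paper's.
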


\begin{proof}
We have $\omega_\psi((a,a^{-1}), 1) \phi'(x_1, x_2) = \xi_v(a,a^{-1})^{-1} \chi_v(a,a^{-1})^{-1} \ONE_{a \cO_{F_v}^\times}(x_1) \ONE_{a^{-1} \cO_{F_v}^\times}(x_2)$ so that
\begin{equation*}
\langle \omega_\psi((a,a^{-1}),1) \phi', \phi' \rangle = \xi_v(a,a^{-1})^{-1} \chi_v(a,a^{-1})^{-1} \vol(\cO_{F_v}^\times,dx_v)^2 \ONE_{\cO_{F_v}^\times}(a).
\end{equation*}
Thus $Z_v(\tfrac{1}{2}, \Phi_v, \chi_v)  = \vol(\cO_{F_v}^\times, dx_v)^2 \vol(\cO_{F_v}^\times,d^1 x_v^{\Tam})$.
\end{proof}

\section{An explicit Rallis inner product formula}\label{ch:explicit rallis}

Let $F$ be a totally real number field and let $E/F$ be a CM extension. Let $\eta_1,\ldots, \eta_n$ be the real embeddings of $F$. Let $\chi \from E^\times \backslash \bA_E^\times \to \CC^\times$ be a Hecke character of infinity type $(k+j,j)$ where $k = (k_1, \ldots, k_n), j = (j_1, \ldots, j_n) \in \bZ^n$. Assume that $B = M_2(F)$ and let $W_0 = \Res_{B/E} B = W_1 + W_2$ be a decomposition of the $E$-space $W_0$ into totally isotropic subspaces. Set $\bX' = \Res_{E/F}(E \otimes W_1), \bY' = \Res_{E/F}(E \otimes W_2)$, and define a Schwartz function $\phi' = \otimes_v \phi_v' \in \cS(\bX'(\bA))$ as in Section \ref{ch:schwartz}:
\begin{equation*}
\phi_{l,v}'(z) \colonequals \begin{cases}
{}_1 F_1(-l_i, k_i+1, 4 \pi z \overline z) \overline z^k e^{-2 \pi z \overline z} & \text{if $v = \eta_i \mid \infty$ and $k \geq 0$,} \\
{}_1 F_1(-l_i, -k_i+1, 4 \pi z \overline z) z^{-k} e^{-2 \pi z \overline z} & \text{if $v = \eta_i \mid \infty$ and $k < 0$,} \\
\ONE_{\cO_{E_v}}(z) & \text{if $v$ is nonsplit and $\chi_v$ is unramified,} \\
\chi_v(z) \ONE_{\cO_{E_v}^\times}(z) & \text{if $v$ is nonsplit and $\chi_v$ is ramified,} \\
\ONE_{\cO_{F_v}}(z_1) \ONE_{\cO_{F_v}}(z_2) & \text{if $v$ splits and $\chi_v$ is unramified,} \\
\chi_v(z_1, z_2)^{-1} \ONE_{\cO_{F_v}^\times}(z_1) \ONE_{\cO_{F_v}^\times}(z_2) & \text{if $v$ splits and $\chi_v$ is ramified.}
\end{cases}
\end{equation*}
Define
\begin{equation*}
\Sigma_\chi \colonequals \{v : \text{$\chi_v$ is unramified}\}, \qquad
\Sigma_{\widetilde \chi} \colonequals \{v : \text{$\widetilde \chi_v$ is unramified}\}.
\end{equation*}
For each place $v$ of $F$, define
\begin{equation*}
C_v
\colonequals \begin{cases}
\frac{(2\pi)^2}{4^{|k_i|+1} \pi^{|k_i|+1}} \cdot \frac{l_i! (|k_i|)!^2}{(l_i+|k_i|)!} & \text{if $v = \eta_i \mid \infty$} \\
q_v^{-d_v/2} & \text{if $v \notin \Sigma_\chi$, $v \notin \Sigma_{\widetilde \chi}$, $v$ unram} \\
q_v^{-d_v/2}(1 - q_v^{-2}) & \text{if $v \in \Sigma_\chi$, $v \notin \Sigma_{\widetilde \chi}$, $v$ unram} \\
q_v^{-d_v/2} & \text{if $v \in \Sigma_\chi$, $v \in \Sigma_{\widetilde \chi}$, $v$ unram} \\
q_v^{-d_v/2} q_v^{-1}(1-q_v^{-2})^{-1}(1-\widetilde \chi_w(\pi_w) q_v^{-1})  & \text{if $v \notin \Sigma_\chi$, $v \notin \Sigma_{\widetilde \chi}$, $v$ ram} \\
q_v^{-d_v/2} q_v^{-1}(1-q_v^{-1})(1-q_v^{-2})^{-1}(1 - \widetilde \chi_v(\pi_v) q_v^{-1}) & \text{if $v \in \Sigma_\chi$, $v \notin \Sigma_{\widetilde \chi}$, $v$ ram} \\
q_v^{-d_v/2} q_v^{-1}(1-q_v^{-1})(1-q_v^{-2})^{-1} & \text{if $v \in \Sigma_\chi$, $v \in \Sigma_{\widetilde \chi}$, $v$ ram} \\
q_v^{-3d_v/2} & \text{if $v \notin \Sigma_\chi$, $v \notin \Sigma_{\widetilde \chi}$, $v$ split} \\
q_v^{-3d_v/2}\frac{(1 - (\chi_{1,v}\chi_{2,v}^{-1})(\pi_v)q_v^{-1})(1 - (\chi_{1,v}^{-1}\chi_{2,v})(\pi_v) q_v^{-1})}{(1+q_v^{-1})} & \text{if $v \in \Sigma_\chi$, $v \notin \Sigma_{\widetilde \chi}$, $v$ split} \\
q_v^{-3d_v/2} (1-q_v^{-1})(1+q_v^{-1})^{-1} & \text{if $v \in \Sigma_\chi$, $v \in \Sigma_{\widetilde \chi}$, $v$ split}
\end{cases}
\end{equation*}

\begin{theorem}\label{t:explicit rallis}
The Petersson inner product of the theta lift $\theta_{\phi'}(\chi \xi)$ is
\begin{equation*}
\langle \theta_{\phi'}(\chi \xi), \theta_{\phi'}(\chi \xi) \rangle = \frac{\rho_F}{\rho_E} \cdot
\frac{L(1,\widetilde \chi)}{\zeta(2)} \cdot \prod_v C_v,
\end{equation*}
where $C_v = 1$ at all but finitely many places. In particular, $\theta_{\phi'}(\xi \chi) \neq 0$ if $\chi$ is nontrivial on $\bA_E^1$.
\end{theorem}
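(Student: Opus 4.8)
The plan is to derive the formula from the split-case Rallis inner product formula (Proposition~\ref{p:split rallis}) and the local zeta-integral computations of Section~\ref{ss:local}. Taking $\varphi_1 = \varphi_2 = \phi'$ in Proposition~\ref{p:split rallis} gives
\[
\langle \theta_{\phi'}(\chi \xi), \theta_{\phi'}(\chi \xi) \rangle
= \frac{\rho_F}{\rho_E} \prod_v Z_v\!\left(\tfrac12, \Phi_v^{\O,\Sp}(\delta(\phi_v' \otimes \overline{\phi_v'})), \chi_v\right),
\]
so everything reduces to evaluating the right-hand Euler product. Each local factor has already been computed place-by-place in Section~\ref{ss:local}: at the archimedean places using the confluent-hypergeometric Schwartz functions $\phi_{k,l}'$ together with Lemma~\ref{l:infinite hecke}; at the finite inert and finite ramified places using Lemma~\ref{l:nonsplit hecke} and the explicit Tamagawa volumes of Section~\ref{s:tamagawa}; and at the finite split places using Lemma~\ref{l:split hecke}, where the essential point is that the split-place integral produces the ratio of local Rankin--Selberg factors $L_v(1, \chi_{1,v} \otimes \chi_{2,v}^{-1}) L_v(1, \chi_{1,v}^{-1} \otimes \chi_{2,v}) / L_v(2, \varepsilon_{E/F})$.

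The next step is to reorganize the product. At each place one checks the local identity
\[
Z_v\!\left(\tfrac12, \Phi_v^{\O,\Sp}(\delta(\phi_v' \otimes \overline{\phi_v'})), \chi_v\right) = \frac{L_v(1, \widetilde\chi)}{\zeta_{F,v}(2)} \cdot C_v,
\]
where $L_v(1, \widetilde\chi)$ is the product, over the places of $E$ lying over $v$, of the local Euler factors of $L(s, \widetilde\chi)$ at $s = 1$, and $\zeta_{F,v}$ is the local Euler factor of $\zeta_F$; this is a matter of matching the formulas of Section~\ref{ss:local} against the table defining $C_v$, the only genuine manipulations being at the split places (where the Rankin--Selberg factors above are rewritten in terms of $\widetilde\chi$ and $\zeta_{F,v}(2)$) and at the places ramified in $E/F$ (where the factor $1 - \widetilde\chi_w(\pi_w) q_v^{-1}$ appears). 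Since $L_v(1, \widetilde\chi) = 1$ and $\zeta_{F,v}(2)^{-1} = 1$ for all but finitely many $v$, and $C_v = 1$ outside the archimedean places, the places dividing $D_F$, and the places where $\chi$ or $\widetilde\chi$ ramifies, the Euler product converges and
\[
\prod_v Z_v\!\left(\tfrac12, \Phi_v^{\O,\Sp}(\delta(\phi_v' \otimes \overline{\phi_v'})), \chi_v\right) = \frac{L(1, \widetilde\chi)}{\zeta_F(2)} \cdot \prod_v C_v,
\]
which, combined with the display above, is the claimed identity, with $\prod_v C_v$ a finite product.

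For the nonvanishing statement: inspection of the table shows that each $C_v$ is nonzero --- a positive real at the archimedean places, and otherwise a power of $q_v$ times a finite product of factors $1 - a q_v^{-s}$ with $|a| = 1$ and $s \in \{1, 2\}$, each of which is nonzero since $q_v \geq 2$ --- so $\prod_v C_v \neq 0$, and of course $\rho_F/\rho_E \neq 0$. When $\chi$ is nontrivial on $\bA_E^1$ the character $\widetilde\chi$ is a nontrivial unitary Hecke character of $E$, so $L(1, \widetilde\chi) \neq 0$ by the classical nonvanishing of Hecke $L$-functions at $s = 1$. Hence $\langle \theta_{\phi'}(\chi \xi), \theta_{\phi'}(\chi \xi) \rangle \neq 0$, and therefore $\theta_{\phi'}(\chi \xi) \neq 0$.

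The step I expect to be the main obstacle is the second one: the bookkeeping needed to make the local identities $Z_v = (L_v(1, \widetilde\chi)/\zeta_{F,v}(2)) \, C_v$ hold exactly. This requires careful tracking of the Tamagawa-measure normalizations (the $q_v^{-d_v/2}$ and $q_v^{-3d_v/2}$ factors), of the partial-Fourier-transform $\delta$ in the definition of $\Phi_v^{\O,\Sp}$, and of the behavior at the dyadic places and the places ramified in $E/F$, so that no spurious local factors survive into the final product. The analytic inputs (convergence of the product, nonvanishing of $L(1, \widetilde\chi)$) and the structural input (Proposition~\ref{p:split rallis}, itself resting on the regularized Siegel--Weil formula of \cite{GQT}) are all standard.
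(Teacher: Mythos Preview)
Your proposal is correct and follows essentially the same approach as the paper: invoke the Rallis inner product formula (Proposition~\ref{p:split rallis}, applicable since $B = M_2(F)$ here), match each local zeta integral from Section~\ref{ss:local} against $L_v(1,\widetilde\chi)/\zeta_{F,v}(2)$ to extract $C_v$, observe that $C_v = 1$ almost everywhere, and deduce nonvanishing from $C_v \neq 0$ and $L(1,\widetilde\chi) \neq 0$ for $\widetilde\chi$ nontrivial. Your write-up is in fact more explicit than the paper's own proof, which compresses the same argument into a few lines.
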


\begin{proof}
By the results of Section \ref{ss:local}, it is a straightforward comparison to see that
\begin{equation*}
Z(\tfrac{1}{2}, \Phi_v, \chi_v) = C_v \cdot \frac{L_v(1, \widetilde \chi)}{\zeta_v(2)} \qquad \text{for all places $v$ of $F$}.
\end{equation*}
Since all but finitely many places simultaneously satisfy the conditions $d_v = 0$, $v \notin \Sigma_\chi$, $v \notin \Sigma_{\widetilde \chi}$, and $v$ is split or unramified, we see that $C_v = 1$ for all but finitely many places, and the desired equation now follows from the doubling method. Observe that the factor $\rho_F/\rho_E$ comes from the fact definition of the Tamagawa measure on $\bA_E^1$ and the local measures on $E_v^1$ (Section \ref{s:tamagawa}).

Finally, since $C_v \neq 0$ for all $v$, it follows that $\theta_{\phi'}(\chi \xi) \neq 0$ if and only if $L(1, \widetilde \chi) \neq 0$. But $L(1, \widetilde \chi) \neq 0$ if and only if $\chi$ is trivial on $\bA_E^1$, so the final assertion holds.
\end{proof}

The Shimura--Maass differential operator 
\begin{equation*}
\delta_k \colonequals \frac{1}{2\pi i}\left(\frac{\partial}{\partial z} + \frac{k}{z - \overline z}\right)
\end{equation*}
maps real analytic modular forms of weight $k$ to real analytic modular forms of weight $k+2$. Define the composite operator
\begin{equation}\label{e:maass}
\delta_k^l \colonequals \delta_{k + 2l} \circ \cdots \circ \delta_{k+2} \circ \delta_k
\end{equation}
mapping real analytic modular forms of weight $k$ to real analytic modular forms of weight $k + 2l$. 

Let $f_\chi$ be the normalized newform of weight $|k| + 1 = (|k_1| + 1, \ldots, |k_n| + 1)$ in $\pi_\chi$. For $l = (l_1, \ldots, l_n)$, let $F_\chi^l$ denote the automorphic form on $\GL_2(\bA_F)$ corresponding to $\delta_{|k|+1}^l f_\chi$.


\begin{theorem}\label{t:newform}
If $\chi$ does not factor through the norm map $\bA_E^\times \to \bA_F^\times$, we have
\begin{equation*} 
\theta_{\phi_l'}(\chi \xi) = D_l \cdot F_\chi^l, \qquad \text{for some $D_l \neq 0$.}
\end{equation*}
\end{theorem}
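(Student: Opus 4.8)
The plan is to identify the theta lift $\theta_{\phi_l'}(\chi\xi)$ with (a constant multiple of) the automorphic form $F_\chi^l$ associated to $\delta_{|k|+1}^l f_\chi$ by comparing them as vectors in the irreducible automorphic representation $\pi_\chi^B = \pi_\chi$ (recall $B = M_2(F)$ here, and $\pi_\chi$ is cuspidal since $\chi$ does not factor through the norm, by Lemma~\ref{l:lift cusp}). By Theorem~\ref{t:theta aut ind} (or Theorem~\ref{t:global lift}), $\theta_{\phi_l'}(\chi\xi)$ is a nonzero vector in $\pi_\chi$ — nonzero by Theorem~\ref{t:explicit rallis}. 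Since $\pi_\chi = \bigotimes_v \pi_{\chi,v}$ is irreducible, it suffices to show that $\theta_{\phi_l'}(\chi\xi)$ and $F_\chi^l$ transform in the same way under enough local group elements to pin down a vector up to scalar: at the finite places this means matching the action of a suitable compact-open subgroup (level structure), and at the infinite places matching the action of $\SO(2)$ (weight) and the appropriate Casimir/lowering behavior.

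The key steps, in order: First, at each infinite place $v = \eta_i$, use Lemma~\ref{l:infinite hecke}, which says $\omega_\psi(\alpha, r(\theta))\phi_{k,l}' = \xi(\alpha^{-1})\alpha^{-k} e^{i(|k|+1+2l)\theta}\phi_{k,l}'$, to conclude that $\theta_{\phi_l'}(\chi\xi)$ is an eigenvector of weight $|k_i|+1+2l_i$ at $v$, exactly the weight of $\delta_{|k_i|+1}^{l_i} f_{\chi}$. Second, at each finite place, invoke Lemmas~\ref{l:nonsplit hecke} and~\ref{l:split hecke}, which show $\omega_\psi(h, d(\delta)^{-1}gd(\delta))\phi_v' = (\chi\xi)^{-1}(h)\cdot(\chi\epsilon_{E/F})(a)\cdot\phi_v'$ for $g = \left(\begin{smallmatrix} a & b \\ c & d\end{smallmatrix}\right)$ in the appropriate $K_0$; this exhibits $\theta_{\phi_l'}(\chi\xi)$ as (a scalar times) the local new vector in $\pi_{\chi,v}$ with the correct nebentypus, using the conductor formula \eqref{e:c pi chi} to match levels. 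Third, assemble: the finite-place new vectors plus the infinite-place weight-$(|k|+1+2l)$ vector cut out a one-dimensional subspace of $\pi_\chi$ (newform theory / multiplicity one), and $F_\chi^l$ lies in the same subspace — this is essentially the classical fact that the Shimura--Maass operator $\delta_k^l$ raises weight by $2l$ while preserving level and nebentypus, together with the observation (Lemma~\ref{l:schwartz maass shimura}) that $\delta_{|k|+1}^l$ applied to the Gaussian-type vector $\phi_{k,0}'$ produces precisely $\phi_{k,l}'$ up to an explicit nonzero constant. Hence $\theta_{\phi_l'}(\chi\xi) = D_l\cdot F_\chi^l$ for some constant $D_l$, and $D_l\neq 0$ because $\theta_{\phi_l'}(\chi\xi)\neq 0$ by Theorem~\ref{t:explicit rallis}.

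The main obstacle will be the bookkeeping at the infinite places: one must argue not merely that $\theta_{\phi_l'}(\chi\xi)$ has the right $\SO(2)$-weight, but that it is the specific vector obtained by applying $\delta_{|k|+1}^l$ to the lowest-weight (newform) vector, rather than some other weight-$(|k|+1+2l)$ vector in the (infinite-dimensional) archimedean component. This is where Lemma~\ref{l:schwartz maass shimura} does the real work: it identifies the Schwartz function $\phi_{k,l}'$ as the image of $\phi_{k,0}'$ under the operator realizing $\delta_{|k|+1}^l$ on the Weil-representation side, so that the theta integral intertwines $\delta_{|k|+1}^l$ on the $\GL_2$ side with the substitution $\phi_{k,0}'\rightsquigarrow\phi_{k,l}'$ on the Schwartz side. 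Concretely, one shows $\theta_{\phi_0'}(\chi\xi)$ corresponds to $f_\chi$ (the $l=0$ case, which is classical and follows from the finite-place and archimedean weight computations above together with the classical theta-lift-to-$\GL_2$ construction of CM forms), and then propagates through $\delta_{|k|+1}^l$ via Lemma~\ref{l:schwartz maass shimura}. The remaining verifications — that the finite-place Schwartz functions give the new vector at the correct level matching \eqref{e:c pi chi}, and that the archimedean $\delta_k$-computation in Lemma~\ref{l:schwartz maass shimura} is correctly normalized — are routine given the lemmas already established.
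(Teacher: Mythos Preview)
Your approach is essentially the paper's: place $\theta_{\phi_l'}(\chi\xi)$ in $\pi_\chi$ via Theorem~\ref{t:global lift}, verify the transformation law under $\SO(2)$ at infinity (Lemma~\ref{l:infinite hecke}) and under $K_0(c(\pi_\chi))$ at the finite places (Lemmas~\ref{l:nonsplit hecke}, \ref{l:split hecke}), and invoke multiplicity one. The ``main obstacle'' you flag at the archimedean places, however, is not there. For any irreducible admissible representation of $\GL_2(\RR)$, each $\SO(2)$-type occurs with multiplicity at most one; in particular, in $\pi_{\chi,\eta_i}$ the weight-$(|k_i|+1+2l_i)$ subspace is already one-dimensional, so matching the $\SO(2)$-weight alone suffices to pin down the archimedean vector up to scalar. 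There is no ``other weight-$(|k|+1+2l)$ vector'' to rule out, and the paper's proof simply cites Casselman's theorem (tacitly including this archimedean $K$-type multiplicity-one fact) to conclude. Your detour through Lemma~\ref{l:schwartz maass shimura} is correct but unnecessary for the present statement; the paper reserves that lemma for Theorem~\ref{t:Dl}, where the explicit constant relating $\theta_{\phi_l'}(\chi\xi)$ to $\delta_{|k|+1}^l\theta_{\phi_0'}(\chi\xi)$ is actually needed to compute $|D_l|^2$.
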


\begin{proof}
First recall that by Theorem \ref{t:global lift}(a), the theta lift $\theta_{\phi'}(\chi \xi)$ is an automorphic form in the automorphic induction $\pi_\chi$ to $\GL_2(\bA_F)$. If $f$ is a Hecke eigenform of weight $|k|+1+2l$ in $\pi_\chi$, then it must satisfy that for all $r(\theta) \colonequals r(\theta_1) \cdots r(\theta_n)$ with $r(\theta_j) \in \SO(2)$ and $k_0 = \left(\begin{smallmatrix} a & b \\ c & d \end{smallmatrix}\right) \in K_0 \colonequals \prod_{v \nmid \infty} K_{0,v}$ with $\det(k_0) = 1$, we have
\begin{equation}\label{e:eigen criterion}
f(g r(\theta) d(\frak d)^{-1} k_0 d(\frak d)) = \prod_{j=1}^n e^{i(|k_j|+1+2l_j)\theta_j} (\chi\epsilon_{E/F})(a) f(g) \qquad \text{for all $g \in \GL_2(\bA_F)$}.
\end{equation}
By Casselman's theorem \cite[Theorem 1]{Ca73}, the dimension of automorphic forms satisfying \eqref{e:eigen criterion} must have dimension $1$. 
Therefore to see that $\theta_{\phi'}(\chi \xi)$ is a (possibly zero!) multiple of $F_\chi^l$, we need only see that it satisfies \eqref{e:eigen criterion}.

We first recall the definition of the theta lift $\theta_{\phi'}(\chi \xi)$ on $\GL_2(\bA_F)$. If $g \in \GL_2(\bA_F)^+ \colonequals \{g \in \GL_2(\bA_F) : \det(g) \in \Nm(\bA_E^\times)\}$, then for any $h \in \bA_E^\times$ such that $\det(g) = \Nm(h)$,
\begin{equation*}
\theta_{\phi'}(\chi \xi)(g) = \int_{[E^1]} \Theta(\omega_\psi(hh_1,g) \phi') \cdot (\chi\xi)(h h_1) \, dh_1.
\end{equation*}
We define $\theta_{\phi'}(\chi \xi)$ on $\GL_2(F) \GL_2(\bA_F)^+ = \left\{g \in \GL_2(\bA_F) : \det(g) \in F^\times \Nm(\bA_E^\times)\right\}$ by
\begin{equation*}
\theta_{\phi'}(\chi \xi)(\gamma g) = \theta_{\phi'}(\chi \xi)(g), \qquad \text{for $\gamma \in \GL_2(F), g \in \GL_2(\bA_F)^+$}.
\end{equation*}
Note that $\GL_2(F) \GL_2(\bA_F)^+$ is an index-2 subgroup of $\GL_2(\bA_F)$. We define $\theta_{\phi'}(\chi \xi)$ on $\GL_2(\bA_F)$ by extending by 0 outside $\GL_2(F) \GL_2(\bA_F)$. Define $K_0 \colonequals \prod_v K_{0,v}$, where $K_{0,v} \subset \GL_2(\cO_{F_v})$ as defined in Section \ref{ch:schwartz}. Note that $K_0 \subset \GL_2(F) \GL_2(\bA_F)^+$. By Lemmas \ref{l:infinite hecke}, \ref{l:nonsplit hecke}, and \ref{l:split hecke}, for $r(\theta) = r(\theta_1) \cdots r(\theta_n)$ with $r(\theta_j) \in \SO(2)$ and $k_0 = \left(\begin{smallmatrix} a & b \\ c & d \end{smallmatrix}\right) \in K_0 \cap \GL_2(\bA_F)^+$, 
\begin{equation*}
\omega_\psi(h_0, r(\theta) d(\frak d)^{-1} k_0 d(\frak d)) \phi_l' = \prod_{j=1}^n e^{i(|k_j|+1+2l_j)\theta_j} (\chi \xi)^{-1}(h h_0) (\chi \epsilon_{E/F})(a) \phi_l',
\end{equation*}
where $h_0 \in \bA_E^\times$ is such that $\Nm(h_0) = \det(k_0)$. This implies that for any $g \in \GL_2(\bA_F)^+$ and any $h \in \bA_E^\times$ with $\Nm(h) = \det(g)$,
\begin{align*}
\theta_{\phi_l'}&(\chi \xi)(g r(\theta) d(\frak d)^{-1} k_0 d(\frak d)) \\
&= \int_{[E^1]} \Theta(\omega_\psi(h h_1 h_0, g r(\theta) d(\frak d)^{-1} k_0 d(\frak d)) \phi_l') \cdot (\chi \xi)(h h_1 h_0) \, dh_1 \\
&= \prod_{j=1}^n \int_{[E^1]} \Theta(\omega_\psi(h h_1, g) \phi_l') \cdot e^{i(|k_j|+1+2l_j)\theta_j} \cdot (\chi \xi)^{-1}(h_0) \cdot (\chi \epsilon_{E/F})(a) \cdot (\chi \xi)(h h_1 h_0) \, dh_1 \\
&= \prod_{j=1}^n e^{i(|k_j|+1+2l_j)\theta_j} (\chi \epsilon_{E/F})(a) \cdot \int_{[E^1]} \Theta(\omega_\psi(h h_1, g) \phi_l') \cdot (\chi \xi)(h h_1) \, dh_1 \\
&= \prod_{j=1}^n e^{i(|k_j|+1+2l_j)\theta_j} (\chi \epsilon_{E/F})(a) \cdot \theta_{\phi_l'}(\chi \xi)(g). \qedhere
\end{align*}
\end{proof}

In the next result, we give an exact formula (up to $\CC^1$) for the constant $D_l$ in the case that $F = \bQ$. One can do this for the general case by comparing Theorem \ref{t:explicit rallis} to known formulas for the Petersson inner product of Hilbert modular forms, but the formula for $D_l$ will be more complicated. In the following, we use \cite[Equation (2.5)]{S76}, \cite[Section 5]{H81} together with the factors at bad places as determined in \cite[Section 4.2]{Co18}.

Let $g$ be a twist of $f_\chi$ which is twist-minimal by $\chi_g$. Let $N$, $N_g,$ be the levels of $f_\chi,$ $g$, and let $N_{\chi, g}$ be the conductor of $\chi_g$. For every prime $p$, let $p^{r_g}$ be the exact power of $p$ dividing $N_g$, and $p^{r_{\chi_g}}$ be the exact power of $p$ dividing $N_{g,\chi}$. We denote by $L(s, \ad, f_\chi)$ the adjoint $L$-function of $f_\chi$. Define
\begin{equation*}
\ell_p \colonequals
\begin{cases}
1 & \text{if $p \nmid N$,} \\
(1 + 1/p) L_p(\ad, f, 1) & \text{if $p \nmid N_g$ and $p \mid N$,} \\
(1 + 1/p) & \text{if $p \mid\mid N_g$ and $p \mid\mid N$,} \\
(1 + 1/p)(1-1/p^2)^{-1} & \text{if $p \mid\mid N_g$ and $p^2 \mid N$,} \\
(1 + 1/p) & \text{if $r_g = r_{\chi_g} \geq 1$ and $p^{r_g} \mid\mid N$}, \\
(1 + 1/p)(1 - 1/p)^{-1} & \text{if $r_g = r_{\chi_g} \geq 1$ and $p^{r_g+1} \mid N$,} \\
1 & \text{if $r_g \geq 2$ and $r > r_{\chi_g}$.} \\
\end{cases}
\end{equation*}
Note that, comparing to \cite[Section 4.2]{Co18}, the last case comes from the fact that $\pi_\chi \cong \pi_\chi \otimes \det(\varepsilon_{E/F})$.

\begin{theorem}\label{t:Dl}
Assume $F = \bQ$ and let $K_0$ is any maximal compact subgroup of $\GL_2(\bA_{\bQ, \rm fin})$ containing $K \colonequals \prod_v, K_{0,v}(c_v(\pi_\chi))$ (Sections \ref{s:conductor}, \ref{ch:schwartz}). Then
\begin{equation*}
|D_l|^2 
=
\left|\frac{(2\pi i)^l (2i)^l}{(|k|+1)_l}\right|^2 \cdot \rho_E^{-2} \cdot \prod_v C_v \cdot \zeta(2) \cdot (2\pi)^{-1} \cdot 2 \cdot \frac{[K_0 : K]}{\frac{\pi}{3} \cdot [\PSL_2(\bZ) : \Gamma_1(N)]} \cdot  \frac{(4\pi)^{|k|+1}}{|k|!} \cdot \prod_p \ell_p.
\end{equation*}
In particular, $|D_l| \sim \pi^l$ and, up to an element of $\CC^1$, $\theta_{\phi_0'}(\chi \xi)$ is an algebraic holomorphic Hecke eigenform of weight $k+1$ and level $c(\chi)$.
\end{theorem}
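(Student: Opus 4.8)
The plan is to extract $|D_l|^2$ from the identity $\theta_{\phi_l'}(\chi\xi)=D_l\,F_\chi^l$ of Theorem~\ref{t:newform} by taking Petersson norms: since $F_\chi^l$ is the automorphic form attached to $\delta_{|k|+1}^l f_\chi$, we get
\[
|D_l|^2=\frac{\langle\theta_{\phi_l'}(\chi\xi),\theta_{\phi_l'}(\chi\xi)\rangle}{\langle F_\chi^l,F_\chi^l\rangle},
\]
and the two sides are then evaluated separately. The numerator is exactly the content of Theorem~\ref{t:explicit rallis}, namely $\frac{\rho_F}{\rho_E}\cdot\frac{L(1,\widetilde\chi)}{\zeta(2)}\cdot\prod_v C_v$, with $\rho_F=1$ because $F=\bQ$. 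So the only real work is on the denominator, where the goal is to express $\langle F_\chi^l,F_\chi^l\rangle$ in terms of the adjoint $L$-value of $f_\chi$.

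First I would reduce $\langle F_\chi^l,F_\chi^l\rangle$ to $\langle f_\chi,f_\chi\rangle$. By Lemma~\ref{l:schwartz maass shimura}, applying $\delta_{|k|+1}^l$ to the classical avatar of $\theta_{\phi_0'}(\chi\xi)=D_0 f_\chi$ recovers $\frac{(|k|+1)_l}{(2\pi i)^l(2i)^l}$ times the classical avatar of $\theta_{\phi_l'}(\chi\xi)$; combined with the standard Rankin--Selberg evaluation of the Petersson norm of a Maass--Shimura derivative this pins down $\langle F_\chi^l,F_\chi^l\rangle/\langle f_\chi,f_\chi\rangle$ explicitly, and it is precisely this comparison of the theta normalization with the $\delta^l$ normalization that produces the factor $\bigl|\tfrac{(2\pi i)^l(2i)^l}{(|k|+1)_l}\bigr|^2$ in the statement (the residual $l$-dependence being carried by $C_\infty$ inside $\prod_v C_v$). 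Next I would pass from the adelic product $\langle f_\chi,f_\chi\rangle$ to the classical Petersson product of the normalized newform on $\Gamma_1(N)$, which contributes the volume factor $\tfrac{\pi}{3}[\PSL_2(\bZ):\Gamma_1(N)]$ and the index $[K_0:K]$ between the chosen maximal compact and the level subgroup, and then rewrite the classical norm via the Shimura/Hida formula \cite{S76,H81} in terms of $L(1,\ad,f_\chi)$, with the ramified Euler factors recorded by the constants $\ell_p$ as in \cite{Co18}, picking up the archimedean factor $\tfrac{(4\pi)^{|k|+1}}{|k|!}$.

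The final step is the dihedral factorization: since $\pi_\chi$ is automorphically induced, $L(s,\ad,f_\chi)=L(s,\epsilon_{E/F})\,L(s,\widetilde\chi)$, and $L(1,\epsilon_{E/F})=\rho_E/\rho_F=\rho_E$. Thus $L(1,\widetilde\chi)$ cancels against the one in the Rallis formula and the two copies of $\rho_E$ combine to $\rho_E^{-2}$; collecting all remaining constants gives the displayed expression for $|D_l|^2$. For the closing assertions one counts powers of $\pi$ in the right-hand side: $2l$ from $|(2\pi i)^l(2i)^l|^2$, $1-|k|$ from $\prod_v C_v$, $2$ from $\zeta(2)$, $-1$ from $(2\pi)^{-1}$, $-1$ from the $\tfrac{\pi}{3}$, $|k|+1$ from $(4\pi)^{|k|+1}$, and $-2$ from $\rho_E^{-2}$, with all other factors $\pi$-free, so the powers sum to $2l$ and $|D_l|$ is $\pi^l$ times an algebraic number; taking $l=0$ and using that $f_\chi$ has algebraic Fourier coefficients shows that $\theta_{\phi_0'}(\chi\xi)=D_0 f_\chi$ is, up to the phase $D_0/|D_0|\in\CC^1$, an algebraic holomorphic Hecke eigenform, of weight $|k|+1$ and level the conductor of $\pi_\chi$ by Theorem~\ref{t:newform} and \eqref{e:c pi chi}.

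The main obstacle is the bookkeeping of normalizations: three a priori different conventions are in play---the adelic Petersson product coming out of the theta correspondence with its Tamagawa measures, the classical Petersson product on $\Gamma_1(N)\backslash\HH$, and the normalization implicit in the Shimura/Hida adjoint formula---and matching them exactly, together with getting the archimedean $\Gamma$-factors (from the confluent hypergeometric Schwartz functions and from $\delta_{|k|+1}^l$) and the ramified Euler factors $\ell_p$ precisely right, is where essentially all the effort lies; once these are reconciled, the identity follows by assembling Theorems~\ref{t:newform}, \ref{t:explicit rallis}, Lemma~\ref{l:schwartz maass shimura}, Casselman's multiplicity-one theorem \cite{Ca73}, and the dihedral factorization.
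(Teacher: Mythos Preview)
Your proposal is correct and follows essentially the same approach as the paper: take the ratio of Petersson norms, use Lemma~\ref{l:schwartz maass shimura} to reduce $D_l$ to $D_0$, evaluate the numerator by Theorem~\ref{t:explicit rallis}, convert the adelic norm of $F_\chi$ to the classical one (picking up the volume and index factors), apply the Shimura--Hida adjoint $L$-value formula with the ramified corrections $\ell_p$, use the dihedral factorization $L(1,\ad,f_\chi)=L(1,\widetilde\chi)\,\rho_E$, and finish by counting powers of~$\pi$. The only cosmetic difference is that the paper immediately rewrites $\theta_{\phi_l'}(\chi\xi)=\tfrac{(2\pi i)^l(2i)^l}{(|k|+1)_l}\,\delta_{|k|+1}^l\theta_{\phi_0'}(\chi\xi)$ and works entirely with $|D_0|^2$, thereby avoiding any separate computation of $\langle F_\chi^l,F_\chi^l\rangle/\langle f_\chi,f_\chi\rangle$.
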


\begin{proof}
By Lemma \ref{l:schwartz maass shimura}, we have
\begin{equation*}
\theta_{\phi_l'}(\chi \xi) = \frac{(2\pi i)^l (2i)^l}{(|k|+1)_l} \cdot \delta_{|k|+1}^l\left(\theta_{\phi_0'}(\chi \xi)\right).
\end{equation*}
It therefore suffices to calculate $|D_0|^2 = \langle \theta_{\phi_0'}(\chi \xi), \theta_{\phi_0'}(\chi \xi) \rangle/\langle F_\chi, F_\chi \rangle.$ 

Following \cite[Lemmas 6.1, 6.3]{IP16a}, we have
\begin{equation*}
\langle F_\chi, F_\chi \rangle = (2 \pi) \cdot \zeta(2)^{-1} \cdot 2^{-1} \cdot [K_0 : K]^{-1} \cdot \frac{\pi}{3}[\PSL_2(\bZ) : \Gamma_1(N)] \cdot \langle f_\chi, f_\chi \rangle,
\end{equation*}
where $\langle f_\chi, f_\chi \rangle$ is normalized as in \cite[Eq.\ (2.1)]{S76}: for any cusp form $f$ of weight $\kappa$ and level $\Gamma$, set
\begin{equation*}
\langle f, f \rangle \colonequals \frac{1}{\vol(\frak h/\Gamma)} \int_{\frak h/\Gamma} f(x + i y) \overline{f(x + i y)} y^k \frac{dx \, dy}{y^2}.
\end{equation*}
By \cite[Eq.\ (2.5)]{S76}, \cite[Section 5]{H81}, \cite[Section 4.2]{Co18},
\begin{equation*}
\langle f_\chi, f_\chi \rangle = \zeta(2)^{-1} \cdot \frac{|k|!}{(4\pi)^{|k|+1}} \cdot \prod_{p \mid N} \ell_p^{-1} \cdot L(1, \ad, f_\chi).
\end{equation*}
We have $L(1, \ad, f_\chi) = L(1, \widetilde \chi) \cdot L(1, \varepsilon_{E/F}) = L(1, \widetilde \chi) \cdot \rho_E.$ 
Therefore, by Theorem \ref{t:explicit rallis},
\begin{equation*}
|D_0|^2 
= \rho_E^{-2} \cdot \prod_v C_v \cdot \zeta(2) \cdot (2\pi)^{-1} \cdot 2 \cdot \frac{[K_0 : K]}{\frac{\pi}{3} \cdot [\PSL_2(\bZ) : \Gamma_1(N)]} \cdot  \frac{(4\pi)^{|k|+1}}{|k|!} \cdot \prod_p \ell_p.
\end{equation*}
Since $E$ is CM by construction so that $\rho_E \sim \pi$ and since $C_\infty \sim \pi^2/\pi^{|k|+1}$, we see:
\begin{equation*}
|D_0|^2 \sim \pi^{-2} \cdot \pi^{2} \cdot \pi^2 \cdot \pi^{-1} \cdot \pi^{-1} \in \overline \QQ. \qedhere
\end{equation*}
\end{proof}

\section{An example: the canonical Hecke character for $\bQ(\sqrt{-7})$}\label{ch:example}

Let $F = \bQ$ and let $E = \bQ(\sqrt{-7})$. Then $E$ has class number $1$ and there is a unique \textit{canonical character} $\chi_{\rm can}'$ in the sense of Rohrlich \cite{Ro80} (see also \cite[Page 52]{Yang thesis}). Explicitly, $\chi_{\rm can}'$ can be described as follows. First consider the character
\begin{equation*}
\epsilon \from \cO_E/(\sqrt{-7}) \cong \bZ/7\bZ \stackrel{\left(\frac{\cdot}{7}\right)}{\longrightarrow} \{\pm 1\}.
\end{equation*}
Then $\epsilon(-1) = -1$ and hence the map on principal ideals
\begin{equation*}
P(\sqrt{-7}) = \{\alpha \cO_E : \text{$\alpha \in E^\times$ is relatively prime to $7$}\} \to E^\times, \qquad \alpha \cO_E \mapsto \epsilon(\alpha) \alpha
\end{equation*}
is a well-defined homomorphism. Since $E$ has class number $1$, then $P(\sqrt{-7}) = I(\sqrt{-7})$, and the above defines a Hecke character of $E^\times$. We define $\chi_{\rm can} \colonequals \chi_{\rm can}' \cdot || \cdot ||_{\bA_K}^{1/2}$ to be the normalized \textit{unitary} Hecke character of $E^\times$. It's easy to see that for $n > 0$:
\begin{enumerate}[label=(\alph*)]
\item
$\chi_{\rm can}^n$ has $\infty$-type $(n,0)$.

\item
$\chi_{\rm can}^n$ has conductor $\sqrt{-7} \cO_E$ if $n$ is odd and conductor $\cO_E$ if $n$ is even.

\item
$\chi_{\rm can}|_{\bA_F^\times} = \varepsilon_{E/F}$.
\end{enumerate}

\subsection{Two quaternion algebras}

We now compute the local epsilon factors $\epsilon_v(\BC(\pi_{\chi_{\rm can}^n}) \otimes \chi_{\rm can}^m)$. At $v = \infty$, this calculation depends on whether $n+1 > m$ or $n + 1\leq m$. At the local places, this can be calculated using \cite[Section 1]{T83}. The interesting finite place is $v = 7$.
\begin{enumerate}[label=(\alph*)]
\item
Momentarily let $v$ be a real place of a number field $F$, take $f$ to be any automorphic form of $\GL_2$ of weight $k$ at $v$ and let $\Omega$ be a Hecke character of $E$ such that $\Omega_v(z) = z^{l_1} \overline z^{l_2}$. Then
\begin{equation*}
\epsilon_v(f, \Omega) \cdot \omega_v(-1) = \begin{cases}
+1 & \text{if $k \leq l_1 - l_2$,} \\
-1 & \text{if $k > l_1 - l_2$.}
\end{cases}
\end{equation*}
Since $\pi_{\chi_{\rm can}^n}$ has weight $n+1$, this implies that
\begin{equation*}
\epsilon_\infty(\BC(\pi_{\chi_{\rm can}^n}) \otimes \chi_{\rm can}^m) \cdot \omega_\infty(-1) = \begin{cases}
+1 & \text{if $n +1 \leq m$,} \\
-1 & \text{if $n + 1 > m$.}
\end{cases}
\end{equation*}

\item
Since $\chi_{{\rm can},v}$ factors through $\Nm$ for all $v \nmid 7$, the representation $\Ind_{\cW_{E_v}}^{\cW_{F_v}}(\chi_{{\rm can},v})$ is decomposable. By \cite[Proposition 1.6]{T83}, for any Hecke character $\Omega$, we have 
\begin{equation*}
\epsilon_v(\BC(\pi_{\chi_{\rm can}}) \otimes \Omega) \cdot \omega_v(-1) = +1 \qquad \text{for all $v \nmid 7$}.
\end{equation*}

\item
First observe that $\Res_{\cW_E} \Ind_{\cW_E}^{\cW_F}(\chi) = \chi \oplus \chi^\tau$ for any character $\chi$ of $\cW_E$. Since base change on the $\GL_2$ side corresponds to restriction on the Galois side, we have
\begin{equation*}
\epsilon_7(\BC(\pi_{\chi_{\rm can}}) \otimes \Omega) = \epsilon_7(\Res_{\cW_E} \Ind_{\cW_E}^{\cW_F} (\chi) \otimes \Omega) = \epsilon_7(\chi_{\rm can} \Omega) \epsilon_7(\chi_{\rm can}^\tau \Omega),
\end{equation*}
where the last equality holds because local $\epsilon$-factors change direct sums to products. By \cite[Lemma 3.2]{Yang thesis}, we have
\begin{equation*}
\epsilon_7(\chi_{\rm can} \Omega) = - \left(\tfrac{2}{7}\right) \sqrt{-1} = \epsilon_7(\chi_{\rm can}^\tau \Omega).
\end{equation*}
Since $\chi_{\rm can}|_{F^\times} = \varepsilon_{E/F}$, the automorphic representation $\pi_{\rm can}$ has trivial central character and hence the above calculation shows $\epsilon_7(\BC(\pi_{\chi_{\rm can}}) \otimes \Omega) \omega_7(-1) = -1$. By the above argument, 
\begin{equation*}
\epsilon_7(\BC(\pi_{\chi_{\rm can}^n}) \otimes \chi_{\rm can}^m) \cdot \omega_7(-1) = \begin{cases}
+1 & \text{if $n$ is even,} \\
-1 & \text{if $n$ is odd.}
\end{cases}
\end{equation*}
\end{enumerate}


We can now discuss the possibilities for the quaternion algebra determined by the pair of Hecke characters $\chi_{\rm can}^n$ and $\chi_{\rm can}^m$. First observe that the central character condition $\chi_{\rm can}^n \chi_{\rm can}^m \varepsilon_{E/F} = 1$ on $\bA^\times$ implies that $n$ and $m$ must have different parity. We now have two cases:
\begin{enumerate}[label=(\roman*)]
\item
If $n$ is odd, then $\epsilon_v(\BC(\pi_{\chi_{\rm can}^n}) \otimes \chi_{\rm can}^m) = -1$ if and only if $v = 7$. This implies that if $L(\BC(\pi_{\chi_{\rm can}^n}) \otimes \chi_{\rm can}^m, \tfrac{1}{2}) \neq 0$, then necessarily $n + 1 > m$ so that $\epsilon_\infty(\BC(\pi_{\chi_{\rm can}^n}) \otimes \chi_{\rm can}^m) = -1$ and hence $\Sigma_{\pi_{\chi_{\rm can}^n},\chi_{\rm can}^m} = \{7, \infty\}.$

\item
If $n$ is even, then $\epsilon_v(\BC(\pi_{\chi_{\rm can}^n}) \otimes \chi_{\rm can}^m) = +1$ for all finite $v$. This implies that if $L(\BC(\pi_{\chi_{\rm can}^n}) \otimes \chi_{\rm can}^m, \tfrac{1}{2}) \neq 0$, then necessarily $n + 1 \leq m$ so that $\epsilon_\infty(\BC(\pi_{\chi_{\rm can}^n}) \otimes \chi_{\rm can}^m) = +1$ and hence $\Sigma_{\pi_{\chi_{\rm can}^n},\chi_{\rm can}^m} = \varnothing.$
\end{enumerate}
Summarizing, take $n,m$ to have opposite parity, we have the chart
\begin{equation}\label{e:dichotomy chart}
\begin{tabular}{c | c | c}
& $\begin{gathered} n+1 > m \\ \epsilon_\infty = -1\end{gathered}$ & $\begin{gathered} n+1 \leq m \\ \epsilon_\infty = +1 \end{gathered}$ \\ \hline
$\epsilon = +1$ & $\begin{gathered} \text{$n$ odd, $m$ even} \\ \epsilon_7 = -1 \\ \text{(definite, ramified at $7,\infty$)}  \end{gathered}$ & $\begin{gathered} \text{$n$ even, $m$ odd} \\ \epsilon_7 = +1  \\ \text{(indefinite---in fact, split!)}  \end{gathered}$ \\ \hline
$\epsilon = -1$ & $\begin{gathered} \text{$n$ even, $m$ odd} \\ \epsilon_7 = +1 \end{gathered}$ & $\begin{gathered} \text{$n$ odd, $m$ even} \\ \epsilon_7 = -1 \end{gathered}$
\end{tabular}
\end{equation}
Waldspurger's formula is in the setting of $\epsilon = +1$, and our main theorem (Theorem \ref{t:identity of periods}) gives an identity between the two $\epsilon = +1$ boxes, taking $B = M_2(F)$ and $B' = B_{\{7,\infty\}}$. In Sections \ref{ch:schwartz} and \ref{ch:explicit rallis}, we constructed a family of Schwartz functions such that their theta lifts realize the newform and its images under iterates of the Shimura--Maass operator. We recall this construction next.

\subsection{Torus periods of a weight-$(3+2l)$ CM form}

Take the special case $n=2$ and let $m = 3 + 2l$, where $l \geq 0$. As in Section \ref{s:schwartz}, we take $\phi_l' \colonequals \otimes_v \phi_{l,v}'$ where
\begin{equation*}
\phi_{l,v}'(z) = \begin{cases}
{}_1 F_1(-l,3,4\pi z \overline z) z^2 e^{-2 \pi z \overline z} & \text{if $v \mid \infty$,} \\
\ONE_{\cO_{F_v}}(z_1) \cdot \ONE_{\cO_{F_v}}(z_2) & \text{if $v \nmid \infty$.}
\end{cases}
\end{equation*}
If we set $\xi = \chi_{\rm can}$, 
\begin{equation*}
C_v = \begin{cases}
\frac{(2 \pi)^2}{4^3 \pi^4} \cdot \frac{l! \cdot 4}{(l+2)!} = \frac{1}{2 (l+2)(l+1) \pi^2} & \text{if $v \mid \infty$,} \\
1 & \text{if $v \neq 7$,} \\
\frac{1}{7}(1 - 49^{-1})^{-1}(1- 7^{-1}) = \frac{1}{8} & \text{if $v = 7$},
\end{cases}
\end{equation*}
so that by Theorem \ref{t:global lift}(b) and Theorem \ref{t:newform}, the theta lift $\theta_{\phi_l'}(\chi_{\rm can}^2 \xi)$ is a Hecke eigenform on $\GL_2(\bA_\bQ)$ in $\pi_{\chi_{\rm can}^2}$. Furthermore, again by Theorem \ref{t:explicit rallis},
\begin{equation*}
\langle \theta_{\phi_l'}(\chi_{\rm can}^2 \xi), \theta_{\phi_l'}(\chi_{\rm can}^2 \xi) \rangle = \left(\frac{2\pi}{\sqrt{7} \cdot 2}\right)^{-1} \cdot \frac{1}{16 \cdot (l+2) \cdot (l+1) \cdot \pi^2} \cdot \frac{L(1, \widetilde \chi_{\rm can}^2)}{\zeta(2)}.
\end{equation*}
And as before, by Theorem \ref{t:B B' adjoint}, 
\begin{equation*}
\int_{[E^\times]} \theta_{\phi_l'}(\chi_{\rm can}^2 \cdot \xi)(g) \cdot \chi_{\rm can}^{3 + 2l}(g) \, dg = \int_{[E^\times]} \chi_{\rm can}^2(g) \cdot \overline{\theta_{\phi_l'}'(\overline{\chi_{\rm can}^{3 + 2l} \cdot \xi'{}^{-1}})}(g) \, dg,
\end{equation*}
where by Theorem \ref{t:global lift}(b) the theta lift $\overline{\theta_{\phi_l'}'(\overline{\chi_{\rm can}^{3+2l} \cdot \xi'{}^{-1}})}$ is an automorphic form in $\pi_{\chi_{\rm can}^{3 + 2l}}^{B'}$.


\subsection{Nonvanishing torus periods} 

Recall that by Theorem \ref{t:newform}, $\theta_{\phi_l'}(\chi_{\rm can}^2 \xi)$ is a nonzero Hecke eigenform of weight $3 + 2l$ in $\pi_{\chi_{\rm can}^2}$. Choose a basis $1, \bi, \bj, \bi\bj$ for the split quaternion algebra $M_2(\bQ)$ such that $\bi^2 = u = -7$, $\bj^2 = -1/7$. Then our torus embedding is
\begin{equation*}
E^\times \hookrightarrow \GL_2(\bQ), \qquad a + b \bi \mapsto \left(\begin{smallmatrix} a & -2 b \\ -bu/2 & a \end{smallmatrix}\right).
\end{equation*}
In particular, for any finite place $p$ of $\bQ$, the induced embedding $E_p^\times \hookrightarrow \GL_2(\bQ_p)$ makes the compact open subgroup 
\begin{equation*}
K_{0,p} \colonequals 
\begin{cases}
\GL_2(\bZ_p) & \text{if $p \neq 7$,} \\
\left\{\left(\begin{smallmatrix} a & b \\ c & d \end{smallmatrix}\right) \in \GL_2(\bZ_p) : c \in 7\bZ_7\right\} & \text{if $p = 7$,}
\end{cases}
\end{equation*}	
into an \textit{optimal} compact open subgroup (in the sense of Gross \cite[Proposition 3.2]{G88}) with respect to $\chi_{\rm can}^{3 + 2l}$, which is unramified at $v \nmid 7$ and has conductor $1$ at $v \mid 7$. By \cite{GP91}, a Hecke eigenform with respect to the above compact open subgroup of $\GL_2(\bA_\bQ)$ is locally (up to a scalar) the Gross--Prasad test vector. Therefore, 
\begin{equation*}
\int_{[E^\times]} \theta_{\phi_l'}(\chi_{\rm can}^2 \xi)(g) \cdot \chi_{\rm can}^{3 + 2l}(g) \, dg \neq 0
\end{equation*}
and combining this with Theorems \ref{t:identity of periods},  \ref{t:newform}, and \ref{t:Dl}, we obtain:

\begin{corollary}
Let $B' = B_{7,\infty}$ denote the definite quaternion algebra over $\bQ$ ramified at exactly $7$ and $\infty$. Define
\begin{equation*}
f_{\chi_{\rm can}^2}^{(l)} \colonequals \theta_{\phi_l'}(\chi_{\rm can}^2 \xi), \qquad f_{\chi_{\rm can}^{3+2l}}^{B'} \colonequals \overline{\theta_{\phi_l'}'(\overline{\chi_{\rm can}^{3+2l} \xi'})}.
\end{equation*}
Then:
\begin{enumerate}[label=(\alph*)]
\item
$\pi^{-l} \cdot f_{\chi_{\rm can}^2}^{(l)}$ is an algebraic Hecke eigenform of weight $3 + 2l$ in $\pi_{\chi_{\rm can}^2}$.

\item
$f_{\chi_{\rm can}^{3+2l}}^{B'}$ is an automorphic form in the Jacquet--Langlands transfer $\pi_{\chi_{\rm can}^{3+2l}}^{B'}$,

\item
there is an identity of nonzero torus periods
\begin{equation*}
0 \neq \int_{[E^\times]} f_{\chi_{\rm can}^2}^{(l)}(g) \cdot \chi_{\rm can}^2(g) \, dg = \int_{[E^\times]} \chi_{\rm can}^2(g) \cdot f_{\chi_{\rm can}^{3+2l}}^{B'}(g) \, dg.
\end{equation*}
\end{enumerate}
\end{corollary}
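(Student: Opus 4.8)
The plan is to obtain the corollary by specializing Theorems~\ref{t:identity of periods}, \ref{t:newform}, and~\ref{t:Dl} to the data $\chi=\chi_{\rm can}^2$, $\chi'=\chi_{\rm can}^{3+2l}$, $\varphi=\phi_l'$, and then inserting the Gross--Prasad nonvanishing discussion that precedes the corollary. Observe first that the identity of periods in part~(c) is \emph{exactly} Theorem~\ref{t:identity of periods} for this data (with left-hand integral taken against $\chi_{\rm can}^{3+2l}$, as in \eqref{e:ex}), and the membership $f_{\chi_{\rm can}^{3+2l}}^{B'}\in\pi_{\chi_{\rm can}^{3+2l}}^{B'}$ in part~(b) is likewise part of the statement of Theorem~\ref{t:identity of periods} (via Theorem~\ref{t:theta aut ind}). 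So the genuine content to establish is: that $\pi_{\chi_{\rm can}^{3+2l}}^{B'}\neq 0$ (part~(b)), the algebraicity assertion of part~(a), and the nonvanishing in part~(c).

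For part~(a) I would argue as follows. By Theorem~\ref{t:newform} with $k=2$ (so $|k|+1=3$), $f_{\chi_{\rm can}^2}^{(l)}=\theta_{\phi_l'}(\chi_{\rm can}^2\xi)=D_l\cdot F^l_{\chi_{\rm can}^2}$ for some $D_l\neq 0$, where $F^l_{\chi_{\rm can}^2}$ is the automorphic form attached to $\delta_3^{\,l}f_{\chi_{\rm can}^2}$ and is of weight $3+2l$. Theorem~\ref{t:Dl} supplies two facts: the $l=0$ case says $\theta_{\phi_0'}(\chi_{\rm can}^2\xi)$ is $\CC^1$ times an algebraic holomorphic Hecke eigenform of weight $3$, and $|D_l|\sim\pi^l$. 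Combining this with the relation $\theta_{\phi_l'}(\chi_{\rm can}^2\xi)=\tfrac{(2\pi i)^l(2i)^l}{(3)_l}\,\delta_3^{\,l}\bigl(\theta_{\phi_0'}(\chi_{\rm can}^2\xi)\bigr)$ from Lemma~\ref{l:schwartz maass shimura} and the standard fact (Shimura) that $\delta_3=q\,\tfrac{d}{dq}-\tfrac{3}{4\pi y}$ preserves $\overline{\bQ}$-rationality of nearly holomorphic modular forms, one reads off that $\pi^{-l}f_{\chi_{\rm can}^2}^{(l)}$ is $\CC^1$ times an algebraic nearly holomorphic Hecke eigenform of weight $3+2l$; the single power of $\pi$ is precisely what the scalar $(2\pi i)^l(2i)^l/(3)_l=(-4\pi)^l/(3)_l$ contributes.

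For part~(b) I would verify the Jacquet--Langlands existence criterion recalled at the start of Section~\ref{ch:global theta}, namely that $\chi_{{\rm can},v}^{3+2l}$ does not factor through $\Nm\colon E_v^\times\to F_v^\times$ at the two ramified places $v\in\{7,\infty\}$ of $B'$: at $v=\infty$ the component $z\mapsto z^{\pm(3+2l)}$ on $\CC^\times$ is not of norm type, and at $v=7$, since $3+2l$ is odd, property~(b) of $\chi_{\rm can}$ gives that $\chi_{\rm can}^{3+2l}$ has conductor $\sqrt{-7}\,\cO_E$, hence is ramified and so nontrivial on $E_7^1$. Thus $\pi_{\chi_{\rm can}^{3+2l}}^{B'}\neq 0$, and by Theorem~\ref{t:theta aut ind} the corresponding theta lift is nonzero, so Theorem~\ref{t:identity of periods} indeed places $f_{\chi_{\rm can}^{3+2l}}^{B'}$ in $\pi_{\chi_{\rm can}^{3+2l}}^{B'}$. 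For the nonvanishing in part~(c), I would reproduce the argument sketched before the corollary: the embedding $E^\times\hookrightarrow\GL_2(\bQ)$ determined by $\bi^2=-7$, $\bj^2=-1/7$ carries each $K_{0,p}$ to an optimal compact open subgroup in the sense of Gross~\cite{G88} relative to $\chi_{\rm can}^{3+2l}$, so by~\cite{GP91} the Hecke eigenform $f_{\chi_{\rm can}^2}^{(l)}$ is, up to a nonzero scalar at each place, the Gross--Prasad test vector; hence the period $\sP(\pi_{\chi_{\rm can}^2},\chi_{\rm can}^{3+2l})$ is nonzero on it as soon as it is a nonzero functional, which by Theorem~\ref{t:global branching} holds iff $L(\BC(\pi_{\chi_{\rm can}^2})\otimes\chi_{\rm can}^{3+2l},\tfrac12)\neq 0$. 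This central value factors into Hecke $L$-values of (finite-order twists of) odd powers of $\chi_{\rm can}$ — the leading factor being $L(\chi_{\rm can}^{5+2l},\tfrac12)$ — whose nonvanishing at the center is known by the theorems of Montgomery--Rohrlich for the canonical Hecke character of $\bQ(\sqrt{-7})$.

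The main obstacle will not be the representation-theoretic assembly, which is essentially bookkeeping on top of Theorems~\ref{t:identity of periods}, \ref{t:newform}, \ref{t:theta aut ind}, and~\ref{t:Dl}, but the two arithmetic inputs buried in part~(c): (i) carefully checking the optimal-embedding/Gross--Prasad hypothesis at the ramified prime $p=7$ and matching the archimedean Schwartz vector $\phi_{l,\infty}'$ (built from ${}_1F_1$) with the correct weight-$(3+2l)$ minimal vector in $\pi_{\chi_{\rm can}^2,\infty}$, and (ii) invoking the correct nonvanishing theorem for the relevant central $L$-value uniformly in $l\geq 0$ — the one place where a genuine analytic input, external to the theta machinery of the paper, is required.
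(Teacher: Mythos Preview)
Your proposal is correct and takes essentially the same approach as the paper, which presents the corollary as an immediate consequence of Theorems~\ref{t:identity of periods}, \ref{t:newform}, and~\ref{t:Dl} together with the Gross--Prasad test vector discussion immediately preceding it. Your treatment is in fact more careful than the paper's on one point: the paper's ``Therefore $\int\neq 0$'' jumps directly from the test-vector property to nonvanishing of the period without explicitly addressing the central $L$-value, whereas you correctly isolate this as the one external analytic input (your obstacle~(ii)).
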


\bibliographystyle{plain}

\begin{thebibliography}{BDP13}

\bibitem[BDP13]{BDP13} M.\ Bertolini, H.\ Darmon, and K.\ Prasanna. Generalized Heegner cycles and $p$-adic Rankin $L$-series. With an appendix by Brian Conrad. \textit{Duke Math. J.} 162 (2-13), no. 6, 1033-1148.


\bibitem[Ca73]{Ca73} W.\ Casselman. {On some results of Atkin and Lehner.} \textit{Math. Ann.} 201 (1973), 301-314.

\bibitem[Ch18]{Ch18} C.\ Chan. {Period identities of CM forms on quaternion algebras}. \textit{PhD thesis}, University of Michigan, 2018. 


\bibitem[Co18]{Co18} D.\ Collins. {Numerical computation of Petersson inner products and q-expansions.} \textit{Preprint}, 2018. 


\bibitem[GI14]{GI14} W.T.\ Gan, A.\ Ichino. Formal degrees and local theta correspondence. \textit{Invent.\ Math.} 195 (2014), no. 3, 509--672. 


\bibitem[GQT]{GQT} W.T.\ Gan, Y.\ Qiu, S.\ Takeda. {The regularized Siegel--Weil formula (the second term identity) and the Rallis inner product formula.} \textit{Invent.\ Math.} 198 (2014), no.\ 3, 739-831.


\bibitem[G88]{G88} B.\ Gross. Local orders, root numbers, and modular curves. \textit{Am.\ J.\ Math.} 110,
1153-1182 (1988).

\bibitem[GP91]{GP91} B.\ Gross and D.\ Prasad. Test vectors for linear forms. \textit{Math.\ Ann}. 291, no. 2 (1991): 343-55.

\bibitem[H11]{H11} J.\ Hanke. Quadratic Forms and Automorphic Forms. Expanded Arizona Winter School Lecture Notes. 



\bibitem[HKS96]{HKS96} M.\ Harris, S.\ Kudla, and W.\ Sweet. {Theta dichotomy for unitary groups.} \textit{J.\ Amer.\ Math.\ Soc.} 9 (1996), no. 4, 941-1004. 



\bibitem[H81]{H81} H.\ Hida. Congruence of cusp forms and special values of their zeta functions. \textit{Invent.\ Math.} 63 (1981), no.\ 2, 225-261.




\bibitem[IP16a]{IP16a} A.\ Ichino and K.\ Prasanna. {Periods of quaternionic Shimura varieties.} \textit{Preprint}, 2016.

\bibitem[IP16b]{IP16b} A.\ Ichino and K.\ Prasanna. {Periods of Quaternionic Shimura Varieties, II.} \textit{Preprint}, 2016.

\bibitem[JL]{JL} H.\ Jacquet and R.\ Langlands. {Automorphic forms for $\GL_2$}. Lecture Notes in Mathematics, Vol.\ 114. \textit{Springer-Verlag, Berlin-New York,} 1970. vii+548 pp.


\bibitem[K94]{K94} S.\ Kudla. {Splitting metaplectic covers of dual reductive pairs}. \textit{Israel J.\ Math.} 87 (1994), no.\ 1-3, 361-401.

\bibitem[KS97]{KS97} S.\ Kudla and W.\ Sweet. Degenerate principal series representations for $U(n,n)$. \textit{Israel J.\ Math.} 98 (1997), 253--306.


\bibitem[LR05]{LR05} E.\ M.\ Lapid, S.\ Rallis. On the local factors of representations of classical groups. In \textit{Automorphic Representations, L-Functions and Applications: Progress and Prospects.} Ohio
State Univ.\ Math.\ Res.\ Inst.\ Publ., vol.\ 11, pp.\ 309--359,  de Gruyter, Berlin (2005)


\bibitem[MVW]{MVW} C.\ Moeglin, M-F.\ Vign\'eras, J-L.\ Waldspurger. {Correspondances de Howe sur uncorps $p$-adique.} Lecture Notes in Mathematics, 1291. \textit{Springer-Verlag, Berlin,} 1987. viii+163 pp.

\bibitem[P06]{P06} A.\ Popa. {Central values of Rankin $L$-series over real quadratic fields.} \textit{Compos.\ Math.} 142 (2006) 811-866.

\bibitem[P93]{P93} D.\ Prasad. {Weil Representation, Howe Duality, and the Theta Correspondence.} \textit{Theta functions: from the classical to the modern.} 105-127, CRM Proc.\ Lecture Notes, \textit{Amer.\ Math.\ Soc.}, Providence, RI, 1993.

\bibitem[PSR87]{PSR87} I.\ Piatetski-Shapiro and S.\ Rallis. {$L$-functions for the classical groups.} \textit{Modular forms (Durham, 1983),} 251-261, Ellis Horwood Ser.\ Mat.\ Appl: Statist.\ Oper.\ Res., \textit{Horwood, Chichester,} 1984.

\bibitem[R84]{R84} S.\ Rallis. On the Howe duality conjecture. \textit{Compos.\ Math.} 51 (1984), 333-399.

\bibitem[R93]{R93} R.\ Ranga Rao. {On some explicit formulas in the theory of Weil representations}. \textit{Pacific J.\ Math.} 157 (1993), No.\ 2, 335-371.

\bibitem[Ro80]{Ro80} D.\ Rohrlich. On the L-function of canonical Hecke characters of imaginary quadratic fields. \textit{Duke Math.\ J.} 47 (1980), no.\ 3, 547-557.

\bibitem[S93]{S93} H.\ Saito. {On Tunnell's formula for characters of $\GL(2)$}, \textit{Compos.\ Math}. 85 (1993), no.\ 1, 99-108.


\bibitem[S76]{S76} G.\ Shimura. {The special values of the zeta functions associated with cusp forms.} \textit{Comm.\ Pure Appl.\ Math.} 29 (1976), no.\ 6, 783-804.

\bibitem[SZ15]{SZ15} B.\ Sun and C.-B.\ Zhu. {Conservation relations for local theta correspondence}. \textit{J.\ Amer.\ Math.\ Soc.} 28 (2015), no.\ 4, 939--983.

\bibitem[T83]{T83} J.\ Tunnell. {Local $\epsilon$-factors and characters of $\GL(2)$}, \textit{Amer.\ J.\ of Math.} 105 (1983), no.\ 6, 1277-1307.

\bibitem[VK91]{VK91} N.\ Ja.\ Vilenkin, V.\ U.\ Klimyk. Representations of Lie Groups and Special Functions, Vol.\ 1. \textit{Math.\ Appl.\ (Soviet Ser.)}, vol.\ 72, Kluwer Academic, Dordrecht, 1991.

\bibitem[W85a]{W85a} J.-L.\ Waldspurger. {Sur les valeurs de certaines fonctions $L$ automorphes en leur centre de sym\'eŽtrie.} \textit{Compositio Math.} 54 (1985), no.\ 2, 173-242.





\bibitem[X13]{X13} W.\ Xiong. {A regularized Siegel--Weil formula on $U(1,1)$ and application to theta lifting from $U(1)$ to $U(1,1)$.} \textit{J.\ Number Theory} 133 (2013), no.\ 4, 1111-1134.

\bibitem[X07]{X07} H.\ Xue. {Central values of $L$-functions over CM fields.} \textit{J.\ Number Theory} 122 (2007), 342-378.



\bibitem[Y95]{Yang thesis} T.\ Yang. {Theta liftings and the L-function of elliptic curves.} Thesis (Ph.D.)-University of Maryland, College Park. 1995. 126 pp, \textit{ProQuest LLC}.

\bibitem[Y97]{Y97} T.\ Yang. {Theta liftings and Hecke L-functions.} \textit{J.\ Reine Angew.\ Math.} 485 (1997), 25-53.

\end{thebibliography}

\end{document}